\let\old@tocline\@tocline
\let\section@tocline\@tocline
\newcommand{\subsection@dotsep}{4.5}
\newcommand{\subsubsection@dotsep}{4.5}
     \leaders\hbox{$\m@th
        \mkern \subsection@dotsep mu\hbox{.}\mkern \subsection@dotsep mu$}\hfill
\let\subsection@tocline\@tocline
\let\@tocline\old@tocline
     \leaders\hbox{$\m@th
        \mkern \subsubsection@dotsep mu\hbox{.}\mkern \subsubsection@dotsep mu$}\hfill
\let\subsubsection@tocline\@tocline
\let\@tocline\old@tocline
\let\old@l@subsection\l@subsection
\let\old@l@subsubsection\l@subsubsection
\def\@tocwriteb#1#2#3{%
  \begingroup
    \@xp\def\csname #2@tocline\endcsname##1##2##3##4##5##6{%
      \ifnum##1>\c@tocdepth
      \else \sbox\z@{##5\let\indentlabel\@tochangmeasure##6}\fi}%
    \csname l@#2\endcsname{#1{\csname#2name\endcsname}{\@secnumber}{}}%
  \endgroup
  \addcontentsline{toc}{#2}%
    {\protect#1{\csname#2name\endcsname}{\@secnumber}{#3}}}%
\newlength{\@tocsectionindent}
\newlength{\@tocsubsectionindent}
\newlength{\@tocsubsubsectionindent}
\newlength{\@tocsectionnumwidth}
\newlength{\@tocsubsectionnumwidth}
\newlength{\@tocsubsubsectionnumwidth}
\newcommand{\settocsectionnumwidth}[1]{\setlength{\@tocsectionnumwidth}{#1}}
\newcommand{\settocsubsectionnumwidth}[1]{\setlength{\@tocsubsectionnumwidth}{#1}}
\newcommand{\settocsubsubsectionnumwidth}[1]{\setlength{\@tocsubsubsectionnumwidth}{#1}}
\newcommand{\settocsectionindent}[1]{\setlength{\@tocsectionindent}{#1}}
\newcommand{\settocsubsectionindent}[1]{\setlength{\@tocsubsectionindent}{#1}}
\newcommand{\settocsubsubsectionindent}[1]{\setlength{\@tocsubsubsectionindent}{#1}}
\renewcommand{\l@section}{\section@tocline{1}{\@tocsectionvskip}{\@tocsectionindent}{}{\@tocsectionformat}}%
\renewcommand{\l@subsection}{\subsection@tocline{1}{\@tocsubsectionvskip}{\@tocsubsectionindent}{}{\@tocsubsectionformat}}%
\renewcommand{\l@subsubsection}{\subsubsection@tocline{1}{\@tocsubsubsectionvskip}{\@tocsubsubsectionindent}{}{\@tocsubsubsectionformat}}%
\newcommand{\@tocsectionformat}{}
\newcommand{\@tocsubsectionformat}{}
\newcommand{\@tocsubsubsectionformat}{}
\def\csname toc@1format\endcsname{\@tocsectionformat}
\def\csname toc@2format\endcsname{\@tocsubsectionformat}
\def\csname toc@3format\endcsname{\@tocsubsubsectionformat}
\newcommand{\settocsectionformat}[1]{\renewcommand{\@tocsectionformat}{#1}}
\newcommand{\settocsubsectionformat}[1]{\renewcommand{\@tocsubsectionformat}{#1}}
\newcommand{\settocsubsubsectionformat}[1]{\renewcommand{\@tocsubsubsectionformat}{#1}}
\newlength{\@tocsectionvskip}
\newcommand{\settocsectionvskip}[1]{\setlength{\@tocsectionvskip}{#1}}
\newlength{\@tocsubsectionvskip}
\newcommand{\settocsubsectionvskip}[1]{\setlength{\@tocsubsectionvskip}{#1}}
\newlength{\@tocsubsubsectionvskip}
\newcommand{\settocsubsubsectionvskip}[1]{\setlength{\@tocsubsubsectionvskip}{#1}}
\patchcmd{\tocsection}{\indentlabel}{\makebox[\@tocsectionnumwidth][l]}{}{}
\patchcmd{\tocsubsection}{\indentlabel}{\makebox[\@tocsubsectionnumwidth][l]}{}{}
\patchcmd{\tocsubsubsection}{\indentlabel}{\makebox[\@tocsubsubsectionnumwidth][l]}{}{}
\newcommand{\@sectypepnumformat}{}
\renewcommand{\contentsline}[1]{%
  \expandafter\let\expandafter\@sectypepnumformat\csname @toc#1pnumformat\endcsname%
  \csname l@#1\endcsname}
\newcommand{\@tocsectionpnumformat}{}
\newcommand{\@tocsubsectionpnumformat}{}
\newcommand{\@tocsubsubsectionpnumformat}{}
\newcommand{\setsectionpnumformat}[1]{\renewcommand{\@tocsectionpnumformat}{#1}}
\newcommand{\setsubsectionpnumformat}[1]{\renewcommand{\@tocsubsectionpnumformat}{#1}}
\newcommand{\setsubsubsectionpnumformat}[1]{\renewcommand{\@tocsubsubsectionpnumformat}{#1}}
\renewcommand{\@tocpagenum}[1]{%
  \hfill {\mdseries\@sectypepnumformat #1}}
\let\oldappendix\appendix
\renewcommand{\appendix}{%
  \leavevmode\oldappendix%
  \addtocontents{toc}{%
    \protect\settowidth{\protect\@tocsectionnumwidth}{\protect\@tocsectionformat\sectionname\space}%
    \protect\addtolength{\protect\@tocsectionnumwidth}{2em}}%
}
\let\oldtableofcontents\tableofcontents
\renewcommand{\tableofcontents}{%
  \vspace*{-\linespacing}%
  \oldtableofcontents}
\newtheorem{lemma}{Lemma}[section]
\newtheorem{proposition}[lemma]{Proposition}
\newtheorem{corollary}[lemma]{Corollary}
\newtheorem{theorem}[lemma]{Theorem}
\theoremstyle{definition}
\newtheorem{definition}[lemma]{Definition}
\theoremstyle{remark} 
\newtheorem*{remark}{Remark}
\newtheorem*{example}{Example}
\newtheorem{conjecture}[lemma]{Conjecture}
\newcommand{\A}{\mathbb{A}}
\newcommand{\B}{\mathbb{B}}
\newcommand{\D}{\mathbb{D}}
\newcommand{\F}{\mathbb{F}}
\renewcommand{\L}{\mathbb{L}}
\newcommand{\N}{\mathbb{N}}
\renewcommand{\P}{\mathbb{P}}
\newcommand{\R}{\mathbb{R}}
\renewcommand{\S}{\mathbb{S}}
\newcommand{\T}{\mathbb{T}}
\newcommand{\X}{\mathbb{X}}
\newcommand{\Z}{\mathbb{Z}}
\newcommand{\cA}{\mathcal{A}}
\newcommand{\cB}{\mathcal{B}}
\newcommand{\cC}{\mathcal{C}}
\newcommand{\cD}{\mathcal{D}}
\newcommand{\cE}{\mathcal{E}}
\newcommand{\cF}{\mathcal{F}}
\newcommand{\cG}{\mathcal{G}}
\newcommand{\cH}{\mathcal{H}}
\newcommand{\cK}{\mathcal{K}}
\newcommand{\cL}{\mathcal{L}}
\newcommand{\cM}{\mathcal{M}}
\newcommand{\cP}{\mathcal{P}}
\newcommand{\cS}{\mathcal{S}}
\newcommand{\cU}{\mathcal{U}}
\newcommand{\cV}{\mathcal{V}}
\newcommand{\cW}{\mathcal{W}}
\newcommand{\cX}{\mathcal{X}}
\newcommand{\cY}{\mathcal{Y}}
\newcommand{\bC}{\mathbf{C}}
\newcommand{\mf}{\mathfrak}
\DeclareMathOperator{\ev}{ev}
\DeclareMathOperator{\coev}{coev}
\DeclareMathOperator{\id}{id}
\DeclareMathOperator{\Id}{Id}
\DeclareMathOperator{\Tr}{Tr}
\DeclareMathOperator{\Hom}{Hom}
\DeclareMathOperator{\Fun}{Fun}
\DeclareMathOperator{\End}{End}
\DeclareMathOperator{\eend}{end}
\DeclareMathOperator{\Push}{Push}
\DeclareMathOperator{\maps}{maps}
\DeclareMathOperator{\Map}{Map}
\DeclareMathOperator{\Lax}{Lax}
\DeclareMathOperator{\Cat}{Cat}
\DeclareMathOperator{\Ins}{Ins}
\DeclareMathOperator{\Cone}{Cone}
\DeclareMathOperator{\Mod}{Mod}
\DeclareMathOperator{\vvt}{Vect}
\DeclareMathOperator{\mmod}{mod}
\DeclareMathOperator{\Perf}{Perf}
\DeclareMathOperator{\Ch}{Ch}
\DeclareMathOperator{\Spec}{Spec}
\DeclareMathOperator{\Coh}{Coh}
\DeclareMathOperator{\Filt}{Filt}
\DeclareMathOperator{\Star}{Star}
\DeclareMathOperator{\Loc}{Loc}
\newcommand{\HH}{\mathrm{HH}}
\DeclareMathOperator{\HC}{HC}
\DeclareMathOperator{\Nrv}{N}
\DeclareMathOperator{\Sing}{Sing}
\DeclareMathOperator{\Alg}{Alg}
\DeclareMathOperator{\Sh}{Sh}
\DeclareMathOperator{\Cosh}{Cosh}
\DeclareMathOperator{\PreSh}{PreSh}
\DeclareMathOperator{\PreCosh}{PreCosh}
\newcommand{\pt}{\mathrm{pt}}
\newcommand{\uph}{\mathrm{h}}
\newcommand{\Compacts}{\mathcal{K}}
\newcommand{\Opens}{\mathcal{U}}
\DeclareMathOperator{\sheafHom}{\mathcal{H}\kern -1.2pt \mathit{om}}
\DeclareMathOperator*{\colim}{colim}
\newcommand{\del}{\partial}
\DeclareMathOperator{\LLoc}{\L\mathrm{oc}}
\DeclareMathOperator{\FFilt}{\F\mathrm{ilt}}
\DeclareMathOperator{\PPerf}{\P\mathrm{erf}}
\DeclareMathOperator{\SSh}{\S\mathrm{h}}
\DeclareMathOperator{\DGCat}{DGCat}
\DeclareMathOperator{\dgcat}{dgcat}
\DeclareMathOperator{\Arb}{Arb}
\DeclareMathOperator{\Ind}{Ind}
\DeclareMathOperator{\DHH}{DHH}
\DeclareMathOperator{\LD}{LD}
\newcommand{\Fr}{\mathrm{Fr}}
\newcommand{\rig}{\mathrm{rig}}
\newcommand{\RR}{\mathrm{RR}}
\renewcommand{\Lax}{\mathrm{Lax}}
\newcommand{\oplax}{\mathrm{oplax}}
\newcommand{\fib}{\mathrm{fib}}
\newcommand{\cofib}{\mathrm{cofib}}
\newcommand{\narrowdown}{\mkern-6mu \downarrow \mkern-3mu}
\let\emptyset\varnothing
\mathchardef\mh="2D
\newcommand{\sslash}{\mathbin{/\mkern-6mu/}}
\newcommand{\tightoverset}[2]{%
  \mathop{#2}\limits^{\vbox to -.5ex{\kern-0.75ex\hbox{$#1$}\vss}}}
\newsavebox{\smlmat}
\savebox{\smlmat}{$\left(\begin{smallmatrix}1&0\\0&1\end{smallmatrix}\right)$}
\newcommand{\arb}{\cA} 
\newcommand{\DeltaDelta}{\Delta\!\!\!\!\Delta} 
\newcommand{\DDelta}{\mathrm{D}\Delta}
\newcommand{\pullbackcorner}[1][dr]{\save*!/#1-1.2pc/#1:(-1,1)@^{|-}\restore}
\author{Vivek Shende and Alex Takeda}
\title{Calabi-Yau structures on topological Fukaya categories}
\begin{document}
\tikzset{darrow/.style={double distance = 4pt,>={Implies},->},
darrowthin/.style={double equal sign distance,>={Implies},->},
tarrow/.style={-,preaction={draw,darrow}},
qarrow/.style={preaction={draw,darrow,shorten >=0pt},shorten >=1pt,-,double,double
distance=0.2pt}} 

\thispagestyle{empty}

\begin{abstract}
We develop a local-to-global formalism for constructing  Calabi-Yau structures for 
global sections of constructible sheaves or cosheaves of dg categories.  The required data --- a morphism between the
sheafified Hochschild homology with the topological dualizing sheaf, satisfying a nondegeneracy condition --- specializes to the classical
notion of orientation when applied to the category of local systems on a manifold.  We apply this 
construction to the cosheaves on arboreal skeleta arising in the microlocal approach 
to the A-model. 
\end{abstract}

\maketitle

\tableofcontents

\section{Introduction}
A compact complex manifold with trivial canonical bundle is said to be Calabi-Yau.  On such a manifold, 
Serre duality takes the simple form $\Hom(E, F) \cong \Hom(F, E)^\vee[d]$.  This is a categorical
analogue of a nondegenerate trace on an algebra, and there is an associated notion of Calabi-Yau category,
generalizing the notion of Frobenius algebra.

Such categories are precisely the categories of boundary
conditions in two dimensional topological field theories, from which these theories may be reconstructed \cite{Cos1, Lur-tft}.  
In particular, much of the relationship between the topological A-model and B-model is captured
by an equivalence between the Fukaya category of Lagrangians on a symplectic manifold, and the derived category of coherent sheaves on the mirror algebraic variety, these being the categories of boundary conditions of 
open strings in the respective theories \cite{Kon}.   Moreover, other invariants associated to these theories
should in principle be computable from the Calabi-Yau categories, in particular
the partition function, i.e. Gromov-Witten type invariants \cite{Cos2}, and also BPS counts, i.e. 
Donaldson-Thomas type invariants \cite{KoSo, KoS}.   Related considerations show that moduli of objects in CY categories carry 
shifted symplectic structures and admit quantizations \cite{PTVV, CPTVV, BD1}.

\vspace{2mm} 

Our purpose here is to establish a local-to-global formalism for constructing certain Calabi-Yau categories which
arise in the topological A-model.   That such a local-to-global procedure should exist is strongly suggested by
the fact that, on the one hand, certain Fukaya categories should carry Calabi-Yau structures, and on the other, 
they are known to be equivalent to categories of microlocal sheaves in certain exact situations \cite{NZ, N1, GPS1, GPS2, Sh, NS, GPS3}.   
This equivalence underlies one approach to homological mirror symmetry    \cite{FLTZ, N5, N6, Ku, GaSh}.  
If one wants to e.g. use the ideas of \cite{Cos2} to extract enumerative mirror symmetry from this microlocal sheaf description, 
it will be necessary to construct Calabi-Yau structures in the microlocal sheaf setting, prove a comparison with 
\cite{Ganatra}, and and show that the aforementioned mirror symmetry results respect Calabi-Yau structures.  Here we 
take the first step. 

A second motivation is that moduli of microlocal sheaves
naturally describe many spaces of interest, for instance: moduli spaces of (possibly irregular) local systems, 
positroid varieties, multiplicative Nakajima varieties, the A-polynomial of a knot, etc.  
Many of these spaces are known or expected to carry symplectic structures and quantizations; in light of 
\cite{PTVV, CPTVV, BD1}, our work provides a uniform construction of these structures.

\vspace{2mm} 
The setting for our local-to-global formalism is: 

\begin{definition}\label{def: ccc and csc spaces}
A {\em constructibly cosheaved in categories (\textsc{ccc}) space} is a pair $(X, \cF)$ where $X$ is a topological space and $\cF$ is a cosheaf on $X$ valued in the $\infty$-category of small dg categories, constructible with respect to some finite stratification of $X$. We define analogously the notion of {\em constructibly sheaved in categories (\textsc{csc}) space} $(X, \cF)$. 
\end{definition}

Given a \textsc{ccc} or \textsc{csc} space as above, we say that a map $X \to Y$ or $Y \to X$ is {\em constructible} if it is stratifiable by a locally finite stratification with respect to which the (co)sheaf is constructible.  In particular 
we say $U \subset X$ is {\em constructible} if it is a union of strata for some finite stratification of $X$ with respect to
which $\cF$ is constructible.  

\begin{example} 
Let $M$ be a manifold and $k$ a field.  Let $Loc(M)$ be the category of (unbounded, possibly infinite rank) local systems of complexes of $k$-vector spaces on $M$.  Then $U \mapsto Loc(U)$, together the evident restriction maps, determines a sheaf $\cL oc$ on $M$ valued in the category of presentable dg categories with limit-preserving morphisms. To get a \textsc{csc} space, let $Loc^b(U) \subset Loc(U)$ be the full subcategory of local systems of perfect $k$-modules, and $\cL oc^b \subset \cL oc$ the corresponding full subsheaf. Then $(M, \cL oc^b)$ is a \textsc{csc} space. 

Taking adjoints gives a cosheaf, also denoted $\cL oc$, on $M$, valued in the 
category of presentable dg categories with colimit-preserving morphisms. Let $Loc^c(U) \subset Loc(U)$ be the full subcategory of compact objects, and $\cL oc^c \subset \cL oc$ the corresponding full subcosheaf. Then $(M, \cL oc^c)$ is a \textsc{ccc} space. The cospecialization maps of the cosheaf $\cL oc^c$ are nontrivial to describe, being the analogue in sheaf theory of wrapping in the Fukaya category \cite{N5, GPS3}. 
\end{example}

\begin{example}
	Let us consider the example above with $M=S^1$, which is Example 3.19 in \cite{N5}. Taking global sections, we get $Loc(S^1) = \Mod(k[t, t^{-1}])$, that is, the (large) category of unbounded complexes of modules over the associative algebra $k[t,t^{-1}]$, and its subcategory of compact objects is $Loc^c(S^1) = \Perf(k[t, t^{-1}])$, that is, complexes that are {\em perfect as} $k[t,t^{-1}]$-modules. On the other hand, the objects of $Loc^b(S^1)$ are the complexes of $k[t,t^{-1}]$-modules that have finite global rank, that is, are perfect as $k$-modules.
	
	We can describe these categories in another way, `on the other side of mirror symmetry'. There are canonical equivalences $Loc^c(S^1) = \Coh(\mathbb{G}_m)$, (a dg enhancement of) the derived category of $\Spec k[\Z]$, and $Loc^b(S^1) = \Coh_{prop}(\mathbb{G}_m)$, which can be identified with its subcategory of objects that have proper support; these objects are all direct sums of skyscraper sheaves, whose supports correspond to the eigenvalues of the monodromy around $S^1$.
	
	Note that, though the co/sheaf of large categories $\cL oc$ has the same information when seen either as a cosheaf or as a sheaf (by taking right adjoints), the global sections of the sheaf $\cL oc^b$ of small categories has strictly less information; in this example, it sees only finite collections of points of the space $\mathbb{G}_m$.
\end{example}

Microlocal sheaf theory provides the structure of a \textsc{csc} \cite{KS} and \textsc{ccc} \cite{N5} space 
to any (singular) subanalytic Legendrian in a cosphere bundle, $X \subset S^*M$.  More generally, 
one can do with only the germ of an embedding into some contact manifold, plus some topological `Maslov' data
\cite{Sh, NS}.  We briefly recall this in Section \ref{sec:microsheaves}.

For Legendrians $X$ with a certain prescribed set of `arboreal' singularities, Nadler showed in \cite{N3} that 
the local categories can be described combinatorially; in Section \ref{sec:arb} we turn this into a definition
to give a construction of some \textsc{csc} and \textsc{ccc} spaces which should be intelligible to a reader innocent of microlocal sheaf theory
and symplectic geometry.

\vspace{2mm}

\subsection{Orientations on \textsc{csc} spaces}

Let $(X,\cF)$ be a \textsc{csc} space.  Hochschild homology is functorial, so
$U \mapsto \HH(\cF(U))$ determines a presheaf.  
We understand this presheaf as taking values in the dg category 
of chain complexes localized along quasi-isomorphisms.\footnote{
For consistency with using \cite{LurHTT, LurHA} as our foundations, we officially view `dg' as meaning `$k$-linear stable $\infty$'.   
Note that in working with such dg categories we also must and do work with what would have historically been
termed (dg enhanced) derived functors, and we do not further indicate this in the notation.   
}
The presheaf $U \mapsto \HH(\cF(U))$ is not generally a sheaf.\footnote{ 
 Such a presheaf should be a sheaf if it ``carries covers to limits'', which in the homological sense at hand 
means that the natural map from a \u{C}ech complex valued in the 
presheaf  to its global sections should be an isomorphism.  For instance the 
constant sheaf with stalk $k$ on the topological space $X$ must have as its global sections
a complex computing $\mathrm{H}^*(X, k)$.} 
We form the sheafification $\cH\cH(\cF)$ of this presheaf; the resulting sheaf carries a $S^1$ action inherited from the local $S^1$ actions.

\begin{example}
$\HH(\cL oc^b(U))$ can
be identified with $k$-valued cochains on the free loop space of $U$.  Meanwhile, 
$\cH \cH(\cL oc^b)$ is the constant sheaf with stalk $k$, and the $S^1$ action is trivial. 
\end{example} 

Hochschild homology gives an universal pairing on Hom spaces: given any dg category $\cC$ and objects $x,y$, there is a map 
\[ \cC(y, x) \otimes \cC(x, y) \to \HH(\cC). \] 
It will be convenient for us to have an explicit description for this map. For that, we pick an appropriate representative for $\HH(\cC) = \cC_\Delta \otimes_{\cC^e} \cC_\Delta$, by using `bar complexes'. We choose a set of objects $S$ of $\cC$ meeting every isomorphism class, and define the following object of $\vvt$ for every $m,n$.
\[
	\HH(\cC)_{(m,n)} = \bigoplus_{x_i,y_i \in S} \cC(y_n,x_0) \otimes \cC(x_0,x_1)[1] \otimes \dots \otimes \cC(x_m,y_0) \otimes \dots \otimes \cC(y_n,x_0)[1]
\]
We can then put the usual differential on $\HH(\cC,\cM) = \bigoplus_{m,n} \HH(\cC,\cM)_{m,n}$, combining the internal differential of the Hom spaces and the composition maps of $\cC$; this complex then computes the Hochschild homology of $\cC$\footnote{Another way of phrasing this definition is as the realization of a simplicial object in $\vvt$, as in \cite[Sec.4.5]{HSS}}. For any pair of objects $x,y$ in $S$, the pairing map is given by mapping into the $(m,n)=(0,0)$ summand with $y_0=y,x_0=x$.

By definition, a proper Calabi-Yau structure is a map $\HH(\cC)_{S^1} \to k[-d]$ 
such that the composition 
\[ \cC(x, y) \otimes \cC(y, x) \to \HH(\cC) \to \HH(\cC)_{S^1} \to k[-d] \]
is a perfect pairing for every $x,y$. Note the existence of such a structure implies that 
$\cC(x, y)$ is perfect, i.e. $\cC$ is proper.

Now consider a \textsc{csc} space $(X, \cF)$.  We recall that Hom in a sheaf of categories
is itself computed by a sheaf.  That is, for any open $U\subset X$ and 
any $x, y \in \cF(U)$,  the functor
\[ V \mapsto \Hom_{\cF(V)}(x|_V, y|_V) \]
defines a sheaf on $U$, which we denote by $\cF|_U^{x,y}$. We define an analog of the expression for $\HH(\cC)$ by picking spanning object sets for each open and using the Hom sheaves instead of the Hom spaces (Definition \ref{def:HHpresheaf}). This gives an explicit representative for the Hochschild homology presheaf $\HH(\cF)$, and for any $U$ and objects $x,y$ in the chosen object set of $\cF(U)$, we have a pairing map
\[ \cF|_U^{y,x} \otimes \cF|_U^{x,y} \to \HH(\cF). \]
We introduce the following local notion of proper Calabi-Yau structure: 
\begin{definition} \label{def:cscCY} 
Let $(X, \cF)$ be a \textsc{csc} space. An orientation of dimension $d$ on $(X,\cF)$ is a map of sheaves of complexes $\Omega: \cH\cH (\cF)_{S^1} \to \omega_X[-d]$ such that for any pair $x,y \in \cF(X)$, the morphism
\[ \cF|_X^{x, y} \to \sheafHom(\cF|_X^{y, x}, \omega_X)[-d], \]
adjoint to the trace pairing induced by $\Omega: \cH\cH (\cF) \to \cH\cH (\cF)_{S^1} \to \omega_X[-d]$, is a quasi-isomorphism of sheaves.
\end{definition}
If such an $\Omega$ exists we will say that it orients $(X,\cF)$, and that this \textsc{csc} space is orientable. One important feature of this definition is that, since it amounts to checking a quasi-isomorphism of sheaves, it satisfies a locality property (Proposition \ref{prop:sheafLocality}). In other words, it is enough to check that the restrictions define an orientation on each open in a cover of $X$.

\begin{example}
We have $\cH\cH(\cL oc^b) = k_M$, with trivial $S^1$-action.  If $M$ is a manifold, an orientation of dimension $\dim(M)$ of $(M, \cL oc^b)$ is thus a choice of isomorphism $k_M \simeq \omega_M[-\dim M]$, i.e., an orientation of $M$ in the sense of classical topology.  
\end{example}

\begin{example}
(Prop \ref{prop:cscCYpoint}) A \textsc{csc} space $(\mathrm{point}, \cF)$ over the point amounts to just a single dg category $\cF(\mathrm{point})$. An orientation  
of $(\mathrm{point}, \cF)$ in the sense of Definition \ref{def:cscCY} is just a proper (aka right) Calabi-Yau structure on $\cF(\mathrm{point})$. 
\end{example}

\begin{example}
(Prop \ref{prop:cscCYinterval}) Consider a \textsc{csc} space 
$( [0,1) , \cF)$, where $\cF$ is constructible with respect to the stratification of the half-open interval $[0,1) = \{0\} \cup (0,1)$.  
Then an orientation on $( [0,1) , \cF)$ is equivalent to a proper (aka right) relative Calabi-Yau structure on the morphism $\cF([0,1)) \to \cF((0,1))$ in the sense of Brav and Dyckerhoff \cite{BD1}. 
\end{example}

Given a continuous map $f: X \to Y$ we may pushforward sheaves of dg categories by the usual formula $f_* \cF(U) = \cF (f^{-1}(U))$.  
There are natural morphisms of sheaves $\cH \cH(f_* \cF) \to f_* \cH \cH (\cF)$ and $\cH \cH(f_* \cF)_{S^1} \to f_* \cH \cH (\cF)_{S^1}$. Note that by the definition of Verdier dualizing complex, there is a map $f_! \omega_X = f_! f^! \omega_Y \to \omega_Y$. Thus if $f$ is proper (so $f_* = f_!$), an orientation $\Omega$ on $X$ determines the composition
\[ \cH \cH(f_* \cF)_{S^1} \to f_* \cH \cH (\cF)_{S^1} \xrightarrow{\Omega} f_* \omega_X \to \omega_Y, \]
which we denote as $f_* \Omega$. 

\begin{theorem}\label{thm:cscCYpushforward}
(Theorem \ref{prop:cscCYpushforwardText}) Let $(X, \cF)$ be a \textsc{csc} space, and $f: X \to Y$ be proper and constructible. If $\Omega$ orients $(X, \cF)$, then $f_* \Omega$ orients $(Y, f_* \cF)$. 
\end{theorem} 

\begin{corollary}
If $(X, \cF)$ is a \textsc{csc} space and $X$ is compact, then an orientation of $(X, \cF)$ induces a proper Calabi-Yau structure on $\cF(X)$. 
\end{corollary}

\begin{corollary}\label{cor:introRelProperCY}
Suppose $X = X_0 \cup_{\partial X_0} (\partial X_0 \times [0,1))$ where 
$X_0$ is compact.  Suppose $(X, \cF)$ is a  \textsc{csc} space and $\cF$ is stratifiable with respect to some stratification which, on 
$\partial X_0 \times [0,1)$, is the product of a stratification on $\partial X_0$ and the stratification $[0,1) = \{0\} \cup (0,1)$. Then an orientation of $(X, \cF)$ induces a proper (aka right) Calabi-Yau structure on the map $\cF(X) \to \cF(\partial X_0 \times (0,1))$. 
\end{corollary}
\begin{proof}
Push forward along the map $X \to [0,1)$ carrying $X_0 \to 0$ and $\partial X_0 \times [0,1) \to [0,1)$. 
\end{proof}

\subsection{Orientations on \textsc{ccc} spaces}
Let $(X,\cF)$ be a \textsc{ccc} space.  Dually to the notion of orientation on \textsc{csc} space, 
the data of an orientation will be a map of cosheaves $\Omega: \omega_X^\vee \to  \cH\cH(\cF)^{S^1}[-d]$ from the linear dual of the dualizing sheaf to the cosheafification of the negative cyclic homology precosheaf. However, it will be more difficult to phrase the nondegeneracy condition, because we do not know a useful local analogue of the evaluated-at-objects Hom sheaf $\cF|_U^{x,y}$. %

Our formulation will use Lurie's covariant Verdier duality, which makes sense for (co)sheaves valued in any stable $\infty$-category $\cA$. Given a locally compact Hausdorff space $X$, the covariant Verdier duality functor is a certain map $\D_X: \PreCosh(X,\cA) \to \PreSh(X,\cA)$, which restricts to an equivalence $\Cosh(X,\cA) \xrightarrow{\sim} \Sh(X,\cA)$. When $\cA = \Perf^{op}$, the classical (contravariant) Verdier duality functor is equivalent to $(\D_X)^{op}$ followed by linear duality $(-)^\vee$ over $k$. Therefore we can use the covariant Verdier duality functor to give a different definition of (the dual of) the Verdier dualizing complex. Defining $\varpi_X = \D_X(k_X)$, for $X$ locally finitely-stratifiable we have $\varpi_X = (\omega_X^\vee)$.

The covariant Verdier duality functor $\D_X$ is defined as the composition of two equivalences
\[ \Cosh(X,\cA) \xrightarrow[D_X]{\sim} \Sh_\Compacts(X,\cA) \xrightarrow[\theta]{\sim} \Sh(X,\cA) \]
where the intermediate term is the category of $\Compacts$-sheaves; these are presheaves on the poset $\Compacts(X)$ of compact subsets of $X$ satisfying certain conditions, which we recall in Definition \ref{def:Ksheaf}. 

We will construct, explicitly via bar complexes, two $\Compacts$-sheaves of $\cF(X)$-bimodules; we have the \emph{bimodule dual diagonal $\Compacts$-sheaf} $\varphi\theta\Delta_X^!\cF$ (Definition \ref{def:bimoduleDualDiagonal}) and the \emph{Verdier dual diagonal $\Compacts$-sheaf} $\DDelta_X\cF$ (Definition \ref{def:VerdierDualDiagonal}). These organize dualized diagonal bimodules in the following way. For any $K \in \Compacts(X)$, we have quasi-isomorphisms of bimodules
\[ \varphi\theta\Delta^!_X\cF(K) \simeq \colim_{K \subseteq U} (I_! \cF(U)_\Delta)^!, \quad \varphi\DDelta_X\cF(K) \simeq \cofib(I_!\cF(X \setminus K)_\Delta \to \cF(X)_\Delta) \]
where $I_!\cF(U)_\Delta$ is the diagonal bimodule of $\cF(U)$ pushed along the corestriction map, and $(-)^!$ is bimodule duality. 

We show that a map $\Theta: \varpi_X \to \cH\cH(\cF)^{S^1}$ determines a morphism of $\Compacts$-sheaves of bimodules $\varphi\theta\Delta^!_X\cF \to \varphi\DDelta_X\cF$, by using the map from Definition \ref{def:etaMap}.
\begin{definition} \label{def:cccCY}
Let $(X, \cF)$ be a  \textsc{ccc} space. An orientation of dimension $d$ on $(X,\cF)$ is a map of cosheaves of complexes $\Theta: \varpi_X \to  \cH\cH(\cF)^{S^1} [-d]$ if the induced map $\varphi\theta\Delta^!_X\cF \to \varphi\DDelta_X\cF[-d]$ is a quasi-isomorphism of sheaves of $\cF(X)$-bimodules.
\end{definition}

In fact, it is enough to check the nondegeneracy condition on restrictions $\Theta|_U$ for any collection of $U$ which cover $X$, e.g. the stars of all strata (Proposition \ref{prop:cosheafLocality}).

\begin{example}
We have $\cH\cH(\cL oc^c) = k_M$, the constant cosheaf with the trivial $S^1$-action.  
By contrast $\mathrm{HH}(\cL oc^c(U))$ is chains on the free loop space of $U$.  
If $M$ is a manifold, an orientation of $(M, \cL oc^c)$ 
is an orientation of $M$.   This is nontrivial to check directly, but we will shortly give an argument
reducing it to the case of $(M, \cL oc^b)$. 
\end{example}

\begin{example}
(Prop. \ref{prop:cccCYpoint}) A \textsc{ccc} space  $(\mathrm{point}, \cF)$ amounts to just a single category $\cF(\mathrm{point})$. An orientation 
on $(\mathrm{point}, \cF)$ is just a smooth (aka left) Calabi-Yau structure on $\cF(\mathrm{point})$ 
in the usual sense. 
\end{example}

\begin{example} \label{ex:bd1proper}
(Prop. \ref{prop:cccCYinterval}) Consider a \textsc{ccc} space 
$( [0,1) , \cF)$, where $\cF$ is constructible for  $[0,1) = \{0\} \cup (0,1)$.  
Then an orientation of  $( [0,1) , \cF)$ is equivalent to a smooth (aka left) Calabi-Yau structure
on the morphism $\cF((0,1)) \to \cF([0,1))$ in the sense of \cite{BD1}. 
\end{example}

Given a continuous map $f: X \to Y$ we may pushforward cosheaves of dg categories by the usual formula $f_* \cF(U) = \cF (f^{-1}(U))$. There are natural morphisms of precosheaves $f_* \cH \cH (\cF) \to \cH \cH(f_* \cF)$ and $f_* \cH \cH (\cF)^{S^1} \to \cH \cH(f_* \cF)^{S^1}.$ Thus if $f$ is proper (so $f_* = f_!$), an orientation of $(X, \cF)$ determines the composition
\[ \varpi_Y  \to  f_* \varpi_X \xrightarrow{\Theta}  f_* \cH \cH (\cF)^{S^1} \to \cH \cH(f_* \cF)^{S^1}, \]
which we denote as $f_* \Theta$. 

\begin{theorem} \label{thm:cccCYpushforward}
(Theorem \ref{prop:cccCYpushforwardText}) Let $(X, \cF)$ be a  \textsc{ccc} space, and $f: X \to Y$ be proper and constructible.    
If $\Theta$ orients $(X, \cF)$, then $f_* \Theta$ orients $(Y, f_* \cF)$. 
\end{theorem} 

\begin{corollary} \label{cor:cccCYglobal} 
If $(X, \cF)$ is a \textsc{ccc} space and $X$ is compact, then an orientation  of $(X, \cF)$ induces a smooth Calabi-Yau structure on the dg category $\cF(X)$. 
\end{corollary}

\begin{corollary}
Suppose $X = X_0 \cup_{\partial X_0} (\partial X_0 \times [0,1))$ where 
$X_0$ is compact.  Suppose $(X, \cF)$ is a \textsc{ccc} space and $\cF$ is stratifiable with respect to some stratification which, on 
$\partial X_0 \times [0,1)$, is the product of a stratification on $\partial X_0$ and the stratification $[0,1) = \{0\} \cup (0,1)$. 

Then an orientation of $(X, \cF)$ gives a smooth (aka left) Calabi-Yau structure on the map $\cF(X) \to \cF(\partial X_0 \times (0,1))$. 
\end{corollary}

\begin{example}
Suppose $([0,1], \cF)$ is a \textsc{ccc} space and $\cF$ is constructible for $[0,1] = \{0\} \cup (0,1) \cup \{1\}$.  This is equivalent 
to the data of  smooth Calabi-Yau structures on the morphisms 
$\cF([0,1)) \leftarrow \cF((0,1)) \rightarrow \cF((0,1])$ in the sense of \cite{BD1}.  Applying Corollary \ref{cor:cccCYglobal}, 
we find a smooth Calabi-Yau structure on the pushout, recovering one main result of \cite{BD1}. 
\end{example}

\subsection{Duality of orientations} 

For \textsc{csc} spaces, the nondegeneracy property of a putative orientation was formulated locally in terms
of Hom sheaves, and in practice is relatively easy to check (we will do so in examples).  However, it is not
so easy to verify, directly from the definition, the nondegeneracy property of a putative orientation on a \textsc{ccc} space, 
because of the appearance of duals of diagonal bimodules, etc.   
Here we give a method to bypass this issue in favorable situations. 

For a given dg category $\cC$, we write 
$\cC^{pp} = \Hom(\cC, \Perf(k))$ for its pseudoperfect modules. If $\cC$ is smooth, then $\cC^{pp}$ is proper, and it follows from Theorem 3.1 of \cite{BD1} that a smooth Calabi-Yau structure on $\cC$ determines a proper Calabi-Yau structure on $\cC^{pp}$.

If $(X, \cF)$ is a \textsc{ccc} space, then $(X, \cF^{pp})$ is a \textsc{csc} space. We have the following analogue of that result of \cite{BD1}: 
\begin{proposition} (Prop. \ref{prop:dualOne})
An orientation on a  \textsc{ccc} space $(X, \cF)$ induces an orientation on the  \textsc{csc} space $(X, \cF^{pp})$. 
\end{proposition} 

In general, one \emph{cannot} recover smooth Calabi-Yau structures on $\cC$ from proper Calabi-Yau structures on $\cC^{pp}$. However, it is possible to do so if $\cC$ is \emph{saturated}, that is, smooth and proper; then, these two sets of structures are in bijection.

Similarly, one can {\em not} generally recover $\cF(X)$ from $\cF(X)^{pp}$.
However, if $\cF$ is {\em locally saturated}, i.e. has smooth and proper stalks, then one can recover 
$\cF$ from $\cF^{pp}$, and we will show the following result. Let us pick a finite cover $X = \bigcup_i U_i$ such that $\cF(U_i)$ is saturated, in other words, by small enough $U_i$ (for example, the cover by stars of strata).
\begin{theorem} (Theorem \ref{thm:dualTwo}) \label{thm:checkOnProper} 
If $(X, \cF)$ is a locally saturated \textsc{ccc} space, then a morphism 
$\Theta:\varpi_X \to \cH\cH(\cF)^{S^1}[-d]$ orients each $(U_i, \cF|_{U_i})$ if and only if the dual morphism $\cH\cH(\cF^{pp})_{S^1} \to \omega_X[-d]$ orients $(X, \cF^{pp}|_{U_i})$. If that is the case, then $\Theta$ is an orientation on $(X,\cF)$.
\end{theorem}

\begin{example}
The stalk of $\cL oc^c$ at any point is $\mathrm{Perf}(k)$, so in particular it is smooth and proper.  
If the base space $M$ is a manifold, then a choice of orientation gives an orientation of the sheaf $\cL oc^b$, and by the result above, an orientation of the cosheaf $\cL oc^c$.
Applying Theorem \ref{thm:cccCYpushforward} gives a smooth Calabi-Yau structure on $\cL oc^c(M)$, recovering another main result of 
\cite{BD1}. 

Note however that one {\em cannot} obtain this result by instead applying the easier Theorem \ref{thm:cscCYpushforward} to $(M,  \cL oc^b)$ and then attempting to dualize, 
in particular because 
one {\em cannot} recover $\cL oc^c(M)$ from $\cL oc^b(M)$, already for $M = S^1$. 
\end{example}

\subsection{Applications}
We turn to constructions of \textsc{ccc} spaces and orientations. In Section \ref{sec:arb}, we elaborate a construction, originally due to David Nadler 
\cite{N3}, which takes as input a rooted tree $\vv{T}$ with some marked leaves, and outputs 
a \textsc{ccc} space $(\T, \arb)$.  The stalks of $\arb$ are themselves representation categories
of quivers which are `subquotients' of $T$; in particular, the stalks are smooth and proper.  
We show:

\begin{theorem} \label{thm:main} (Theorem \ref{thm: arborientation}) 
Let $\vv{T}$ be a rooted tree with marked leaves. The canonical $S^1$-action on $\cH\cH(\arb)$ is trivial, and 
there is an isomorphism, unique up to homotopy and multiplication by a scalar, $\cH\cH(\arb^{pp}) \cong \omega_\T[-\dim \T]$, giving an orientation on the \textsc{ccc}
space $(\T, \arb)$. 
\end{theorem} 

We define an {\em arboreal space} to be a \textsc{ccc} space locally modeled on $(\T, \cA)$ for varying $T$.  For such a space 
$(\X, \cF)$, the obstruction to its global orientability is 
the nontriviality of the rank one local system $\sheafHom(\cH\cH(\cF^{pp}), \omega_\X [-\dim \X])$; this is classified by the corresponding element of
$H^1(\X, k^\times)$.  We will show that this is in fact an element of $H^1(\X, \pm 1)$, which we term the 
first Stiefel-Whitney class of the locally arboreal space $(\X,  \cF)$.   When $\X$ is smooth, it is the usual first Stiefel-Whitney class.

More generally, microlocal sheaf theory
provides a general construction of \textsc{csc} and \textsc{ccc} spaces from subanalytic Legendrian varieties (carrying certain topological
`Maslov data') \cite{KS, N5, Sh, NS}. We recall the setup in Section \ref{sec:microsheaves}, and remind 
the reader that microsheaves can be used to compute Fukaya categories of Weinstein manifolds \cite{GPS3}.  
The arboreal 
spaces were originally introduced to give `resolutions' of these more general singular Legendrians
compatible with the \textsc{ccc} structure \cite{N4}.  Using this technology and our Theorems \ref{thm:cccCYpushforward} and \ref{thm:main}, 
we deduce: 

\begin{theorem} (Theorem \ref{thm:legendrian}) 
If $M$ is orientable, then for any subanalytic Legendrian $\Lambda \subset S^*M$ (or conic Lagrangian $\Lambda \subset T^*M$), the \textsc{ccc} space 
$(\Lambda, \mu \Sh_\Lambda^c)$ is orientable.

More generally, for any germ of Legendrian $\Lambda$ and any Maslov datum $\tau$, 
if $w_1(\Lambda, \tau)$ vanishes, then $(\Lambda, \mu \Sh_{\Lambda; \tau}^c)$ is orientable. 
\end{theorem} 

We also give a more hands-on construction of orientations in the case when $M$ is orientable, and $\Lambda \subset T^*M$ is the union of the zero section
and the cone over a Legendrian with immersed front projection.  Already this simple case is enough to give rise to many moduli spaces 
of interest in geometry, e.g. spaces appearing in group-valued Hamiltonian reduction, tame and wild character varieties and the augmentation variety of knot contact homology; 
we survey these in Section \ref{sec:apps}. 
Our results, together with the general work on Calabi-Yau categories and shifted symplectic structures \cite{PTVV, CPTVV, BD2}, give uniform constructions
of the symplectic structures and quantizations of these spaces.

\vspace{2mm} \noindent \textbf{Acknowledgements.}
We thank Ilyas Bayramov, Dori Bejleri, Philip Boalch, Christopher Brav, Tobias Dyckerhoff, Sheel Ganatra, Tatsuki Kuwagaki, Aaron Mazel-Gee, David Nadler, Tony Pantev, John Pardon, Theo Johnson-Freyd, Ryan Thorngren, Bertrand To\"en and Gjergji Zaimi for helpful conversations. We would also like to thank the anonymous reviewers of this paper, who pointed out many possible changes and improvements. This project was supported in part by NSF DMS-1406871. \\ 

\section{Review}
This section is a review of definitions and previously known results. 

\subsection{Categorical setting}\label{sec:dgcat}
We are primarily interested in $\Z$-graded dg (differential graded) categories over some fixed field $k$. One can find foundational accounts specific to dg categories in e.g. \cite{Kel1, Kel2, Drin, Toe1, TVa, BGT}. That being said, the starting point for our formalism are (co)sheaves of such dg categories, understood in an $\infty$-categorical sense. In order to organize all the coherence information, we need to work in an appropriate $\infty$-categorical setting; we chose to work in the $\infty$-category of $k$-linear stable $\infty$-categories.

In fact, the formalism we develop applies more generally to stable $\infty$-categories tensored over $\cE$, where $\cE$ is a rigid stable small symmetric monoidal $\infty$-category \cite{HSS}. We have the following $\infty$-categories.
\begin{itemize}
	\item $\mathrm{Cat}^\mathrm{perf}(\cE) = \Mod_\cE(\mathrm{Cat}^\mathrm{perf})$, an $\infty$-category whose objects are small stable idempotent-complete $\infty$-categories that are tensored over $\cE$, with $\cE$-linear exact functors as morphisms,
	\item $\mathrm{Pr}^\mathrm{L}(\cE) = \Mod_\cE(\mathrm{Pr}^\mathrm{L}_\mathrm{St})$, an $\infty$-category whose objects are stable presentable $\infty$-categories that are tensored over $\cE$, with $\cE$-linear exact functors as morphisms,
	\item $\mathrm{Cat}^\mathrm{Mor}(\cE)$, a full subcategory of $\mathrm{Pr}^\mathrm{L}(\cE)$ on the essential image of the embedding $\Ind: \mathrm{Cat}^\mathrm{perf}(\cE) \to \mathrm{Pr}^\mathrm{L}(\cE)$. This is an $\infty$-category whose objects can be identified with the objects as $\mathrm{Cat}^\mathrm{perf}(\cE)$, but whose morphisms from $\cA \to \cB$ are given by bimodules, that is, $\cE$-bilinear functors $\cA^{op} \times \cB \to \Ind(\cE)$.
\end{itemize}

Our formalism applies to any choice of $\cE$ but, since our main interest is in cases where $\cE = \Perf_k$, we will state the results for that choice. We will use the following shorthand notation:
\begin{itemize}
	\item $\dgcat$ for the $\infty$-category $\mathrm{Cat}^\mathrm{perf}(\Perf_k)$,
	\item $\DGCat$ for the $\infty$-category $\mathrm{Cat}^\mathrm{Mor}(\Perf_k)$.
\end{itemize}
These two categories have the same objects; we will refer to an object of either as a ``dg category''. For the places where we need to construct explicit objects of these categories, namely, the construction of the arboreal spaces in Section \ref{sec:arb} and the examples in Section \ref{sec:apps}, we are implicitly using the `Morita model structure' to present the $\infty$-category $\dgcat$. This is a model structure where one localizes with respect to Morita equivalences; see \cite{Cohn} for more details. Also, since all our functors are derived by default, from now on we will simply write `isomorphism' for what usually is elsewhere called `quasi-isomorphism'.

\subsubsection{Dualizability of dg categories}\label{sec:dualizability} 
Recall that an object $X \in \cC$ is dualizable in a symmetric monoidal category $\cC$ %
if there is another object $X^\vee$ and evaluation/coevaluation maps $\ev_X: X^\vee \otimes X \to 1_\cC$ and $\coev_X: 1_\cC \to X \otimes X^\vee$, where $1_\cC$ denotes the monoidal unit, such that the compositions
\begin{align*}
X &\xrightarrow{\coev_X \otimes \id_X} X \otimes X^\vee \otimes X \xrightarrow{\id_X \otimes \ev_X} X \\
X^\vee &\xrightarrow{\id_{X^\vee} \otimes \coev_X} X^\vee \otimes X \otimes X^\vee \xrightarrow{\ev_X \otimes \id_{X^\vee}} X
\end{align*}
are equivalent to the identity functors on $X$ and $X^\vee$. %
We now recall the classification of dualizable objects and morphisms in categories of dg categories, see e.g. \cite[Prop. 4.23]{HSS}. 
\begin{proposition} \label{dualizable DGCat}
Every object of $\cA \in \DGCat$ is dualizable, with the dual $\cA^\vee = \Ind((\cA^c)^{op})$.  The morphisms of $\DGCat$ that are left adjoints are exactly the morphisms in the image of ind-completion $\Ind:\dgcat \to \DGCat$.  
\end{proposition} 
The statement regarding morphisms is equivalent to the statement that admitting a continuous right adjoint is equivalent to preserving compact objects.

Dualizability is more complicated in the category $\dgcat$ of small dg categories. A small dg category $\cA$ is said to be \emph{smooth} if it is perfect as an $\cA$-bimodule. More precisely, the Yoneda embedding $\cA \hookrightarrow \Ind(\cA)$ corresponds to a bimodule $\cA_\Delta \in \Ind(\cA^{op} \otimes_k \cA)$, and $\cA$ is smooth if $\cA_\Delta$ is compact. The category $\cA$ is said to be \emph{proper} if for any two objects $a,a'$ the mapping object $\Hom_\cA(a,a') \in \vvt$ is compact, i.e., is in $\Perf$. We will also call the category \emph{saturated} if it is both smooth and proper.

Let us rephrase these definitions of smooth and proper. Consider the category $\Ind(\cA)$ as an object of $\DGCat$. There are co/evaluation morphisms
\begin{align*} 
\ev_{\Ind(\cA)}: & \Ind(\cA^{op}) \otimes \Ind(\cA) \to \vvt \\
\coev_{\Ind(\cA)}: & \vvt \to \Ind(\cA) \otimes \Ind(\cA^{op}),
\end{align*}
satisfying the usual relations. Then $\cA$ is smooth if and only $\coev_{\Ind(\cA)}$ preserves compact objects, and proper if and only if $\ev_{\Ind(\cA)}$ preserves compact objects. From this, one concludes: 

\begin{proposition}[\mbox{\cite[Lemma 4.14]{HSS}}]
A category $\cA \in \dgcat$ is dualizable if and only if it is saturated, in which case its dual is given by the opposite category $\cA^{op}$.
\end{proposition} 

\subsubsection{Pseudo-perfect modules}\label{sec:pseudoperfect}
It will important for us to relate smooth and proper dg categories using the following construction. Let $\cA$ be any dg category.
\begin{definition}\label{def:pp}
The category of pseudo-perfect modules over $\cA$ is the object of $\dgcat$ given by
\[ \cA^{pp} := \Fun^{ex}(\cA^{op}, \Perf), \]
i.e., it is the dg category whose objects are $k$-linear exact functors $\cA \to \Perf$.
\end{definition}

We have a natural embedding $\cA^{pp} \hookrightarrow \Ind(\cA)$. The category $\cA$ itself also embeds into $\Ind(\cA)$ by the $k$-linear Yoneda embedding; this allows one to compare these two categories as subcategories of $\Ind(\cA)$. The following result just follows directly from the definitions of smooth and proper.
\begin{lemma}\cite[Lemma 2.8]{TVa}\label{thm:sat}
If $\cA$ is smooth, then $\cA^{pp} \subset \cA$; if $\cA$ is proper, then $\cA \subset \cA^{pp}$. Therefore if $\cA$ is saturated, there is an equivalence $\cA \simeq \cA^{pp}$.
\end{lemma}

We have then the following easy corollary for smooth dg categories, which follows from restricting the canonical map $\cA^{op} \otimes \cA^{pp} \to \Perf$ along the inclusion $\cA^{pp} \subset \cA$.
\begin{corollary}
If $\cA$ is smooth, then $\cA^{pp}$ is proper.
\end{corollary}

\subsection{Dualizability of bimodules, Hochschild homology, and Calabi-Yau structures}
In this section we largely follow the exposition in Section 2 of \cite{BD1}. We will study the properties of bimodules over dg categories; to recall, an $(\cA,\cB)$-bimodule is a dg functor $\cA\otimes \cB^{op} \to \vvt$.

\subsubsection{Dualizability of bimodules} 
Consider the `linear dual' functor $\Hom_k(-,k)$ on the dg category $\vvt$. Note that this is an anti-automorphism when restricted to the subcategory $\Perf$. For any $\cM$ in $\cA\mh\Mod\mh\cB$, we define its \emph{linear dual}
\[ \cM^\vee = \Hom_k(\cM, k). \]
which is an object of $\cB\mh\Mod\mh\cA$. Explicitly, as a functor $\cM^\vee: \cB\otimes\cA^{op} \to \vvt$, it is given by
\[ (b,a) \mapsto \Hom_k(\cM(a,b), k). \]

For any bimodule $\cM$ there is an  adjunction
\[ - \otimes_{\cA\otimes\cB^{op}} \cM: \cB\mh\Mod\mh\cA \rightleftarrows \vvt : \Hom_k(\cM,-). \]
$\cM$ is called linear-dualizable (or right-dualizable) if the natural transformation $- \otimes_k \cM^\vee \to \Hom_k(\cM,-)$ is an equivalence of functors. Equivalently, $\cM$ is linear-dualizable if it always evaluates to a perfect $k$-complex, i.e. $\cM(a,b) \in \Perf$ for every $(a,b) \in \cA\otimes\cB^{op}$. In that case, there is an isomorphism of bimodules $\cM \overset{\simeq}\to (\cM^\vee)^\vee$, so we also get another adjunction
\[ \cM^\vee \otimes_{\cA\otimes\cB^{op}} -: \cA\mh\Mod\mh\cB \rightleftarrows \vvt: \cM \otimes_k - .\]

There is also the \emph{bimodule dual} of $\cM$ defined using the internal Hom of bimodules
\[ \cM^! = \Hom_{\cA\otimes\cB^{op}}(\cM, \cA \otimes_k \cB^{op}), \]
where we regard $\cA \otimes_k \cB^{op}$ as a $\cA^e \otimes \cB^e$-module (i.e., with two actions of $\cA$ and two of $\cB$). This gives an object of $\cB\mh\Mod\mh\cA$, which maps a pair of objects $(b,a) \in \cB \times \cA^{op}$ to
\[ \cM^!(b,a) = \Hom_{\cA\mh\Mod\mh\cB}(\cM(-,-'), \cA(a,-)\otimes_k \cB(-',b)). \]

For any bimodule $\cM$ there is an adjunction
\[ \cM \otimes_k -: \vvt \rightleftarrows \cA\mh\Mod\mh\cB: \Hom_{\cA\mh\Mod\mh\cB}(\cM, -). \]
$\cM$ is called bimodule-dualizable (or left-dualizable) if the natural transformation $\cM^! \otimes_{\cA\otimes\cB^{op}} - \to \Hom_{\cA\mh\Mod\mh\cB}(\cM, -)$ is an equivalence. Equivalently $\cM$ is bimodule-dualizable if it is perfect as a bimodule (i.e., is a  compact object in the category of bimodules). In that case we get an isomorphism of bimodules $\cM \overset{\simeq}\to (\cM^!)^!$, and we also have another adjunction
\[ - \otimes_k \cM^!: \vvt \rightleftarrows \cA\mh\Mod\mh\cB: - \otimes_{\cA\otimes\cB^{op}} \cM. \]

\begin{remark}
These duals are also called in the literature respectively \emph{right dual} and \emph{left dual}, or respectively \emph{proper dual} and \emph{smooth dual}; the `handedness' comes from the asymmetrical choice of looking at $\cM$ as an object of $\cA\otimes\cB^{op}\mh\Mod\mh k$. 
\end{remark}

\subsubsection{Hochschild homology}
Given a pair $(\cA,\cM)$ of a dg category $\cA$ and an $\cA$-bimodule $\cM$, the Hochschild complex $\HH(\cA,\cM)$ of $\cA$ with coefficients in $\cM$ is defined as a (derived) tensor product
\[ \HH(\cA,\cM) = \cA_\Delta \otimes_{\cA^e} \cM \]
of right and left modules over $\cA^e = \cA^{op} \otimes \cA$. When $\cM = \cA_\Delta$ is the identity (or diagonal) bimodule itself, we will denote
\[ \HH(\cA) := \HH(\cA,\cA_\Delta) = \cA_\Delta \otimes_{\cA^e} \cA_\Delta. \]
We will also use the notation $\HH_n(\cA) = H^n(\HH(\cA))$ to denote the Hochschild homology groups.

The tensor product above can be computed by taking any appropriate resolution; in particular one can take the bar resolution \cite{Lod}. Tensoring over $\cA^e$ with the diagonal bimodule gives then the \emph{reduced bar complex} $\overline{C}^\mathrm{bar}(\cA)$ \cite{Lod,Yeu} representing $\HH(\cA)$. This complex carries a canonical homotopy $S^1$ action, whose homotopy orbits $\HH(\cA)_{S^1}$ and fixed points $\HH(\cA)^{S^1}$ were classically termed the cyclic and negative cyclic complexes, \cite{Hoy, Kas} with homology groups denoted by
\[ \HC_n(\cA) = H^*(\HH(\cA)_{S^1}), \quad \HC^-_n(\cA) = H^*(\HH(\cA)^{S^1}). \]
Explicit representatives for these complexes can be given by using the formalism of mixed complexes \cite{Lod}; we refer the reader to \cite[Sec. 2.4]{Yeu} for an explicit application of this formalism to the context of dg categories.

This construction is functorial in $\cA$: using the formalism of trace functors \cite[Sec.4]{BD2}, one sees that taking Hochschild complexes gives a functor $\HH(-): \dgcat \to \vvt$. Moreover, the $S^1$-actions are also compatible, so we also have functors $\HH(-)^{S^1}$ and $\HH(-)_{S^1}$ computing the negative cyclic and cyclic complexes.

\subsubsection{Calabi-Yau structures}\cite{KoSo-alg, Ginz} 
Recall that a category $\cA$ is said to be {\em proper} if all the Hom spaces are perfect as $k$-complexes, which is equivalent to the identity bimodule $\cA_\Delta$ being linear-dualizable. In this case the dual of Hochschild homology can be computed in terms of the linear dual $\cA^\vee$ (by adjunction):
\[ \Hom_k(\HH(\cA),k) = \Hom_k(\cA_\Delta \otimes_{\cA^e} \cA_\Delta, k) \simeq \Hom_{\cA^e}(\cA_\Delta,\cA^\vee). \]

Recall also that a category $\cA$ is said to be {\em smooth} if the diagonal bimodule is compact as a module over $\cA^e$, or equivalently bimodule-dualizable. In this case the Hochschild homology of $\cA$ can be calculated in terms of $\cA^!$:
\[ \HH(\cA) = \Hom_k(k, \cA_\Delta \otimes^L_{\cA^e} \cA_\Delta) \simeq \Hom_{\cA^e}(\cA^!,\cA_\Delta). \]

In the case where $\cA$ is both smooth and proper (i.e., dualizable as an object of $\dgcat$), tensoring over $\cA$ with the bimodules $\cA^\vee$ and $\cA^!$ give inverse autoequivalences of the category $\Perf\mh\cA = (\Mod\mh\cA)^c$ of compact $\cA$-modules; these are respectively a Serre functor and its inverse for this category \cite{BD1}.

More generally, fix an arbitrary $\cA$-bimodule $\cM$. If $\cA$ is proper there is an isomorphism
\[ \Hom_k(\HH(\cA,\cM),k) \simeq \Hom_{\cA^e}(\cM,\cA^\vee), \]
and if $\cA$ is smooth there is an isomorphism
\[ \HH(\cA,\cM) \simeq \Hom_{\cA^e}(\cA^!, \cM). \]
allowing one to describe Hochschild homology with coefficients in an arbitrary bimodule.

\begin{definition}\label{def:absoluteCYstructures}
A $d$-dimensional proper (or right) Calabi-Yau structure on a proper dg category $\cA$ is a morphism in $\vvt$
\[ \HH(\cA)_{S^1} \to k[-d] \]
so that the induced map $\cA_\Delta \to \cA^\vee[-d]$ 
is an isomorphism of $\cA$-bimodules .

A $d$-dimensional smooth (or left) Calabi-Yau structure on a smooth dg category $\cA$ is a morphism in $\vvt$
\[ k[d] \to \HH(\cA)^{S^1} \]
so that the induced map $\cA^![d] \to \cA_\Delta$ is an isomorphism of $\cA$-bimodules .  
\end{definition}

\begin{remark}
We can rephrase the definitions above by saying that a right/left Calabi-Yau structure is a map out of/into Hochschild homology that is compatible with the $S^1$ action; the data of a proper CY structure is a dual cyclic homology class $[\omega] \in \Hom_k(\HC_d(\cA),k)$, and the data of a smooth CY structure is a negative cyclic homology class $[\lambda] \in \HC^-_d(\cA)$. 
\end{remark}

There are also relative versions of the definitions above \cite{Toe2,BD1}. A functor $f:\cA\to\cB$ induces a map of Hochschild complexes $\HH(f): \HH(\cA) \to \HH(\cB)$, compatible with the $S^1$ action.  We define the relative Hochschild complex 
\[ \HH(\cB|\cA) := \Cone(\HH(\cA) \to \HH(\cB)) \]
and denote similarly by $\HH(\cB|\cA)^{S^1}$ and $\HH(\cB|\cA)_{S^1}$ the relative negative cyclic and cyclic complexes.

\begin{definition}\label{def:relativeCYstructures}
	\cite{BD1} A $d$-dimensional proper (or right) relative Calabi-Yau structure on a functor $f:\cA\to\cB$ between proper dg categories 
	is a dual class in cyclic homology
	\[ \phi \in \Hom_k(\HH(\cB|\cA)_{S^1}, k[-d+1]) \]
	satisfying a nondegeneracy condition.
	
	A $d$-dimensional proper (or right) relative Calabi-Yau structure on a functor $f:\cA\to\cB$ between smooth dg categories 
	is a negative cyclic class
	\[ \theta \in \Hom_k(k[d], \HH(\cB|\cA)^{S^1}) \]
	satisfying a nondegeneracy condition.
\end{definition}
We refer to \cite[Sec.4]{BD1} for the more detailed description of these definitions.

\subsection{Constructible sheaves and cosheaves of $\infty$-categories}\label{sec:constsh}
Let us denote by $X$ a topological space, and by $\Opens(X)$ the category of open sets of $X$; from this one can produce the nerve of this category \cite[Ch.1]{LurHTT}; this is an $\infty$-category which we will equally denote by $\Opens(X)$. Given some $\infty$-category $\cC$, a $\cC$-valued presheaf is an $\infty$-functor $\cF: \Opens(X)^{op} \to \cC$. A presheaf is a sheaf if it carries covers to limits.  A precise definition in rather greater generality than we need here is
given in \cite[Def. 6.2.2.6]{LurHTT}.

Now let us consider a finite partially ordered set $A$, regarded as a topological space where a subset is open if it is closed upwards. Following \cite[Def.A.5.1]{LurHA}, we define an $A$-stratification of $X$ to be a continuous map $X \to A$; we will moreover assume that this stratification satisfies the three conditions of \cite[Prop.A.5.9]{LurHA} (note that condition (iii) is automatically satisfied since we assumed $A$ is finite). We follow \cite[Def.A.5.2]{LurHA} and define a full subcategory $\Sh^A(X,\cC)$ of $A$-constructible sheaves, spanned by sheaves that are locally constant when restricted to the stratum $X_a$ for all $a \in A$.

Cosheaves are the opposite category of sheaves valued in the opposite category: 
\[ \PreCosh(X,\cC) := \PreSh(X,\cC^{op})^{op}, \qquad \Cosh(X,\cC) := \Sh(X,\cC^{op})^{op} \]
Unraveling this definition, a precosheaf is a functor $\cF: \Opens(X) \to \cC$. This is a cosheaf if it carries covers to colimits. We will likewise denote the $\infty$-categories of $A$-constructible cosheaves and precosheaves by $\Cosh^A(X,\cC)$ and $\PreCosh^A(X,\cC)$, respectively. 

We now restrict our attention to sheaves and cosheaves valued in $\cC = \dgcat$. Recall Definition \ref{def: ccc and csc spaces} from the introduction: a \textsc{ccc} space is $(X, \cF)$ where $X$ is an $A$-stratified space and $\cF \in \PreCosh^A(X,\dgcat)$, and similarly a \textsc{csc} space is $(X,\cF)$ where now $\cF \in \Sh^A(X,\dgcat)$.

\subsubsection{The sheaf of morphisms}\label{sec:sheaf of Homs}
We will use the following well-known fact about sheaves of categories: given a sheaf of categories $\cF$ on some space $X$ and two objects $x,y \in \cF(X)$, the morphism spaces between the images of $x$ and $y$ form a \emph{sheaf of morphisms}, which associates $U \mapsto \cF(U)(\rho x, \rho y)$, where $\rho:\cF(X) \to \cF(U)$ is the restriction map of $\cF$.

In the main text of this article, we will assume that this fact holds equally for sheaves $\cF$ valued in the $\infty$-category $\dgcat$, giving sheaves of morphisms valued in $\vvt$, and that compositions in $\vvt$ between the morphism spaces give compositions maps at the level of sheaves of morphisms. More precisely, we assume:
\begin{proposition}\label{prop:morphismFunctor}
	Let $\cF:I^\triangleleft \to \dgcat$ be a diagram, and $x,y$ two objects in $\cF(*)$. Then there is a diagram $\cF|^{x,y}:I \to \vvt$, such that for every $i$ there is an equivalence $\cF|^{x,y}(i) \simeq \cF(i)(\rho x,\rho y)$; if $\cF$ is a limit diagram then so is $\cF|^{x,y}$. Moreover, for any triple of objects $x,y,z$ of $\cF(*)$, there is a canonical map $\cF|^{x,y} \otimes \cF|^{y,z} \to \cF|^{x,z}$ in $\Fun(I^\triangleleft, \vvt)$.
\end{proposition}

We could not find in the literature precise statements of the proposition above for the specific setting that we are working in, namely, for diagrams valued in the $\infty$-category of small stable $\infty$-categories tensored over some rigid symmetric monoidal $\cE$ (in our case, $\cE = \Perf_k$). For completeness, we include in Appendix \ref{sec:appendixSheafOfHoms} a proof of this fact using a different, albeit equivalent, model for these categories.

\begin{definition}\label{def:sheafOfHoms}
	For $U \in \Opens(X)$ and $x,y\in \cF(U)$, the \emph{sheaf of morphisms} $\cF|_{U}^{x,y}$ is the $\vvt$-valued sheaf on $X$ is given by applying Proposition \ref{prop:morphismFunctor} to $J = \Opens(X)^{op}_{\subseteq U}$, with left cone point $U$; and extending the resulting sheaf to all of $\Opens(X)$ by zero. For any triple $x,y,z \in \cF(U)$, we will denote by $c_U^{x,y,z}$ the corresponding morphism $\cF|_{U}^{x,y} \otimes \cF|_{U}^{y,z} \to \cF|_{U}^{x,z}$ in $\Sh(X,\vvt)$.
\end{definition}

We extend the notation $\cF|^{x,y}_U$ to allow for objects of sections on other opens. Given $x \in \cF(U'), y \in \cF(U'')$, we write: 
\[ \cF|^{x,y}_U = \begin{cases}
	\cF|^{\rho x, \rho y}_U \quad \text{if} \, U \subset U' \cap U'' \\
	0 \quad \text{otherwise}
\end{cases}\]

\section{Orientations on \textsc{csc} spaces}\label{sec:orientationCSC}
In this section we will study a version of orientation that applies to spaces endowed with sheaves of dg categories, generalizing the notion of a proper (relative) Calabi-Yau structure.

\subsection{Sheafified Hochschild homology} \label{sec:sheafHH}
Let $(X,\cF)$ be a \textsc{csc} space. We would like to apply the construction that produces the Hochschild complex of each dg category $\cF(U)$, in order to obtain a presheaf calculating $\HH(\cF(U))$. We could pick any model for the functor $\HH$ and apply this to $\cF$, but it will be convenient to work with a more explicit construction , which gives an complex calculating $\HH(\cF(U))$ for every $U \in \Opens(X)$.

We pick, for each $U \in \Opens(X)$, a spanning set of objects $S=\{S_U\} \subseteq \mathrm{Ob}(\cF(U))$, that is, a set of objects meeting all equivalence classes, such that for any $V \subset U$, $S_V$ contains the image of $S_U$ under the restriction map $\rho$ of $\cF$. This can always be obtained; first we pick any collection of spanning sets of objects and then adjoin all the restrictions from bigger open sets.

For each pair of nonnegative integers $(m,n)$, we define the following object of the category $\Sh(X,\vvt)$
\[ \HH(\cF)_{(m,n)} = \bigoplus \cF|_{V_n}^{y_n,x_0} \otimes \cF|_{U_0}^{x_0,x_1}[1] \otimes \dots \otimes \cF|_{U_m}^{x_m,y_0} \otimes \cF|_{V_0}^{y_0,y_1}[1] \otimes \dots \otimes \cF|_{V_{n-1}}^{y_{n-1},y_n}[1], \]
where the sum is taken over all multisets of elements $U_i$ and $V_i$ in $\Opens(X)$ and sequences of objects in the appropriate object sets $S_U$.

\begin{definition}\label{def:HHpresheaf}
The Hochschild homology presheaf $\HH(\cF)$ associated to $\cF$ and $S=\{S_U\}$ is the object of $\PreSh(X,\vvt)$ given by $\HH(\cF) = \bigoplus_{m,n} \HH(\cF)_{(m,n)}$ endowed with the usual Hochschild chain differential using the composition maps $c^{x,y,z}_{U,V}$.
\end{definition}
Note that $\HH(\cF)$ is generally not a sheaf, though each individual morphism sheaf $\cF|_U^{x,y}$.This is expected, since Hochschild homology does not preserve arbitrary limits.

\begin{proposition}\label{prop:HHsheafValues}
For any $U \in \Opens(X)$, the value of $\HH(\cF)$ on $U$ is equivalent to the Hochschild homology of $\cF(U)$, and for any pair $U \subseteq V$, the restriction map of $\HH(\cF)$ is equivalent to the map induced on Hochschild homologies by the restriction map $\rho: \cF(V) \to \cF(U)$.
\end{proposition}
\begin{proof}
By definition, the value of $\HH(\cF)$ on $U$ is given by the two-pointed Hochschild chain complex $\cF(U)\otimes_{\cF(U)^e} \cF(U)$ computed using the set of objects given by the union of all the images of all the $S_V$ for all $U \subseteq V$, including $U$. By assumption, this meets every isomorphism class of objects in $\cF(U)$, so this complex does compute Hochschild homology.

The statement about the morphisms follows from the fact that the map on Hochschild homology can be described on the two-pointed complex by applying $\rho$ to each factor, together with the fact that the restriction maps of each $\cF|_U^{x,y}$ are given by the maps induced on hom spaces by $\rho$ (Proposition \ref{prop:morphismFunctor}).
\end{proof}

We can perform the exact same construction as above, but with the negative cyclic or cyclic complexes instead of Hochschild complexes, and obtain presheaves $\HH(\cF)^{S^1}, \HH(\cF)_{S^1}$, respectively the negative cyclic presheaf, with its canonical map $\HH(\cF)^{S^1} \to \HH(\cF)$ and the cyclic sheaf, with its canonical map $\HH(\cF) \to \HH(\cF)_{S^1}$.
\begin{definition} \label{def:HHsheaves} If $(X,\cF)$ is a \textsc{csc} space, we denote by $\cH\cH(\cF), \cH\cH(\cF)^{S^1}$ and $\cH\cH(\cF)_{S^1}$ the objects of $\Sh(X,\vvt)$ given respectively by the sheafifications of the presheaves $\HH(\cF), \HH(\cF)^{S^1}$ and $\HH(\cF)_{S^1}$.
\end{definition}

There are canonical morphisms $\cH\cH(\cF)^{S^1} \to \cH\cH(\cF) \to \cH\cH(\cF)_{S^1}$ and
\[ \HH(\cF) \to \cH\cH(\cF), \quad \HH(\cF)^{S^1} \to \cH\cH(\cF)^{S^1}, \quad \HH_{S^1}(\cF)_{S^1} \to \cH\cH(\cF)_{S^1} \]
in $\PreSh(X,\vvt)$. We note also that, for any $U \in \Opens(X)$, there is a canonical equivalence of presheaves $\HH(\cF|_U) \overset{\sim}{\to} \HH(\cF)|_U$.

Let us recall the example given in the introduction. If $M$ is a stratifiable topological space (for example, a manifold), there is a cosheaf $\cL oc^c$ (`compact local systems') and a sheaf $\cL oc^b$ (`bounded local systems'), giving both a \textsc{ccc} space and a \textsc{csc} space with underlying topological space $M$. This sheaf and this cosheaf both have the category $\Perf_k$ as stalks everywhere; the difference is that over larger open sets, the sections of $\cL oc^b$ are still perfect as $k$-modules, whereas the cosections of $\cL oc^c$ may not be. The Hochschild complex presheaf is then isomorphic to an presheaf given by \emph{cochains on the free loop space}; this is obtained by dualizing the statement of \cite[Thm 7.3.14]{Lod}. That is, this presheaf assigns, for every $U \in \Opens(M)$,
\[ \HH(\cL oc^b(U)) \simeq C^*(\cL U;k), \]
where $\cL U$ denotes the free loop space of $U$, i.e., the space of continuous maps $S^1 \to U$, with the $S^1$-action given by loop rotation. Note that the presheaf is locally constant, since around any point, one can find a contractible neighborhood $U \in \Opens(X)$, for which we will have $C^*(\cL U;k) \simeq k$. Therefore, its sheafification $\cH\cH(\cL oc^b)$ is a rank one local system on $X$, and the local identification above gives a local trivialization of the $S^1$-action.

\begin{proposition}
The sheaf $\cH\cH(\cL oc^b) \in \Sh(X,\vvt)$ is constant and the $S^1$-action on it is trivial.
\end{proposition}
\begin{proof}
It is enough to give a nonzero morphism of presheaves $\HH(\cL oc^b) \to k_X$. For any open set $U$, consider the inclusion $U \hookrightarrow \cL U$ as the constant maps from the circle, which gives a map of $\infty$-nerves $\Opens(X) \to \Opens(\cL X)$. Pullback along this map induces the desired morphism of presheaves. The second part of the claim follows from the fact that the $S^1$-action is locally trivial.
\end{proof}
Note from the example above that the presheaf $\HH(\cL oc^b)$ and the sheaf $\cH\cH(\cL oc^b)$ are generally quite different; for instance, $\cH\cH(\cL oc^b)$ 
forgets all the information about the large loops in $\cL X$ and the $S^1$-action on them.

\subsubsection{Sheafified trace map} 
Hochschild homology gives a sort of universal trace on morphism spaces of a category, and the same is true for the sheafified version we defined above. For any $U \in \Opens(X)$ and $x,y$ in the object set $S_U$, by including into the $(m,n)=(0,0)$ summand of $\HH(\cF)_{(m,n)}$, we have a morphism of $\vvt$-valued presheaves $\cF|_U^{x,y} \otimes \cF|_U^{y,x} \to \HH(\cF)$. 
\begin{definition}\label{def:sheafified trace map}
	The \emph{sheafified trace map} $\Tr^{x,y}_U: \cF|_U^{x,y} \otimes \cF|_U^{y,x} \to \cH\cH(\cF)$ is the morphism of sheaves given by the lift of the map above.
\end{definition}

\subsection{Orientations} \label{sec:orientationsProper}
Here we recall the definitions and statements presented in the introduction. If $X$ is a stratifiable space, we write $\omega_X$ for the Verdier dualizing sheaf; this is a $\Perf$-valued sheaf given by $\omega_X = \pt^! k$, where $\pt$ is the canonical map from $X$ to the point and $\pt^!$ is the right adjoint to the pushforward with compact supports. For any point $x \in X$ with open neighborhood $U$, an explicit representative of the object $\omega_X$ can be described on sufficiently small open sets $U$ by the relative chains $\omega_X(U) = C_d(X, X \setminus U; k)$, as explained in \cite[p.91]{GM}. Let us denote $\D': \Sh(X,\vvt) \to \Sh(X,\vvt)^{op}$ the classical Verdier duality functor $\D' = \sheafHom(-,\omega_X)$.

 Let $(X,\cF)$ be a \textsc{csc} space. Recall that the data of an orientation of dimension $d$ (for some integer $d$) is given by a morphism of sheaves
\[ \Omega: \cH\cH(\cF)_{S^1} \to \omega_X[-d], \]
or equivalently a global section of $\D'(\cH\cH(\cF)_{S^1})$. For any $U \in \Opens(X)$ and objects $x,y \in S_U$, we compose the restricted map $\Omega|_U: \cH\cH(\cF|_U)_{S^1} \to \omega_X[-d]|_U$ this with the sheafified trace of Definition \ref{def:sheafified trace map} and the canonical map $\cH\cH(\cF|_U) \to \cH\cH(\cF|_U)_{S^1}$ to get a morphism of sheaves $\cF|_U^{x,y} \otimes \cF|_U^{y,x} \to \omega_X[-d]$ which by adjunction gives a morphism
\[ \cF|_U^{x,y} \to \sheafHom(\cF|_U^{y,x},\omega_X[-d]). \]
Recalling Definition \ref{def:cscCY}, we say that $\Omega$ is an \emph{orientation} on $(U,\cF|_U)$ if for any $(U,x,y)$, this latter is an isomorphism in $\Sh(U,\vvt)$. Very importantly, this condition requires checking isomorphisms of sheaves, which can be done locally. In other words, we have:
\begin{proposition}\label{prop:sheafLocality}
	If $X = \bigcup_i U_i$ is a cover and a map $\Omega: \cH\cH(\cF)_{S^1} \to \omega_X[-d]$ is such that, for any $U_i$, $\Omega|_{U_i}$ is an orientation on $(U_i,\cF|_{U_i})$, then $\Omega$ is an orientation on $(X,\cF)$.
\end{proposition}

\subsubsection{Relation to proper CY structures}
We now discuss the relation between our definition of orientations and previously existing notions of Calabi-Yau structures on proper dg categories.
\begin{proposition}\label{prop:cscCYpoint}
	If $X$ is a point, then an orientation on the \textsc{csc} space $(X,\cF)$ is a proper Calabi-Yau structure on the category $\cF$, in the sense of Definition \ref{def:absoluteCYstructures}.
\end{proposition}
\begin{proof}
	Follows from the fact that for any $x,y \in \cF$, the induced morphism of hom spaces $\cF(x,y) \to \Hom(\cF(y,x), k)[-d]$ is naturally equivalent to the morphism obtained from the induced map of bimodules in $\cF \to \cF^\vee$ at the pair $(x,y)$.
\end{proof}

\begin{proposition}\label{prop:cscCYinterval}
	If $X = [0,1)$ with stratification $\{0\} \cup (0,1)$, then an orientation on $(X,\cF)$ is a relative proper Calabi-Yau structure on the restriction functor $f: \cF([0,1)) \to \cF((0,1))$, in the sense of Definition \ref{def:relativeCYstructures}.
\end{proposition}
\begin{proof}
	The data of $\cF$ is given just by the morphism $f$ between two proper dg categories $\cF([0,1))$ and $\cF((0,1))$. Note that in this case, exactly as it happened for the point, there is no difference between $\HH(\cF)$ and $\cH\cH(\cF)$, since $\HH(\cF)$ is already a sheaf. We now calculate the sections of Verdier dualizing sheaf $\omega_X$ to be
	\[ \omega_X([0,1)) = 0, \quad \omega_X((0,1)) = k[1] \]
	Therefore, the data of a morphism $\HH(\cF)_{S^1} \to \omega_X[-d]$ is equivalent to the data of a map $\HH(\fib) \to k[-d]$, where $\HH(\fib)$ denotes the fiber of the induced morphism $\HH(\cF([0,1))) \to \HH(\cF((0,1)))$. It remains to check that the nondegeneracy condition coincides with the one given in \cite[Def.4.7]{BD1}; this is the condition that a certain map of distinguished triangles of $\cF(X)$-bimodules be an isomorphism. But isomorphisms of bimodules can be checked by evaluation at each pair $(x,y)$ of representatives for isomorphism classes of objects of $\cF(X)$; we can pick these objects to be in our chosen object set $S_U$. The restriction maps from $X=[0,1)$ to $(0,1)$ combined with the isomorphism of sheaves $\cF|_X^{x,y} \to \sheafHom(\cF|_X^{y,x},\omega_X[-d])$ gives the desired isomorphism of distinguished triangles.
\end{proof}

\subsubsection{Pushforward}
We now recall and prove the pushforward result stated in Theorem \ref{thm:cscCYpushforward}. Consider $f: X \to Y$ a proper and constructible map from a \textsc{csc} space $(X,\cF)$. There is a sheaf of proper dg categories $f_*\cF$ making $Y$ a \textsc{csc} space. Choosing as object sets for the sheaf $f_*\cF$ spanning sets containing the images of the spanning sets $S_{f^{-1}(U)}$, we get a map of presheaves $\HH(f_*\cF) \to f_*\HH(\cF)$, and by the universal property of sheafification there is a canonical morphism of sheaves $\cH \cH(f_* \cF) \to f_* \cH \cH (\cF)$ lifting it, and similarly for the corresponding cyclic homology sheaves. Since $f$ is proper (so $f_* = f_!$), we consider the composition 
\[ \cH \cH(f_* \cF)_{S^1} \to f_* \cH \cH (\cF)_{S^1} \xrightarrow{\Omega} f_* \omega_X \to \omega_Y, \]
which we denote as $f_*\Omega$.

\begin{proposition}\label{prop:cscCYpushforwardText}
	If $\Omega$ is an orientation on $(X,\cF)$ then $f_*\Omega$ is an orientation on $(Y,f_*\cF)$.
\end{proposition}
\begin{proof}
	For any open $U \in \Opens(Y)$ and objects $x,y \in S_U$, consider the morphism
	\[ \cF|_{f^{-1}(U)}^{x,y} \to \D'(\cF|_{f^{-1}(U)}^{x,y}) [-d] \]
	determined by the orientation $\Omega$; this is an isomorphism by assumption. We apply the pushforward $f_*$ to obtain a morphism
	\[ \cF|_{f^{-1}(U)}^{x,y} \to f_* \D'(\cF|_{f^{-1}(U)}^{x,y}) [-d] \simeq \D'(f_*\cF|_{f^{-1}(U)}^{x,y}) [-d], \] 
	where we used the fact that there is a canonical equivalence $f_! \simeq f_*$  since $f$ was proper. By construction, this is equivalent to the morphism determined by the pushforward orientation $f_*\Omega$.
\end{proof}

As stated in the introduction, we can combine Propositions \ref{prop:cscCYinterval} and \ref{prop:cscCYpushforwardText} in the case where the space is of the form $X = X_0 \cup_{\del X_0} (\del X_0 \times [0,1))$, with $X_0$ compact, with $\cF$ stratified with respect to some stratification restricting to a product of a stratification on $\del X_0$ and the stratification we considered above on $[0,1)$. Then an orientation on $(X,\cF)$ gives a relative CY structure on the restriction map $\cF(X) \to \cF(\del X_0 \times (0,1))$, as stated in Corollary \ref{cor:introRelProperCY} in the introduction.

We can refine that result and rephrase the nondegeneracy condition entirely in terms of relative CY structures on a cover. Consider the cover $X = \bigcup_\sigma U_\sigma$, indexed over the strata of $X$, with $U_\sigma = \Star(\sigma)$. For each $\sigma$, we then choose a small compact retract $K_\sigma$ of $U_\sigma$, that is, a compact subset $K$ of $U$ to which $U$ has a stratification-respecting retraction.
\begin{proposition}\label{prop:cscLocalCheck}
	A map $\Omega: \cH\cH(\cF)_{S^1} \to \omega_X[-d]$ is an orientation on $(X,\cF)$ if for each $\sigma$, $\Omega|_{U_\sigma}$ induces a relative proper Calabi-Yau structure on the restriction $\cF(U_\sigma) \to \cF(U_\sigma \setminus K_\sigma)$.
\end{proposition}
\begin{proof}
	By Proposition \ref{prop:sheafLocality}, it is sufficient to check the nondegeneracy condition on sections over each $U_\sigma$. For each such $U_\sigma$, we refine the stratification such that there is a stratified map $U_\sigma \to [0,1)$ sending $K_\sigma \to \{0\}$. By the proof of \ref{prop:cscCYpushforwardText}, the nondegeneracy condition on $U_\sigma$ is equivalent to the condition that the restriction $\Omega|_{U_\sigma}$ give a relative Calabi-Yau structure on the restriction map $\cF(U_\sigma) \to \cF(U_\sigma \setminus K_\sigma)$.
\end{proof}
In other words, combining the two propositions above gives a way to globalize relative Calabi-Yau structure, as we explained in the introduction.

\section{Orientations on \textsc{ccc} spaces}\label{sec:orientationsCCC}
Let $(X,\cF)$  be a \textsc{ccc} space. In this section we will describe a dual version of the definition of orientation given in the previous section. This definition will be a generalization of the notion of a smooth (relative) Calabi-Yau structure.

\subsection{Covariant Verdier duality}\label{sec:lurieVerdier}
We will need Lurie's covariant Verdier duality (\cite[Sec.7.3.4]{LurHTT} and \cite[Sec.5.5.5]{LurHA}) in order to phrase our definition of orientation on a \textsc{ccc} space. Here we recall the relevant assertions, mostly following the notation from the exposition in \cite{Vol}. 
  
\subsubsection{K-sheaves}
Let $X$ be a locally compact Hausdorff space. 
We write $\Compacts(X)$ for the set of all compact subsets of $X$; we view it as a partially ordered set, with respect to inclusion. Let $\cA$ be a stable $\infty$-category which admits small limits and colimits.   

\begin{definition}\cite[Def.7.3.4.1]{LurHTT}\label{def:Ksheaf}
	An $\cA$-valued presheaf $\cG$ on $\Compacts(X)$ is a $\Compacts$-sheaf if 
	\begin{enumerate}
		\item $\cG(\emptyset)$ is final.
		\item For any $K,K' \in \Compacts(X)$,
		\[\xymatrix{
			\cG(K\cup K') \ar[r] \ar[d] & \cG(K) \ar[d]\\
			\cG(K') \ar[r] & \cG(K \cap K')
		}\]
		is a pullback square in $\cA$.
		\item The canonical map $\cG(K) \to \colim_{K \Subset K'} \cG(K')$ is an isomorphism in $\cA$, where the notation $K \Subset K'$ means that $K \subseteq U \subseteq K'$ for some open $U$.
	\end{enumerate}
	We denote by $\Sh_\cK(X,\cA)$ the full subcategory of $\Fun(\Compacts(X)^{op},\cA)$ spanned by the $\Compacts$-sheaves.
\end{definition}
Analogously to usual sheaves, we write $\Cosh_\cK(X,\cA) = \Sh_\cK(X,\cA^{op})^{op}$. One can go back and forth between ordinary (co)sheaves and $\Compacts$-(co)sheaves:
\begin{proposition}\cite[Lem.5.5.5.3]{LurHA}\label{prop:functorsUsheafKsheaf}
	There are $\infty$-functors $\theta:\Fun(\Opens(X)^{op},\cA) \to \Fun(\Compacts(X)^{op},\cA)$ and $\psi: \Fun(\Compacts(X)^{op},\cA)  \to \Fun(\Opens(X)^{op},\cA)$ which restrict to equivalences between $\Sh(X,\cA)$ and $\Sh_{\Compacts}(X,\cA)$.
	
	The functor $\theta$ is defined so that for any compact set $K$, we have $\theta(\cG)(K) \cong \colim_{K \subseteq U} \cG(U)$. The functor $\psi$ is defined so that for any open $U$, we have $\psi(\cG)(\cU) \cong \lim_{K \subseteq U} \cG(K)$. 
\end{proposition}

Following \cite{Vol}, we will also make use of an intermediate category between the $\Compacts$-sheaves and $\Compacts$-presheaves. We denote by $\Sh(\Compacts(X),\cA)$ the full subcategory of $\Fun(\Compacts(X)^{op},\cA)$ spanned by the objects satisfying conditions (1) and (2) of Definition \ref{def:Ksheaf}.
\begin{proposition}\cite[Lem.5.13]{Vol}\label{prop:functorPhi}
	The inclusion $\Sh_\cK(X,\cA) \hookrightarrow \Sh(\Compacts(X),\cA)$ has a right-inverse
	\[ \varphi: \Sh(\Compacts(X),\cA) \to \Sh_\cK(X,\cA), \] 
	which preserves filtered colimits and gives an equivalence $\varphi\cF(K) \simeq \colim_{K \Subset K'} \cF(K')$ for every $\cF \in \Sh(\Compacts(X),\cA)$ and $K \in \Compacts(X)$.
\end{proposition}

\subsubsection{Verdier duality functor}\label{sec:VerdierDualityFunctor}
Let us now describe the construction of the Verdier duality functor for cosheaves valued in $\cA$. Let $\cG$ be an $\cA$-valued precosheaf on $X$, that is, an object $\cG \in \Fun(\Opens(X)^{op},\cA)$. Given a closed subset $K \subset X$, the {\em cosections cosupported in $K$} are by definition $\cG_K(X) := \cofib(\cG(X \setminus K) \to \cG(X))$. Taking $\cG_K$ is functorial in $K$ and $\cG$, as it is obtained from the following construction. Consider the map $\Compacts(X)^{op} \to \Fun(\Delta^1,\Opens(X))$ given by $K \mapsto (X \setminus K \to X)$, and take the composition
\[ \Compacts(X)^{op} \times \Fun(\Opens(X),\cA) \to \Fun(\Delta^1,\Opens(X)) \times \Fun(\Opens(X),\cA) \to \Fun(\Delta^1,\cA) \overset{\fib}{\to} \cA \]
We will denote by $D_{X,\cA}: \Fun(\Opens(X),\cA) \to \Fun(\Compacts(X)^{op},\cA)$ the adjoint $\infty$-functor; by construction we have an equivalence $D_{X,\cA}(\cG)(K) \simeq \cG_K(X)$ for each $K$.
\begin{theorem}\cite[Thm.5.5.5.1 and Prop.5.5.5.10]{LurHA}
	The functor $D_{X, \cA}$ restricts to an equivalence $\Cosh(X,\cA) \xrightarrow{\sim} \Sh_\cK(X,\cA)$.
\end{theorem}
The composition $\D_{X, \cA} = \psi \circ D_{X, \cA}$ is called the \emph{covariant Verdier duality functor}; it gives an equivalence of $\infty$-categories $\Cosh(X,\cA) \simeq \Sh(X,\cA)$. It can be proven that its inverse is given, under the identification $\Cosh(X,\cA) \simeq \Sh(X,\cA^{op})^{op}$, by the same procedure with $\cA^{op}$ substituted for $\cA$, that is, by the functor $(\D_{X, \cA^{op}})^{op}$. When $X$ or $\cA$ are clear from context, we omit them from the notation. 

\subsubsection{Relation to classical Verdier duality}
We cannot simply take $\cA$ to be $\Perf$ in the description above, since this category is not closed under all small limits or colimits. However, upon restricting to sheaves that are constructible with respect to a finite stratification $A$, every (co)limit we take when defining the Verdier dual is equivalent to a finite (co)limit, so the equivalence $\D_{X, \vvt}$ restricts to an analogous equivalence\footnote{See \cite{Vol2} for a more detailed proof of this fact, using Lurie's $\infty$-categories of exit paths}
\[ \D_{X,\Perf}: \Sh^A(X,\Perf) \xrightarrow{\sim} \Cosh^A(X,\Perf). \] 
One can further compose this functor with the linear duality functor $\Hom_k(-,k)$; since linear duality exchanges limits and colimits in $\Perf$ there is an equivalence
\[ \D'_X: \Sh^A(X,\Perf) \to \Sh^A(X,\Perf)^{op} \]
corresponding to the classical contravariant notion of Verdier duality for sheaves of (complexes) of vector spaces. Namely, there is an object $\omega_X \in \Sh(X,\Perf)$, called the \emph{dualizing complex} of the space $X$, and a canonical equivalence of functors $\D'_X(-) \simeq \sheafHom_X(-, \omega_X)$.
\begin{definition}
	Let us denote $\varpi_X = D_X(\underline{k}_X)\in \Cosh(X,\Perf)$ the covariant Verdier dual of the constant sheaf $\underline{k}_X$.
\end{definition}
Since $\D'_X$ is an equivalence, we have an equivalence $\varpi_X \simeq \omega_X^\vee$, that is, identifying the object defined above with the $k$-linear dual of the dualizing complex.

Classical Verdier duality does not extend all the way to $\vvt$-coefficients, but it does extend beyond $\Perf$ enough for our purposes. Namely, we consider constructible sheaves valued in the following categories.
\begin{definition}
	The subcategories $\vvt^b_-$ (resp. $\vvt^b_+$) is the full subcategory of $\vvt$ spanned by the objects that are isomorphic to bounded above (resp. below) complexes 
	whose cohomologies are finite-dimensional $k$-vector spaces.
\end{definition}

Note that the $\pm$ subscript refers to which direction the complexes \emph{are allowed} to be supported in infinitely many degrees. From the fact that the compact objects in $\vvt$ are exactly the ones that are isomorphic to complexes of finite-dimensional vector spaces bounded in both directions, it follows that the intersection of $\vvt^b_-$ and $\vvt^b_+$ is identified with $\Perf \subseteq \vvt$. Each of these three subcategories is closed under finite (but not arbitrary) limits and colimits, %
so $\D$ restricts to equivalences $\Sh^A(X,\cC) \xrightarrow{\sim} \Cosh^A(X,\cA)$ for $\cA = \vvt^b_-, \vvt^b_+$ or $\Perf$. In terms of classical Verdier duality, we recover the usual description:
\begin{corollary}
	Contravariant Verdier duality gives an equivalence
	\[ \D': \Sh^A(X, \vvt^b_\pm) \xrightarrow{\sim} (\Sh^A(X, \vvt^b_\mp))^{op}. \]
\end{corollary}

\subsubsection{The bivariant Verdier dual}
We now describe a variant of the construction above which we will later need to define the notion of an orientation on a cosheaf of categories. Recall that the Verdier dual $\Compacts$-presheaf of some $\cA$-valued precosheaf on $X$ is given by applying the composition
\[ \Fun(\Opens(X),\cA) \to \Fun(\Compacts(X)^{op},\Fun(\Delta^1,\cA)) \to \Fun(\Compacts(X)^{op},\cA) \]
where the first functor is pullback under $K \mapsto (X \setminus K \to X)$ and the last functor is induced by the functor $\Fun(\Delta^1,\cA) \to \cA$ taking the cofiber. Let us now make a variant of this definition by considering instead the functor
\[ \Opens(X) \times \Compacts(X)^{op} \to \Fun(\Delta^1,\Opens(X)) \]
given by $(U,K) \mapsto (U \setminus K \to U)$. We then get a composition
\[ (\Opens(X)\times \Compacts(X)^{op}) \times \Fun(\Opens(X),\cA) \to \Fun(\Delta^1,\Opens(X)) \times \Fun(\Opens(X),\cA) \to \Fun(\Delta^1,\cA) \]
We compose the adjoint to the map above with the three maps $\Fun(\Delta^1,\cA) \to \cA$ given by the source, target and cofiber, to get three maps $\Fun(\Opens(X),\cA) \to \Fun(\Opens(X)\times \Compacts(X)^{op},\cA)$, which we call $(-)_{\Opens-\Compacts},(-)_{\Opens}$ and $D_{\Opens-\Compacts}(-)$.
\begin{definition}\label{def:bivariantVerdier}
	For any $\cG \in \Fun(\Opens(X),\cA)$, the \emph{bivariant Verdier dual of $\cG$} is the object of $\Fun(\Opens(X)\times \Compacts(X)^{op},\cA)$ given by $D_{\Opens-\Compacts}(\cG)$.
\end{definition}

We have canonical natural transformations among the three functors out of $\cA$, inducing, for any $\cG \in \Fun(\Opens(X),\cA)$, an equivalence
\[ \cofib((\cG)_{\Opens-\Compacts} \to (\cG)_{\Opens}) \xrightarrow{\sim} D_{\Opens-\Compacts}(\cG) \]
Let us describe these objects: for any $(U,K) \in \Opens(X)\times \Compacts(X)^{op}$, we have equivalences
\[ (\cG)_{\Opens-\Compacts}(U,K) \simeq \cG(U \setminus K), \quad (\cG)_{\Opens}(U,K) \simeq \cG(U), \quad D_{\Opens-\Compacts}(\cG)(U,K) \simeq \cofib(\cG(U \setminus K) \to \cG(U)). \]
We think of the object $D_{\Opens-\Compacts}(\cG)$ as a more local variant of the Verdier dual $\Compacts$-presheaf to the precosheaf $\cG$; the restriction of $D_{\Opens,\Compacts}(\cG)$ to $\{X\} \times \Compacts(X)$ is equivalent to $D(\cG) = D_{X,\cA}(\cG)$. 

We are interested in the values of $D_{\Opens,\Compacts}(\cG)$ on pairs where $K \subseteq U$; therefore, let us denote by $L$ the sub-poset of $\Opens(X) \times \Compacts(X)^{op}$ where that holds.

\begin{remark}
	For a \emph{cosheaf} $\cG$, the object $D_{\Opens-\Compacts}(\cG)|_L \in \Fun(L,\cA)$ contains just as much information as the usual Verdier duality $\Compacts$-sheaf $D(\cG)$; for any $(U,K) \in L$, the map 
	\[ D_{\Opens-\Compacts}(\cG)(U,K) \to D_{\Opens-\Compacts}(\cG)(X,K) = D(\cG)(K)\]
	is an equivalence since the square obtained by applying $\cG(-)$ to the cover of $X$ given by $\{U,X \setminus K\}$ is a pushout square, by the cosheaf property. On the other hand, for an arbitrary \emph{pre}cosheaf $\cG$, the object $D_{\Opens-\Compacts}(\cG)|_L$ cannot be recovered from $D(\cG)$, since its `local Verdier duals' may have more information.
\end{remark}

\subsection{Cosheafified Hochschild homology} 
We have an analogue of the Hom sheaves used throughout the previous section. Let $\cF \in \Cosh(X,\dgcat)$.
\begin{definition}
	For any $U \in \Opens(X)$ and $x,y \in \cF(U)$, the \emph{precosheaf} of morphisms $\cF|_U^{x,y}$ is the object of $\Fun(\Opens(X),\vvt)$ obtained by applying Proposition \ref{prop:morphismFunctor} to $I = \Opens(X)_{U\subseteq}$, with left cone point $U$, and extending by zero to all of $\Opens(X)$. For any triple $x,y,z \in \cF(U)$, we denote by $c^{x,y,z}_U$ the corresponding composition morphism of precosheaves.
\end{definition}
In other words, for any opens $U, V \subseteq X$ and objects $x,y \in \cF(U)$, we have equivalences
\[
\cF|^{x,y}_{U}(V) \simeq \begin{cases} \cF|^{x,y}_{U}(V) \simeq \cF(V)(\iota x,\iota y) & \mbox{if} \,\, U \subseteq V \\ 0 & \mbox{otherwise} \end{cases}
\]
where we generically use $\iota$ to denote the appropriate corestriction map of $\cF$. This is a $\vvt$-valued precosheaf on $X$. In the same fashion as before, we extend the notation to allow for $x,y \in U'$ when $U' \subseteq U$. As we did in the sheaf case, we generalize the notation to define composition maps
\[ c^{x,y,z}_{U,V}: \cF|_U^{x,y} \otimes \cF|_V^{y,z} \to \cF|_{U \cup V}^{x,z} \]
of precosheaves.

\subsubsection{$\Compacts$-presheaf version}
We give a $\cK$-presheaf version of these homs.  Consider the inclusion of posets $\Compacts(X)^{op} \to \Opens(X)$ given by $K \mapsto X \setminus K$; 
considering compacts contained in a fixed $K$ this restricts to an inclusion of posets 
\[ \Compacts(X)^{op}_{\subseteq K} \to \Opens(X)_{X\setminus K \subseteq} \]
into the poset of opens containing $X \setminus K$. 

\begin{definition}
For a compact $K \subset X$ and $x,y \in \cF(X\setminus K)$, we write $\cF|^{x,y}_K$ for the $\Compacts$-precosheaf obtained by 
pulling back $\cF|^{x,y}_{X\setminus K}$ along the inclusion of posets.  That is, for any $K' \subseteq K$, we take 
$\cF|^{x,y}_K(K') = \cF|^{x,y}(X \setminus K')$.  
\end{definition} 

Just as we did for the hom sheaves in Section \ref{sec:sheafHH}, we extend this notation. Given any $x \in \cF(X \setminus K')$ and $y \in \cF(X \setminus K'')$, we define
\[ \cF|^{x,y}_K= \begin{cases}
	\cF|^{\iota x, \iota y}_K \quad \text{if\ } K \subseteq K', K \subseteq K'' \\
	0 \quad \text{otherwise}.
\end{cases}\]

\subsubsection{Hochschild homology of a cosheaf}\label{sec:cosheafHH}
Let us pick spanning sets of objects $S_U \subseteq \mathrm{Ob}(\cF(U))$ for every $U \in \Opens(X)$, such that for any $U \subseteq V$, $S_V$ contains the image of $S_U$ under the corestriction map of $\cF$. This is always possible; one can start by independently picking spanning sets of objects independently for every open, and then adjoining all the images of spanning sets of smaller opens.

We define an object $\HH(\cF)_{(m,n)}$ for each $(m,n)$ by the same formula we used for the sheaf case, namely:
\[ \HH(\cF)_{(m,n)} = \bigoplus \cF|_{V_n}^{y_n,x_0} \otimes \cF|_{U_0}^{x_0,x_1}[1] \otimes \dots \otimes \cF|_{U_m}^{x_m,y_0} \otimes \cF|_{V_0}^{y_0,y_1}[1] \otimes \dots \otimes \cF|_{V_{n-1}}^{y_{n-1},y_n}[1] \]
where the sum is taken over all collections of elements of $\Opens(X)$ and all objects in $S$.
\begin{definition}
	The Hochschild homology precosheaf associated to $\cF$ and $S=\{S_U\}$ is the object of $\PreCosh(X,\vvt)$ given by $\HH(\cF) = \bigoplus_{m,n} \HH(\cF)_{(m,n)}$, endowed with the usual two-pointed Hochschild chain differential using the composition maps $c^{x,y,z}_{U,V}$.
\end{definition}
By the assumption on the object sets $S_U$, we have the exact analogue of Proposition \ref{prop:HHsheafValues}:
\begin{proposition}\label{prop:HHcosheafValues}
	For any $U \in \Opens(X)$, the value of $\HH(\cF)$ on $U$ is equivalent to the Hochschild homology of $\cF(U)$, and for any pair $U \subseteq V$ in $\Opens(X)$, the corresponding corestriction map is equivalent to the map induced on Hochschild homologies by $\rho: \cF(U) \to \cF(V)$.
\end{proposition}
$\HH(\cF)$ is generally not a cosheaf, and we denote by $\cH\cH(\cF)$ its cosheafification. Analogously to the sheaf case, we denote by $\cH\cH(\cF)^{S^1}, \cH\cH(\cF)_{S^1}$ the negative cyclic and cyclic homology cosheaves, given by cosheafifying the analogous respective precosheaves.

\subsection{Orientations}

\subsubsection{The Verdier dual diagonal $\Compacts$-sheaf}
In order to state the definition of nondegeneracy condition on $\Omega$, we will need to check a certain map of sheaves of bimodules; to construct that, we use a similar method to the one we used in Section \ref{sec:cosheafHH}, but using sheaves and $\Compacts$-sheaves valued in bimodules instead. We define the following $\cF(X)^e\mh\Mod$-valued $\cK$-presheaves on $X$:
\begin{align*}
	\DDelta_X(\cF)_{m,n} = \bigoplus \cofib(&\cF(X)(-,\iota x_0) \otimes \dots \otimes \cF|_{K_m}^{x_m,y_0} \otimes \dots \otimes \cF(X)(\iota y_n,-) \\
	&\to \cF(X)(-,\iota x_1) \otimes \dots \otimes \cF(X)(\iota x_m,\iota y_0) \otimes \dots \otimes\cF(X)(\iota y_n,-)) 
\end{align*}
where the sum is over all compacts and objects in $S$, and the terms $\cF(X)(\dots)$ denote constant presheaves on $X$, whose sections over any open are given by the denoted object of $\vvt$. 

Defining $\DDelta_X(\cF) = \bigoplus_{m,n}(\DDelta_X(\cF)_{m,n})$, together with the differential given by the composition maps, we get a $\cF(X)^e\mh\Mod$-valued $\Compacts$-presheaf on $X$ whose value on $K$ is equivalent to $\Cone(I_! \cF(X\setminus K)_\Delta \to \cF(X)_\Delta)$, that is, the cofiber of the map of bimodules from the pushforward of the diagonal bimodule of $\cF(X \setminus K)$ to the diagonal bimodule of the global sections category.
\begin{proposition}
	If $\cF$ is a cosheaf, the $\Compacts$-presheaf $\DDelta_X(\cF)$ is an object of the subcategory $\Sh(\Compacts(X),\cF(X)^e\mh\Mod)$.
\end{proposition}
\begin{proof}
	We need to verify that this object satisfies conditions (1) and (2) of Definition \ref{def:Ksheaf}; condition (1) holds trivially since the sections over $K=\emptyset$ are given by the cofiber of an isomorphism. Using the description of sections of $\DDelta_X(\cF)$ as pushforwards of diagonal bimodules, condition (2) says that for every pair $(K,K')$ of compacts
	\[\xymatrix{
		I_!(\cF(X \setminus (K \cup K'))_\Delta) \ar[r] \ar[d] & I_!(\cF(X \setminus K)_\Delta) \ar[d] \\
		I_!(\cF(X \setminus K')_\Delta) \ar[r] & I_!(\cF(X \setminus (K \cap K'))_\Delta)
	}\]
	is a pullback square, or equivalently a pushout square, since the category of $\cF(X)$-bimodules is stable. But since $\cF$ is a cosheaf, this is exactly the statement of Theorem 6.1 in \cite{BD1}.
\end{proof}

A similar argument also proves the following result, concerning functoriality with respect to $X$.
\begin{proposition}\label{prop:VerdierDiagonalFunctoriality1}
	If $\cF$ is a cosheaf, there is a canonical isomorphism $I_! \DDelta_U(\cF|_U) \to \DDelta_X(\cF)|_U$.
\end{proposition}
\begin{proof}
	We can construct this map using the explicit description of $\DDelta_U(\cF)$ and using the fact that $I_!$ can be expressed tensoring on both sides over $\cF_U$ with $\cF(X)_\Delta$. It remains to check that for any $K \in \Compacts(U)$ this map induces an isomorphism, which follows from the fact that the square
	\[\xymatrix{
		I_!(\cF(U \setminus K)_\Delta) \ar[r] \ar[d] & I_!(\cF(U)_\Delta) \ar[d] \\
		I_!(\cF(X \setminus K)_\Delta) \ar[r] & I_!(\cF(X)_\Delta)
	}\]
	is a pushout square because $\cF$ is a cosheaf.
\end{proof}

We can apply the functor $\varphi$ of Proposition \ref{prop:functorPhi} to get a $\Compacts$-sheaf of bimodules.
\begin{definition}\label{def:VerdierDualDiagonal}
	The \emph{Verdier dual diagonal $\Compacts$-sheaf} is the object of $\Sh_\cK(X,\cF(X)^e\mh\Mod)$ given by $\varphi\DDelta_X(\cF)$.
\end{definition}
The same functoriality property of Proposition \ref{prop:VerdierDiagonalFunctoriality1} holds for this $\Compacts$-sheaf:
\begin{proposition}\label{prop:VerdierDiagonalFunctoriality2}
	If $\cF$ is a cosheaf, there is a canonical isomorphism $I_! \varphi\DDelta_U(\cF|_U) \to \varphi\DDelta_X(\cF)|_U$.
\end{proposition}
\begin{proof}
	Follows from applying $\varphi$ and using the fact that the functor $\varphi$ is calculated on each $K$ by a colimit, and the functor $I_!:\cF(U)^e\mh\Mod \to \cF(X)^e\mh\Mod$ is a left adjoint thus commutes with colimits.
\end{proof}

\subsubsection{The bimodule dual diagonal $\Compacts$-sheaf}
We now define another $\cK$-sheaf, which will be our representative for the `bimodule dual' of the diagonal cosheaf. Recall that the bimodule dual of a perfect $\cA$-bimodule $\cM$ is given by $\cM^!=\Hom_{\cA^e}(\cM,\cA^e)$; to calculate this derived hom one can take e.g. the bar complex resolution. We now use this strategy, but with sheaves of morphisms instead.

Let us define the following object of $\Fun(\Opens(X),\cF(X)^e\mh\Mod)$
\[ \Delta_X(\cF)_{p,q} :=  \bigoplus \cF(X)(-,\iota x_0) \otimes \dots \otimes \cF|_{U_p}^{x_p,y_0} \otimes \dots \otimes \cF(X)(\iota y_q,-)\]
and its left dual as a $\cF(X)$-bimodule, that is, the object of $\Fun(\Opens(X)^{op},\cF(X)^e\mh\Mod)$ given by
\[ \Delta^!_X(\cF)_{p,q} :=  \prod \Hom_{\cF(X)^e}\left( \cF(X)(-,\iota x_0) \otimes \dots \otimes \cF|_{U_p}^{x_p,y_0} \otimes \dots \otimes \cF(X)(\iota y_q,-), \cF(X)^e\right) \]

We denote by $\Delta_X(\cF)$ and $\Delta^!_X(\cF)$ the objects given by the direct sum/product over all $p,q$ and endowed with the differential coming from the composition maps. We can apply the functor $\theta$ mentioned in the statement of Proposition \ref{prop:functorsUsheafKsheaf} to pass to a $\Compacts$-presheaf; for any $K$ there is an equivalence $\theta(\Delta^!_X(\cF))(K) \cong \lim_{K \subseteq U} \Delta^!_X(\cF)(U)$.
\begin{proposition}
	The object $\theta(\Delta^!_X(\cF))$ belongs to the category $\Sh(\Compacts(X),\cF(X)^e\mh\Mod)$.
\end{proposition}
\begin{proof}
	We must verify that this object satisfies conditions (1) and (2) of Definition \ref{def:Ksheaf}. Condition (1) is automatically satisfied, and to prove condition (2), we choose small enough open neighborhoods $U, U'$ of $K,K'$; it is then sufficient to prove that the square
	\[\xymatrix{
		I_!(\cF(U \cup U')^!) \ar[r] \ar[d] & I_!(\cF(U)^!) \ar[d] \\
		I_!(\cF(U')^!) \ar[r] & I_!(\cF(U \cap U')^!)
	}\]
	is a pullback square for all small enough open neighborhoods $U, U'$ of $K,K'$. This holds since the square above is obtained from a pushout square of compact bimodules by applying the functor $(-)^!$, and this latter is an anti-equivalence when restricted to compact bimodules.
\end{proof}

We can apply the functor of Proposition \ref{prop:functorPhi} to get a $\Compacts$-sheaf.
\begin{definition}\label{def:bimoduleDualDiagonal}
	The \emph{bimodule dual diagonal $\Compacts$-sheaf} is the $\cF(X)^e\mh\Mod$-valued $\Compacts$-sheaf on $X$ given by $\varphi\theta(\Delta^!_X(\cF))$.
\end{definition}
We have functoriality of $\theta(\Delta^!_X(\cF))$ and $\varphi\theta(\Delta^!_X(\cF))$ with respect to $X$:
\begin{proposition}\label{prop:bimoduleDualFunctoriality}
	If $\cF$ is a cosheaf, there are canonical isomorphisms $I_! \theta\Delta^!_U(\cF|_U) \simeq \theta\Delta^!_X(\cF)|_U$ and $I_! \varphi \theta\Delta^!_U(\cF|_U) \simeq \varphi \theta\Delta^!_X(\cF)|_U$
\end{proposition}
\begin{proof}
	Follows from similar arguments used to prove Propositions \ref{prop:VerdierDiagonalFunctoriality1} and \ref{prop:VerdierDiagonalFunctoriality2}. We can construct the map $I_!\Delta^!_U(\cF|_U) \to \Delta^!_X(\cF)|_U$ using the explicit representatives and the corestriction map $\cF(U) \to \cF(X)$, and then check that it gives an isomorphism on every $V\subseteq U$ as a consequence of the fact that all of its sections are perfect bimodules so by Lemma 4.2 of \cite{BD1}, pushforwards commute with taking bimodule dual. The statement about the $\Compacts$-sheaves (that is, after applying $\varphi$) follows since $\varphi$ commutes with $I_!$.
\end{proof}

\subsubsection{Nondegeneracy condition}
Let us recall the definition from the introduction: the data of an orientation of dimension $n$ on the \textsc{ccc} space $(X,\cF)$ is a morphism of cosheaves
\[ \Omega: \varpi_X[d] \to \cH\cH(\cF)^{S^1} \]
where $\omega_X^\vee$ is the covariant Verdier dual of the constant sheaf $\underline{k}$ on $X$, and $\cH\cH(\cF)^{S^1}$ is the cosheafification of the negative cyclic homology cosheaf. In order to give an orientation, the corresponding morphism of cosheaves $\Omega:\varpi_X[d] \to \cH\cH(\cF)$ must satisfy a certain nondegeneracy condition, namely, we require that under a certain universal map
\[ \eta_X: \Hom_{\Cosh(X,\vvt)}(\varpi_X,\cH\cH(\cF)) \to \Hom_{\Sh_\cK(X,\cF(X)^e\mh\Mod)}(\varphi\theta\Delta^!_X(\cF),\varphi\DDelta_X(\cF)) \]
the morphism of cosheaves $\Omega$ maps to an isomorphism of $\Compacts$-sheaves $\varphi\theta\Delta^!_X(\cF)[d] \simeq \varphi\DDelta_X(\cF)$. The remainder of this subsection is devoted to the construction of the map $\eta_X$.

We can apply the `bivariant Verdier dual' functor of Definition \ref{def:bivariantVerdier} to the Hochschild homology precosheaf to get an object $D_{\Opens-\Compacts}(\HH(\cF))$ of the category $\Fun(\Opens(X)\times \Compacts(X)^{op},\vvt)$. By definition, for any $(U,K) \in \Opens(X)\times\Compacts(X)^{op}$, there is an isomorphism
\[ D_{\Opens-\Compacts}(\HH(\cF))(U,K)\simeq \cofib(\HH(\cF(U \setminus K)) \to \HH(\cF(U))). \]
We note that, using bar complexes, one can construct maps
\[ \HH(\cF(U\setminus K)) \simeq \cF(U\setminus K)_\Delta \otimes_{\cF(U\setminus K)} \cF(U \setminus K)_\Delta \to I_!\cF(U)_\Delta \otimes_{\cF(X)^e} I_!(\cF(X\setminus K)_\Delta) \]
giving a map $D_{\Opens-\Compacts}(\HH(\cF))(U,K) \to I_!\cF(U)_\Delta \otimes_{\cF(X)^e} \cofib(I_!(\cF(X\setminus K)_\Delta) \to \cF(X)_\Delta)$. Since $\cF$ is a cosheaf of smooth dg categories by assumption, we have an isomorphism between this latter complex and the bimodule hom $\Hom_{\cF(X)^e}(I_!(\cF(U)_\Delta)^!,\cofib(I_!(\cF(X\setminus K)_\Delta) \to \cF(X)_\Delta))$.

We now consider the objects
\begin{align*}
	\Delta^!_X(\cF) &\in \Fun(\Opens(X)^{op},\cF(X)^e\mh\Mod) \simeq \Fun(\Opens(X),(\cF(X)^e\mh\Mod)^{op}) \\
	\DDelta_X(\cF) &\in \Fun(\Compacts(X)^{op},\cF(X)^e\mh\Mod)
\end{align*}
constructed in the preceding subsections. We denote by
\[ \Hom_{\cF(X)^e\mh\Mod}(\Delta^!_X(\cF),\DDelta_X(\cF)) \in \Fun(\Opens(X)\times \Compacts(X)^{op},\vvt) \]
the object obtained by combining these two objects with the $\vvt$-enriched internal hom
\[ \Hom_{\cF(X)^e\mh\Mod}: (\cF(X)^e\mh\Mod)^{op} \times \cF(X)^e\mh\Mod \to \vvt. \]
Explicitly, the object $\Hom(\Delta^!_X(\cF),\DDelta_X(\cF))$ associates to a pair $(U,K)$ the object of $\vvt$ given by
\[ \Hom_{\cF(X)^e\mh\Mod}(\Delta^!_X(\cF)(U),\DDelta_X(\cF)(K) \simeq \Hom_{\cF(X)^e}(I_!(\cF(U)_\Delta)^!,\cofib(I_!(\cF(X\setminus K)_\Delta) \to \cF(X)_\Delta)).  \]\
Therefore, for each fixed $(U,K)$, combining the identifications above, we have a map of complexes
\[ \xi_{X,(U,K)}: D_{\Opens-\Compacts}(\HH(\cF))(U,K) \to \Hom(\Delta^!_X(\cF),\DDelta_X(\cF))(U,K). \]

The following Lemma shows that these maps can be all assembled coherently over $\Opens(X)\times \Compacts(X)^{op}$.
\begin{lemma}
	There is a natural morphism in the category $\Fun(\Opens(X)\times \Compacts(X)^{op},\vvt)$
	\[ \xi_X: D_{\Opens-\Compacts}(\HH(\cF)) \to \Hom_{\cF(X)^e\mh\Mod}(\Delta^!_X(\cF),\DDelta_X(\cF)), \]
	whose induced map on any $(U,K)$ is equivalent to the map $\xi_{X,(U,K)}$.
\end{lemma}
\begin{proof}
	By definition, the bivariant Verdier dual $D_{\Opens-\Compacts}(\HH(\cF))$ is equivalent to a cofiber $(\HH(\cF))_{\Opens-\Compacts} \to (\HH(\cF))_{\Opens}$, so we have an equivalence
	\begin{align*}
		&D_{\Opens-\Compacts}(\HH(\cF)) \simeq \\
		\hspace{-1cm}\bigoplus \cofib&\left(\cF|_{U\setminus K}^{w_q,x_0} \otimes \dots \otimes \cF|_{U \setminus K}^{x_m,y_0} \otimes \dots \otimes \cF|_{U\setminus K}^{y_n,z_0} \otimes \dots \otimes \cF|_{U \setminus K}^{z_p,w_0} \otimes \dots\right. \\
		&\left. \to \cF|_U^{w_q,x_0} \otimes \dots \otimes \cF|_U^{x_m,y_0} \otimes \dots \otimes \cF|_U^{y_n,z_0} \otimes \dots \otimes \cF|_U^{z_p,w_0} \otimes \dots \right)
	\end{align*}
	where again the sum is taken over all $(m,n,p,q),(U,K)$ and objects in our chosen object sets, and the tensor factors are the appropriate functors out of $\Opens(X)\times\Compacts(X)^{op}$, and the whole complex is endowed with the differential coming from composition maps.

	We map the object above to
	\begin{align*}
		\hspace{-1cm}\bigoplus \cofib&\left(\cF(X)(w_q,x_0) \otimes \dots \otimes \cF|_U^{x_m,y_0} \otimes \dots \otimes \cF(X)(y_n,z_0) \otimes \dots \otimes \cF|_{X \setminus K}^{z_p,w_0} \otimes \dots\right. \\
		&\left. \to \cF(X)(w_q,x_0) \otimes \dots \otimes \cF|_U^{x_m,y_0} \otimes \dots \otimes \cF(X)(y_n,z_0) \otimes \dots \otimes \cF(X)(z_p,w_0) \otimes \dots \right)
	\end{align*}
	where the factors in the dots are of the forms $\cF|_U^{-,-}[1]$, $\cF|_{X\setminus K}^{-,-}$ or $\cF(X)(-,-)$. Since tensor commutes with cofiber, the object above is isomorphic to
	\begin{align*}
		\hspace{-1cm}\bigoplus &\left(\cF(X)(-,x_0) \otimes \dots \otimes \cF|_U^{x_m,y_0} \otimes \dots \otimes \cF(X)(y_n,-)\right) \otimes_{\cF(X)^e} \\
		 &\cofib\left(\cF(X)(-,z_0) \otimes \dots \otimes \cF|_{X \setminus K}^{z_p,w_0} \otimes \dots \cF(X)(w_q,-) \to \right.\\
		 &\left.\cF(X)(-,z_0) \otimes \dots \otimes \cF(X)(z_p,w_0) \otimes \dots \cF(X)(w_q,-)\right)
	\end{align*}
	that is, the two-sided tensor $\Delta_X\cF \otimes_{\cF(X)^e} \DDelta_X(\cF)$. Evaluating the image of this map against $\Delta^!_X\cF$ gives the desired map.
\end{proof}

Recall that if $C$ is an ordinary category, morphisms in presheaf categories can be described by the formalism of `ends': the morphism space between two $C$-valued sheaves on $X$ is given by 
\[ \Hom_{\Sh(X,C)}(\cG_1,\cG_2) = \int_{\Opens(X)^{op}} \Hom_C \circ (\cG_1 \times \cG_2) \]
the end of the bifunctor given by composing with $\cG_1 \times \cG_2$ with
\[ \Hom_C: C^{op} \times C \to \mathrm{Set}. \]
In the setting of $\infty$-categories, the appropriate notion of end can be given as the limit over a certain category of `twisted arrows' \cite{Hau2}. For any $\infty$-category $I$, there is an $\infty$-category $\mathrm{Tw}^\ell(I)$ with a canonical morphism
\[ p: \mathrm{Tw}^\ell(I) \to I^{op} \times I \]
such that the end of an $\infty$-functor $\cH:I^{op} \times I \to \cA$ is given by $\int_I \cH = \lim(F \circ p)$. If we have a $\vvt$-enriched $\infty$-category $\cC$, and $\cG_1,\cG_2 \in \Sh(X,\cC)$, we can then express their hom space as an object of $\vvt$ as $\lim(p \circ (\cG_1 \otimes \cG_2))$, a limit over $\mathrm{Tw}^\ell(\Opens(X)^{op})$.

In order to construct the desired morphism $\eta_X$, we must relate these ends to our previous constructions. Recall the following two functors: Lurie's covariant Verdier functor
\[ \D: \Fun(\Opens(X)^{op},\cA) \to \Fun(\Opens(X),\cA) \]
and bivariant Verdier duality
\[ D_{\Opens-\Compacts}: \Fun(\Opens(X),\cA) \to \Fun(\Opens(X)\times\Compacts(X)^{op},\cA) \]
Recall that the functor $\D$, when restricted to sheaves, gives an equivalence of categories, with inverse given again by the functor $\D$ on cosheaves. We will now prove a generalization of this statement involving the functor $D_{\Opens-\Compacts}$. 

Note that if $\cG \in \Sh(X,\cA)$, upon fixing $U$, $D_{\Opens-\Compacts} \circ \D_X(\cG)$ restricted to $U \times \Compacts(U)$ is isomorphic to $D_U \circ \D_U(\cG)\simeq \theta\cG$. The following proposition is just about making this statement coherent over all opens. Consider the projection map
\[ p: \Opens(X) \times \Compacts(X)^{op} \to \Compacts(X)^{op} \]
Composing the corresponding pullback with the functor $\theta$ gives a functor
\[ p^* \circ \theta: \Fun(\Opens(X)^{op},\cA) \to \Fun(\Opens(X)\times\Compacts(X)^{op},\cA) \]
We now have two functors with the same source and target, $D_{\Opens-\Compacts} \circ \D$ and $p^*\circ \theta$. We can further restrict the source to sheaves and compose the target with restriction to the locus $L \subset \Opens(X)\times\Compacts(X)^{op}$ where $K \subseteq U$.
\begin{proposition}\label{prop:doubleVerdier}
	There is a natural equivalence
	\[  (-)|_L \circ D_{\Opens-\Compacts} \circ \D \simeq (-)|_L \circ p^*\circ \theta \]
	of functors $\Sh(X,\cA) \to \Fun(L,\cA)$.
\end{proposition}
\begin{proof}
	We already know that, for any $\cG \in \Sh(X,\cA)$, the sections of $D_{\Opens-\Compacts} \circ \D(\cG)$ and $p^*\circ \theta(\cG)$ on $(U,K)$ with $K \subseteq U$ are abstractly isomorphic, so it just remains to construct the correct natural transformation of functors. For conciseness let us write $\Opens = \Opens(X)$ and $\Compacts = \Compacts(X)$. We define the following maps of simplicial sets
	\[ a: \Delta^1 \times \Delta^1 \times \Opens\times \Compacts^{op} \to \Delta^1\times \Opens, \qquad b: \Delta^1 \times \Compacts \to \Opens^{op} \]
	given by
	\[ a(r,s,U,K) = \begin{cases}
		(r,U \setminus K), & s=0\\
		(r,U), & s=1
	\end{cases}  \qquad b(r,K') = \begin{cases}
		X, &r=0\\
		X\setminus K', &r=1
	\end{cases}\]
	Denoting $\cM = \Opens \cup \Compacts$ as a subset of the poset of subsets of $X$, we consider the diagram
	\[ \Delta^1 \times \Delta^1 \times \Opens \times \Compacts \xrightarrow{a} \Delta^1 \times \Opens \overset{i}{\hookrightarrow} \Delta^1 \times \cM \overset{j}{\hookleftarrow} \Delta^1 \times \Compacts \xrightarrow{b} \Opens^{op} \]
	We then have a functor $a^* i^* j_! b^*: \Fun(\Opens^{op},\cA) \to \Fun(\Delta^1\times\Delta^1\times\Opens\times\Compacts^{op},\cA)$, which gives an adjoint
	\[ (a^* i^* j_! b^*)^\sharp: \Fun(\Opens^{op},\cA) \to \Fun(\Opens\times\Compacts^{op},\Fun(\Delta^1\times\Delta^1,\cA)\]
	It follows from the definitions of the functors $D_{\Opens-\Compacts}$ and $\D$ that the functor above, composed with the functor induced by $\fib^2:\Fun(\Delta^1\times\Delta^1,\cA))\to \cA$, gives $D_{\Opens-\Compacts} \circ \D$.

	Let us define a simplicial set $Z \subset \Delta^1\times\Delta^1\times \Opens \times \Compacts^{op} \times \Compacts$ given by
	\[ Z = \left\{ (r,s,u,K,K')\ \middle|\ \begin{array}{ll} K' \subseteq U \setminus K, &s=0 \\ K' \subseteq U, &s=1 \end{array} \right\} \]
	We form the square
	\[\xymatrix{
		Z \ar[r]^-{\pi_2} \ar[d]_{\pi_1} & \Delta^1 \times \Compacts \ar[d]^j \\
		\Delta^1 \times \Delta^1 \times \Opens \times \Compacts \ar[r]_-{i\circ a} & \Delta^1 \times \cM
	}\]
	where $\pi_1(r,s,U,K,K') = (r,s,U,K)$ and $\pi_2(r,s,U,K,K') = (r,K')$. By the definition of $Z$, the image of $\pi_2$ on some fixed input is always contained in the image of $a \circ \pi_1$ on the same input,so we have a natural transformation of functors $j\circ \pi_2 \Rightarrow i \circ a \circ \pi_1$; using (co)unit maps of the relevant adjunction gives then a natural transformation $\pi_{1!}\pi_2^* a^* \Rightarrow a^* i^* j_! b^*$.

	We then form another square
	\[\xymatrix{
		Z \ar[r]^-{b \circ \pi_2} \ar[d]_{\pi_1} & \Opens^{op} \ar[d]^{\lambda_2} \\
		\Delta^1 \times \Delta^1 \times \Opens \times \Compacts^{op} \ar[r]_-{\lambda_1} & \Delta^1 \times \Delta^1 \times \cM^{op}
	}\]
	where $\lambda_1(r,s,U,K)=(r,s,K)$ and $\lambda_2(V)=(0,0,V)$. We now have a natural transformation $\lambda_2\circ b \circ \pi_2 \Rightarrow \lambda_1\circ \pi_1$; using adjunctions we get a natural transformation $\pi_{1!}\pi_2^* a^* \Rightarrow \lambda_1^*\lambda_{2!}$. This latter functor, upon taking adjoints and composing with $\fib^2$, gives the functor $p^*\circ \theta$ of the statement we are trying to prove. 
	
	In summary, we get natural transformations
	\[ D_{\Opens-\Compacts} \circ \D \Leftarrow \fib^2\circ (\pi_{1!} \circ \pi_2^*)^\sharp \Rightarrow p^* \circ \theta \]
	of functors $\Fun(\Opens^{op},\cA) \to \Fun(\Opens\times \Compacts^{op},\cA)$, so to prove the statement we want it is enough to show that these induce equivalences when applied to a sheaf $\cG$ and then restricting the result to the locus $L$. We can see this by calculating these maps explicitly; for example, picking some pair $K\subseteq U$, we have that $(\fib^2\circ (\pi_{1!} \circ \pi_2^*)^\sharp \cG)(U,K)$ is given by taking the double fiber of the square
	\[\xymatrix{
		\cF(X) \ar[r] \ar[d]	& \colim_{K' \subseteq U\setminus K} \cF(X \setminus K') \ar[d] \\
		\cF(X) \ar[r] 			& \colim_{K' \subseteq U} \cF(X\setminus K')
	}\]
	which, when $\cF$ is a sheaf, is equivalent to $\colim_{K\subseteq V} \cF(V)$. Similar calculations of the other two functors show that on each $(U,K)$ the maps are isomorphisms.
\end{proof}

We now use the maps we described to produce our desired map. We have a composition
\begin{align*}
	\Hom_{\Cosh(X,\vvt)}&(\varpi_X,\cH\cH(\cF)) \to \Hom_{\Cosh(X,\vvt)}(\varpi_X,\HH(\cF)) \\
	&\xrightarrow{D_{\Opens-\Compacts}} \Hom_{\Fun(\Opens(X)\times \Compacts(X)^{op},\vvt)}(D_{\Opens-\Compacts}\D k_X,D_{\Opens-\Compacts}(\HH(\cF))) \\
	& \xrightarrow{\xi_X} \Hom_{\Fun(\Opens(X)\times \Compacts(X)^{op},\vvt)}(D_{\Opens-\Compacts}\D k_X,\Hom_{\cF(X)^e}(\Delta^!_X\cF,\DDelta_X\cF)) \\
	& \xrightarrow{(-)|_L} \Hom_{\Fun(L,\vvt)}((D_{\Opens-\Compacts}\D k_X)|_L,\Hom_{\cF(X)^e}(\Delta^!_X\cF,\DDelta_X\cF)|_L) \\
	&\to \Hom_{\Fun(L,\vvt)}((D_{\Opens-\Compacts}\D k)|_L,\Hom_{\cF(X)^e}(\Delta^!_X\cF,\DDelta_X\cF)|_L)
\end{align*}
where $k$ denotes the constant functor with value $k$ and the last map is induced by the canonical map from the constant presheaf to the constant sheaf. By Proposition \ref{prop:doubleVerdier}, we have that $(D_{\Opens-\Compacts}\D k)|_L$ is the constant functor on $L$ with value $k$. Applying $\theta$ along the first coordinate, to get functors out of $L_\Compacts = \{(J,K)| K \subset J\} \subset \Compacts \times \Compacts^{op}$, we get a further map from the last complex above:
\begin{align*}
	&\to \Hom_{\Fun(L_\Compacts,\vvt)}(k,\Hom_{\cF(X)^e}(\theta\Delta^!_X\cF,\DDelta_X\cF)|_L)
\end{align*}
That is, the complex above is the data of a coherent choice of map $k \to \Hom_{\cF(X)^e}(\theta\Delta^!_X\cF(J),\DDelta_X\cF(K))$ for any pair of compacts with $K \subseteq J$. We now take ends, noting that, by definition, the image of the canonical map from the category of twisted arrows $\mathrm{Tw}^\ell(\Compacts(X)^{op}) \to \Compacts(X)\times\Compacts(X)^{op}$ lands inside of the locus $L_\Compacts$. Therefore we can pullback along this map and take limits, giving a map of complexes
\begin{align*} 
	\Hom_{\Fun(L_\Compacts,\vvt)}(k,\Hom_{\cF(X)^e}(\theta\Delta^!_X\cF,\DDelta_X\cF)|_L) &\to \int^{\Compacts(X)^{op}} \Hom_{\cF(X)^e}(\theta\Delta^!_X\cF,\DDelta_X\cF) \\ 
	&= \Hom_{\Sh(X,\cF(X)^e\mh\Mod)}(\theta\Delta^!_X\cF,\DDelta_X\cF)
\end{align*}
\begin{definition}\label{def:etaMap}
	The map $\eta_X$ is given by composing the map
	\[ \Hom_{\Cosh(X,\vvt)}(\varpi_X,\cH\cH(\cF)) \to \Hom_{\Fun(\Compacts(X)^{op},\cF(X)^e\mh\Mod)}(\theta\Delta^!_X\cF,\DDelta_X\cF) \]
	constructed above with the map
	\[ \Hom_{\Fun(\Compacts(X)^{op},\cF(X)^e\mh\Mod)}(\theta\Delta^!_X\cF,\DDelta_X\cF) \to \Hom_{\Sh_\Compacts(X,\vvt)}(\varphi\theta\Delta^!_X\cF,\varphi\DDelta_X\cF) \] 
	induced by the functor $\varphi$ of Proposition \ref{prop:functorPhi}.
\end{definition}

This map is universal in $X$, in the following sense. Recall that for any sheaf $\cF$ and $U \in \Opens(X)$, we have a natural isomorphism $(\D_X\cF)_U \simeq \D_U(\cF_U)$, so we have a natural isomorphism $\varpi_U \simeq \varpi_X|_U$. Therefore, restricting on both sides of the map $\eta_X$, we get a square of complexes
\[ \xymatrix{
	\Hom_{\Cosh(X,\vvt)}(\varpi_X,\cH\cH(\cF)) \ar[r] \ar[d]	& \Hom_{\Sh_\Compacts(X,\cF(X)^e\mh\Mod)}(\varphi \theta \Delta^!_X(\cF),\varphi\DDelta_X(\cF)) \ar[d] \\
	\Hom_{\Cosh(U,\vvt)}(\varpi_U,\cH\cH(\cF|_U)) \ar[r] 		& \Hom_{\Sh_\Compacts(U,\cF(X)^e\mh\Mod)}(\varphi \theta \Delta^!_U(\cF|_U),I_!\varphi\DDelta_U(\cF|_U))
}\]
\begin{proposition}\label{prop:etaFunctoriality}
	The square above commutes up to homotopy.
\end{proposition}
\begin{proof}
	This follows from the functoriality results we gave in Propositions \ref{prop:VerdierDiagonalFunctoriality2} and \ref{prop:bimoduleDualFunctoriality}, together with the fact that the map $\xi_X$ is also functorial in $X$, in the sense that $\xi_X$ and $\xi_U$ intertwine the identifications given by the two Propositions cited above; this can be seen by a calculation using the explicit description we gave for these $\xi$ maps.
\end{proof}

\begin{example} \emph{The half-open interval} \label{ex:openIntervalCosheaf}
Let $X = [0,1)$ with stratification $\{0\} \cup (0,1)$. Let us denote $U = (0,1)$; the data of a constructible cosheaf $\cF$ on $X$ is then given by a corestriction functor $\iota: \cF(U) \to \cF(X)$.

	To describe the relevant $\Compacts$-sheaves, we note that there are only two types of connected compact subsets; let us pick one representative for each type, for example, compact subsets $K_1 = [0, 3/4]$ and $K_2=[1/4,1/2]$. Denoting $\sim$ for the equivalence relation generated by stratified retracts of open subsets, we have
	\[ X \setminus K_1 \sim U, \quad X \setminus K_2 \sim X \sqcup U \]
	with the inclusion $X \setminus K_1 \hookrightarrow X \setminus K_2$ equivalent to the inclusion $U \hookrightarrow X \sqcup U$.
	
	Let us describe the Verdier dual Hochschild $\Compacts$-presheaf. We have equivalences
	\begin{align*}
		\DHH_X(\cF)(K_1) &\cong \cofib\left(\HH(\cF(U)) \xrightarrow{\iota} \HH(\cF(X))\right) \\
		\DHH_X(\cF)(K_2) &\cong \cofib\left(\HH(\cF(U)) \oplus \HH(\cF(X)) \xrightarrow{(\iota,\id)} \HH(\cF(X))\right) \cong \HH(\cF(U))[1]
	\end{align*}
	with restriction map given by the connecting map. We can also describe the version over $\Opens(X)\times \Compacts(X)^{op}$. By definition we have $\DHH(\cF)(X,K_i) = \DHH_X(\cF)(K_i)$, and that the map $\DHH(\cF)(U,K) \to \DHH(\cF)(X,K)$ is the equivalence
	\begin{align*}
		\cofib&\left(\HH(\cF(U))\oplus \HH(\cF(U))\to \HH(\cF(U))\right) \xrightarrow{\sim} \\
		&\cofib\left(\HH(\cF(X))\oplus \HH(\cF(U))\to \HH(\cF(X))\right) \cong \HH(\cF(U))[1]
	\end{align*}
	
	As for the $\Compacts$-sheaves of bimodules, we note that every sufficiently small open set containing $K_1$ or $K_2$, is equivalent, by retracts respecting the stratification, to $X$ or $U$, respectively. Therefore, we have equivalences
	\[ \varphi\theta\Delta^!_X(\cF)(K_1) \cong \cF(X)^!, \quad \varphi\theta\Delta^!_X(\cF)(K_2) \cong I_!\cF(U)^!. \]
	Calculating the Verdier dual diagonal bimodule $\Compacts$-sheaf gives
	\begin{align*}
		\varphi\DDelta_X(\cF)(K_1) &\cong \cofib\left(I_!\cF(U)_\Delta \to \cF(X)_\Delta\right) \\ \varphi\DDelta_X(\cF)(K_2) &\cong \cofib\left(\cF(X)_\Delta \oplus I_!\cF(U)_\Delta \to \cF(X)_\Delta \right) \cong I_!\cF(U)_\Delta[1]
	\end{align*}

	Suppose now that we are given a section $s:\underline{k}_L \to \DHH(\cF)|_L$. This is just given by an element $s_{(X,K_1)} \in \cofib\left(\HH(\cF(U)) \xrightarrow{\iota} \HH(\cF(X))\right)$. %

	We can describe explicitly the map of bi-simplicial objects. For each compact $K$, we choose small enough open $V$ such that $K \subseteq V$. We then have a map
	\begin{align*}
		\phi:&\DHH(\cF)(V,K)_{m,n,p,q} = \\
		\hspace{-1cm}\bigvee \cofib&\left(\cF(V\setminus K)(w_q,x_0) \otimes \dots \otimes \cF(V \setminus K)(x_m,y_0) \otimes \dots \otimes \cF(V\setminus K)(y_n,z_0) \otimes \dots \otimes \cF(V \setminus K)(z_p,w_0) \otimes \dots\right. \\
		&\left. \to \cF(V)(w_q,x_0) \otimes \dots \otimes \cF(V)(x_m,y_0) \otimes \dots \otimes \cF(V)(y_n,z_0) \otimes \dots \otimes \cF(V)(z_p,w_0) \otimes \dots \right)
		&\to \cofib\left(\cF(X)(w_q,x_0) \otimes \dots \otimes \cF(X \setminus K)(x_m,y_0) \otimes \dots \otimes \cF(X)(y_n,z_0) \otimes \dots \otimes \cF(V)(z_p,w_0) \otimes \dots \right.\\
		&\left. \to \cF(X)(w_q,x_0) \otimes \dots \otimes \cF(X)(x_m,y_0) \otimes \dots \otimes \cF(X)(y_n,z_0) \otimes \dots \otimes \cF(V)(z_p,w_0) \otimes \dots \right)
	\end{align*}
	We can evaluate the image $\phi(s_{(V,K)})$ against any
	\[ \xi \in \Delta^!_X(\cF)(V)^{p,q} = \bigvee \Hom_{\cF(X)^e}(\cF(X)(-,z_0) \otimes \dots \otimes \cF(V)(z_p,w_0) \otimes \dots \otimes \cF(X)(w_q,-), \cF(X)^e) \]
	to get an element in
	\begin{align*}
		\bigvee \cofib&\left(\cF(X)(-,x_0) \otimes \dots \otimes \cF(X \setminus K)(x_m,y_0) \otimes \dots \cF(X)(y_n,-) \right. \\
		&\left.\to \cF(X)(-,x_0) \otimes \dots \otimes \cF(X)(x_m,y_0) \otimes \dots \cF(X)(y_n,-)\right)  \cong \DDelta_X(\cF)_{m,n}
	\end{align*}
	giving the desired map of bar complexes.

	We can take $V=X$ for $K_1$ and $V=U$ for $K_2$; given an section $s_{(X,K_1)}$ as above, the map induced on sections by the corresponding map of $\Compacts$-sheaves $\theta\Delta^!_X(\cF) \to \DDelta_X(\cF)$ on the pair $(K_1,K_2)$ is the data of a homotopy-commuting square
	\[\xymatrix{
		\cF(X)^! \ar[d] \ar[r] & I_!\cF(U)^! \ar[d] \\
		\cofib\left(I_!\cF(U)_\Delta \to \cF(X)_\Delta\right) \ar[r] & I_!\cF(U)_\Delta[1]
	}\] \\
\end{example}

We can use the maps described above to define the nondegeneracy condition on orientations on \textsc{ccc} spaces that we mentioned in the Introduction.
\begin{definition}
	A map $\Theta:\varpi_X[d] \to \cH\cH(\cF)^{S^1}$ is an orientation on $(X,\cF)$ if its image $\eta_X(\Theta) \in \Hom_{\Sh_\cK(X,\cF(X)^e-\Mod)}(\varphi\theta\Delta^!_X(\cF), \varphi\DDelta_X(\cF)[-d])$ is an isomorphism of $\cF(X)$-bimodules valued $\Compacts$-sheaves.
\end{definition}

Analogously to the sheaf case, a morphism of cosheaves can be checked to be an isomorphism locally, so we have:
\begin{proposition}\label{prop:cosheafLocality}
	If $X = \bigcup_i U_i$ and $\Theta:\varpi_X[d] \to \cH\cH(\cF)^{S^1}$ is such that $\Theta|_{U_i}$ is an orientation on $(U_i,\cF|_{U_i})$, then $\Theta$ is  an orientation on $(X,\cF)$. 
\end{proposition}
\begin{proof}
	Follows from the fact that a morphism of $\Compacts$-sheaves can be checked to be an isomorphism locally, together with the functoriality of the map $\eta_X$ taking the data of $\Omega:\varpi_X[d] \to \cH\cH(\cF)$ to a morphism of $\Compacts$-sheaves (Proposition \ref{prop:etaFunctoriality}).
\end{proof}

\subsubsection{Relation to smooth CY structures}
In a dual manner to the case of sheaves of proper dg categories, an orientation on a cosheaf of smooth dg categories induces smooth Calabi-Yau structures. The following proposition follows directly from the definition.
\begin{proposition}\label{prop:cccCYpoint}
	If $X$ is a point, then an orientation on the \textsc{ccc} space $(X,\cF)$ is a smooth Calabi-Yau structure on the category $\cF$, in the sense of Definition \ref{def:absoluteCYstructures}.
\end{proposition}

\begin{proposition}\label{prop:cccCYinterval}
	If $X = [0,1)$ with stratification $\{0\} \cup (0,1)$, then an orientation on $(X,\cF)$ is a relative smooth Calabi-Yau structure on the corestriction functor $\iota: \cF((0,1)) \to \cF([0,1))$, in the sense of Definition \ref{def:relativeCYstructures}.
\end{proposition}
\begin{proof}
	Follows from comparing the description in Example \ref{ex:openIntervalCosheaf}, of the induced map of distinguished triangles in the category of $\cF(X)$-valued $\Compacts$-sheaves, to the definition of relative CY structure.
\end{proof}

\subsubsection{Pushforward}
We now prove the pushforward result stated in Theorem \ref{thm:cccCYpushforward}. Consider $f: X \to Y$ a proper and constructible map from a \textsc{ccc} space $(X,\cF)$. We have a cosheaf $f_*\cF$ giving $Y$ the structure of a \textsc{ccc} space; moreover, by the universal property of cosheafification, there is a canonical map $ f_* \cH \cH (\cF) \to \cH \cH(f_* \cF)$. We consider then the composition
\[ \varpi_Y  \to  f_* \varpi_X \xrightarrow{\Theta}   f_* \cH \cH (\cF)^{S^1} \to \cH \cH(f_* \cF)^{S^1}, \]
which we call $f_*\Omega$. 

\begin{proposition}\label{prop:cccCYpushforwardText}
	If $\Theta$ is an orientation on $(X,\cF)$ then $f_*\Theta$ is an orientation on $(Y,f_*\cF)$.
\end{proposition}
\begin{proof}
	We note that since $f$ is proper, the Verdier duality functors we use intertwine the pushforwards $f_*$ (on usual and $\Compacts$-sheaves). The result then follows from the locality statement Proposition \ref{prop:cosheafLocality}; we can check nondegeneracy on each open of a cover $\bigcup_\alpha U_\alpha$ of $Y$,  which follows from nondegeneracy on the open cover $\bigcup_\alpha f^{-1}(U_\alpha)$ of $X$.
\end{proof}

Finally, analogously to Proposition \ref{prop:cscLocalCheck} for \textsc{csc} spaces, we can combine the two previous results to give a way to check nondegeneracy in terms of relative CY structures. Given a \textsc{ccc} space $(X,\cF)$, we take again the cover indexed over all strata $X= \bigcup_\sigma U_\sigma, U_\sigma = \Star(\sigma)$, and a small retract $K_\sigma$ of $U_\sigma$.
\begin{proposition}\label{prop:cccLocalCheck}
	A map $\Theta: \varpi_X[d] \to \cH\cH(\cF)^{S^1}$ is an orientation on $(X,\cF)$ if, for each $\sigma$, $\Theta|_{U_\sigma}$ induces a relative smooth Calabi-Yau structure on the corestriction $\cF(U_\sigma \setminus K_\sigma) \to \cF(U_\sigma)$.
\end{proposition}
\begin{proof}
	Entirely analogous to the proof of \ref{prop:cscLocalCheck}, using a stratified map to $[0,1)$ and applying Propositions \ref{prop:cccCYinterval} and \ref{prop:cccCYpushforwardText}.
\end{proof}

\subsection{Duality}
We now address the relation between orientations on a \textsc{ccc} space $(X,\cF)$, as defined above in this Section, and orientations on some associated \textsc{csc} space $(X,\cP)$, as defined in Section \ref{sec:orientationsProper}. This relation will be a sheafified version of the description in Section 3.2 of \cite{BD1}, which we paraphrase now.

Let $\cA$ is a smooth dg category, and let $\cP \subseteq \cA$ be any full dg subcategory spanned by locally perfect objects; this is a proper dg category. One can obtain such a category by taking any subcategory of $\cA^{pp}$, identified with a full subcategory of $\cA$. Then, $\cA_\Delta$ is a proper $(\cP,\cA)$-bimodule, and induces a pairing
\[ \langle -,- \rangle_{\cP,\cA}: \HH(\cP) \otimes \HH(\cA) \to k \]
as described in \cite[Sec.5.1]{Sher}, following \cite{Shk}. This is described by a composition
\[ \HH(\cP) \otimes \HH(\cA) \xrightarrow{\sim} \HH(\cP \otimes \cA^{op}) \xrightarrow{\wedge} \HH(\Perf) \xrightarrow{\int} k \]
The map $\wedge$ can be explicitly given on bar complexes by a shuffle product. The third is a trace map, providing an inverse to the isomorphism $k \overset{\sim}{\hookrightarrow} \HH(k)$. 

It is known that the pairing $\langle -,- \rangle_{\cP,\cA}$ can be lifted to a pairing $\langle \widetilde{-,-} \rangle_{\cP,\cA}$ on the negative cyclic chain complex \cite{Shk}, referred to by the name of `higher residue pairing', which by adjunction gives a map
\[ \langle \widetilde{-,-} \rangle_{\cP,\cA}^\sharp: \HH(\cA)^{S^1} \to \Hom_{k[[u]]}(\HH(\cP)^{S^1},k[[u]]) \cong \Hom(\HH(\cP)_{S^1},k), \]
On the other hand, the inclusion $\cP \subset \cA$ also gives a $k$-linear functor
\[ \mathrm{LD}_\cA: \cA\mh\Mod\mh\cA \to (\cP\mh\Mod\mh\cP)^{op}, \]
such that, for any $\cA$-bimodule $\cM$, there is an equivalence in $\vvt$
\[ \mathrm{LD}_\cA(\cM)(p,p') \cong \Hom_{\cA}(\cM(-,p),\cA_\Delta(-,p')) \]
Again, resolving $\cM$ as a left $\cA$-module using bar complexes, one can prove that up to equivalence this functor preserves diagonal bimodules and sends bimodule duals to linear duals. That is, there are equivalences of $\cP$-bimodules
\[ \mathrm{LD}_\cA(\cA_\Delta) \simeq \cP_\Delta, \qquad \mathrm{LD}(\cA^!) \simeq \cP^\vee. \]
Therefore, $\mathrm{LD}$ gives a morphism in $\vvt$
\[ \mathrm{LD}_\cA(\cA^!,\cA_\Delta): \HH(\cA) \simeq \Hom_{\cA^e}(\cA^!,\cA_\Delta) \to \Hom_{\cP^e}(\cP_\Delta,\cP^\vee) \simeq \Hom_k(\HH(\cP),k). \]
Since $\mathrm{LD}_\cA(\cA^!,\cA_\Delta)$ is the map on morphism spaces of bimodules induced by a functor, it preserves isomorphisms.
\begin{theorem}\cite[Thm.3.1]{BD1}
The adjoint map to the pairing $\langle-,-\rangle_{\cP,\cA}$ is equivalent to $\mathrm{LD}_\cA(\cA^!,\cA_\Delta)$, and therefore
\[ \langle \widetilde{-,-} \rangle_{\cP,\cA}^\sharp: \HH(\cA)^{S^1} \to \Hom(\HH(\cP)_{S^1},k) \]
maps smooth CY structures on $\cA$ to proper CY structures on $\cP$.
\end{theorem}

When $\cA$ is saturated, we can take $\cP = \cA^{pp} \simeq \cA$, and then $\LD$ is an anti-autoequivalence of $\cA\mh\Mod\mh\cA$, reducing to the result in \cite[Prop.6.10]{GPSher}.
\begin{corollary}
	If $\cA$ is saturated, the map $\langle \widetilde{-,-} \rangle_{\cA,\cA}^\sharp$ gives an bijection between (equivalence classes of) smooth and proper CY structures on $\cA$.
\end{corollary}

\begin{remark}
On the other hand, when $\cA$ is not saturated, in general there is no way of producing a smooth CY structure on it from a proper CY structure on its category of pseudo-perfect modules. We will see later in Theorem \ref{thm:dualTwo} that there is a way around this problem in the case where $\cA$ is the global sections of a locally saturated sheaf.
\end{remark}

\subsubsection{Duality for relative CY structures}
We will need a relative version of the duality statement above, which appears as a claim in \cite[Sec.4.2]{BD1}. Let $f:\cA \to \cB$ be a functor between smooth dg categories, with pullback map $f^{pp}:\cB^{pp} \to \cA^{pp}$ on the corresponding categories of pseudo-perfect modules. We pick spanning sets of objects for all these categories in order to write bar complexes; the pair $f,f^*$ then induces a morphism of distinguished triangles of complexes
\[\xymatrix{
	\HH(\cA) \ar[r]^{\HH(f)} \ar[d] & \HH(\cB) \ar[r] \ar[d] & \HH(\cB|\cA) \ar[d] \\
	\Hom(\HH(\cA^{pp}),k) \ar[r]^{(f^{pp})^\vee} & \Hom(\HH(\cB^{pp}),k) \ar[r] & \Hom(\HH(\cB^{pp}|\cA^{pp}),k)[1]
}\]
Analogously to the absolute case, one can lift these maps to maps between the negative cyclic and the dual cyclic complexes. The same argument works in the relative case, by applying the functor $\LD_\cB$; using the same bar complexes one can give isomorphisms of bimodules
\[ \mathrm{LD}_\cB(F_! \cA_\Delta) \simeq (F^{pp})^* (\cA^{pp})_\Delta, \quad \mathrm{LD}_\cB(F_! \cA^!) \simeq (F^{pp})^* (\cA^{pp})^\vee. \]
and then compare the map induced by $\LD_\cB$ on morphism spaces with the map induced on Hochschild homologies, which gives
\begin{proposition}\label{prop:relativeProperToSmooth}
	The map $\HH(\cB|\cA)^{S^1}[-d] \to \Hom(\HH(\cA^{pp}|\cB^{pp})_{S^1},k)[-d+1]$ maps $d$-dimensional relative smooth Calabi-Yau structures on $f$ to $d$-dimensional relative proper Calabi-Yau structures on $f^{pp}$.
\end{proposition}

We note now that $\LD_\cB$ is an anti-autoequivalence of $\cB^e\mh\Mod$ when $\cB$ is saturated. Therefore, regardless if $\cA$ is proper or not, we have
\begin{corollary}\label{cor:relativeSmoothToProper}
	If $f:\cA\to\cB$ with $\cA$ smooth and $\cB$ saturated, if the image of $\Omega$ under the map $\HH(\cB|\cA)^{S^1}[-d] \to \Hom(\HH(\cA^{pp}|\cB^{pp})_{S^1},k)[-d+1]$ is a $d$-dimensional relative proper CY structure on $f^{pp}$, then $\Omega$ is a $d$-dimensional relative smooth CY structure on $f$.
\end{corollary}

\begin{remark}
	Note that nothing forces the map above to give a bijection between classes of relative smooth CY structures on $f$ and \emph{all classes of} relative proper CY structures on $f^{pp}$, only to the ones in the image of the map.
\end{remark}

\subsubsection{Orientations on sheaves of pseudo-perfect modules}\label{sec:smoothtoproper}
Now, let us take a \textsc{ccc} space $(X,\cF)$, such that $\cF$ is a cosheaf of smooth dg categories. Taking pseudo-perfect modules gives a sheaf $\cF^{pp}$ of proper dg categories. Recall that the data of an orientation on $\cF$ is a map of cosheaves $\varpi_X \to \cH\cH(\cF)^{S^1}$, and the data of an orientation on $\cP$ is a map of sheaves $\cH\cH(\cP)_{S^1} \to \omega_X$. Recall also that since the classical Verdier duality functor $\D'_X = (-)^\vee \circ \D_X$ is an anti-autoequivalence on sheaves, $\varpi_X$ and $\omega_X$ are linear duals of each other.

We will now repeat the exact same strategy used in the duality between smooth and proper CY structures, but with orientations on (co)sheaves instead. Let us pick spanning sets of objects for the cosheaf $\cF$ and the sheaf $\cP$; this allows us to write representatives for the Hochschild homology pre(co)sheaves using bar complexes of the precosheaves $\cF|_U^{x,y}$ on one side, and of the sheaves $\cF^{pp}|_U^{x,y}$ on the other.

Writing the same formula for the adjoint of the higher residue pairing map as in the case of $\cP \subset \cA^{pp} \subset \cA$, but replacing the factors of $\cA(x,y)$ and $\cP(x,y)$ by the objects above gives us a map in $\PreSh(X,\vvt)$
\[ \mathrm{DL}:\HH(\cP)_{S^1} \to \Hom(\HH(\cA)^{S^1},k) \]
Note that this is the adjoint map in the opposite direction from a sheafy version of the $\LD$ map we wrote before. Since linear dual sends colimits in $\vvt$ to limits, by universal property of the co/sheafification functors we get a morphism
\[ \cD\cL: \cH\cH(\cP)_{S^1} \to \Hom(\cH\cH(\cA)^{S^1},k) \]
in $\Sh(X,\vvt)$. Thus, given a map of cosheaves $\Theta: \varpi_X[d] \to \cH\cH(\cA)^{S^1}$, we can take its $k[[u]]$-linear dual to get a map of sheaves $\Omega = \Theta^\vee: \Hom(\cH\cH(\cA)^{S^1},k) \to \omega_X[-d]$.

We again take the cover $X = \bigcup_\sigma U_\sigma, U_\sigma = \Star(\sigma)$ indexed by the set of strata.
\begin{proposition}\label{prop:dualOne}
	If $\Theta|_{U_\sigma}$ is an orientation on each \textsc{ccc} space $(U_\sigma,\cF|_{U_\sigma})$, then $\cD\cL \circ \Theta^\vee$ is an orientation on the \textsc{csc} space $(X,\cP)$.
\end{proposition}
\begin{proof}
	By the locality result (Proposition \ref{prop:sheafLocality}) it is enough to check on any open cover. By Proposition \ref{prop:cccLocalCheck}, on each $U_\sigma$, $\Omega$ gives a relative CY structure on the corestriction $f:\cF(U_\sigma \setminus K_\sigma) \to \cF(U_\sigma)$ where $K_\sigma$ is a small compact retract of the star $U_\sigma$. The desired result then follows from applying Propositions \ref{prop:relativeProperToSmooth} and \ref{prop:cscLocalCheck}.
\end{proof}

\begin{remark}
	The proof above shows that the data of an orientation on a smooth cosheaf always gives rise to an orientation on its sheaf of pseudo-perfect modules. 
	However, this is of limited utility in practice. Despite their apparent symmetry, the nondegeneracy condition for orientations on smooth cosheaves is hard to check.
\end{remark}

\subsubsection{The case of locally saturated cosheaves} \label{sec:propertosmooth}
Let us now add the assumption that the cosheaf $\cF$ is locally saturated. With this extra assumption, we will prove a converse statement to Proposition \ref{prop:dualOne}. For this we will need some facts about the Hochschild/cyclic homology (co)sheaves associated to $\cF$. Consider the Hochschild homology cosheaf $  \cH\cH(\cF)$, the negative cyclic homology cosheaf $  \cH\cH(\cF)^{S^1}$ and the cyclic homology cosheaf $  \cH\cH(\cF)_{S^1}$. A priori these are all cosheaves valued in $\vvt$. Recall now the subcategories $\vvt^b_+,\vvt^b_-$ of $\vvt$, to which classical Verdier duality extends.
\begin{lemma}\label{lemma:bounded}
If the cosheaf $\cF$ is locally saturated, then the cosheaf $  \cH\cH(\cF)$ (resp. $  \cH\cH(\cF)^{S^1}$, $\cH\cH(\cF)_{S^1}$) is valued in $\Perf$ (resp. $\vvt^b_+, \vvt^b_-$). Moreover, the map $\cD\cL$ is an equivalence, exhibiting the sheaf $\cH\cH(\cF^{pp})$ (resp. $\cH\cH(\cF^{pp})_{S^1}$) as the linear dual of the cosheaf $\cH\cH(\cF)$ (resp. $\cH\cH(\cF)^{S^1}$).
\end{lemma}
\begin{proof}
Let $\cA$ be a saturated dg category. Then we have $\HH(\cA) \in \Perf$, and $\HH(\cA)^{S^1} \in \vvt^b_+$ and $\HH(\cA)_{S^1} \in \vvt^b_-$; this follows from the fact that they can be computed respectively by the negative cyclic and cyclic bicomplexes (for an exposition in the context of $\infty$-categories see \cite{Hoy}). Note that the directions are opposite from the usual complexes in the literature since we use cohomological grading. The desired statements then follow from finiteness of the stratification and from the closedness of the relevant subcategories of $\vvt$ under finite colimits.
\end{proof}

We now pick an inverse $\cD\cL^{-1}: \Hom(\cH\cH(\cF)^{S^1},k) \xrightarrow{\sim} \cH\cH(\cF^{pp})$. The same argument of the proof of Proposition \ref{prop:dualOne}, together with Corollary \ref{cor:relativeSmoothToProper}, give the following theorem.
\begin{theorem}\label{thm:dualTwo}
	If $\Omega: \cH\cH(\cP)_{S^1}\to \omega_X[-d]$ is an orientation on each \textsc{csc} space $(U_i,\cF^{pp}|_{U_i})$, then
	\[ (\Omega \circ \cD\cL^{-1})^\vee: \varpi_X \to \cH\cH(\cF)^{S^1}[-d] \]
	is an orientation on the \textsc{ccc} space $(X,\cF)$.
\end{theorem}

\begin{remark}
	The global sections $\cF(X)$ will in general be a smooth but not proper dg category, and the proposition above gives a way of checking the condition of being smooth CY structures purely in terms of usual Verdier duality, and on the sheaf of proper dg categories $\cF^{pp}$.
\end{remark}

\subsection{Quotients of cosheaves}\label{sec:quotients}
In this section we describe a construction whose purpose is to construct Calabi-Yau structures on the sort of quotients which arise from the sheaf theoretic version of 
``stop removal'' \cite{GPS2, GPS3}. 

Let $(X,\cF)$ be a \textsc{ccc} space and let $Y \subseteq X$ be an \emph{open} subset given by a union of strata. Let us denote by $j:Y \hookrightarrow X$ the inclusion map. Using the equivalence $\Cosh(X,\dgcat)^{op} \cong \Sh(X,\dgcat^{op})$, we regard $\cF$ as a $\dgcat^{op}$-valued sheaf on $X$. There are pullback and pushforward functors $j^* \dashv j_*$ for sheaves valued in any $\infty$-category \cite[Sec.7.3]{LurHTT}, in particular for the $\infty$-category $\dgcat^{op}$.

Now let $\cG \to \cF \to j_* j^* \cF$ be a \emph{fiber} sequence in $\Sh(X,\dgcat^{op})$, where $\cF \to j_* j^* \cF$ is the unit map of the adjunction. Using the anti-equivalence above this is equivalent to a \emph{cofiber} sequence $j_* j^* \cF \to \cF \to \cG$ in $\Cosh(X,\dgcat)$. Let us denote $\cF_{X/Y} = \cG$ to be the cofiber object; this is a $\dgcat^{op}$-valued constructible sheaf, that is, a constructible cosheaf of dg categories.

\begin{remark}
	This construction is best explained as an example of \emph{recollement} of $\infty$-categories of constructible sheaves, as described in \cite[Sec.A.8 and Sec.A.9]{LurHA}. If we set $Z = X \setminus Y$ to be the complement with a closed inclusion $i: Z \hookrightarrow X$, then the situation above describes a recollement of $\Sh(X,\cC)$ from the pair $\Sh(Z,\cC), \Sh(Y,\cC)$. Assuming that the pushforward $i_*$ has a further \emph{right adjoint} $i^!$, then from the recollement we get a fiber sequence
	\[ i_* i^! \cF \to \cF \to j_* j^* \cF \]
	and therefore can construct the fiber $\cF_{X/Y}$ as $i_* i^! \cF$. This is the case when, for instance, the coefficient category $\cC$ is a stable $\infty$-category \cite[Rem.A.8.19]{LurHA}. However, note that in our case $\cC = \dgcat^{op}$ is not a stable category. We do not know if the adjoint $i^!$ exists in this context; if it does then we can use it to set $\cG = i_* i^! \cF$. But the existence of this adjoint does not ultimately matter for us; we just set $\cG$ to be the fiber of the unit map, and will not need anything more from the general theory of recollements.
\end{remark}

Let us now discuss the cofiber of a fully faithful functor between saturated categories.
\begin{lemma}
	Let $\cA \xrightarrow{f} \cB \xrightarrow{g} \cC$ be a cofiber sequence in $\dgcat$, where $\cA$ and $\cB$ are saturated and $f$ is fully faithful. Then $\cC$ is also saturated.
\end{lemma}
\begin{proof}
	This seems to be well-known, but let us include the proof for completeness. The conditions of this lemma mean that we have an \emph{exact sequence} as defined in \cite[Def.5.1]{HSS}, following \cite{BGT}. By \cite[Prop.5.15]{BGT}, this implies that, at the level of homotopy categories, the cofiber $\cC$ is (the idempotent completion of) the Verdier quotient $\cB/\cA$. As smoothness is inherited by quotients \cite{Efi}, we know that $\cC$ is automatically smooth. Consider now the ind-completion of this sequence
	\[ \Ind(\cA) \xrightarrow{F} \Ind(\cB) \xrightarrow{G} \Ind(\cC), \]
	which is an exact sequence in $\DGCat$. Since $F$ and $G$ preserve compact objects, they have continuous right-adjoints $F^r,G^r$. By the argument in the proof of \cite[Prop.5.4]{HSS}, we know that $G^r$ is fully faithful and the null sequence
	\[ F F^r \to \id \to G^r G \]
	is a cofiber sequence. Now, since $\cA$ and $\cB$ are saturated, $F^r$ restricted to $\cB \cong \cB^{pp}$ lands in $\cA \cong \cA^{pp}$ and therefore preserves compact objects. Since compact objects are preserved by colimits, the have that $G^r G$ preserves compact objects as well, and since $G$ is essentially surjective the same goes for $G^r$. Therefore $g$ has a fully faithful right adjoint $g^r$ exhibiting $\cC$ as a full subcategory of the proper dg category $\cB$, proving properness. 
\end{proof}

Now as above take the cofiber sequence $j_* j^* \cF \to \cF \to \cF_{X/Y}$ in $\Cosh(X,\dgcat)$.

\begin{lemma} \label{lem:quotcosheaf}
	Suppose that $\cF$ is locally saturated, and that for any small enough open $U$ such that $\cF(U)$ is saturated, $\cF(U\ \cap Y)$ is also locally saturated, and the corestriction map
	\[ \cF(U\ \cap Y) \to \cF(U) \]
	is fully faithful. Then $\cF_{X/Y}$ is locally saturated.
\end{lemma}
\begin{proof}
	Follows from the lemma above, together with the observation that there is an equivalence $j_* j^* \cF(U) \cong \cF(U \cap Y)$.
\end{proof}

Let us now pass to the \textsc{csc} spaces obtained by taking pseudo-perfect modules. By the lemma above we know that $(X,\cF_{X/Y}^{pp})$ is also a \textsc{csc} space. Moreover, we have a morphism of sheaves
\[ \cH\cH(\cF_{X/Y}^{pp})_{S^1} \to \cH\cH(\cF^{pp})_{S^1}. \]
Given a map $\Omega: \cH\cH(\cF^{pp})_{S^1} \to \omega_X[-d]$ we can pull it back along the morphism above to a map $\Omega'$ from $\cH\cH(\cF_{X/Y}^{pp})_{S^1}$.

\begin{lemma}\label{lem: orientation on quotient}
	If $\Omega$ is an orientation on the \text{csc} space $(X,\cF^{pp})$, then $\Omega'$ is an orientation on the \text{csc} space $(X, \cF_{X/Y}^{pp})$
\end{lemma}
\begin{proof}
	Let $U \subset X$ be any open, and $a,b$ a pair of objects in $\cF_{X/Y}^{pp}(U)$. Consider the maps of sheaves
	\begin{align*}
		(\cF^{pp})|^{a,b}_U &\to \sheafHom((\cF^{pp})|^{b,a}_U, \omega_X[-d]) \\
		(\cF^{pp}_{X/Y})|^{a,b}_U &\to \sheafHom((\cF^{pp}_{X/Y})|^{b,a}_U, \omega_X[-d])
	\end{align*}
	induced by $\Omega$ and $\Omega'$, respectively. The fully faithful morphism $f: \cF_{X/Y}^{pp} \to \cF^{pp}$ in $\Sh(X,\dgcat)$ induces isomorphisms $(\cF_{X/Y}^{pp})|^{a,b}_U \xrightarrow{\simeq} (\cF^{pp})|^{a,b}_U$ in $\Sh(X, \Perf)$, intertwining the maps above, so nondegeneracy of $\Omega$ implies nondegeneracy of $\Omega'$.
\end{proof}

\section{Arboreal spaces} \label{sec:arb}
In \cite{N3, N4}, Nadler introduced a class of Legendrian singularities, termed `arboreal' because they were indexed by trees.  Microlocal sheaf
theory equips these with the structure of \textsc{ccc} spaces, and Nadler showed that 
(1) the cosheaf of dg categories is explicitly computable \cite{N3} and (2) every Legendrian singularity admits a deformation to a space
with only arboreal singularities, `noncharacteristic' in the sense of preserving global sections of the Kashiwara-Schapira sheaf.  

In this section we recall Nadler's `combinatorial' construction of these spaces (we rewrite \cite[Sec. 2.2]{N3} with more pictures), and 
give a corresponding combinatorial / representation-theoretic construction of their (co)sheaves of categories (roughly speaking, turning 
the right hand side of \cite[Prop. 4.6]{N3} into a definition). 

The main purpose is to fix notation and formulate the Definitions \ref{def:arborealSpace} and \ref{def:genArborealSpace}: an {\em arboreal space} is 
a pair $(\mathbb{X}, W)$ of a space $\mathbb{X}$ and a cosheaf of dg categories $\mathcal{W}$, 
locally modeled on Nadler's arboreal singularities. 

\subsection{Trees and arboreal singularities}
\subsubsection{Trees and correspondences} 
A tree is a connected acyclic graph.  A correspondence of trees $\mf p$ is a diagram 
\[ R \overset{q}\twoheadleftarrow S \overset{i}\hookrightarrow T \]
where $R, P, T$ are trees, and the maps $q$, $i$ are respectively surjective and injective maps of graphs.

For us, an injective map of graphs is given by an injection between the sets of vertices and an injection between sets of edges, respecting incidence; note this is more restrictive than an injective map of topological spaces. A surjective map of graphs is defined by the contraction of some subset of edges. An isomorphism of correspondences 
$ R \overset{q}\twoheadleftarrow S \overset{i}\hookrightarrow T $
and 
$R' \overset{q'}\twoheadleftarrow S' \overset{i'}\hookrightarrow T'$ 
is the data of isomorphisms $R \simeq R', S \simeq S', T \simeq T'$ which intertwine the maps. 

Given trees and maps $Q \hookrightarrow R \twoheadleftarrow S$, the
fiber product graph $Q \times_R S$ is the subtree of $S$ whose vertices
map to the image of $Q$.  
Thus correspondences can be composed: 
\[ (P \twoheadleftarrow Q \hookrightarrow R)  \circ 
 (R \twoheadleftarrow S \hookrightarrow T) = 
  (P \twoheadleftarrow Q\times_R S \hookrightarrow T). \]

\begin{definition}\label{def:arb}
The category $\Arb$ has objects given by trees, and morphisms given by correspondences of trees:
\[ \Hom_{\Arb}(T, R) := \{ \mf{p}\ |\ \mf{p} = (R \overset{q}\twoheadleftarrow S \overset{i}\hookrightarrow T) \} \]
with composition of morphisms given by composition of correspondences.
\end{definition}

\begin{definition} \label{def:arbT} 
Let $T$ be a tree. We write $\Arb_T$ for the category whose objects are given by correspondences $\mf{p} = (R \overset{q}\twoheadleftarrow P \overset{i}\hookrightarrow T)$ (where the rightmost tree is fixed to be $T$) and whose morphisms are given by
\[ \Hom_{\Arb_T}(\mf{p},\mf{p}') = \{ \mf{q}\ |\ \mf{p}' = \mf{q} \circ \mf{p} \}, \]
with composition induced by composition in $\Arb$.
\end{definition}
Note that the category $\Arb_T$ is equivalent to the undercategory of $\Arb$ on the object $T$.

\begin{lemma}
For any tree $T$, $\Arb_T$ is a poset.
\end{lemma}
\begin{proof} 
The reflexivity axiom is satisfied by the trivial correspondence; we also have transitivity by composition of correspondences. As for the anti-symmetry axiom, suppose that we have $\mf{p}' = \mf{q} \circ \mf{p}$ and $\mf{p}' = \mf{q}' \circ \mf{p}$. Then $\mf{q}\circ \mf{p}$ is a correspondence of the form $(T \twoheadleftarrow - \hookrightarrow T)$ which is necessarily the trivial correspondence, implying that $\mf{q}$ and $\mf{q}'$ are also trivial. It remains to check that for any $\mf{p}, \mf{p}'$, there is at most one element in $\Hom(\mf{p}, \mf{p}')$.
Suppose
$ (P \twoheadleftarrow Q \hookrightarrow R)  \circ 
 (R \twoheadleftarrow S \hookrightarrow T) = 
  (P \twoheadleftarrow N \hookrightarrow T) $.
We want to reconstruct $(P \twoheadleftarrow Q \hookrightarrow R)$
from just $(R \twoheadleftarrow S \hookrightarrow T)$ and $(P \twoheadleftarrow N \hookrightarrow T)$.    

The map  $N = Q\times_R S \to S$ determined by taking the (necessarily unique)
factorization of $N \hookrightarrow T$ as 
$N \hookrightarrow S \hookrightarrow T$.  From this we can characterize $Q$ 
as the image of $N$ under the map $S \twoheadrightarrow R$.  The map $Q \twoheadrightarrow
P$ is determined by the (necessarily unique) factorization of $N \twoheadrightarrow P$
into $N \twoheadrightarrow Q \twoheadrightarrow P$. 
\end{proof} 

We will also be interested in rooted trees, i.e. trees with a distinguished vertex.  
We regard a rooted tree as a directed graph by directing all edges \emph{towards} the root, and denote it by $\vv T$. 
A correspondence $\vv R \twoheadleftarrow \vv S \hookrightarrow \vv T$ of rooted trees is a correspondence 
of the underlying trees such that the image of the root of $S$ is the root of $R$, and the image of the root of $S$ is 
the closest vertex in (the image of) $S$ to the root of $T$.  We write $\vv{\Arb}$ for the category of rooted trees with morphisms given by correspondences of rooted trees, and $\vv{\Arb}_{\vv T}$ for the undercategory of $\vv T$.
 
The following lemma follows from an elementary consideration of compatible rootings.
\begin{lemma}\label{lem: forgetful map}
The forgetful map ${\vv{\Arb}}_{\vv T} \to \Arb_T$ is an isomorphism.
\end{lemma}
\begin{proof}
Given a correspondence $R \twoheadleftarrow S \hookrightarrow T$ and a rooting $\vv T$ of $T$, there
is a unique choice of rootings $\vv{R}, \vv{S}$ so that the same maps 
define a rooted correspondence $\vv R \twoheadleftarrow \vv S \hookrightarrow \vv T$.
\end{proof}

\subsubsection{Arboreal singularities}\label{subsec:arbsing}
Recall that the nerve $\Nrv(\cC)$ of a category $\cC$ is the simplicial complex whose vertices 
are the objects of $C$, morphisms are the edges, triangles are commuting triangles
giving compositions, etc. We will denote $|\Nrv(\cC)|$ for its geometric realization. When $\cC$ is a poset, this is also called the order complex; if moreover $\cC$ is finite this is a complex with cells of bounded dimension.

\begin{definition} Let $T$ be a tree. We write $\overline{\T} = |\Nrv(\Arb_T)|$ for the order complex of the arboreal category. 
We write $\T$ for the union of simplices containing the initial correspondence $\mf{p}_T = (T \twoheadleftarrow T \hookleftarrow T)$, 
and $\T^\text{link} = \overline{\T} \setminus \T$ for the complement of this union.  
\end{definition}

The space $\T$ is conical; the initial object $\mf{p}_T \in \Arb_T$ gives the 
cone point and $\T^\mathrm{link}$ gives the link.  

\begin{example}
We write $A_2$ for the tree $\bullet - \bullet$.  We label the vertices $\alpha$ and $\beta$. 
There are four correspondences: the trivial correspondence
$\mf{p}_0 = (\alpha - \beta) \twoheadleftarrow (\alpha - \beta) \hookrightarrow (\alpha - \beta)$,
the correspondence $\bullet \twoheadleftarrow (\alpha - \beta) \hookrightarrow (\alpha - \beta)$, and two correspondences 
$\bullet \twoheadleftarrow \bullet \hookrightarrow (\alpha - \beta)$
for inclusions of $\alpha$ or $\beta$. 

We abbreviate these by enclosing in parenthesis those vertices of 
$A_2$ which get identified in the quotient $R \twoheadleftarrow S$.  
So, for example, we will denote
the three nontrivial correspondences by $\alpha, \beta, (\alpha\beta)$ 
and the trivial correspondence simply by $\alpha\beta$.

In the poset structure, the three nontrivial correspondences are incomparable, and the 
correspondence $\mf{p}_0$ is smaller than all of them.  Thus there are 7 strata in the order
complex $\overline{\A_2}$: the four 0-simplices $[\alpha\beta],[\alpha],[\beta],[(\alpha\beta)]$,
and three 1-simplices $[\alpha\beta \to \alpha],[\alpha\beta \to \beta],[\alpha\beta \to (\alpha\beta)]$. 
This can be realized as the following stratified space 
of dimension one (Figure \ref{fig:a2sing}). Note that $\A_2^{link}$ 
is the disjoint union of 3 points,
each labeled by a 0-simplex $[\mf q] \neq [\mf p_0]$.

\begin{figure}[h]
    \centering
    \includegraphics[width=0.40\textwidth]{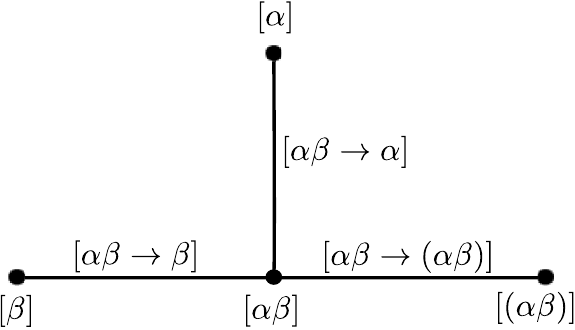}
         \caption{Arboreal singularity $\overline{\A_2}$. For simplicity we use the
     notation described above for each correspondence.}
    \label{fig:a2sing}
\end{figure}
\end{example}

\begin{example}
We write $A_3$ for the tree $\bullet - \bullet - \bullet$, whose vertices we label
$\alpha - \beta - \gamma$.  There are eleven correspondences: the trivial one,
four correspondences of the form $[\bullet \to \bullet] \twoheadleftarrow \dots$ and six of the form 
$[\bullet] \twoheadleftarrow \dots$. There are 45 strata in the order complex $\overline{\A_3}$:
11 zero-dimensional, 22 one-dimensional and 12 two-dimensional strata, assembled as in Figure \ref{fig:a3sing}. The link $\A_3^{link}$ can be glued out of copies of $\overline{\A_2}$. 

\begin{figure}[h]
    \centering
    \includegraphics[width=0.45\textwidth]{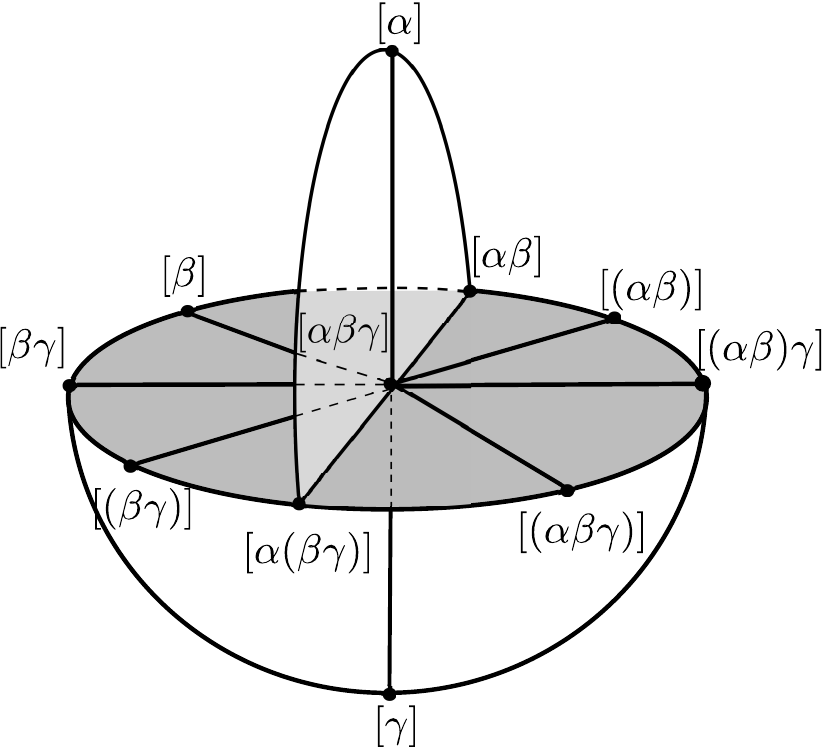}    
    \caption{Arboreal singularity $\overline{\A_3}$. This singularity is homeomorphic to a union of three 2-discs, along half-discs; in the figure above the gray disc is horizontal and the two white half-discs discs are glued to it along two perpendicular diameters, one to the top and another to the bottom. We only labeled the 0-simplices; the labels on all other simplices can be deduced from their vertices. The link $\A_3^\mathrm{{link}}$ can be seen to be homeomorphic to the 1-skeleton of a tetrahedron.}
    \label{fig:a3sing}
\end{figure}
\end{example}

\subsubsection{Tree quiver representations} 
We recall some standard facts about representations of tree quivers.  
These facts can be found in many sources, e.g. \cite{Hap, Par}. Let $\vv T$ be a finite quiver, i.e., a finite directed graph. We write $k[\vv T]$ for the path algebra of the quiver, whose generators are the vertices and arrows, subject to the relations that the vertex generators
are idempotent, and $f g = 0$ unless the head of $f$ is the tail of $g$ (and the head and tail of a vertex are itself). 
That is, we read paths {\em from left to right}.

Quiver representations correspond to 
{\em right modules} over this algebra; we write $\mmod(\vv T)$ for the (underived!) abelian category of such modules, 
$\Mod(\vv T)$ for the derived dg category of dg modules, and $\Perf(\vv T)$ for category of perfect dg modules, that is, the subcategory of perfect objects in $\Mod(\vv T)$. These are dg enhancements of the usual derived categories of algebra representations, and we can identify them as objects $\Mod(\vv T) \in \DGCat$, $\Perf(\vv T) \in \dgcat$, and indeed $\Mod(\vv T) = \Ind \Perf(\vv T)$. Henceforth we restrict attention to the case when the underlying graph of $\vv T$ is a tree. 
In this case the path algebra $k[\vv T]$ is smooth and proper, 
so the perfect and pseudoperfect modules coincide. 

For vertices $\alpha, \beta \in \vv{T}$, we write $\alpha \ge \beta$ when
there is a path \emph{from} $\alpha$ \emph{to} $\beta$, and we denote this unique path by $|\alpha\rangle \langle \beta|$. 
These compose in the usual way, $|\alpha \rangle \langle \beta | |\beta \rangle \langle \gamma| = |\alpha \rangle \langle \gamma|$, and all other compositions vanish. We are particularly interested in the case when the edge directions arise 
from the choice of a fixed root vertex of $T$, by directing all the edges 
\emph{toward} the root.  We pronounce $\alpha \ge \beta$ as ``alpha is above beta'' or ``beta is below alpha", 
so that everything is above the root. For any vertex $\alpha$, we denote $\hat\alpha$ to be the unique vertex such that there is an arrow of length one from $\alpha \to \hat\alpha$, i.e., the unique vertex directly below $\alpha$.

We write $P_\alpha :=|\alpha \rangle \langle \alpha| k[\vv T]$ for the right
module of ``paths from $\alpha$''.  All paths must have a beginning, and so: 

\begin{lemma}
There is an isomorphism of right $\vv T$ modules 
$k[\vv T] = \bigoplus_\alpha P_\alpha$, and 
$\Hom_{\mmod(\vv T)}(P_\alpha, P_\beta)$ vanishes
unless $\alpha \le \beta$, in which case it is one dimensional, generated
by composition on the left with the unique path  $ |\beta \rangle \langle \alpha|$.  

In particular, the $P_\alpha$ are the indecomposable projectives of $\mmod(\vv T)$, 
and in particular their Homs are calculated by the same formula in $\Mod(\vv T)$. 
\end{lemma}

\begin{example}
For the quiver $1 \to 2 \to 3 \to \cdots \to n$, the path algebra can be identified with an algebra of 
 triangular $n \times n$ matrices.  The matrix $| i \rangle \langle j |$ corresponds to the unique path
 from the $i$th vertex to the $j$th vertex.  The composition 
 $|i \rangle \langle j | |j \rangle \langle k | = |i \rangle \langle k|$ corresponds to the left-to-right composition rule
 $(i \to j)(j \to k) = (i \to k)$. 
\end{example}

We recall that quivers corresponding to the same underlying tree but with different arrow orientations have representation categories related by reflection
functors, defined in \cite{BGP}.  A source (sink) is a vertex that only has outgoing (incoming) 
arrows. Given a source $\alpha$, let $s_\alpha \vv T$ be the quiver obtained by reversing all the arrows at $\alpha$. There is 
a reflection functor $R^+_\alpha:\mmod(\vv T) \to \mmod(s_\alpha \vv T)$, which in fact preserves compact objects and so
induces a dg derived equivalence $\Perf(\vv T) \to \Perf(s_\alpha \vv T)$.  Likewise, at sinks there are similar reflection functors $R^-_\alpha$. The quiver structure for a rooted tree has all arrows pointing to the root. 
Because the underlying graph is acyclic,  two such structures corresponding to different roots $\rho_1,\rho_2$ 
can be related by a sequence of moves $s_\alpha$.  Thus the derived categories $\Mod(\vv T)$ and $\Perf(\vv T)$
depends only on the underlying tree (up to non-canonical equivalence).  Choosing a root determines a $t$-structure,
and thus the distinguished set of projective generators $\{P_\alpha \}$.

\subsubsection{A representation of Arb in categories of quiver representations}
We now construct functors from our arboreal categories into the category of dg categories, which map each rooted tree $\vec T$ to the representation category of its path algebra. Functors between such representation categories are defined by (derived) tensor product with bimodules; in order to explicitly define those we use a model structure induced from the model structure on chain complexes.

Let us denote by $\Ch$ the category of (unbounded) chain complexes of $k$-vector spaces. This category has a combinatorial monoidal model structure (see e.g. \cite[Sec.1.3.5]{LurHA}) for which every object is cofibrant; the $\infty$-category $\vvt$ is a localization. Let $A,B$ be two dg algebras; and let us denote by $A\mh\mmod\mh B(\Ch)$ the (ordinary) category of $(A,B)$-bimodule objects of $\Ch$. We now paraphrase \cite[Prop.4.3.3.15 and Thm.4.3.3.17]{LurHA} in our context:
\begin{proposition}
	The category $A\mh\mmod\mh B(\Ch)$ inherits a natural combinatorial model structure from $\Ch$, for which there is a natural equivalence of $\infty$-categories
	\[ \Nrv(A\mh\mmod\mh B(\Ch))[W^{-1}] \xrightarrow{\sim} A\mh\Mod\mh B \]
	to the $\infty$-category $A\mh\Mod\mh B$ of $A,B$-bimodules valued in $\vvt$.
\end{proposition}
Therefore, we can describe objects of $A\mh\Mod\mh B$ and equivalences between them by dg bimodules and bimodule maps which are weak equivalences of chain complexes.

Let $\mf{p} = (\vv R \overset{q}\twoheadleftarrow \vv S \overset{i}\hookrightarrow \vv T)$ be a correspondence of rooted trees. We now associate some bimodules to the maps in this correspondence; as vector spaces, each of them will be identified with a subspace of the path algebra $k[\vv T]$ (seen simply as a vector space). We will denote them by $k[\dots]$ for some collection of paths in $\vv T$. For ease of notation we also identify $\vv S$ with its image in $\vv T$.
\begin{definition}
	To the inclusion $i: \vv S \hookrightarrow \vv T$ we associate $(k[\vv T], k[\vv S])$-bimodules $M_i^0,M_i^{-1}$ given by
	\begin{align*}
	M_i^{-1} &= k\left[ |\alpha\rangle\langle\beta|\ \middle|\ \alpha > \vec S, \beta \in \vv S \right], \\
	M_i^{0} &= k\left[ |\alpha\rangle\langle\beta|\ \middle|\ \beta \in \vv S \right],
	\end{align*}
	where the left $k[\vv T]$ and right $k[\vv S]$-actions are given by composition of paths. The natural inclusion $M_i^{-1} \subset M_i^{0}$ commutes with these actions and defines a dg bimodule $M_i = [M_i^{-1} \to M_i^0]$.
	
	To the quotient $q: \vv S \twoheadrightarrow \vv R$ let us denote by $\vv Q \subset \vv S$ the (possibly not connected) subquiver that gets contracted by $q$, and by $\tilde Q$ the subset of its vertices that are not roots of connected components of $\vv Q$. That is, $\tilde Q$ contains exactly the vertices $\beta \in \vv Q$ such that $\hat\beta \in \vv Q$. We then associate the space
	\[ M_q = k\left[ |\alpha\rangle\langle\beta|\ \middle|\ \beta \notin \tilde Q \right] \]
	and define a $(k[\vv S], k[\vv R])$-bimodule structure on it, where $k[\vv S]$ acts by composition on the left, and a path in $k[\vv R]$ acts by composing with its unique lift to $k[\vv S]$ that has the lowest starting point (this canonical lift always exists).
\end{definition}

Recall that we are working in the categories of algebras and bi(modules) over the category $\Ch$ of chain complexes with its model structure. We have the following statement (about ordinary categories).
\begin{lemma}\label{lem:bimodules}
	The bimodules $M^{0,-1}_i$ are projective over $k[\vv T]$ and the bimodule $M_q$ is projective over $k[\vv S]$. Moreover, the functors
	\begin{align*}
	- \otimes_{k[\vv T]} M_i &: \mmod\mh k[\vv T](\Ch) \to \mmod\mh k[\vv S](\Ch) \\
	- \otimes_{k[\vv S]} M_q &: \mmod\mh k[\vv S](\Ch) \to \mmod\mh k[\vv R](\Ch)
	\end{align*}
	have left adjoints.
\end{lemma}
\begin{proof}
	The first statement follows from the fact that each of the bimodules above can be identified with a direct sum whose components are projective (left) modules $P^\mathrm{left}_\alpha$ for some $\alpha$ (the left module of all paths ending at $\alpha$). For the second statement, we now define the left adjoint by giving explicit complexes of bimodules, together with unit and counit maps.
	
	Let us first deal with the inclusion map $i$. We define a dg $(k[\vv S], k[\vv T])$-bimodule $L_i = [L^0_i \to L^1_i]$, whose degree zero term is simply
	\[ L^0_i = k\left[ |\alpha\rangle\langle\beta|\ \middle|\ \alpha \in \vv S \right], \]
	with left and right actions given by composition. 
	
	The degree 1 term is more complicated; it is a vector space spanned by pairs
	\[ \left( |\hat\gamma\rangle\langle\alpha|, |\gamma\rangle\langle\beta| \right) \]
	of paths in (the image of) $\vv S$ and $\vv T$, respectively, where $\alpha \in \vv S$, $\gamma \in \vv T \setminus \vv S$ but its successor $\hat\gamma \in \vv S$ (that is, $\gamma$ is one vertex above $\vv S$). The right $k[\vv T]$ action is just given by composition in the second factor (appending on $\beta$) but the left $k[\vv S]$-action is given by the formula
	\[ |\alpha'\rangle\langle\alpha| \cdot \left( |\hat\gamma\rangle\langle\alpha|, |\gamma\rangle\langle\beta| \right)= \begin{cases} \left( |\hat\gamma\rangle\langle\alpha'|, |\gamma\rangle\langle\beta| \right),\ \text{if}\ \alpha' < \gamma \\ 0,\ \text{otherwise.}
	\end{cases} \]
	We define the map $L^0_i \to L^1_i$ by sending
	\[ |\alpha\rangle\langle\beta| \mapsto \sum_{\gamma \in \vv T \setminus \vv S\ \text{such that}\ \alpha \le \hat\gamma \in \vv S}  \left( |\hat\gamma\rangle\langle\alpha|, |\gamma\rangle\langle\beta| \right) \]
	and checking that this intertwines the left and right actions. 
	
	We now establish that $- \otimes_{k[\vv S]} L_i$ is left-adjoint to $- \otimes_{k[\vv T]} M_i$, by exhibiting the unit and counit maps, given by morphisms of bimodules $\eta: k[\vv S] \to L_i \otimes_{k[\vv T]} M_i$ and $\epsilon: M_i \otimes_{k[\vv S]} L_i \to k[\vv T]$. From the description above we see that $L^0_i \otimes_{k[\vv T]} M^0_i$ can be naturally identified with $k[\vv S]$, and that $L^1_i \otimes_{k[\vv T]} M^{-1}_i$ can be identified with the space spanned by pairs of paths
	\[ \left( |\hat\gamma\rangle\langle\alpha|, |\gamma\rangle\langle\beta| \right) \]
	as in $L^1_i$, but with both $\alpha, \beta \in \vv S$. With these identifications, we define the unit map
	\[ \eta: k[\vv S] \to \left( L^0_i \otimes_{k[\vv T]} M^0_i \right) \oplus \left( L^1_i \otimes_{k[\vv T]} M^{-1}_i \right) \]
	by sending
	\[ |\alpha\rangle\langle\beta| \mapsto |\alpha\rangle\langle\beta| + \sum_{\gamma \in \vv T \setminus \vv S\ \text{such that}\ \alpha \le \hat\gamma \in \vv S} \left(|\hat\gamma\rangle\langle\alpha|, |\gamma\rangle\langle\beta|\right). \]
	
	As for the counit map, we identify $M^0_i \otimes_{k[\vv S]} L^0_i$ with the space
	\[ k\left[ |\alpha\rangle\langle\beta|\ \middle|\ \alpha > \vv S\ \text{or}\ \alpha \in \vv S, \beta \in \vv T \right] \]
	and the space $M^{-1}_i \otimes_{k[\vv S]} L^1_i$ with the subspace of $k[\vv T]$ of paths with nontrivial intersection with $\vv S$; both of these spaces have natural maps to $k[\vv T]$ which moreover give maps of $k[\vv T]$-bimodules. It is then a straightforward computation to check that $\eta,\epsilon$ satisfy the triangle relations.
	
	We now do the same for the quotient map $q$. We define a $(k[\vv R],k[\vv S])$-bimodule $L_q$ by
	\[ L_q = k\left[ |\alpha\rangle\langle\beta|\ \middle|\ \alpha \notin \tilde Q \right], \]
	with actions defined in a similar way to the actions on $M_q$. We then have identifications
	\[ L_q \otimes_{k[\vv T]} M_q \simeq k\left[ |\alpha\rangle\langle\beta|\ \middle|\ \alpha, \beta \notin \tilde Q \right], \]
	that is, paths that can cross $\tilde Q$ but cannot end or start there, and
	\[ M_q \otimes_{k[\vv T]} L_q \simeq k\left[ |\alpha\rangle\langle\beta|\ \middle|\ |\alpha\rangle\langle\beta| \notin \mathrm{Image}(\tilde Q) \right], \]
	that is, paths that may start or end in $\tilde Q$, but cannot be contained in a single connected component of the subforest on the $\tilde Q$ vertices. The unit map is given by lifting a path in $k[\vv R]$ to its preimage with minimal start (which by construction cannot be in $\tilde Q$) and the counit map is inclusion into $k[\vv T]$. Again we can directly check the triangle relations.
\end{proof}

We now put these functors together; for a correspondence $\mf{p} = (\vv R \overset{q}\twoheadleftarrow \vv S \overset{i}\hookrightarrow \vv T)$ we define the $(k[\vv T],k[\vv R])$-bimodule $M_{\mf p} = M_i \otimes_{k[\vv S]} M_q$. By definition, it is a two-term complex of bimodules, where each term can be identified with a subspace of $k[\vv T]$. These identifications provides enough functoriality at the level of 2-morphisms, in the following sense.
\begin{lemma}\label{lem:compositions}
	Given correspondences $\mf{q} = \mf{p}' \circ \mf{p}$, there is an isomorphism of bimodules
	\[ \phi_{\mf{p}, \mf{p}'}: M_{\mf{p}' \circ \mf{p}} \to M_{\mf p} \otimes_{k[\vv R]} M_{\mf p'}. \]
	These maps satisfy a natural compatibility relation for compositions of three correspondences.
\end{lemma}
\begin{proof}
	Let $\mf{p} = (\vv R \overset{q}\twoheadleftarrow \vv S \overset{i}\hookrightarrow \vv T)$ and $\mf{p}' = (\vv R' \overset{q'}\twoheadleftarrow \vv S' \overset{i'}\hookrightarrow \vv R)$ be the two compositions, with subforests $Q$ and $Q'$ corresponding to the quotient maps $q,q'$, respectively. The composition $\mf{p}'' = \mf{p}' \circ \mf{p}$ is then given by $(\vv R' \overset{q''}\twoheadleftarrow \vv S' \times_{\vv R} \vv S \overset{i''}\hookrightarrow \vv T)$. We can identify $\vv S' \times_{\vv R} \vv S$ with a rooted subtree of $\vv T$ on the vertices of $S$ which map to the image of $i$; one can also see that the rooted subforest corresponding to $q''$ is given by
	\[ \vv Q'' = \left( \vv Q \cap (\vv S' \times_{\vv R} \vv S) \right) \cup \vv Q', \]
	where we identify everything with sub-trees or forests inside $\vv T$.
	
	Now for some path $|\alpha\rangle\langle\beta|$ in $\vv T$ we define conditions $C_1,C_2,C_3$ as follows: $|\alpha\rangle\langle\beta|$ satisfies $C_1$ if $\alpha > \vv S$ in $\vv T$, $C_2$ if there is $\gamma \in S \setminus (q')^{-1}(i'(S'))$ such that $\alpha > \gamma > \beta$ and $C_3$ if $\beta \notin \tilde Q''$ (as before, $\tilde Q''$ is the non-root vertices of $\vv Q''$). We can then calculate each term in
	\[  M_{\mf p} \otimes_{k[\vv R]} M_{\mf p'} = \left[ M^{-1}_{\mf p} \otimes_{k[\vv R]} M^{-1}_{\mf p'} \to M^{0}_{\mf p} \otimes_{k[\vv R]} M^{-1}_{\mf p'} \oplus M^{-1}_{\mf p} \otimes_{k[\vv R]} M^{0}_{\mf p'} \to M^{0}_{\mf p} \otimes_{k[\vv R]} M^{0}_{\mf p'}\right] \]
	as vector spaces spanned by paths in $\vv T$ following some subset of these conditions.
	\begin{align*}
		M^{-1}_{\mf p} \otimes_{k[\vv R]} M^{-1}_{\mf p'} &\simeq k \left[ |\alpha\rangle\langle\beta| \middle|\ C_1,C_2\ \text{and}\ C_3 \right] \\
		M^{-1}_{\mf p} \otimes_{k[\vv R]} M^{0}_{\mf p'} &\simeq k \left[ |\alpha\rangle\langle\beta| \middle|\ C_1\ \text{and}\ C_3 \right] \\
		M^{0}_{\mf p} \otimes_{k[\vv R]} M^{-1}_{\mf p'} &\simeq k \left[ |\alpha\rangle\langle\beta| \middle|\ C_2\ \text{and}\ C_3 \right] \\
		M^{0}_{\mf p} \otimes_{k[\vv R]} M^{0}_{\mf p'} &\simeq \left[ |\alpha\rangle\langle\beta| \middle|\ C_3 \right], \\
	\end{align*}
	with maps given by the inclusions, with a minus sign on the map $M^{-1}_{\mf p} \otimes_{k[\vv R]} M^{-1}_{\mf p'} \to M^{-1}_{\mf p} \otimes_{k[\vv R]} M^{0}_{\mf p'}$, due to the $-1$ shift.
	
	Comparing the conditions above with the rooted subtree $\vv S' \times_{\vv R} \vv S$ we see that there are identifications
	\begin{align*} M^{-1}_{\mf{p}' \circ \mf{p}} &\simeq k \left[ |\alpha\rangle\langle\beta| \middle|\ \text{not}\ C_1,\ \text{not}\ C_2, C_3 \right] \\
	M^{0}_{\mf{p}' \circ \mf{p}} &\simeq k \left[ |\alpha\rangle\langle\beta| \middle|\ C_3 \right].
	\end{align*}
	Using these identifications we define a map $M_{\mf{p}' \circ \mf{p}} \to M_{\mf p} \otimes_{k[\vv R]} M_{\mf p'}$ as identity in degree zero and diagonal embedding into $M^{0}_{\mf p} \otimes_{k[\vv R]} M^{-1}_{\mf p'} \oplus M^{-1}_{\mf p} \otimes_{k[\vv R]} M^{0}_{\mf p'}$ in degree -1. We check this gives an isomorphism of bimodules, and functoriality for three-term compositions follows from an explicit calculation, which basically holds due to the fact that we can identify all the corresponding vector spaces as subsets of the same path algebra $k[\vv T]$.
\end{proof}

We now put together Lemmas \ref{lem:bimodules} and \ref{lem:compositions} in the formalism of $\infty$-categories.
\begin{proposition}\label{prop:arborealfunctors}
	There is an $\infty$-functor
	\[ \cA: \Nrv(\vv{\Arb}) \to \dgcat \]
	that on objects and morphisms satisfies $\cA(\vv T) \simeq \Perf(\vv T)$ and $\cA(\mf p) \simeq - \otimes_{k[\vv T]} M_{\mf p}$. Moreover, it lands in the subcategory of $\dgcat$ where all morphisms are left-dualizable, that is, dg functors with (derived) left adjoints. Taking left adjoints gives another $\infty$-functor
	\[ \cA^\ell: \Nrv(\vv{\Arb}) \to \dgcat^{op}. \]
	We will call these functors $\cA$ and $\cA^\ell$ \emph{arboreal functors}.
\end{proposition}
\begin{proof}
	We will begin by constructing the desired map at the level of strict categories: by Lemma \ref{lem:compositions}, the bimodules we associated to correspondences give a pseudofunctor from the 1-category $\vv \Arb$ to the strict 2-category $\Alg^2(\Ch)$ whose objects are dg algebras, 1-morphisms are dg bimodules and 2-morphisms are morphisms of dg bimodules. Using the results from \cite[Sec.4.3.3]{LurHA} and \cite[Ch.11.B]{GR}, by localization along the morphisms of bimodules which are weak equivalences, we get a functor into an $(\infty,2)$-category $\Alg^2(\vvt)$ whose objects are algebra objects in $\vvt$, and 1-morphisms are bimodules; this category appears in \cite{Hau} and its maximal $(\infty,1)$-subcategory is $\mathrm{Morita}(\vvt)$ in the notation of \cite[Sec.4.8]{LurHA}.
	
	From the fact that the morphisms of bimodules $\phi_{\mf{p}, \mf{p}'}$ of Lemma \ref{lem:compositions} is a weak equivalence we get that our map restricts to an $\infty$-functor $\Nrv(\vv{\Arb}) \to \mathrm{Morita}(\vvt)$. The latter can be identified with the full subcategory of $\DGCat$ on the objects of the form $\Mod\mh\cA$ for some $\cA \in \Alg(\vvt)$. Recall that every bimodule in the complex $M_{\mf p}$ is projective as a $k[\vv T]$ module; the corresponding dg functor preserves perfectness, so our arboreal functor $\cA$ factors through $\cA: \Nrv(\vv{\Arb}) \to \dgcat$; moreover by Lemma \ref{lem:bimodules} it furthermore factors through the subcategory on the left-dualizable morphisms.
\end{proof}

Let us characterize the image of morphisms under the functor $\cA$, by looking at its action on the generating projectives.
\begin{lemma} \label{lem:projectivecalc}	
Consider a correspondence $\mf p: \vv R \overset{q} \twoheadleftarrow \vv S  \overset{i} \hookrightarrow \vv T$, 
and any vertex $\alpha \in T$.  Then there is an equivalence
\[ \cA(\mf p)(P_\alpha) \cong \begin{cases} P_{q(i^{-1}(\alpha))} & \qquad \alpha \in i(S)  \\ 0 & \qquad \text{otherwise.} \end{cases} \]
The morphism $\Hom_{\vv T}(P_\alpha, P_\beta) \to \Hom_{\vv R}(\cA(\mf p)(P_\alpha), \cA(\mf p)(P_\beta))$ sends 
$|\beta \rangle \langle \alpha| \to |q(i^{-1}(\beta)) \rangle \langle q(i^{-1}(\alpha))|$ (and hence is an isomorphism) when
these are defined; otherwise it is zero. 
\end{lemma}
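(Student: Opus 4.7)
The plan is to prove the lemma by analyzing the two constituent functors of $c_{\mf p} = c_2 \circ c_1$ separately on the generating projectives. Here $c_1 : Rep(\vec T) \to Rep(\vec S)$ is tensoring over $k[\vec T]$ with the $(k[\vec T], k[\vec S])$-bimodule $k[\vec S] = k[\vec T]/I$, where $I$ is the two-sided ideal of paths not in $i(S)$; and $c_2 : Rep(\vec S) \to Rep(\vec R)$ is restriction of scalars along the algebra map $f : k[\vec R] \to k[\vec S]$ described in the text.

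For $c_1$, since $P_\alpha = |\alpha\rangle\langle\alpha| \, k[\vec T]$ is cyclic, one has
\[ c_1(P_\alpha) = P_\alpha \otimes_{k[\vec T]} k[\vec S] = \pi(|\alpha\rangle\langle\alpha|) \cdot k[\vec S], \]
where $\pi : k[\vec T] \twoheadrightarrow k[\vec S]$ is the quotient. The image $\pi(|\alpha\rangle\langle\alpha|)$ equals the vertex idempotent $|i^{-1}(\alpha)\rangle\langle i^{-1}(\alpha)|$ if $\alpha \in i(S)$ and is zero otherwise, which gives the dichotomy in the lemma after this first step.

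For $c_2$, I reduce (as the text suggests) to the case of collapsing one subtree $Q \subset S$ with lowest vertex $\rho$. A key observation is that $f$ is in general not unital: the element $e := f(1_{k[\vec R]}) = \sum_{\beta \in V(R)} |s(\beta)\rangle\langle s(\beta)|$ is an idempotent supported on $s(V(R)) = (V(S) \setminus Q) \cup \{\rho\}$, and the correct restriction of scalars sends a right $k[\vec S]$-module $N$ to $N \cdot e$ equipped with the $k[\vec R]$-action $n \cdot r = n \cdot f(r)$. For $N = P_\gamma^{(\vec S)}$, this space is spanned by $\{|\gamma\rangle\langle s(\beta)| : s(\beta) \le \gamma\}$. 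I would then show that $s : V(R) \to V(S)$ is both order-preserving and order-reflecting with respect to the path-partial orders: preservation follows from $q \circ s = \mathrm{id}$ together with the order-preservation of $q$, while reflection reduces to the observation that any downward path in $\vec S$ meeting $Q$ must exit through $\rho$ (the unique lowest vertex of the subtree). This yields a bijection $\beta \leftrightarrow s(\beta)$ between the surviving basis above and the standard basis $\{|q(\gamma)\rangle\langle\beta| : \beta \le q(\gamma)\}$ of $P_{q(\gamma)}^{(\vec R)}$, and a direct computation of the right action through $f$ verifies that $|\gamma\rangle\langle s(\beta)| \mapsto |q(\gamma)\rangle\langle\beta|$ is $k[\vec R]$-linear.

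Composing yields the formula on objects. For morphisms, recall $|\beta\rangle\langle\alpha| : P_\alpha \to P_\beta$ is simply left multiplication. When both $\alpha, \beta \in i(S)$, connectedness of the subtree $i(S)$ forces the unique path from $\beta$ to $\alpha$ in $\vec T$ to lie entirely in $i(S)$, so $\pi(|\beta\rangle\langle\alpha|) = |i^{-1}(\beta)\rangle\langle i^{-1}(\alpha)|$; tracking left multiplication by this element through the isomorphism of the previous step produces left multiplication by $|q(i^{-1}(\beta))\rangle\langle q(i^{-1}(\alpha))|$ in $k[\vec R]$, which is indeed a valid path since $q$ preserves the order. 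If either $\alpha$ or $\beta$ lies outside $i(S)$, the source or target projective is zero and the morphism vanishes. The hard step is establishing the order-reflection property of $s$, which calls for a tidy case analysis according to the position of the endpoints relative to the collapsed vertex $q(Q)$; once this is in hand, the rest of the proof is bookkeeping about how path compositions interact with $\pi$ and $f$.
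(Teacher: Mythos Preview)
Your approach coincides with the paper's: both factor $c_{\mf p}$ through $Rep(\vec S)$ and compute each step on the $P_\alpha$. The paper identifies the restricted module by evaluating at each vertex idempotent, $M^{(\lambda)} = P_\gamma \cdot f(|\lambda\rangle\langle\lambda|)$, and then checking that the arrow maps are isomorphisms; your basis-bijection is the same computation repackaged. Your remark that $f$ is non-unital is a genuine clarification the paper glosses over, though the idempotent-by-idempotent calculation handles it implicitly since $\bigoplus_\lambda M^{(\lambda)} = M \cdot e$.

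One small slip: the justifications you give for order-preservation and order-reflection of $s$ are swapped. The identity $q \circ s = \mathrm{id}$ together with order-preservation of $q$ yields \emph{reflection} of $s$ (apply $q$ to an inequality $s(\beta_1) \le s(\beta_2)$). It is \emph{preservation} of $s$ that needs the geometric input: because $s$ sends the collapsed vertex to the minimum $\rho$ of the convex subtree $Q$, a downward path in $\vec R$ lifts along $s$ to a downward path in $\vec S$ (this is exactly your ``exit through $\rho$'' observation). In fact what your basis bijection actually uses is the single biconditional $s(\beta) \le \gamma \Longleftrightarrow \beta \le q(\gamma)$, which the paper also asserts; its forward direction is order-preservation of $q$, and its backward direction is order-preservation of $s$ combined with $s(q(\gamma)) \le \gamma$. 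Order-reflection of $s$ is not strictly needed. This is a labeling issue rather than a gap; once the two arguments are correctly paired, your proof goes through and matches the paper's.
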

\begin{proof}
	By definition, the dg functor $\cA(\mf p)$ is calculated by $- \otimes_{k[\vv T]} M_{\mf p}$, that is, tensoring with a two-term complex of bimodules that are projective as left $k[\vv T]$-modules; the statement above then follows from an explicit computation.
\end{proof}

\begin{remark}
Comparing the functor in Lemma \ref{lem:projectivecalc} to the description in \cite[Sec.4.2]{N3} shows that the (co)sheaf we have constructed here agrees
with the Nadler's calculation of the microlocal sheaf categories. 
\end{remark}

\subsection{Sheaves and cosheaves on an arboreal singularity}
\subsubsection{Constructible sheaves on simplicial complexes}
We briefly recall how to describe constructible sheaves on a simplicial complex.  
For a simplex $\sigma$ in a simplicial complex $X$, we write $\Star(\sigma)$ for the union of
open simplices whose closure contains $\sigma$.  To give a sheaf $\cF$ on $X$, constructible 
with respect to the stratification by simplices, it suffices to give the values of $\cF$ on 
the open sets $\Star(\sigma)$, and the corresponding restriction maps
$\cF(\Star(\sigma)) \to \cF(\Star(\tau))$ when $\Star(\tau) \subset \Star(\sigma)$, i.e., 
when $\sigma$ lies in the closure of $\tau$.  The appropriate diagrams should commute. 
Our definition of simplicial complex demands that the closure of an open simplex is a closed simplex, so 
there are no non-trivial overlaps, hence no descent conditions. 

The restriction
$\cF(\Star(\sigma)) \to \cF_\sigma$ is then necessarily an isomorphism, so one can instead
discuss `generization maps' $\cF_\sigma \to \cF_\tau$ when $\sigma$ lies in the closure of
$\tau$; this is the so-called `exit path' description of a constructible sheaf. 
A similar description works for any sufficiently fine stratification satisfying appropriate local contractibility
conditions. 

From a functor $F: \cX \to \cY$ we may extract the generization maps for a $\cY$-valued 
constructible sheaf $\Nrv(F)$ on the geometric realization of the nerve $|\Nrv(\cX)|$, as follows.  On objects, we set
\[ \Nrv(F) ([x_1 \to x_2 \to \cdots \to x_n])  = F(x_n) \]
and the generization maps are given by 
\[ \Nrv(F) ([x_{m} \to  \cdots \to x_{m'}] \to [x_1 \to x_2 \to \ldots \to x_n]) 
= F(x_m' \to x_{n}), \]
where the map $x_m' \to x_n$ comes from the fact that  $x_{m} \to  \cdots \to x_{m'}$
was a subsequence of  $x_1 \to x_1 \to \ldots \to x_n$.  
The fact that $F$ was a functor translates to the compatibility of the generization maps. 

Note however that in order for this data to define a sheaf valued in $\cY$, we must be able 
to say what values it takes on unions of strata.  These will be given by a limit.  In the setting
at hand, we will work with finite stratifications, so insist that $\cY$ be closed under finite limits. 

\begin{remark}
Though it is not necessary, one can rephrase the above in the language of exit-path categories, as developed in \cite[Sec.A.6]{LurHA}. For any $A$-stratified space $X$, one can associate an $\infty$-category $\Sing^A(X)$, its exit-path category, such that there is an equivalence
\[ \Fun(\Sing^A(X),\cC) \simeq \Sh^A(X,\cC) \]
for any $\infty$-category $\cC$ with small limits. In the case where $X = |\Nrv(\cX)|$ for some category $\cX$, with its natural stratification $A$, there is an equivalence $\Sing^A(|\Nrv(\cX)|) \simeq \Nrv(\cX)$ so any functor out of $\cX$  determines a constructible sheaf on the geometric realization of the nerve.  If the nerve is finite, then we do not need to require $\cC$ to have all small limits; it is enough to require existence of finite limits.
\end{remark}

\subsubsection{The arboreal sheaf and cosheaf}\label{subsection:nsheaf}
Let $T$ be a tree. Recall that we have a category $\Arb_T$ and a space $\overline{\T} := \Nrv(\Arb_T)$. Choosing a root determines a quiver $\vv T$, giving a category $\vv\Arb_{\vv T}$ which is equivalent to $\Arb_T$. Composing the functors of proposition \ref{prop:arborealfunctors} with the forgetful functor $\vv\Arb_{\vv T} \to \vv\Arb$, we have functors $\arb: \Arb_T \to \dgcat^{op}$ and $\arb^\ell: \Arb_T \to  \dgcat$.   

\begin{definition} \label{def:nsheaf}
The \emph{arboreal sheaf}  $\cA_{\vv T} \in \Sh(\overline{\T}, \dgcat)$ is the sheaf associated to the functor $\arb$, and the \emph{arboreal cosheaf} $\cA^{co}_{\vv T} \in \Cosh(\overline{\T}, \dgcat)$ 
is the cosheaf associated to the functor $\arb^\ell$. 
\end{definition}

Note that by definition of the functors $\arb$ and $\arb^\ell$, the arboreal cosheaf and sheaf are both locally saturated; we then have a \textsc{ccc} space $(\overline{\T},\cA_{\vv T}^{co})$ and a \textsc{csc} space $(\overline{\T},\cA_{\vv T})$, with the arboreal singularity as underlying topological space; also we have $\cA_{\vv T} = (\cA_{\vv T}^{pp})$.

\begin{remark}
Here we understand the topology on $\overline{\T}$ as being the stratification topology: stars of strata are the basis of opens.  The relevance of this
is that $\dgcat$  has (only) homotopy finite limits and colimits. 

Writing this topological space as $\overline{\T}^{str}$ 
and the usual topology as $\overline{\T}^{us}$, there is a continuous map $t: \overline{\T}^{us} \to \overline{\T}^{str}$.  To pull back the sheaf or cosheaf, we would however
have to change the target category from $\dgcat$ to $\DGCat$ so as to have all limits and colimits.  Having done so we may
consider the sheaf or cosheaf $t^* \Ind \cA_{\vv T}$ in the usual topology.  
\end{remark} 

For brevity we will usually drop the subscript $\vv T$ when it is unambiguous. Let us describe the arboreal sheaf explicitly. Let $\mf{p}$ be a correspondence $R \twoheadleftarrow S \hookrightarrow T$.  
We write  $\T(\mf{p})$ for the union of all simplices $[\mf{p}_1 \to \mf{p}_2 \cdots \to \mf{p}]$.  
Then $\overline\T = \coprod \T(\mf{p})$,  and, by definition, any sheaf associated to a functor from $\Arb_T$ is constant
on the  $\T(\mf{p})$.  
In fact, each $\T(\mf{p})$ with $\mf{p} \neq \mf{p}_T$ is topologically an open cell of dimension $|T| - |R|$ \cite[Prop 2.14]{N3}.\footnote{Nadler 
writes $\mathrm{L}_T$ for our $\T$ and  $\mathrm{L}_T(\mf{p})$ for our  $\T(\mf{p})$ in \cite{N3}.  Our notation
is chosen to emphasize that no symplectic geometry or microlocal sheaf theory is directly needed to 
understand the essentially combinatorial definitions and proofs.} 
The cone point $\T(\mf{p}_T)$ is the unique zero dimensional cell.  
Moreover, 
$\overline{\T(\mf{p})} = 
\coprod_{\mf{p}' \leq \mf{p}} \T(\mf{p}')$ 
\cite[Prop. 2.18]{N3}.

\begin{figure}[h]
    \centering
    \includegraphics[width=0.45\textwidth]{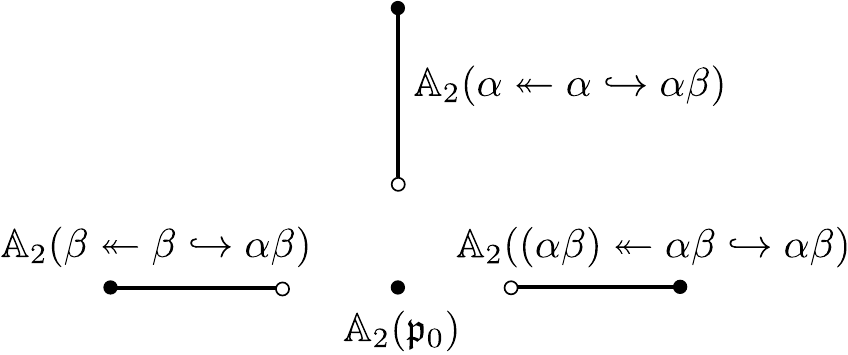}
         \caption{Stratification of the arboreal singularity $\overline\A_2$, by
         the strata $\A_2(\mf p)$.} 
    \label{fig:a2strata}
\end{figure}

Fix $\vv T$ and denote $\arb = \arb_{\vv T}$.  Because $\arb$ is constructible with respect to a stratification by a 
union of cells which all adjoin $\mf{p}_T$, 
the restriction map $\Gamma(\T, \arb) \to \arb_{\mf{p}_T}$ 
is an isomorphism. In particular, $\Gamma(\T, \arb) \simeq \Perf(\vv{T})$.  
Given an object $X \in \Gamma(\T, \arb) = \Perf(\vv{T})$, 
the germ at a point in 
$\T(R \twoheadleftarrow S \hookrightarrow T)$ is an element of 
the category $\Perf(\vv{R})$.  The desired object 
is produced by applying the correspondence functor $c_{\mf p}$ obtained from
$(R \twoheadleftarrow S \hookrightarrow T)$ to $X$. 

\subsubsection{Hom sheaves} Recall that given a sheaf of dg categories $\cF$ and objects $x,y \in \cF(X)$, there is a sheaf $\cF|_X^{x,y} \in \Sh(X,\vvt)$ which on $U$ evaluates to $\Hom_{\cF(U)}(\rho x,\rho y)$. For ease of notation, we will denote
\[ \arb_{\alpha, \beta} := \arb|_\T^{P_\alpha,P_\beta} \]
for the Hom sheaf between the generating projectives.

We will need explicit descriptions of the Hom sheaves between the generating projectives $P_\alpha$. Since we know
what the functors $c_{\mf p}$ do to the projective objects from Lemma \ref{lem:projectivecalc}, it is just a matter of assembling
the sheaf of the morphisms between Hom spaces.

\begin{definition}
For $\alpha$ a vertex of $T$, we write
\[ \T(\alpha):= \coprod_{\alpha \in S} \T(R \twoheadleftarrow S \hookrightarrow T). \]
\end{definition}

\begin{remark}\label{rmk:discs}
In \cite{N3}, Nadler gives an explicit construction of the arboreal singularities: for each
vertex $\alpha$ of $T$, take a copy of $\R^{|T|-1}$
with coordinates $x^\gamma(\alpha), \gamma \neq \alpha$. The topological space $\T$ is recovered by gluing these spaces: 
for each edge $\{\alpha,\beta\}\in E(T)$, identify points 
with coordinates $x_\gamma(\alpha)$ and $x_\gamma(\beta)$ whenever
$
x_\beta(\alpha) = x_\alpha(\beta) \geq 0$ and 
$ x_\gamma(\alpha) = x_\gamma(\beta)$ for  $\gamma \neq \alpha, \beta$.
Comparing this construction with the combinatorial definition \cite[Sec. 2]{N3} it is proven that the strata 
$\T(R \twoheadleftarrow S \hookrightarrow T)$ sit in the closure of the Euclidean space corresponding to $\alpha$
exactly when $\alpha \in S$. Thus $\T(\alpha)$ is homeomorphic to a closed ball of dimension $|T|-1$.
\end{remark}

\begin{figure}[h]
    \centering
    \includegraphics[width=0.8\textwidth]{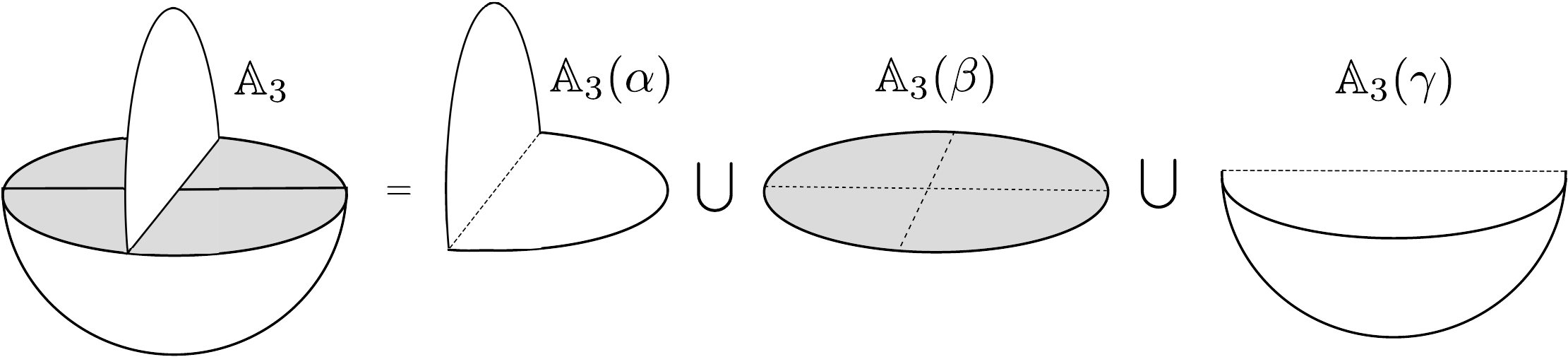}
         \caption{Gluing of $\overline\A_3$ from the discs $\A_3(\bullet)$. $\A_3(\alpha)$ and $\A_3(\gamma)$ are shown creased, with half-disc flaps pointing up and down, respectively.}
    \label{fig:a3decomposition}
\end{figure}

The following calculations are new.
\begin{proposition}
The sheaf $\arb_{\alpha,\alpha} \in \Sh(\T,\vvt)$ is the constant rank one (in degree zero) sheaf on $\T(\alpha)$.  
\end{proposition}
\begin{proof}
Let us describe the  functor on $\Arb_T$ whose nerve is the sheaf $\arb_{\alpha,\alpha}$.
By Lemma \ref{lem:projectivecalc}, on objects this functor is:
\[ (R \twoheadleftarrow S \hookrightarrow T) \mapsto 
\Hom_{\vv R}(P_{q(i^{-1}(\alpha))}, P_{q(i^{-1}(\alpha))}) = \begin{cases} k, \qquad \alpha \in i(S) \\ 0, \qquad 
\mathrm{otherwise.} \end{cases} \]
These Hom spaces have the identity as a basis element, which must be preserved by 
the functorial structure, hence gives a global section trivializing the sheaf hom. 
\end{proof}

To describe other Hom sheaves we have to worry about the orientation of the arrows in the quiver. For any pair of vertices $\alpha,\beta$, consider the following subset of $\T$:
\[ \T(\alpha, \beta)  := \coprod_{\substack{\alpha, \beta \in S \\ q(i^{-1}(\alpha)) \le q(i^{-1}(\beta))}}
\T(R \twoheadleftarrow S \hookrightarrow T). \]

\begin{figure}[h]
    \centering
    \includegraphics[width=0.6\textwidth]{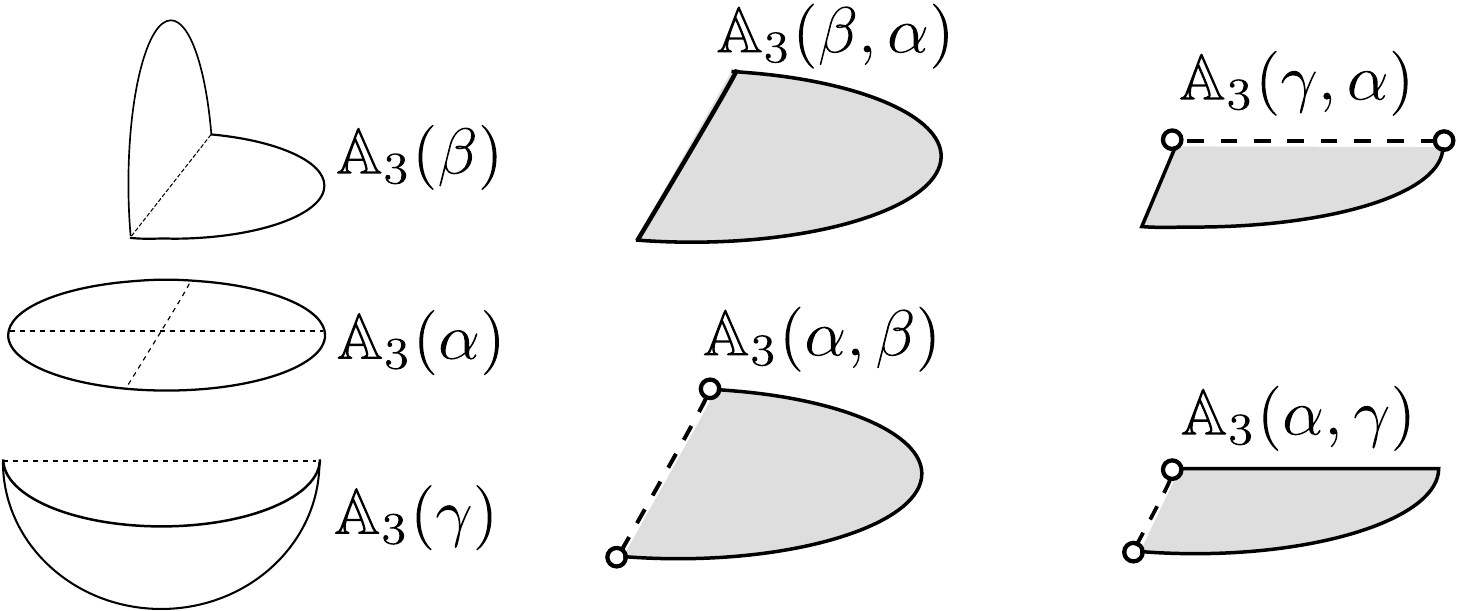}
         \caption{The subsets $\A_3(\bullet,\bullet)$ where $\A_3$ has the quiver structure $\alpha \rightarrow \beta \leftarrow \gamma$. The subsets $\A_3(\alpha),\A_3(\beta),\A_3(\gamma)$ are homeomorphic to closed discs, and the arboreal singularity is obtained by gluing them appropriately; on the left $\A_3(\alpha),\A_3(\gamma)$ are shown with folded flaps up and down, and are glued along the horizontal parts to $\A_3(\beta)$.
         Note that the subsets $\T(\lambda_1,\lambda_2)$ depend on the directions of the arrows in $T$, and moreover as in the proof above
         for any vertices $\lambda_1,\lambda_2$,
         the difference between $\T(\lambda_1,\lambda_2)$ and $\T(\lambda_1) \cap \T(\lambda_2)$ is
         at most deletion of some boundary strata.}
    \label{fig:a3supportHom}
\end{figure}

\begin{proposition} \label{prop:sheafhom}
The sheaf $\arb_{\alpha, \beta}$ is the constant rank one sheaf on $\T(\alpha, \beta)$. 
\end{proposition}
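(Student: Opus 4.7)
The plan is to mirror the proof of the preceding proposition, using Lemma~\ref{lem:projectivecalc} to read off both stalks and generization maps of $\underline{\Hom}_{\cN}(P_\alpha, P_\beta)$ directly from the functor whose nerve defines it.

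By construction, $\underline{\Hom}_{\cN}(P_\alpha, P_\beta)$ is the nerve of the functor on $Arb_T$ sending $\mf p = (R \twoheadleftarrow S \hookrightarrow T)$ to $\Hom_{\vec R}(c_{\mf p}(P_\alpha), c_{\mf p}(P_\beta))$. Applying the first half of Lemma~\ref{lem:projectivecalc} to each argument, the stalk vanishes unless both $\alpha, \beta \in i(S)$, in which case it reduces to $\Hom_{\vec R}(P_{q(i^{-1}(\alpha))}, P_{q(i^{-1}(\beta))})$. By the standard fact about rooted-tree path algebras recalled earlier in the section, this Hom space is the one-dimensional $k$-vector space spanned by the path $|q(i^{-1}(\beta))\rangle\langle q(i^{-1}(\alpha))|$ precisely when $q(i^{-1}(\alpha)) \le q(i^{-1}(\beta))$ in $\vec R$, and is zero otherwise. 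Comparing with the definition, the locus of nonzero stalks is exactly $\T(\alpha,\beta)$, and the stalks there are one-dimensional.

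To upgrade rank one to constancy on $\T(\alpha,\beta)$, I would invoke the second half of Lemma~\ref{lem:projectivecalc}: on morphism spaces, the functor $c_{\mf p}$ sends the distinguished basis path $|\beta\rangle\langle\alpha|$ to the distinguished basis path $|q(i^{-1}(\beta))\rangle\langle q(i^{-1}(\alpha))|$ whenever both are defined, and is an isomorphism in that case. Using the compatibility $c_{\mf q \circ \mf p'} = c_{\mf q} \circ c_{\mf p'}$, the same statement holds for the generization map induced by any morphism in $Arb_T$ between two correspondences both lying in $\T(\alpha,\beta)$. Hence the distinguished basis paths assemble into a nowhere-vanishing section of the restriction $\underline{\Hom}_{\cN}(P_\alpha, P_\beta)|_{\T(\alpha,\beta)}$, trivializing it as a constant rank one sheaf, while the sheaf vanishes outside $\T(\alpha,\beta)$.

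The mild subtlety to watch out for — and the only real obstacle, which explains why this statement is slightly more delicate than the preceding proposition — is that when $\alpha$ and $\beta$ are not comparable in $\vec T$, the generic stratum $\T(\mf p_T)$ is not in $\T(\alpha,\beta)$: there is no global basis element in $\Hom_{\vec T}(P_\alpha, P_\beta)$ to trivialize the sheaf on all of $\T$. The trivialization therefore genuinely exists only after restriction to $\T(\alpha,\beta)$, and one must verify that the locally defined basis paths at the various strata of $\T(\alpha,\beta)$ are indeed matched by the generization maps (rather than, say, being nonzero but differing by a consistent cocycle). This coherence is immediate from the explicit basis-to-basis formula of Lemma~\ref{lem:projectivecalc}, and completes the identification.
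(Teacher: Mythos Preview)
Your proof is correct and matches the paper's primary argument essentially verbatim: compute stalks via Lemma~\ref{lem:projectivecalc}, then observe that the same lemma sends basis path to basis path under every generization map, yielding a trivializing section over $\T(\alpha,\beta)$. The paper additionally supplies a second, independent argument --- an explicit topological description showing $\T(\alpha,\beta)$ is contractible (it is $\T(\alpha)\cap\T(\beta)$ with at most some boundary strata deleted) --- which you omit; this contractibility is not needed here but is invoked later in the nondegeneracy proof.
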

\begin{proof}
Again, let us write the functor on $\Arb$ giving rise to this sheaf.  By Lemma \ref{lem:projectivecalc}, on objects it is 
\[ (R \twoheadleftarrow S \hookrightarrow T) \mapsto 
\Hom_{\vv R}(P_{q(i^{-1}(\alpha))}, P_{q(i^{-1}(\beta))}) = \begin{cases} k, \qquad \alpha, \beta  \in S\,\, \& 
\,\, q(\alpha) \le q(\beta) \\ 0, \qquad 
\mathrm{otherwise.} \end{cases} \]
Here, when $\alpha \notin S$, we interpret $P_{q(i^{-1}(\alpha))}$ as $0$; and similarly for $\beta$.  
This shows that the sheaf has the correct stalks.  Lemma 
\ref{lem:projectivecalc} also described how the functor acts on the natural basis for these spaces, showing
that the sheaf is locally constant and in fact giving a global section, showing it is constant. 

Another way to see that the sheaf is constant is to explicitly describe the locus 
$\T(\alpha, \beta)$ and show it is contractible.
First, note that $\T(\alpha, \beta) \subseteq
\T(\alpha)\cap \T(\beta)$, with equality if $\alpha \leq \beta$. Since $\T(\alpha)$
and $\T(\beta)$ are both homeomorphic to $\R^{|T|-1}$ and are glued together along a half-space, their
intersection is homeomorphic to a closed ball.
For the case where $\alpha \nleq \beta$, the inclusion is strict, so suppose that we have a simplex 
\[ [\mf p_1 \to \dots \to \mf p_m] \subset (\T(\alpha)\cap \T(\beta))-\T(\alpha, \beta). \]
If we denote $\mf p_m = (R_m \twoheadleftarrow S_m \hookrightarrow T)$, this is the same as having 
$\alpha,\beta \in S_m$ but $q(\alpha) \neq q(\alpha \vee \beta)$.
Consider now the correspondence $\mf q = (R \twoheadleftarrow Q \hookrightarrow R_m)$, where $Q$ is some subtree
of $R_m$ containing $q(\alpha)$ but not containing $q(\beta)$. Then $\beta \notin Q \times_{R_m} S_m$,
which means the simplex $[\mf p_1 \to \dots \mf p_m \to \mf p_m \to \mf q \circ \mf p_m]$ is in $\T(\alpha)$ but
not in $\T(\beta)$, so this simplex is contained in the boundary of $\T(\alpha)\cap\T(\beta)$. Thus $\Lambda(\alpha,\beta)$ is obtained by deleting parts of the boundary of the closed ball $\T(\alpha)\cap\T(\beta)$,so it is contractible. 
\end{proof}

\subsubsection{Generalized arboreal singularities}\label{subsec:genarb}
In the expansion of Legendrian singularities, certain additional `generalized arboreal singularities' appear \cite{N4}.  Such a space is obtained from an ordinary arboreal singularity by deleting some of the strata; its data is encoded by a rooted tree $\vv T$ together with a subset of marked leaves $\ell \subseteq V(T)$, such that every element of $\ell$ is maximal for the partial order induced by the rooting. Let us define $\vv T^+$ to be the rooted tree obtained from $\vv T$ by adjoining a new vertex $\alpha^+$ for each $\alpha \in \ell$, with a new edge $\alpha^+ \to \alpha$.   

\begin{definition} \label{def: generalized arboreal singularity} (following Prop. 4.29 in \cite{N4})
The \emph{generalized arboreal singularity} $\overline\T^*$ corresponding to $(\vv T, \ell)$ is the stratified space obtained from the arboreal singularity $\overline\T^+$ by deleting the union $Y$ of subsets $\T^+(\mf p_\alpha) \cup \T^+(\mf p_{(\alpha^+ \alpha)})$ for every $\alpha \in \ell$. We denote $\T^* = \T^+ \cap \overline\T^*$.

From the construction of Section \ref{sec:quotients}, there is a locally saturated cosheaf on $\T^*$ given by
\[ \arb^{co}_{\vv T,\ell} := (\arb^{co}_{\vv T})_{\T^+/Y}|_{\T^*} \]
giving $\T^*$ the structure of a \textsc{ccc} space; and also of a \textsc{csc} space with the sheaf $\arb_{\vv T,\ell}$ of pseudo-perfect modules.
\end{definition}

\begin{figure}[h]
    \centering
    \includegraphics[width=0.8\textwidth]{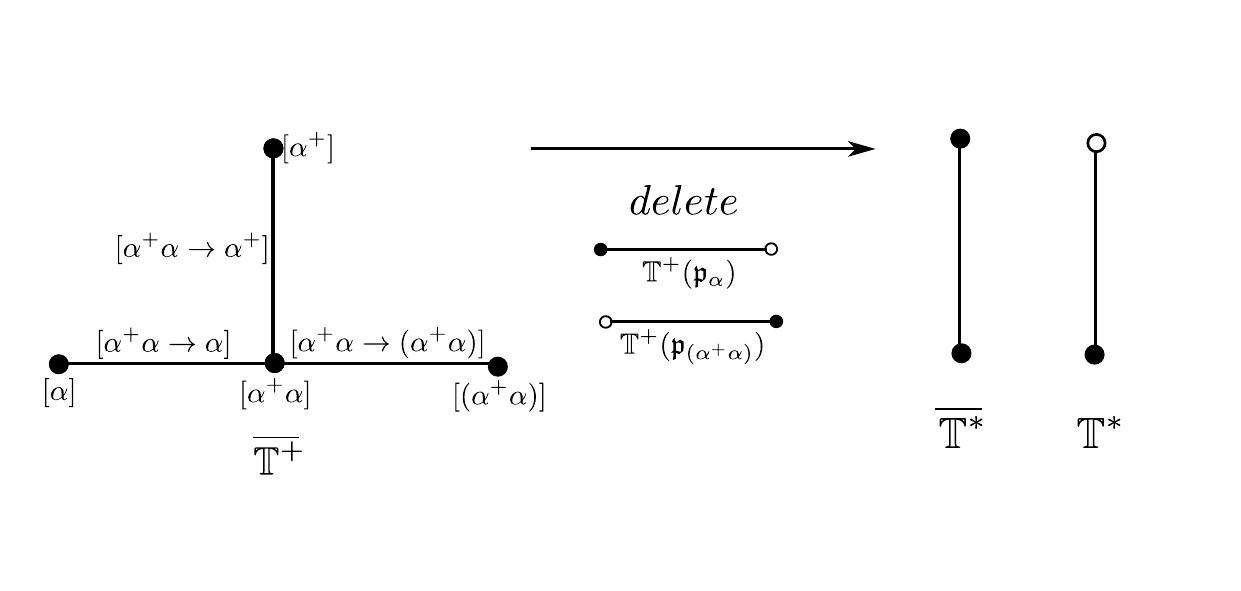}
         \caption{The generalized arboreal singularity corresponding to the singleton quiver $\vec{T} = \alpha$ with the vertex also a marked leaf $\ell = \{\alpha\}$. We delete two open subsets to get $\overline\T^*$; also shown is $\T^* = \overline\T^* \cap \T^+$.}
    \label{fig:genarb1}
\end{figure}

\begin{figure}[h!]
    \centering
    \includegraphics[width=0.8\textwidth]{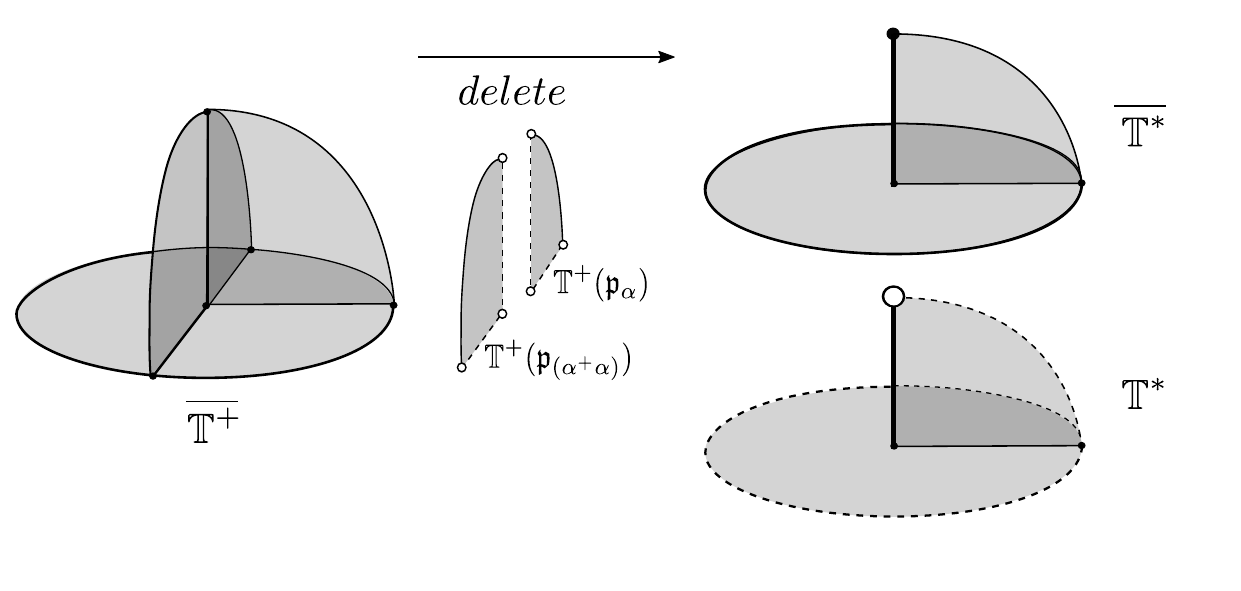}
         \caption{The generalized arboreal singularity corresponding to the $A_2$ quiver $\vec T = \alpha\to \beta$ with marked leaf $\ell = \{\alpha\}$. Here we obtain $\overline\T^*$ from deleting some of the strata in the arboreal singularity $\overline\T^+$ corresponding to the augmented quiver $\alpha^+ \to \alpha \to \beta$. Bottom right corner shows $\T^*$; note that it contains part of its boundary.}
    \label{fig:genarb2}
\end{figure}

\section{Orientations on arboreal spaces}
\subsection{Arboreal spaces}
\begin{definition}\label{def:arborealSpace}
A \textsc{ccc} space $(\X, \cW)$ is {\em special arboreal}  
if it is locally modeled on $(\T, \cA^{co}_{\vv T})$ for varying $\vv T$.  

More precisely, for every $x \in \X$, there is some $\vv T = \vv T(x)$, some refinements of stratifications on $\X$ and $\T$,
after which there is some stratified neighborhood $\mathbb{U} \ni x$ and a stratified embedding $m: \mathbb{U} \hookrightarrow \T$ 
and an isomorphism $\cW|_{\mathbb{U}} \cong m^* \cA^{co}_{\vv T}|_{\mathbb U}$. 
\end{definition}

Note that we ask $\X$ is locally modeled on the {\em interiors} $\T$ of the arboreal singularities $\overline{\T}$.  

\begin{remark} Because the spaces $(\T, \cA_{\vv T})$ are themselves, away from the central point, locally modeled on other
such spaces for smaller trees, in the above definition we could equivalently 
ask that $x$ maps to the central point of $\T$ if we allow $m$ to be a stratified submersion.  This would have 
the virtue of making $T(x)$ a well defined function of $x$. 
\end{remark}

More generally, 
\begin{definition}\label{def:genArborealSpace}
A \textsc{ccc} space $(\X, \cW)$ is  {\em arboreal} if it is locally
modeled on the $(\T^*, \arb^{co}_{\vv T,\ell})$ of Def. \ref{def: generalized arboreal singularity}, for varying choices of $ (\vv T,\ell)$. 
\end{definition}

Note that a special arboreal space is an arboreal space in which we choose the set of marked leaves to be empty.

\begin{lemma}
An arboreal space $(\X, \cW)$ is smooth (i.e. $\cW(U)$ is smooth for all $U$) and stalkwise proper.  The dual \textsc{csc} space 
$(\X, \cW^{pp})$ is proper and stalkwise smooth. 
\end{lemma}
\begin{proof}
Follows from the fact that the cosheaves $\arb^{co}_{\vv T,\ell}$ are locally saturated, together with the fact that finite colimits of smooth dg categories are smooth.
\end{proof}

\subsection{An orientation on an arboreal space}\label{sec:locarb}
Consider an arboreal space $(\X,\cW)$ as above. By Theorem \ref{thm:checkOnProper}, which will be proved later in Section \ref{sec:propertosmooth}, orientations on the \textsc{ccc} space $(\X, \cW)$ are equivalent to orientations on the \textsc{csc} space $(\X,\cW^{pp})$. Moreover, by Lemma \ref{lem: orientation on quotient}, we only have to consider orientations on special arboreal spaces.

We will first show the local models $(\T, \arb)$ admit orientations. More precisely, 
we will show that a rooting of $T$ induces a canonical isomorphism $\cH\cH(\arb) \simeq \omega_{\T}[1 -|T|]$,
which moreover gives an orientation on the \textsc{csc} space $(\T, \arb)$.

\subsubsection{The dualizing complex of an arboreal singularity}
Recall that an explicit representative for the Verdier dualizing sheaf $\omega_X$ is given by the ``sheaf of local singular chains'' \cite[p.91]{GM}. That is, let ${C}^{-d}$ be the sheaf which on
sufficiently small open sets is given by $\bC^{-d}(U) = C_d(X, X \setminus U; k)$, where $C_d$ 
is the singular $d$-chains, and the sheaf structure is defined by the evident restriction maps; the singular chain differential 
collects these into a complex of sheaves.

\begin{proposition}\label{prop:omega}
With notation as above, the stalk of $\omega_\T$ at a stratum labeled by a correspondence $\mf p = (R \twoheadleftarrow S \hookrightarrow T)$
is concentrated in degree $-(n-1)$, where it is given by a direct sum decomposition
\[ \omega_\T^{-(n-1)}(\T(\mf p)) \simeq \bigoplus_{\alpha \in R} k_\alpha \simeq k^{|R|}, \]
where each $k_\alpha \simeq k$.
Now suppose we have correspondences $\mf{p}' = \mf{q} \circ \mf{p}$, where $\mf{p} = (R' \twoheadleftarrow S' \hookrightarrow T)$ and 
$\mf{q} = (R' \overset{q}\twoheadleftarrow Q \overset{i}\hookrightarrow R)$. Then the simplex $[\mf{p}]$ is in the closure of $[\mf{p}\to \mf{p}']$
and the generization map $\omega_\T(\T(\mf{p})) \to \omega_\T(\mf{p}')$ is given, in the decomposition above, by
\[ \bigoplus_{\alpha \in R} k_\alpha \to \bigoplus_{\beta \in R'} k_\beta, \]
where $1 \in k_\alpha$ gets sent to $1 \in k_{q(\alpha)}$ if $\alpha \in S$ and $0$ otherwise. In other words, the map adds all
the factors corresponding to vertices that get identified by the quotient $q$. Moreover, if one picks an orientation of a top stratum of $\T$, there is a canonical choice of isomorphisms above.
\end{proposition}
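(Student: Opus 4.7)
The plan is to pin down the dualizing complex stratum by stratum using a product-type structure on $\T$ to reduce to a single local calculation at the cone point of the arboreal singularity of a smaller tree, and then to carry out that calculation using the sheet decomposition of Remark \ref{rmk:discs}.

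\textbf{Step 1: Local product structure and reduction.} The first step is to observe that, by the gluing presentation of Remark \ref{rmk:discs}, a neighborhood of any interior point $x$ of an open stratum $\T_\circ(\mathfrak{p})$, with $\mathfrak{p} = (R \twoheadleftarrow S \hookrightarrow T)$, is homeomorphic as a stratified space to $\T_\circ(\mathfrak{p}) \times \T_R$, where $\T_R$ denotes the arboreal singularity associated to the tree $R$ (built by the same recipe used for $T$) and $x$ corresponds to an interior point of $\T_\circ(\mathfrak{p})$ paired with the cone point of $\T_R$. Representing $\omega_\T$ by the sheaf of local chains and using $\omega_{\T_\circ(\mathfrak{p})} \cong k[|T|-|R|]$, the claim on stalks reduces to showing that the local homology of $\T_R$ at its cone point is $k^{|R|}$, concentrated in degree $|R|-1$, with a canonical decomposition $\bigoplus_{\alpha \in R} k_\alpha$ indexed by the vertices of $R$.

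\textbf{Step 2: The local homology calculation.} I would then use the decomposition of $\T_R$ into its $|R|$ disc-sheets $\T_R(\alpha)$ from Remark \ref{rmk:discs}: each is a closed $(|R|-1)$-ball containing the cone point in its relative interior, two sheets meet along a closed half-space precisely when the corresponding vertices are joined by an edge of $R$, and higher intersections are organized by connected subtrees. Running a Mayer--Vietoris argument for this cover of $\T_R$ minus its cone point, each punctured sheet deformation retracts onto a sphere of dimension $|R|-2$, and the tree (acyclic) structure of $R$ forces the resulting spectral sequence to collapse to a single column, yielding $k^{|R|}$ in top degree with one generator per sheet. The canonical $k_\alpha$ is the image of the relative fundamental class of the oriented sheet $\T_R(\alpha)$. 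This spectral-sequence collapse is the step I expect to be the main technical obstacle; an equivalent approach, which I would keep in reserve, is induction on $|R|$ by pruning a leaf and comparing the sheet structures of $\T_R$ and $\T_{R \setminus \text{leaf}}$.

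\textbf{Step 3: Generization maps and canonical orientation.} With the cone-point stalk identified in terms of sheets, the generization from $\T_\circ(\mathfrak{p})$ to $\T_\circ(\mathfrak{p}')$, with $\mathfrak{p}' = \mathfrak{q} \circ \mathfrak{p}$ and $\mathfrak{q} = (R' \twoheadleftarrow Q \hookrightarrow R)$, corresponds under the product decomposition to moving out of the cone point of $\T_R$ in the direction of the stratum of $\T_R$ labeled by $\mathfrak{q}$: a sheet $\T_R(\alpha)$ for $\alpha$ outside the image of $Q \hookrightarrow R$ is pushed out of the local neighborhood and contributes zero, whereas for $\alpha$ in the image of $Q$ the sheet becomes the sheet of $\T_{R'}$ indexed by $q(\alpha)$. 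This is precisely the stated formula $1 \in k_\alpha \mapsto 1 \in k_{q(\alpha)}$ or $0$. Finally, since each $k_\alpha$ is the fundamental class of a global disc-sheet $\T(\alpha)$ of $\T$, an orientation of one top stratum orients the unique containing disc-sheet, and all remaining $k_\alpha$ are determined by propagating through chains of generization maps across edges of $T$; the acyclicity of $T$ rules out any monodromy obstruction to this propagation.
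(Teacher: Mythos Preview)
Your approach is correct and shares its computational core with the paper's proof---both use the sheet decomposition $\T = \bigcup_\alpha \T(\alpha)$ of Remark~\ref{rmk:discs} together with a Mayer--Vietoris argument over the tree---but the organization is genuinely different. The paper never invokes the transverse product structure $\T_\circ(\mathfrak{p}) \times \T_R$; instead it computes the stalk at \emph{every} stratum directly as relative chains $C_*(\T, \T \setminus U)$, running an iterative binary Mayer--Vietoris (adjoining one vertex at a time along the tree) first for a neighborhood of the cone point and then again for a neighborhood of a nearby point on an adjacent stratum, reading off the generization map as the literal restriction of relative chains between these two computations. Your reduction via the product structure is conceptually cleaner and avoids repeating the Mayer--Vietoris at each stratum, but it front-loads the burden onto establishing that product decomposition, which is true (it is essentially \cite[Prop.~2.5, 3.19]{N3}) but is not something Remark~\ref{rmk:discs} alone gives you; you would need to say a bit more there. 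Conversely, the paper's hands-on approach is fully self-contained within the disc description and makes the ``addition of factors under edge-contraction'' visible as an explicit identity of relative cycles, at the cost of a longer case analysis. Your reserve option in Step~2 (induction by pruning a leaf) is in fact exactly the paper's iterative scheme.
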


\begin{proof}
As mentioned above, on any space $X$, for sufficiently small $U$, there is a natural isomorphism $\omega_X(U) = C_*(X, X\setminus U)$, with the chain complex $C_*$ interpreted as a cochain complex just by negating all the degrees.

Let us first calculate the stalk at the center of $\T$, that is at the simplex $[\mf p_0]$ given by the trivial
correspondence. Consider the decomposition of $\T$ into the discs $\T(\alpha)$, and for each vertex $\alpha$ let
$U_\alpha = U \cap \T(\alpha)$ and $Z_\alpha = \T(\alpha) \setminus U_\alpha$
 where $U$ is a small neighborhood of $[\mf p_0]$. Each $U_\alpha$ is an open disc inside
of the disc $\T(\alpha)$, so the relative homology $H_*(\T(\alpha),Z_\alpha)$ is $k$ in degree $n-1$ and 
zero in other degrees.

For an edge $\alpha-\beta$, the Mayer-Vietoris sequence for relative homology gives a distinguished triangle
\[ C_*(\T(\alpha)\cap \T(\beta), Z_\alpha \cap Z_\beta) \to
C_*(\T(\alpha), Z_\alpha)\oplus C_*(\T(\beta), Z_\beta) \to
C_*(\T(\alpha)\cup \T(\beta), Z_\alpha \cup Z_\beta)
\]

Note that since the discs $\T(\alpha), \T(\beta)$ are glued by their halves, the pair 
$(\T(\alpha)\cap \T(\beta), Z_\alpha \cap Z_\beta)$ has zero relative homology
\[ C_*(\T(\alpha)\cap \T(\beta), Z_\alpha \cap Z_\beta) \simeq C_*(\text{half-ball},\text{half-sphere}) \simeq 0 \] 
so we can identify 
\[ C_*(\T(\alpha)\cup \T(\beta), Z_\alpha \cup Z_\beta) \simeq C_*(\T(\alpha), Z_\alpha)\oplus C_*(\T(\beta), Z_\beta)
\simeq k^2[n-1]. \]

We can iterate and glue the pairs $\T(\alpha), \T(\beta)$ according to the tree $T$. More specifically, take 
connected subtrees $Q \subset Q' \subseteq T$ such that $|Q'| = |Q|+1$, that is the subtree $Q'$ is obtained from $Q$ by 
adding a new vertex $\gamma$ of $T$ with an edge $\beta-\gamma$ for some $\beta \in Q$. Now suppose by induction that we
know a direct sum decomposition of the relative cohomology of the pair obtained by taking all $\T(\alpha)$ and $Z_\alpha$
with $\alpha \in Q$:
\[ C_*(\bigcup_{\alpha \in Q} \T(\alpha), \bigcup_{\alpha \in Q} Z_\alpha) \simeq 
\bigoplus_{\alpha \in Q} C_*(\T(\alpha), Z_\alpha) \simeq k^{|Q|}[n-1]. \]

Note that since we glued $\gamma$ only to one vertex $\beta$ in $Q$,
the intersection $(\cup_{\alpha \in Q} \T(\alpha)) \cap \T(\gamma)$ coincides with $\T(\beta) \cap \T(\gamma)$, and also
$(\cup_{\alpha \in Q} Z_\alpha) \cap Z_\gamma = Z_\beta \cap Z_\gamma$. So as noted above this pair has vanishing cohomology,
and the Mayer Vietoris distinguished triangle
\[ C_*((\bigcup_{\alpha \in Q} \T(\alpha)) \cap \T(\gamma), (\bigcup_{\alpha \in Q} Z_\alpha) \cap Z_\gamma) \to
C_*(\bigcup_{\alpha \in Q} \T(\alpha), \bigcup_{\alpha \in Q} Z_\alpha) \oplus C_*(\T(\gamma), Z_\gamma) \to
C_*(\bigcup_{\alpha \in Q'} \T(\alpha), \bigcup_{\alpha \in Q'} Z_\alpha)\]
gives us a direct sum decomposition 
$C_*(\bigcup_{\alpha \in Q'} \T(\alpha), \bigcup_{\alpha \in Q'} Z_\alpha) \simeq k^{|Q'|}[n-1]$ 
Iterating this until we reach all of $T$ gives the desired result.

\begin{figure}[h]
    \centering
    \includegraphics[width=0.6\textwidth]{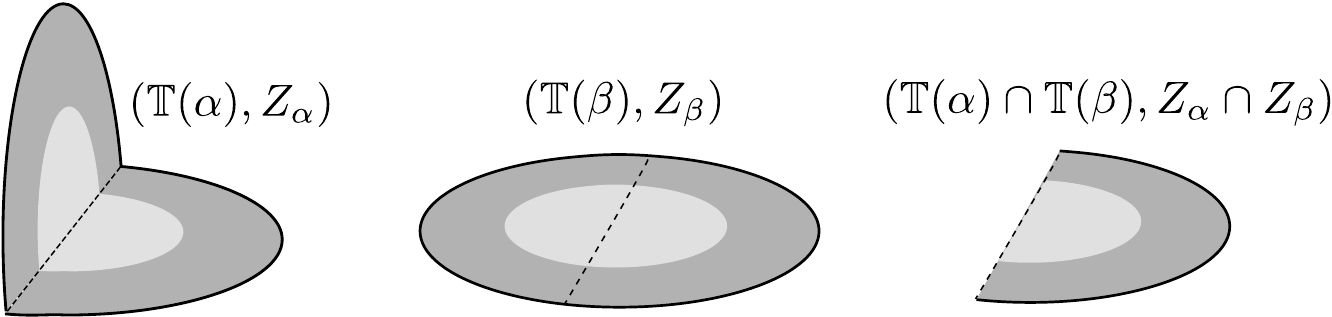}
         \caption{The pairs $(\T(\alpha),Z_\alpha)$ appearing in the Mayer-Vietoris decomposition. The complements 
         $Z_\alpha = \T(\alpha)\setminus U_\alpha$ are in dark gray. The intersection pairs 
         $(\T(\alpha)\cap \T(\beta),Z_\alpha \cap Z_\beta)$ have vanishing relative homology.}
    \label{fig:omegadiscs}
\end{figure}

To describe all the other stalks and generization maps, let's first describe the map 
\[ (\omega_\T)_{[\mf p_0]} \to (\omega_\T)_{[\mf p_0 \to \mf p]}\]
from the central stalk calculated above to the stalk over a neighboring 1-simplex $[\mf p_0 \to \mf p]$ where
\[ \mf p = (R \overset{q}\twoheadleftarrow S \hookrightarrow T), \]
with $q$ the contraction of a single edge $\mu-\lambda$. As above, let $U$ be a neighborhood of the origin, and take a 
point in the simplex $[\mf p_0 \to \mf p]$ inside of $U_0$. Taking a neighborhood $V \subset U_0$ of this point, 
we see that the map between the two stalks of $\omega_\T$ is given by restriction of relative chains.

We use the same decomposition of $\T$ into discs that we have above. Let $V_\alpha = V \cap \T(\alpha)$ and $Y_\alpha = \T(\alpha) \setminus V_\alpha$. We see that $V_\alpha = \emptyset$ if $\alpha \notin S$, and if $\alpha \in S$ then the pair $(\T(\alpha),V_\alpha)$ can be homotoped to $(\T(\alpha), U_\alpha)$. Moreover, if we consider an edge
that's not being contracted, say $\alpha-\beta \neq \mu-\lambda$, $V_\alpha$ and $V_\beta$ intersect in an open half-ball inside 
the closed half-ball $\T(\alpha) \cap \T(\beta)$, so the pair $(\T(\alpha) \cap \T(\beta), Y_\alpha \cap Y_\beta)$ is homotopic to
$(\T(\alpha) \cap \T(\beta), Z_\alpha \cap Z_\beta)$ and has vanishing relative homology. 

A new feature only occurs for the edge 
$\mu-\lambda$ being contracted: every stratum adjoining the 1-simplex $[\mf p \to \mf p_0]$ must be labeled by correspondences 
where this edge is also contracted. So the neighborhood $V$ is entirely contained in the intersection $\T(\mu)\cap \T(\lambda)$, and
so the pair $(\T(\mu) \cap \T(\lambda), Y_\mu \cap Y_\lambda)$ is homotopic to $(\T(\mu), Y_\mu)$ and has relative homology 
$H_*(\T(\mu) \cap \T(\lambda), Y_\mu \cap Y_\lambda) \simeq k[n-1]$.

Now we can apply the same process as above and iterate over all edges of the original tree $T$. Suppose we start
with a subtree $Q$ of $T$ such that $Q \subset R, Q \subset S$, meaning no edge of $Q$ gets contracted by $q$, and suppose by induction
that we already know a direct sum decomposition
\[ C_*(\bigcup_Q \T(\alpha), \bigcup_Q Y_\alpha) \simeq \bigoplus_Q C_*(\T(\alpha),Y_\alpha) \simeq k^{|Q|}[n-1]. \]
Consider the Mayer-Vietoris triangle for adjoining an extra vertex $\gamma$ via an edge $\beta-\gamma$
as above. The relative chain restriction maps give a map between the distinguished triangles
\[
\xymatrix{
C_*(\T(\beta) \cap \T(\gamma), Z_\beta \cap Z_\gamma) \ar[r] \ar[d]
& \bigoplus_{\alpha\in Q} C_*(\T(\alpha), Z_\alpha) \oplus C_*(\T(\gamma), Z_\gamma) \ar[r] \ar[d]
& \bigoplus_{\alpha\in Q'} C_*(\T(\alpha), Z_\alpha) \ar[d] \\
C_*(\T(\beta) \cap \T(\gamma), Y_\beta \cap Y_\gamma) \ar[r]
& \bigoplus_{\alpha\in Q} C_*(\T(\alpha), Y_\alpha) \oplus C_*(\T(\gamma), Y_\gamma) \ar[r]
& C_*(\bigcup_{Q'} \T(\alpha), \bigcup_{Q'} Y_\alpha).
}
\]
Since this is a map of distinguished triangles it suffices to know two of the vertical maps. There are three cases to be checked.
Suppose $\gamma \in S$ but $\beta-\gamma \neq \mu-\lambda$: the two terms on the left vanish, and
the remaining maps are all isomorphisms. Now suppose $\gamma \notin S$:  the two terms on the left vanish, but then since 
$V_\gamma = \emptyset$ the vertical map $C_*(\T(\gamma), Z_\gamma) \to C_*(\T(\gamma), Y_\gamma)$ is zero so the direct sum factor
$C_*(\T(\gamma), Z_\gamma)$ of $C_*(\cup_{Q'} \T(\alpha), \cup_{Q'} Z_\alpha)$ gets killed by the restriction map.
The last case to consider is when the vertex being added is $\lambda$ via the contracted edge $\mu-\lambda$. Then the
intersection $(\T(\mu) \cap \T(\lambda), Y_\mu \cap Y_\lambda)$ has relative homology $k$ in degree $n-1$ so the map above in 
degree $n-1$ becomes 
\[
\xymatrix{
0 \ar[r] \ar[d] 
& \bigoplus_{\alpha\in Q} C_{n-1}(\T(\alpha), Z_\alpha) \oplus C_{n-1}(\T(\lambda), Z_\lambda) \ar[r] \ar[d] 
& \bigoplus_{\alpha\in Q'} C_{n-1}(\T(\alpha), Z_\alpha) \ar[d] \\
k \ar[r] & \bigoplus_{\alpha\in Q} C_{n-1}(\T(\alpha), Y_\alpha) \oplus C_{n-1}(\T(\lambda), Y_\lambda) \ar[r]
& C_{n-1}(\bigcup_{Q'} \T(\alpha), \bigcup_{Q'} Y_\alpha).
}
\]

Here the middle vertical map $C_{n-1}(\T(\lambda), Z_\lambda) \to C_{n-1}(\T(\lambda), Y_\lambda)$ is restriction of relative chains, so
it is an isomorphism $k \overset{\sim}\to k$. Thus $C_{n-1}(\bigcup_{Q'} \T(\alpha), \bigcup_{Q'} Y_\alpha) \simeq k^{|Q|}$, and
does \emph{not} gain an extra direct sum factor $k$ from $C_{n-1}(\T(\lambda), Y_\lambda)$. A better way of phrasing this is that
we have a decomposition
\[ C_*(\bigcup_{Q'} \T(\alpha), \bigcup_{Q'} Y_\alpha) \simeq \bigoplus_{\alpha \in q(Q')} k_\alpha, \]
with one factor $k_\alpha \simeq k$ for each vertex $\alpha$ in the image $q(Q')\subset R$, since $\mu$ and $\lambda$ 
contribute only one factor $k_{(\mu\lambda)}$ of $k$. Using this decomposition, we can write down the vertical restriction map on the right as:
\[ \bigoplus_{\alpha \in Q'} k_\alpha \to \bigoplus_{\alpha \in q(Q')} k_\alpha, \]
which is an isomorphism on all factors except for $k_\mu \oplus k_\lambda \to k_{\mu\lambda}$ where it is \emph{addition} of cochains.

We can calculate all the other stalks and maps by iterating this procedure, since every correspondence of trees can be decomposed into successive contraction of a single edge; this gives the calculation in the theorem. Note that in order to describe the maps as addition, we implicitly used 
the fact that we have a distinguished basis element $1 \in H_{n-1}(\T(\alpha),Z_\alpha)$. Picking this element uniquely requires picking a
orientation of each disc $\T(\alpha)$; in fact since all discs are glued it is only necessary to pick an orientation of one of the discs, or equivalently an orientation of a top-dimensional stratum of $\T$.
\end{proof}

\subsubsection{The Hochschild homology and the cyclic homology sheaf}
Recall that the Hochschild homology of an algebra $A$ is calculated by the Hochschild chain complex
\[ \HH(A): \dots \to A \otimes A \otimes A \to A \otimes A \to A, \]
where $A^{\otimes n}$ is placed in degree $-n$, and the differential $d_{-n}: C_{-n}(A) \to C_{-n+1}(A)$ given by 
\[ d(x_n \otimes\dots\otimes x_0) = x_{n-1}\otimes\dots\otimes x_0 x_n + 
\sum_{i=0}^{n-1} (-1)^i x_n\otimes\dots\otimes x_{i+1}x_i \otimes\dots \otimes x_0. \]

We are interested in the stalks of the Hochschild homology sheaf $\cH\cH(\arb)$ and of the cyclic homology sheaf $\cH\cH(\arb)_{S^1}$, which 
are the Hochschild/cyclic homology of the stalks of $\arb$, i.e., 
of the categories $\Perf(\vv T)$.  To calculate these, recall that
Hochschild/cyclic homology is invariant under dg Morita equivalences, and 
for any dg-algebra $\cA$, there is an isomorphism of Hochschild complexes
$ \HH(\cA) \simeq \HH(\Perf\mh\cA) $
with the Hochschild homology of the category of perfect $\cA$-modules \cite{Kel1}, with compatible $S^1$-actions. We recall: 

\begin{proposition}  \label{prop:Cib} \cite{Cib} Let $\vv Q$ be an acyclic quiver.  Then $\HH_0(k[\vv Q]) = k^{|Q|}$ 
and all higher Hochschild homologies vanish. Moreover the $S^1$-action on the Hochschild complex is trivial, i.e. the cyclic complex is given by
\[ \HH(k[\vv Q])_{S^1} = H^*(BS^1)\otimes \HH(k[\vv Q]) = k[u] \otimes \HH(k[\vv Q]), \]
with $u$ denoting canonical generator of $H^*(BS^1)$ in degree $2$, such that the natural map $\HH(k[\vv Q]) \to \HH(k[\vv Q])_{S^1}$ sends $x \mapsto 1\otimes x$.
\end{proposition}
\begin{proof}
	This follows from the fact that additive invariants map semi-orthogonal decompositions of categories to direct sums, so for our tree quivers we reduce to the calculation of Hochschild/cyclic homology for the ground field $k$. For an explicit proof using the path algebra structure one can consult \cite{Cib}.
\end{proof}

Since all the $S^1$ actions we consider will be trivial, we will ignore it from now on; every map out of $\HH(k[\vv T])$ can be factored through the map $\HH(k[\vv T]) \to \HH(k[\vv T])_{S^1}$ by sending $u\mapsto 0$, so for all our applications we can just construct maps out of/into the Hochschild homology $\HH$ itself.

There is a natural basis on $\HH_0(\Perf(\vv Q)) = \HH_0(k[\vv Q])\simeq k^{|Q|}$, given by the images of the idempotents 
$| \alpha \rangle \langle \alpha| \in k[\vv Q]$, or equivalently, of the modules $P_\alpha \in \Perf(\vv Q)$. 
Note  that this basis depends on $\vv{Q}$ and not just the underlying graph. 
In terms of these bases, Lemma \ref{lem:projectivecalc} gives the generization functors of the Hochschild homology
sheaf. 

\begin{proposition} \label{prop:corrhoch}
Let $\mf p = (R \twoheadleftarrow S \hookrightarrow T)$ be a correspondence inducing a functor $\Perf(\vv T) \to \Perf(\vv R)$. The
induced map between Hochschild homologies is given by
\[ |\alpha \rangle \langle \alpha| \mapsto \begin{cases}  |q(i^{-1}(\alpha)) \rangle \langle q(i^{-1}(\alpha))| & \text{\ if\ } \alpha \in i(S) \\ 0 & \text{\ otherwise.} \end{cases} \]
\end{proposition}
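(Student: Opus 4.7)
The plan is to reduce this to the computation in Lemma \ref{lem:projectivecalc} by using functoriality of Hochschild homology in the form of the universal trace. The key intermediate fact is that, under the Morita invariance isomorphism $HH_*(k[\vec Q]) \cong HH_*(Rep(\vec Q))$, the idempotent $|\alpha\rangle\langle\alpha|$, viewed as a $0$-cycle in the bar complex, represents precisely the image of $\mathrm{id}_{P_\alpha}$ under the trace map
$$\tr_{P_\alpha}: \mathrm{End}_{Rep(\vec Q)}(P_\alpha) \to HH_0(Rep(\vec Q)).$$
I would first pin this identification down. It is visible from the splitting $C_*(k[\vec Q]) = \Delta_* \oplus L_*$ in the proof of Proposition \ref{prop:Cib}: the diagonal summand $\Delta_*$ is the direct sum $\bigoplus_\alpha C_*(k \cdot |\alpha\rangle\langle\alpha|)$ of $|Q|$ copies of the Hochschild complex of the ground field, each of which survives in degree zero with distinguished generator $|\alpha\rangle\langle\alpha|$; under Morita invariance this generator is exactly $\tr(\mathrm{id}_{P_\alpha})$, since $P_\alpha = |\alpha\rangle\langle\alpha| \cdot k[\vec Q]$ and the trace of the identity on a direct summand of the regular module is represented by the corresponding idempotent.

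Next I would invoke the naturality of the trace: for any dg functor $F:C \to C'$ and any object $X \in C$, functoriality of Hochschild homology gives
$$HH_0(F)\bigl(\tr(\mathrm{id}_X)\bigr) = \tr\bigl(\mathrm{id}_{F(X)}\bigr).$$
Applying this to $F = c_{\mf p}$ and $X = P_\alpha$, I obtain $HH_0(c_{\mf p})([P_\alpha]) = [c_{\mf p}(P_\alpha)]$ in $HH_0(Rep(\vec R))$.

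The final step is simply to quote Lemma \ref{lem:projectivecalc}, which computes $c_{\mf p}(P_\alpha)$ explicitly: it equals $P_{q(i^{-1}(\alpha))}$ if $\alpha \in i(S)$ and vanishes otherwise. Translating back to the idempotent basis of $HH_0(k[\vec R])$ using the identification of the first step, we get $|\alpha\rangle\langle\alpha| \mapsto |q(i^{-1}(\alpha))\rangle\langle q(i^{-1}(\alpha))|$ when $\alpha \in i(S)$, and $0$ when $\alpha \notin i(S)$, as claimed.

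The one subtle point is the identification of the idempotent basis of $HH_0(k[\vec Q])$ with the basis of classes of indecomposable projectives under Morita invariance, which needs the bar-complex description recalled above. Once this is settled the assertion is purely formal, since the correspondence functor $c_{\mf p}$ has been realized as a dg functor and the trace is natural. Note also that because Hochschild homology of $k[\vec Q]$ is concentrated in degree zero, there are no higher-degree subtleties to check.
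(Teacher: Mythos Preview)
Your argument is correct and matches the paper's reasoning: the paper does not give a separate proof of this proposition, but simply remarks in the preceding paragraph that the idempotents $|\alpha\rangle\langle\alpha|$ (equivalently, the classes of the $P_\alpha$) form a basis of $HH_0$, and that ``in terms of these bases, Lemma~\ref{lem:projectivecalc} gives the generization functors of the Hochschild homology sheaf.'' You have spelled out exactly what this sentence means---the identification of the idempotent basis with traces of identities on projectives, and the naturality of the trace under dg functors---so your proposal is a careful expansion of the paper's one-line justification rather than a different approach.
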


\subsubsection{Comparison}\label{subsec:comparison}
Comparing Proposition \ref{prop:omega} with Propositions \ref{prop:Cib} and  \ref{prop:corrhoch} gives 
an abstract isomorphism $\cH\cH(\arb) \simeq \omega_{\T}[1-n]$.

The choice of this isomorphism is not unique, but as we saw above, upon fixing the decomposition of $\T$ as the union of discs $\T(\alpha)$ and an orientation of one of these discs, we get distinguished bases for the stalks of $\omega_{\T}[1-n]$. 
In addition if we pick a root in $T$ this induces choices of roots in all $R$, and we get sets of distinguished elements $|\alpha \rangle \langle \alpha|$ in all the stalks of $\cH\cH(\arb)$. We can then make a \emph{canonical} choice of isomorphism 
$\cH\cH(\arb)\xrightarrow{\sim} \omega_{\T}[1-n]$, which on a stalk over the stratum $\T(R \twoheadleftarrow S \hookrightarrow T)$ gives the isomorphism
\[ \HH_0(k[\vv R]) \xrightarrow{\sim} \bigoplus_{\alpha \in V(R)} k_\alpha, \]
sending $|\alpha \rangle \langle \alpha|$ to $1 \in k_\alpha \simeq k$ in the direct sum decomposition of proposition \ref{prop:omega}. 

Recall we have constructed an isomorphism $ \cH\cH(\arb) \xrightarrow{\sim} \omega_{\T}[-d]$, with $d = |T|-1$; moreover since the $S^1$ action on $\cH\cH(\arb)$ is trivial, this naturally descends to an isomorphism $\cH\cH(\arb)_{S^1} \xrightarrow{\sim} \omega_{\T}[-d]$

\begin{theorem} \label{thm: arborientation}
The map $\cH\cH(\arb)_{S^1} \xrightarrow{\sim} \omega_{\T}[-d]$ constructed above is an orientation on the \textsc{csc} space $(\T,\arb)$.
\end{theorem}

\begin{proof}
We are trying to show that, for any objects $x,y$ of $\arb(\T)$, the map 
\[ \arb_\T^{x,y} \to \sheafHom_{\T}(\arb_\T^{y,x}, \omega_X)[-d]\]
induced by the orientation is an isomorphism. It is enough to check the assertion on
generators of the category, so we use the projective objects $P_\alpha$. 

As calculated in Proposition \ref{prop:sheafhom},
the sheaf $\arb_{\alpha, \beta}$ is the constant sheaf on $\T(\alpha,\beta)$. 
Let us check that the sheaves $\arb_{\alpha, \beta}$ and 
$\sheafHom_\T(\arb_{\beta, \alpha},\omega_{\T})[-d]$ are isomorphic 
by analyzing the topology of $\T(\alpha,\beta)$ and $\T(\beta,\alpha)$.

Let $\gamma = \alpha \vee \beta$ be the minimum point in the geodesic between $\alpha$ and $\beta$, so 
 $\T(\beta,\alpha) = \T(\beta,\gamma) \cap \T(\alpha)$.  We will 
describe the topology of the subset $\T(\beta,\gamma)$ for $\gamma \leq \beta$. 
The space $\T(\beta,\gamma)$ is a subset of the closed discs $\T(\beta)$ and $\T(\gamma)$, 
and only differs from the intersection on its boundary. In fact we have
\[ (\T(\beta) \cap \T(\gamma))\setminus \T(\beta,\gamma) = 
\coprod_{\substack{\beta,\gamma \in S \\ q(\beta) \neq q(\gamma)}} \T(\mf p) \]

If $q(\beta)\neq q(\gamma)$ then there is a correspondence $\mf q$ that deletes $q(\beta)$ and
keeps $q(\gamma)$, i.e. there's a map of the form $[\mf p_1 \to\dots\to \mf p] \to [\mf p_1 \to\dots\to \mf p \to \mf q \circ \mf p]$.
This last simplex is not in $\T(\beta)$, so $[\mf p_1 \to\dots\to \mf p]$ must be on the boundary of $\T(\beta)\cap\T(\gamma)$. Conversely, this map only can exist if $q(\beta) \neq q(\gamma)$ in the correspondence $\mf p$, so $(\T(\beta) \cap \T(\gamma))\setminus \T(\beta,\gamma)$ is exactly the boundary of $\T(\beta) \cap \T(\gamma)$ inside of $\T(\beta) \cup \T(\gamma)$. Thus
\[ \T(\beta,\gamma) \hookrightarrow \T(\beta) \cup \T(\gamma) \]
is an open inclusion, with closure $\T(\gamma,\beta) = \T(\beta)\cap\T(\gamma)$.
Obviously when $\beta = \gamma$ all these are the same.

For any vertices $\mu,\lambda$, let's denote the open inclusion
\[ i_{\mu\lambda}: \T(\mu,\lambda) \hookrightarrow \T(\mu) \cup \T(\lambda) \]
and the closed inclusions
\[ j_{\mu\lambda}: \T(\mu) \cup \T(\lambda) \hookrightarrow \T \]
\[ j_\mu: \T(\mu) \hookrightarrow \T \]

The sheaf $\arb_{\alpha, \beta}$ is supported on the subset $\T(\alpha,\gamma) \cap \T(\beta)$, so it can be expressed in terms of
the constant sheaf on $\T(\alpha,\gamma)$ by
\[ \arb_{\alpha, \beta} = j_{\beta,*} j^*_\beta j_{\alpha\gamma,!} i_{\alpha\gamma,!} 
\underline k_{\T(\alpha,\gamma)}. \]
Similarly we have $\arb_{\beta, \alpha} = j_{\alpha,*} j^*_\alpha j_{\beta\gamma,!} i_{\beta\gamma,!} 
\underline k_{\T(\beta,\gamma)}.$

Let us denote by $\D':\cF \mapsto \sheafHom_X(\cF,\omega_X)$ the contravariant Verdier duality operation. As preparation
for the proof of nondegeneracy, let's first check if there is at least an isomorphism between 
\[ \arb_{\alpha, \beta} \qquad \mbox{and} \qquad 
\sheafHom_{\T}(\arb_{\beta, \alpha}, \omega_{\T})[-d] = \D'(\arb_{\beta, \alpha})[-d]. \]

Verdier duality intertwines the pairs of functors $f_*,f_!$ and $f^*,f^!$, so there are equivalences
\[ \D'(\arb_{\beta, \alpha})[-d] \simeq 
\D' j_{\alpha,*} j^*_\alpha j_{\beta\gamma,!} i_{\beta\gamma,!} \underline k_{\T(\beta,\gamma)}[-d] \simeq
j_{\alpha,!} j^!_\alpha j_{\beta\gamma,*} i_{\beta\gamma,*} \D' \underline k_{\T(\beta,\gamma)}[-d]. 
\]
Since $\T(\beta,\gamma)$ is homeomorphic to an open ball (it is the intersection of two discs with the boundary removed), $D_{\T(\beta,\gamma)} \underline k_{\T(\beta,\gamma)} \simeq k_{\T(\beta,\gamma)}[d]$. Also, by proper base change, $j^!_\alpha j_{\beta\gamma,*} = j_{1,*}j^!_2$, where
$j_1, j_2$ are the closed inclusions
\[ j_1: \T(\alpha) \cap \T(\beta) \cap \T(\gamma) \hookrightarrow \T(\alpha), \qquad 
j_2: \T(\alpha) \cap \T(\beta) \cap \T(\gamma) \hookrightarrow \T(\beta) \cap \T(\gamma) \]
Since $i_{\beta\gamma}$ is the inclusion of $\T(\beta,\gamma)$ into its closure $\T(\beta)\cap \T(\gamma)$, the sheaf $j^!_2 i_{\beta\gamma,*}k_{\T(\beta,\gamma)}$ is isomorphic to $j^*_3 i_{\alpha\gamma,!} \underline k_{\T(\alpha,\gamma)}$, for 
$j_3:\T(\alpha) \cap \T(\beta) \cap \T(\gamma) \hookrightarrow \T(\alpha) \cap \T(\gamma)$ so we have
\[ \D'(\arb_{\beta, \alpha})[-d] \simeq j_{\alpha,!}j_{1,*}j^*_3 i_{\alpha\gamma,!} \underline k_{\T(\alpha,\gamma)}. \]
But denoting $j_4: \T(\alpha) \cap \T(\beta) \cap \T(\gamma) \hookrightarrow \T(\beta)$ we have $j_{\alpha,!}j_{1,*}\simeq j_{\beta,*} j_{4,!}$ and then performing another base change we get
\[ \D'(\arb_{\beta, \alpha})[-d] \simeq j_{\beta,*} j^*_\beta j_{\alpha\gamma,!} i_{\alpha\gamma,!} \underline k_{\T(\alpha,\gamma)}[-d] 
= \arb_{\alpha, \beta}. \]
Verdier duality thus induces an isomorphism between these sheaves. It still remains to check that the orientation we defined indeed
induces this isomorphism.

Since we have an isomorphism $\cH\cH(\arb) \simeq \omega_{\T}[-d]$, this gives an identification 
\[ \sheafHom_\T(\arb_{\beta, \alpha},\omega_{\T})[-d] \simeq \sheafHom_\T(\arb_{\beta, \alpha}, \cH\cH(\arb)). \]
We have to prove that the sheaf morphism given by the trace
\[ \arb_{\alpha, \beta} \to \sheafHom_\T(\arb_{\beta, \alpha},\cH\cH(\arb)) \]
is an isomorphism, which can be checked on stalks. We will perform the calculation at the origin, so we will need a description of the sections of the sheaf Hom on a neighborhood around the origin:
\[ \sheafHom_\T(\arb_{\beta, \alpha}, \cH\cH(\arb))(U) = \mathrm{Hom}((\arb_{\beta, \alpha})|_U, \cH\cH(\arb)|_U), \]
consisting of compatible collections of morphisms between $\arb_{\beta, \alpha}$ and $\cH\cH(\arb)$ on all the strata. 
For every stratum $\T(\mf p)$, we have a map 
\[ \Hom((\arb_{\beta, \alpha})|_U, \cH\cH(\arb)|_U) \to \Hom((\arb_{\beta, \alpha})_{\T(\mf p)}, \cH\cH(\arb(\T(\mf p)))) \]
and a collection of such morphisms is a morphism of sheaves if and only if the diagram
\[\xymatrix{
(\arb_{\beta, \alpha})_{\T(\mf p)} \ar[r] \ar[d] & \cH\cH(\arb(\T(\mf p))) \ar[d] \\
(\arb_{\beta, \alpha})_{\T(\mf{p}')} \ar[r] & \cH\cH(\arb(\T(\mf p')))
}\]
commutes whenever $\T(\mf p)$ is in the closure of $\T(\mf p')$, i.e. whenever there's a map of correspondences $\mf p \to \mf p'$.

Let $\mf p_\text{max} = (R_\text{max} \overset{q}\twoheadleftarrow T \xrightarrow{\sim} T)$ be the ``maximal" correspondence defined by contracting the geodesic from $\beta$ to $\gamma=\alpha\vee\beta$. Then consider the composition of the trace morphism 
at the origin with taking sections on the stratum $\T(\mf p_\text{max})$:
\[ 
\xymatrix{
\Hom_{\arb(U)}(P_\alpha,P_\beta) \ar[r]^-{\Tr} \ar[rd]^{f} &
\mathrm{Hom}(\arb_{\beta, \alpha}|_U, \cH\cH(\arb)|_U) \ar[d] \\ &
\Hom(\Hom_{\vv R_\text{max}}(P_{q(\beta)},P_{q(\alpha)}), \HH(\Perf(\vv R_\text{max}))).
}\]

We know from the Verdier duality argument above that the top two are abstractly isomorphic and are either both zero or $k$.
The only nontrivial case to check is when $\alpha \leq \beta$, where they are both isomorphic
to $k$. So in that case it is enough to check the injectivity of
\[ f: \Hom(P_\alpha,P_\beta) \to \Hom(\Hom(P_{q(\alpha)},P_{q(\beta)}), \HH(\Perf(\vv R_\text{max}))). \]
But by definition this map factors through the isomorphism $\Hom_{\vv T}(P_\alpha,P_\beta) 
\to \Hom_{\vv R_\text{max}}(P_{q(\alpha)}, P_{q(\beta)})$.  Moreover, the map 
\[ \Hom_{\vv R_\text{max}}(P_{q(\alpha)}, P_{q(\beta)}) \to 
\Hom(\Hom_{\vv R_\text{max}}(P_{q(\alpha)},P_{q(\beta)}),  \HH(\Perf(\vv R_\text{max}))) \]
is the adjoint map to the trace pairing, so the morphisms of sheaves induces an isomorphism at the origin.
The calculation of the other stalks are analogous 
and can be obtained by a substitution of $\vv R$ for $\vv T$ and $\omega_{\R}[-d]$ for $\omega_{\T}[-d]$.
\end{proof}

\begin{example}
Consider the arboreal space $\T$ for the quiver $T = \alpha\rightarrow\beta\leftarrow\gamma$. Here
$\beta = \alpha\vee\gamma$, and the sheaves $\arb_{\alpha,\gamma}$ and 
$\arb_{\gamma, \alpha}$ are the constant sheaves respectively supported on the subsets
$\T(\alpha,\gamma)$ and $\T(\gamma,\alpha)$ of Figure \ref{fig:a3supportHom}, 
which are switched by Verdier duality (up to a shift).
\end{example}

\subsection{Global orientations} \label{sec:global}
Consider an arboreal space $(\X,\cW)$; by definition, around each point in $\X$ there is some local model for its neighborhood given by a generalized arboreal singularity.

\begin{proposition} \label{prop: w1}
There is a class  $w_1(\X, \cW) \in H^1(\X, \pm 1)$, which we will call the \emph{(first) Stiefel-Whitney class} of the arboreal space $(\X, \cW)$.
If the image of $w_1$ in $H^1(\X, k^\times)$ vanishes, then $(\X, \cW)$ is orientable. 
\end{proposition}
\begin{proof}
First, let us argue that it is sufficient to prove this result in the case where $(\X,\cW)$ is special arboreal, i.e., when every local model is a (non-generalized) arboreal singularity. By definition, every generalized arboreal singularity $\T^*$ comes with a given embedding into some arboreal singularity $\T^+$, which can be seen from its explicit description to be a homotopy equivalence. Moreover, by Lemma \ref{lem: orientation on quotient}, orientations on $\T^*$ are induced from orientations on $\T^+$.

Let us now assume that all our local models are special arboreal. From our calculation for the local models, we have learned that
for any arboreal space $(\X, \cW)$, the Hochschild homology sheaf $\cH\cH(\cW^{pp})$ is locally isomorphic to the (shifted) dualizing complex, and has trivial $S^1$-action.  

Note that the space of isomorphisms to the dualizing sheaf is a torsor for automorphisms of the dualizing sheaf; by Verdier duality
$\Hom(\omega_\X, \omega_\X)=k =H^0(\X, k)$; so the only automorphism is scalar multiplication.  It follows that any 
local isomorphism $\cH\cH(\cW^{pp}) \cong \omega_\X[-d]$ is a locally a scalar multiple of the isomorphism we have already constructed.
Nondegeneracy can be checked locally, so any such map is nondegenerate and gives an orientation. 

Thus the question of global orientability is the question of constructing a global isomorphism $\cH\cH(\cW^{pp}) \xrightarrow{\sim} \omega_\X[-d]$. 
It follows from the existence of local (degree zero) isomorphisms that for some locally constant rank one sheaf $\cL$, there is an isomorphism 
$\cH\cH(\cW^{pp}) \simeq \omega_\X[-d] \otimes \cL$.  In particular, sheaves locally isomorphic to $\omega_X[-d]$ are classified by $H^1(\X, k^\times)$.  

It remains to explain why the obstruction is the image of a class in $H^1(\X, \pm 1)$.  It will suffice to show that $\HH(\Perf(\vv T))$ is in fact the image
of a sheaf with $\Z$ coefficients. 

We write $K_0$ for the Grothendieck group of a category; 
given a sheaf of dg categories we may form the sheafified Grothendieck group similarly as we formed the sheafified Hochschild homology; we will
be interested in $\cK_0(\cW^{pp})$.  Recall that there is a natural transformation of functors $K_0 \to \HH$ given by the Dennis trace map from $K$-theory to 
Hochschild homology. By naturality this 
induces a morphism $\cK_0(\cW^{pp}) \to  \cH\cH(\cW^{pp})$. 

Consider now this map on a local model given by an arboreal singularity. We have $K_0(\Perf(\vv T)) = \Z^{|T|}$, and one can check that the induced map
\[ K_0(\Perf(\vv T)) \otimes_\Z k \to \HH(\Perf(\vv T)) \]
is an isomorphism. 
\end{proof}

Given the underlying topological space $\X$, let us consider the space of possible choices of different cosheaves $\cW'$ on $\X$ that endow it with the structure of a locally arboreal space. The space of such choices is a torsor over 
\[ H^1(\X, \cA ut(\cW)) \times H^2(\X, \cA ut(1_{\cW})) = H^1(\X, \Z) \times H^2(\X, k^\times). \]
There are no other terms because there is no higher local automorphisms of the cosheaf $\cW$, since
Hochschild cohomology of the tree quivers is just $k$ in degree zero and nothing else  \cite{Toe1}.  
Here, the fact that the connected components of the local automorphisms of $\cW$ are just the 
shift functor can be seen by observing that, for an arboreal singularity $\T$,  
the restriction from $\T$ to the smooth locus of $\T$ remembers the subcategories
generated by every indecomposable. 

Our $w_1(\X, \cW)$ is the reduction mod 2 of the above $H^1$ information; which takes values in a vector space
rather than a torsor because we have now the basepoint given by comparison with the dualizing sheaf.

\begin{remark}
	In the discussion above, we just abstractly described the space of choices for a global cosheaf given a fixed local structure. It turns out that these possibilities are realized in the context of Fukaya categories of Weinstein manifolds. For a choice of Lagrangian skeleton $L$, there is a certain space of data on which the Fukaya category depends, obtained by descent from classes in the Lagrangian Grassmannian; roughly, the first term $H^1(L,\Z)$ comes from the choice of grading, and the second term $H^2(L,k^\times)$ (or $\Z/2$ for integer coefficients) comes from the choice of orientation. A careful discussion of these facts is carried out in \cite[Sec.5.3]{GPS3} and \cite[Sec.10]{NS}.
\end{remark}

\begin{example}
Let us return to the example given in the introduction, for local systems on a manifold $M$ of dimension $d$. The space $(M, \cL oc^c)$ is an arboreal space, whose dual \textsc{csc} space is $(M,\cL oc^b)$. The sheaf $\cH\cH(\cL oc^b)$ is evidently a locally constant rank one sheaf; one may see it is constant e.g. because the constant local system gives a distinguished object on each open, the endomorphisms of which give a section of $\cH\cH(\cL oc^b)$. 

Thus an orientation in the categorical sense coincides with a nonzero section $k_M \to \omega_M[-d]$, i.e., an orientation in the classical sense. 
One can see similarly that the class $w_1(M, \cL oc^c)$ coincides with the usual Stiefel-Whitney class $w_1(M) \in H^1(M,\Z/2\Z)$. 
\end{example}

\section{Arboreal spaces and microlocal sheaf theory} \label{sec:microsheaves}
While we have given an entirely combinatorial study of arboreal spaces, their origins and applications lie in microlocal sheaf theory \cite{N3, N4}.  
Here we recall how microlocal sheaf theory gives rise to \textsc{ccc} spaces, and the role of arboreal singularities in the subject. 
We sketch a general theorem regarding the local existence of orientations, and give a detailed ad hoc construction in a special
case sufficient to treat the explicit examples of the following section.

\subsection{Microlocalization of sheaves}
Let $M$ be a manifold and $T^*M$ its cotangent bundle.  We write $\Sh(M, k)$ for the dg category of sheaves on $M$ valued 
in dg $k$-modules.  To any $\cF \in Sh(M, k)$ can be associated its {\em microsupport} $ss(\cF) \subset T^*M$, a conic 
coisotropic subset \cite{KS}.  Roughly speaking, $ss(\cF)$ measures the directions along which $\cF$ fails to be locally constant. 
In particular, local constancy of $\cF$ is equivalent to the condition that $ss(\cF)$ is contained in the zero section.  More generally, 
the condition that $\cF$ is constructible with respect to a subanalytic stratification is equivalent to the condition that $ss(\cF)$ is a 
subanalytic (singular) Lagrangian. 

Given conical $\Lambda \subset T^*M$, it is natural to study the category $\Sh_\Lambda(M, k)$ of sheaves microsupported
in $\Lambda$.  Here we will be exclusively interested in $\Lambda$ subanalytic Lagrangian, hence in certain full subcategories
of the category of constructible sheaves.

A fundamental result is that $\Sh_\Lambda(M, k)$ localizes, not only over $M$, but in fact over $\Lambda$.  One constructs
a sheaf of dg categories $\mu \Sh_\Lambda$ over $T^*M$ as follows:  sheafify\footnote{The $\mu \Sh_\Lambda(U)$ are presentable 
and the restriction maps  presheaf are continuous and cocontinuous, 
hence the presheaf can be viewed as valued
in $Pres^L$, $Pres^R$, or $Cat$.  In general one must discuss where to sheafify it, but for $\Lambda$ subanalytic Lagrangian (the only case of relevance to us) 
this question evaporates because the directed system whose colimit computes the stalks of $\mu Sh_{\Lambda}^{pre}$ has a cofinal and constant subsequence.} 
the presheaf
\[ \mu \Sh^{pre}_\Lambda(U) = Sh_{\Lambda \cup (T^*M \setminus U)}(M, k) / Sh_{T^*M \setminus U}(M, k). \]

In \cite{KS}, the authors study only $\mu \Sh^{pre}$, and especially its stalks.  As
stalks largely determine the behavior of the sheafification, the results there control $\mu \Sh$.  
In particular, the stalk of $\mu \Sh$ at a smooth Legendrian point of $\Lambda$ is noncanonically
isomorphic to $\vvt$; a choice of isomorphism is termed a microstalk functor. For $\Lambda$ subanalytic Lagrangian, it follows from the involutivity theorem \cite[Thm. 6.5.4]{KS} 
that $\mu \Sh_\Lambda(U)$ is in fact compactly generated by the microstalks at smooth Lagrangian
points of $\Lambda \cap U$.  That is, the sheaf $\mu \Sh_\Lambda$ is in fact valued in $\DGCat$. A key observation of \cite{N5} is that, because the restriction maps of $\mu \Sh_\Lambda$ are continuous
and cocontinuous, upon passing to left adjoints we obtain a cosheaf $\mu \Sh_\Lambda$ 
valued not just in $\DGCat$ but in fact in the image of $\Ind: \dgcat \to \DGCat$.  Thus we may pass to 
the sub-cosheaf of compact objects, 
$\mu \Sh_\Lambda^c$.  

In short, we have explained that microlocal sheaf theory associates a \textsc{ccc} space $(\Lambda, \mu \Sh_\Lambda^c)$ 
to a conical Lagrangian $\Lambda \subset T^*M$.  Restricting to the cosphere bundle, we see similarly that 
Legendrians in $S^*M$ acquire naturally the structure of \textsc{ccc} space; henceforth we work with Legendrians.  (Any exact Lagrangian
$\Lambda \subset T^*M$, in particular conical Lagrangians, are naturally identified with Legendrians in $S^* (M \times \R)$, 
compatibly with $\mu \Sh$, so this is no loss.) 

The theory of contact transformations from \cite[Sec. 7]{KS} implies that $\mu \Sh_\Lambda$ is locally independent of
the embedding $\Lambda \subset S^*M$.  More is true \cite{Sh}, \cite[Sec. 10]{NS}: in fact a subanalytic space $\Lambda$ with the germ of a Legendrian
embedding carries a stable symplectic normal bundle, giving rise to a `Maslov obstruction' in $\Hom(\Lambda, B^2 Pic( \Z)) = H^2(\Lambda, \Z) \times H^3(\Lambda, \Z/2\Z)$.
Trivializations are a torsor for $H^1(\Lambda, \Z) \times H^2(\Lambda, \Z/2\Z)$, and a choice of trivialization $\tau$ determines
a \textsc{ccc} space $(\Lambda, \mu \Sh_{\Lambda; \tau})$.  

Such a trivialization $\tau$, though an element of a torsor,  has however a well defined image  $w_1(\Lambda, \tau) \in H^1(\Lambda, \Z/2\Z)$ 
having to do with whether the structure in question, a priori a trivialization of an obstruction factoring through the 
Lagrangian Grassmannian, in fact trivializes a more refined obstruction which factors through the oriented Lagrangian Grassmannian. 

In case of a Legendrian embedding $\Lambda \subset S^*M$, there is a canonical trivialization $\tau_0$ coming
from the cosphere foliation. For this trivialization $\tau_0$ the $(\Lambda, \mu \Sh_{\Lambda; \tau_0})$ agrees (essentially by definition) 
with the original $(\Lambda, \mu \Sh_\Lambda)$.  If $M$ is oriented, then the cospheres give an oriented Lagrangian
foliation, so $w_1(\Lambda, \tau_0) = 0$. 

\subsection{Arboreal singularities, resolution, and the existence of orientations} \label{sec: general case}

While our previous discussion of arboreal spaces was purely topological, there is also a geometric
notion \cite{N3} of a Legendrian arboreal singularity.  An elegant formulation from \cite{Sta} is that
special arboreal singularities (corresponding to trees with no marked leaves) 
are the smallest class of Legendrian singularities containing smooth spaces, and 
such that if $\Lambda \subset S^*M$ is arboreal and transverse to the cosphere foliation, then 
$M \cup Cone(\Lambda^\epsilon)$ is arboreal where $\Lambda^\epsilon$ is a generic contact perturbation
of $\Lambda$.  

The main results of \cite{N3, N4} are: 

\begin{theorem} \cite{N3} Let $\vv T$ be a rooted tree.  Then there is a map  
$\iota_{\vv T} : \T \hookrightarrow T^\infty \R^T$ with subanalytic legendrian image and an isomorphism of \textsc{ccc} spaces 
$(\iota_{\vv T}(\T), \mu \Sh_{\iota_{\vv T}(\T)}^c) \cong (\T, \cA)$.  The same holds in the setting of rooted trees with marked leaves \cite{N4}. 

Conversely, if $\Lambda \subset S^*M$ is geometrically arboreal, then $(\Lambda, \mu \Sh_\Lambda)$ is an arboreal space. 
\end{theorem} 

\begin{theorem} \cite{N4} 
For any $\Lambda \subset S^*M$, there is an arboreal space $(\X, \cW)$, a proper stratified map $\pi: \X \to \Lambda$, which is 
moreover a homotopy equivalence, 
and an isomorphism $ \mu \Sh_\Lambda^c \cong \pi_* \cW$. 
\end{theorem}

\begin{remark}
In \cite{N4}, the theorem is stated locally at one point in $\Lambda$, but the construction (iteratively replacing strata of the front projection of $\Lambda$ by the boundaries
of their tubular neighborhoods) is easily seen to make sense for any Legendrian which is finite over its (assumed Whitney stratifiable) front projection. 
It is standard that subanalyticity of $\Lambda$ implies Whitney stratifiability both of the Legendrian and the front projection, which therefore
becomes finite after a generic perturbation.  Then the considerations of \cite{N4} serve to construct the morphism of sheaves
noted above 
(alternatively the general microlocal nearby cycle construction of \cite{NS} could be used) and the result stated in \cite{N4} checks that this morphism is an isomorphism
at stalks, hence is an isomorphism. 
\end{remark}

We use these results to deduce the existence of orientations on general microlocal
sheaf categories.  Because Nadler's construction of arborealization takes place in the cotangent bundle 
(where there is a canonical trivialization of the Maslov obstruction), it follows that 
the twisting of the arboreal space $(\X, \cW)$ is classified (in the sense 
described in the discussion after Proposition \ref{prop: w1}) by the pullback to $\X$ of the 
element $\tau$ in the torsor of Maslov data $H^1(\Lambda, \Z) \times H^2(\Lambda, \Z/2\Z)$; similarly
$w_1(\X, \cW)$ and $w_1(\Lambda, \tau)$ are identified under $ H^1(\X, \Z/2\Z) \cong H^1(\Lambda, \Z/2\Z)$. 

\begin{theorem} \label{thm: mush cy}
If $M$ is orientable, then for any subanalytic Legendrian $\Lambda \subset S^*M$ (or conic Lagrangian $\Lambda \subset T^*M$), the \textsc{ccc} space 
$(\Lambda, \mu \Sh_\Lambda^c)$ is orientable.

More generally, for any germ of Legendrian $\Lambda$ and any Maslov datum $\tau$, 
If $w_1(\Lambda, \tau)$ vanishes, then $(\Lambda, \mu \Sh_{\Lambda; \tau}^c)$ is orientable. 
\end{theorem} 
\begin{proof}
Take an arboreal resolution; some arboreal space $(\X, \cW)$ and $\pi: \X \to \Lambda$ with $\pi_* \cW \cong \mu \Sh_{\Lambda; \tau}^c$.  
As noted above,  $w_1(\Lambda, \tau) = w_1(\X, \cW)$.  If this vanishes (which it always does if 
$\tau=\tau_0$ is induced from the cosphere polarization of an orientable manifold) then $(\X, \cW)$ has some orientation, 
$\Omega$.  By Theorem \ref{thm:cccCYpushforward}, we have that $\pi_* \Omega$ orients $(\Lambda, \mu \Sh_{\Lambda; \tau}^c)$. 
\end{proof} 

\begin{remark}
It is presumably possible to give a proof of Theorem \ref{thm: mush cy} without appeal to arborealization.  A hint
in this direction is the result \cite[Prop. 8.4.14]{KS} that 
$\D'_{T^*M} \mu hom(F, G) \cong  \mu hom(G, F) \otimes \pi_M^* \omega_M$: if $M$ is orientable, this looks like a 
verification that some candidate orientation on a \textsc{csc} space is nondegenerate.  Likely the orientation in question 
can be extracted from \cite{KS-hochschild}: one would have to understand how the notion of microlocal Hochschild
homology introduced there is related to the sheafified Hochschild homology of $\mu \Sh_\Lambda^{pp}$ (and how the cyclic
structures behave).  

Possibly similar ideas could also be used to construct the corresponding orientation on the \textsc{ccc} space, 
though note $\mu \Sh_\Lambda^c$ is {\em not} generally locally saturated, 
so one would have directly check the conditions of Definition \ref{def:cccCY}; the approach via arborealization avoids this
(ultimately by appeal to Theorem \ref{thm:checkOnProper}). 
\end{remark}

\subsection{Immersed front projections} 
Here we give a more hands-on construction of orientations in a very special case, which nevertheless suffices to treat 
the examples we present in the following section. 
We say that a Legendrian $\Lambda \subset T^\infty M$ has an immersed front projection when the projection $\Lambda \to M$ is an immersion. 
In this case, there is a natural identification of the Kashiwara-Schapira sheaf with the category of local systems on $\Lambda$, i.e., 
$\mu\Sh_\Lambda = \cL oc_\Lambda$.  For this result in the embedded case, see \cite[Chap. 4]{KS}; the immersed case is no different, 
an explicit discussion can be found in \cite{STW}.   
 
We say that the front projection has {\em normal crossings} when it is locally diffeomorphic to 
a union of coordinate hyperplanes. 

\begin{lemma} Let $M$ be a manifold, $\Lambda \subset T^\infty M$ a smooth Legendrian with normal crossings projection. Let $m \in M$ be a point where 
the front projection of $\Lambda$ is immersed with normal crossings image.  Let $U$ be a conical neighborhood of $m$ in 
$\X = M \cup \R_+ \Lambda$.  Then there is a canonical isomorphism of $(U, \mu\Sh^c)$ with (a trivial factor times) an arboreal singularity 
corresponding to a star quiver, given by a single root vertex with as many leaves as there are points of $\Lambda$ over $m$. 
\end{lemma}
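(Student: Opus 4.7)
The plan is to reduce to a standard local model using the normal crossings hypothesis, and then invoke Nadler's identification of the Kashiwara--Schapira sheaf on a standard arboreal singularity with the combinatorial sheaf $\cN$.

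First, I would set up coordinates. Working locally near $m$, the normal crossings hypothesis produces coordinates $(x_1, \ldots, x_n)$ on $M$ in which the front projection $\pi(\Lambda)$ is the union of the coordinate hyperplanes $H_i = \{x_i = 0\}$ for $i = 1, \ldots, k$, where $k$ is the number of sheets of $\Lambda$ over $m$. Since $\Lambda$ is smooth with normal crossings front projection, its $k$ sheets $\Lambda_1, \ldots, \Lambda_k$ near the preimage of $m$ are smooth Legendrians, each projecting diffeomorphically onto the corresponding $H_i$. Thus each $\Lambda_i$ is Legendrian isotopic to the (positive or negative) conormal of $H_i$ in $T^\infty \R^n$.

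Next I would split off a trivial factor. The coordinates $x_{k+1}, \ldots, x_n$ are tangent to all the hyperplanes $H_i$ and to all sheets of $\Lambda_i$, so the local geometry of $\X = M \cup \R_+ \Lambda$ in this chart is a product of $\R^{n-k}$ with a cone configuration inside $\R^k$: the union of the zero section $\R^k$ and the $\R_+$-cones over the $k$ conormals to the coordinate hyperplanes. The Kashiwara--Schapira sheaf, being constructed from the microlocal geometry, is compatible with this product decomposition (it is locally constant along the contractible trivial factor), so up to shrinking $U$ we get a factorization $(U, \mu loc) \cong (V, \mu loc) \times \R^{n-k}$, where $V \subset T^* \R^k$ is the union of the zero section and the positive cones over the conormals to the $k$ coordinate hyperplanes.

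Finally, I would identify $V$ with the arboreal singularity of the star quiver on $k+1$ vertices (one root $\rho$ and $k$ leaves $\alpha_1, \ldots, \alpha_k$). The star arboreal model, via the disc description in Remark \ref{rmk:discs}, is obtained by gluing a central disc $\T(\rho)$ to $k$ further discs $\T(\alpha_i)$, each attached along one half-hyperplane. This matches $V$ precisely: the zero section plays the role of $\T(\rho)$, while each cone $\R_+ \Lambda_i$ is half of the conormal bundle to $H_i$ and plays the role of $\T(\alpha_i)$, attached to the zero section along the half-hyperplane $\{x_i \leq 0\}$ (or $\{x_i \geq 0\}$, depending on sign). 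Nadler's theorem then provides a canonical identification $\mu loc \cong \cN$ on this star-type arboreal singularity. The main obstacle is being careful about the reduction to the conormal model: one must verify that standard contact transformation technology from \cite[Ch. 7]{KS} is sufficient to Legendrian isotope each sheet $\Lambda_i$ into its conormal model \emph{jointly}, preserving the normal crossings structure globally on $U$, and that the resulting identification of $\mu loc$ on the arboreal neighborhood is indeed canonical (independent of the choices, up to the evident $\mu loc$ automorphisms). Once this is in hand, the remaining identifications are a direct comparison of models.
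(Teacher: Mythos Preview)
Your approach is correct and essentially matches the paper's: pick normal-crossings coordinates, factor out the trivial $\R^{n-k}$, and identify the remaining configuration with the star arboreal model $\S tar_k$ (the paper's proof is in fact terser than yours, only recording the homeomorphism and deferring the $\mu loc \cong \cN$ identification to the surrounding discussion of Nadler's results).

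One small simplification: your ``main obstacle'' about jointly isotoping the sheets $\Lambda_i$ to conormals is not actually needed. A smooth Legendrian in $T^\infty M$ with immersed front projection is already, pointwise, the conormal direction to its front (this is what ``immersed front'' means: the contact hyperplane meets the tangent to the front in a hyperplane of the front, so the covector is the conormal). Hence once you have chosen coordinates making each front sheet a coordinate hyperplane $H_i$, each $\Lambda_i$ \emph{is} literally the positive or negative conormal sphere bundle of $H_i$, with no isotopy required. The identification with the star model is then a direct comparison, and the canonicity follows because no choices beyond the coordinates were made (and the coordinate choice only affects the trivial factor).
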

\begin{proof}
Let $\pi:T^\infty M \to M$, and $d=\dim M$. Around a smooth point of $\pi(\Lambda)$ \emph{on} the base $M$, $U$ is homeomorphic to $\R^d$ with a $d$-dimensional half-space glued to a coordinate hyperplane, which is homeomorphic to the local model $\A_2 \times \R^{d-1}$.

\begin{figure}[h]
    \centering
    \includegraphics[width=0.7\textwidth]{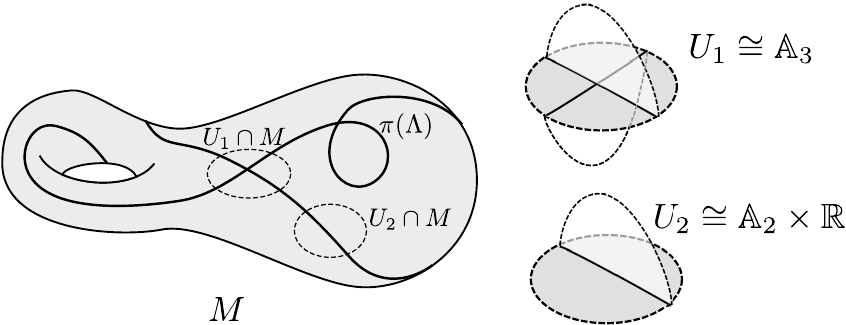}
         \caption{The case $d=2$. The neighborhood of a point in the front projection of the Legendrian $\Lambda$ is homeomorphic to either
         $\A_2 \times \R = \S tar_1 \times \R$ or $\A_3 = \S tar_2$. Note that in general it is the star quivers $Star_k$ that appear, and not the 
         $A_k$ series.}
    \label{fig:normalcrossings}
\end{figure}

Similarly, around a singular point of $\pi(\Lambda)$ by the normal crossings condition, $U$ is homeomorphic to $\R^d$ with $k$ half-spaces glued along $k$ coordinate hyperplanes. Consider now the arboreal singularity $\S tar_k$ corresponding to the star quiver with $k$ leaves; this is homeomorphic to $\R^k$ with $k$ half-spaces glued along coordinate hyperplanes. By comparison we have a homeomorphism $U \simeq \S tar_k \times \R^{d-k}$.
\end{proof} 

Thus $(\X, \mu\Sh^c_\X)$ is a locally arboreal space.  We check that a sufficient criterion for its orientability is the orientability of the base manifold $M$.

\begin{theorem}\label{thm:legendrian} 
The arboreal space $(\X, \mu\Sh^c_\X)$ admits an orientation if and only if $M$ is orientable; in that case it extends the orientation on $M$, i.e. which agrees with the given orientation on $M$ when restricted to the smooth locus of $\X$. 
\end{theorem}

\begin{proof}
Theorem  \ref{thm:checkOnProper} reduces us to consider $\mu \Sh = (\mu\Sh^c)^{pp}$. There is an embedding $\cL oc^b_M \hookrightarrow \mu\Sh$; in terms of the local models $U$, this identifies the locus $U \cap M$ with the disc corresponding to the root vertex $\rho$ of the quiver, and the embedding functor is the embedding of $\Perf$ as the subcategory spanned by the corresponding projective (and simple) object $P_\rho$.

This embedding is an isomorphism of sheaves of dg categories on the top-dimensional locus (away from the projection of $\Lambda$), and moreover the embedding of $M$ is a homotopy equivalence. Under the isomorphism induced in homology we have $w_1(\X,\mu\Sh^c_\X) = w_1(M,\cL oc^c)$, which vanishes if $M$ is orientable.
\end{proof}

\begin{figure}[h]
    \centering
    \includegraphics[width=0.7\textwidth]{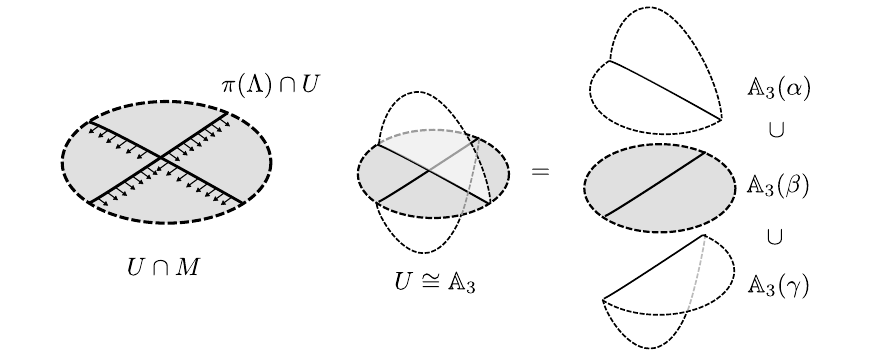}
         \caption{The coorientation of the front projection $\pi(\Lambda)$ defines a decomposition of $U \simeq \A_3$ into discs $\A_3(\alpha)$
         labeled by each vertex of $A_3$. The choice of rooting is given by picking the center vertex of the quiver (which labels the disc
         along $M$) to be the root.} 
    \label{fig:normalcrossings2}
\end{figure}

\section{Examples} \label{sec:apps} 
In this section we recall various categories which arise as the global sections of a sheaf or cosheaf of dg categories on an oriented locally arboreal space. As a consequence of our results, we conclude the existence of certain absolute and relative Calabi-Yau structures.

To relate this discussion to the subject of shifted symplectic geometry, we recall the following theorem.
\begin{theorem}\cite[Thm.5.5]{BD2}
Let $C$ be a smooth dg category with a $d$-dimensional smooth Calabi-Yau structure. Then there is an induced $(2-d)$-shifted symplectic structure on its moduli space of objects $\cM_C$. Moreover, if $f:C \leftrightarrows D: f^r$ is a continuous adjunction of smooth dg categories with a relative smooth Calabi-Yau structure on $f$ which is compatible with the (absolute) structure on $C$, the induced map $\cM_D \to \cM_C$ has a Lagrangian structure.
\end{theorem}
In the theorem above, $\cM_C$ denotes the `moduli space of objects' of $C$ as described in \cite{TVa}.

\begin{remark}
There were similar results previously proven in the literature; in particular we have \cite[Theorem 1.7]{Toe3} which proves the existence of the derived Lagrangian structure in the case that $\cA$ and $\cB$ are smooth and proper, which is not directly applicable to this paper since the categories of interest are smooth but not proper in general. Another result of relevance is \cite[Theorem 1.19]{Yeu} which applies to dg categories ``concentrated in non-negative degrees'' and constructs shifted symplectic structures on their derived moduli of representations, which is the substack of the moduli $\cM_\cA$ representing objects of zero Tor amplitude.
\end{remark}

Using the theorem above together with our result about orientations gives the following corollary.
\begin{corollary}
Let $X$ be a stratified space with compact boundary $\del X$, equipped with a locally saturated cosheaf $\cW$. Then an orientation of dimension $d$ on $\cW^{pp}$ determines a $(3-d)$-shifted symplectic structure on $\cM_{\cW(\del X)}$ and a Lagrangian structure on the morphism $\cM_{\cW(X)} \to \cM_{\cW(\del X)}$.
\end{corollary}

Therefore constructing orientations will then give rise to shifted symplectic structures, Poisson structures, quantizations, etc. in the appropriate circumstances \cite{PTVV, CPTVV}.  Direct application of our methods yield constructions in the ``type A'' cases;
e.g. the moduli space for a point is $\cM_{\Perf}$ inside which one can find the various $BGL_n$.  We expect that the desired symplectic
structures for the analogous moduli spaces for other groups can be constructed via Tannakian considerations as in \cite[Sec. 6]{Sim}, but
do not develop this in detail here.

\begin{remark}
In this section to ease notation we will denote the moduli stack of objects using a blackboard bold capital to distinguish it from the category, e.g.
the category of perfect complexes is denoted $\Perf$ and the moduli stack of perfect complexes is $\PPerf = \cM_{\Perf}$, same for local systems $\Loc$ and $\LLoc$.
\end{remark}

\subsection{The associated graded of a filtration} \label{sec:filt}

Let $\X$ be a comb: a one-dimensional space formed as the union of $\R$ and the positive cone 
on some $n$ points $\{p_i\}$ at positive contact infinity.  The category $\Sh_{\{ p_i \}} (\R)$ is equivalent
to the category $\FFilt_n$ of $n$-step filtered perfect complexes, which just means sequences of perfect complexes
\[ F_0 \to F_1 \to \ldots \to F_n. \]

\begin{corollary}
The functor
\begin{eqnarray*}
\Filt_n & \to & \Perf^{\otimes(n+1)}  \times \Perf \\
F_0 \to \ldots \to F_n & \mapsto & (F_0, Cone(F_0 \to F_1), Cone(F_1 \to F_2), \ldots, 
Cone(F_{n-1} \to F_n)), \, F_n
\end{eqnarray*}
has a relative proper Calabi-Yau structure, coming from an orientation of dimension 1, so the corresponding map of moduli spaces of objects
\[ \FFilt \to \PPerf^{\times (n+1)} \times \PPerf \]
is a Lagrangian mapping to a $2$-shifted symplectic space.
\end{corollary}
\begin{proof}
This follows immediately from Theorem \ref{thm:legendrian}.  Alternatively, the fact that the comb is orientable follows from its being contractible.
\end{proof}

\begin{remark}
Note that following the description in \ref{thm:legendrian}, the decomposition into ``discs" (here intervals) is such that the boundary
of the $i$th interval is given by the endpoints $(+\infty) - (p_i)$, if we pick the positive orientation on the base manifold $\R$. So in the 
Lagrangian map of moduli stacks the first factor $\PPerf^{\times (n+1)}$ is endowed with the opposite 2-shifted symplectic
structure, where the last factor $\PPerf$ has the usual 2-shifted symplectic structure. 
\end{remark}

\begin{figure}[h]
    \centering
    \includegraphics[width=0.7\textwidth]{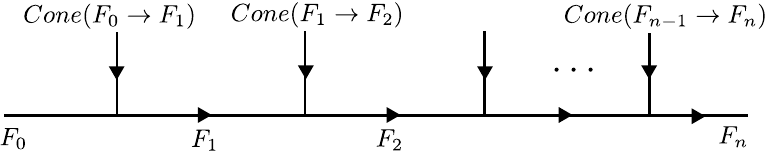}
         \caption{The comb $\X$. We pick a decomposition of $\X$ into overlapping intervals; in this
         case the $i$th interval has endpoints $p_i$ and $+\infty$, and extending the positive orientation on $\R$, its boundary is
         $(+\infty) - (p_i)$.}
    \label{fig:comb}
\end{figure}

Restricting to the open substack $\FFilt_n^\circ$ where all $F_i$ and all $Cone(F_i \to F_{i+1})$ can be represented by vector spaces
in degree zero, we see that the image of the morphism above in each factor
$\PPerf$ lands in some substack $\B GL_{m_i} \subset \PPerf$, and moreover that each map $F_i \to F_{i+1}$ is injective.
Denoting $m_0 = \dim F_0, m_i = \dim Cone(F_i \to F_{i+1})$ and $m = \dim F_n$, this implies that
$m_0 + \dots + m_n = m$. So $\FFilt_n^\circ$ splits into components labeled by
collections of integers $m,\{m_i\}$, and each component has a 2-shifted Lagrangian morphism
\[ \FFilt_n^\circ(m,\{m_i\}) \to (\B GL_{m_0}\times \dots \times \B GL_{m_n}) \times \B GL_m, \]
which can be interpreted as a $2$-shifted Lagrangian correspondence between $\B GL_m$ and $\B L$ for
the Levi subgroup corresponding to the partition $\{m_i\}$. This was originally shown in \cite{Saf2}, where a Lagrangian correspondence
\[\xymatrix{
 & \B P \ar[dl] \ar[dr] & \\
 \B G & & \B L
}\]
is used to define the ``partial group-valued symplectic implosion". Note that once we fix the decomposition $m = m_0 + \dots + m_n$, we can identify
the substack $\FFilt_n^\circ(m,\{m_i\})$ as the classifying space $\B P$ for the parabolic $P$ corresponding to $L$: a map from some other space $\X \to \FFilt_n^\circ(m,\{m_i\})$ determines an invariant filtration of the vector space $k^m$, therefore up to equivalence it is the data of a $P$-bundle over $\X$.

\subsection{Invariant filtrations near punctures on surfaces}
Let $\Sigma$ be an oriented surface with boundary consisting of $n$ circles; draw a collection of $m_i$ concentric circles at each boundary component, 
and choose co-orientations and therefore Legendrian lifts.  Let $\Lambda$ denote the union of these lifts. 
Just as in the previous subsection, the corresponding
category $Sh_\Lambda(\Sigma)$ amounts to the category of sheaves on $\Sigma$ with invariant filtrations at 
the punctures. Note that $\Lambda \cup \del \Sigma$ is just a union of $m = \sum m_i$ circles.  

\begin{corollary} \label{cor:loclagrangian}
The morphism $\SSh_\Lambda(\Sigma) \to \LLoc(\Lambda \cup \del \Sigma) = \LLoc(S^1)^{\times m}$ is $1$-shifted Lagrangian.
\end{corollary}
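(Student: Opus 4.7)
The plan is to deduce this as an immediate special case of Corollary \ref{thm:immerse}, with the ambient manifold taken to be $M = \Sigma$. The strategy is to verify the hypotheses of that corollary in the present two-dimensional setting and then unwind both the source and target of the resulting Lagrangian morphism.

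First I would check the hypotheses. By construction $\Lambda \subset T^\infty \Sigma$ is a disjoint union of (cooriented) Legendrian lifts of finitely many simple closed curves in $\Sigma$; these curves are drawn to be pairwise disjoint, so $\Lambda$ is a smooth closed Legendrian whose front projection $\pi: \Lambda \to \Sigma$ is an embedding, in particular an immersion with normal crossings image. The orientability hypothesis needed to apply Theorem \ref{thm:global} is supplied by Theorem \ref{thm:legendrian}, which takes as input the given orientation on $\Sigma$ together with the coorientations of the concentric circles (equivalently, the choice of Legendrian lift). Since $\dim \Sigma = 2$, Corollary \ref{thm:immerse} then produces a $(3 - 2) = 1$-shifted Lagrangian structure on the microlocal restriction morphism.

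Next I would identify the source and target. Setting $\X = \Sigma \cup \R_+ \Lambda$, the source $M(\mu loc_{\X}(\X))$ is, by the Kashiwara--Schapira theorem recalled above, the moduli of objects in $Sh_\Lambda(\Sigma)$. For the target, the boundary of $\X$ decomposes as $\partial \X = \partial \Sigma \sqcup \Lambda$. Each component is a smooth circle along which $\mu loc$ is concentrated at smooth points of the (embedded) front, and so on each component $\mu loc$ is identified with the category of local systems, by the discussion immediately preceding Corollary \ref{thm:immerse}. Assembling over the $N$ connected components gives $M(\mu loc_{\X}(\partial \X)) = Loc(\Lambda \cup \partial \Sigma) = Loc(S^1)^N$, and the morphism of Theorem \ref{thm:global} is by construction microlocal restriction to the boundary.

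The only place I would expect any subtlety is the identification of the Lagrangian morphism produced by Theorem \ref{thm:global} with the concrete microlocal restriction functor $Sh_\Lambda(\Sigma) \to Loc(\Lambda \cup \partial \Sigma)$; however, this is built into the sheaf-theoretic nature of $\mu loc$ and the functoriality of global sections with respect to the inclusion $\partial \X \hookrightarrow \X$, so no new work beyond the verifications above is required.
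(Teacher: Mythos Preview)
Your proposal is correct and follows exactly the paper's approach: the paper's proof is the single line ``This follows immediately from Theorem \ref{thm:immerse}.'' You have simply spelled out the straightforward verification of hypotheses (smoothness of $\Lambda$, immersed front, $\dim\Sigma=2$) and the identification of source and target that the paper leaves implicit.
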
 
\begin{proof}
This follows immediately from Theorem \ref{thm:legendrian}.
\end{proof} 

Again, we can take the substack $\SSh_\Lambda(\Sigma)^\circ$ on which the restriction of the sheaf to $\Sigma$ has cohomology concentrated
in degree zero; the resulting space is an Artin stack is the classical sense, and as in the comb example above, it has components labeled 
by the microlocal ranks along the boundary components; the morphism splits into components
\[ \SSh_\Lambda(\Sigma)^\circ(\{m_i\}) \to \left[\frac{GL(m_1)}{GL(m_1)}\right] \times \dots \times \left[\frac{GL(m_n)}{GL(m_n)}\right], \] 
where the stacky quotient is taken with respect to the adjoint action.

To get a space with a symplectic structure, one can choose another $1$-shifted Lagrangian morphism to the 
moduli space of local systems around the boundary. Performing Lagrangian intersection between the two $1$-shifted Lagrangians gives a $0$-shifted symplectic space. As observed in \cite{Cal}, this intersection of shifted Lagrangians inside of $[\frac{G}{G}]$ recovers the construction of symplectic spaces by quasi-Hamiltonian reduction of \cite{AMM}; see also \cite{Saf1}.

\begin{corollary}
The moduli space of local systems on a surface equipped with invariant filtrations at the punctures, of which 
the conjugacy classes $C_i$ of the associated graded holonomies are pre-specified, carries a $0$-shifted symplectic
structure. 
\end{corollary}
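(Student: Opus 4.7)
The plan is to realize the desired moduli space as a derived intersection of two $1$-shifted Lagrangian morphisms into a $1$-shifted symplectic target, so that the standard PTVV calculus produces a $0$-shifted symplectic structure on the intersection.

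For the first Lagrangian, I would simply invoke the preceding corollary: the microlocal restriction morphism $Sh_\Lambda(\Sigma)^\circ \to Loc(S^1)^N$ is $1$-shifted Lagrangian, with target $Loc(S^1)^N = (G/G)^N$ which is $1$-shifted symplectic by PTVV applied to the circle (equivalently, the boundary of a disc). Passing to the open substack of constant microlocal ranks just fixes the dimensions of the associated graded pieces and does not affect the Lagrangian structure.

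For the second Lagrangian, at each of the $N$ circles we fix a conjugacy class $C_j \subset G$ prescribing the holonomy of the corresponding associated graded piece, and form the product
$$ \prod_j C_j/G \longrightarrow (G/G)^N = Loc(S^1)^N. $$
Each factor $C_j/G \hookrightarrow G/G$ is a $1$-shifted Lagrangian morphism; this is the derived-geometric reformulation, due to \cite{Saf1, Saf2}, of the classical Alekseev--Malkin--Meinrenken fact that conjugacy classes in $G$ are quasi-Hamiltonian $G$-spaces with group-valued moment map the inclusion $C \hookrightarrow G$.

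Putting these together, the derived fiber product
$$ Sh_\Lambda(\Sigma)^\circ \times_{Loc(S^1)^N} \prod_j C_j/G $$
inherits a canonical $0$-shifted symplectic structure, since the derived intersection of two $n$-shifted Lagrangians in an $n$-shifted symplectic target is $(n-1)$-shifted symplectic \cite{PTVV}. By construction, this fiber product parametrizes exactly local systems on $\Sigma$ equipped with an invariant filtration at each puncture whose associated graded holonomies lie in the prescribed conjugacy classes, as desired. The main substantive step is the second one --- establishing that each conjugacy-class inclusion is $1$-shifted Lagrangian --- but this is precisely the content of Safronov's shifted-symplectic repackaging of quasi-Hamiltonian reduction, and can be invoked as a black box.
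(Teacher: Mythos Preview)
Your proposal is correct and matches the paper's own argument essentially line for line: the paper likewise invokes the preceding corollary for the first Lagrangian, cites Safronov for the fact that $[C/G] \to [G/G]$ is $1$-shifted Lagrangian, and obtains the $0$-shifted symplectic structure as the derived intersection of the two.
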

\begin{proof}
This can be deduced from Corollary \ref{cor:loclagrangian} by observing that fixing a conjugacy class $C$ in a reductive group $G$ 
determines a Lagrangian morphism $[\frac{C}{G}] \to [\frac{G}{G}]$. Performing Lagrangian intersection between $\SSh_\Lambda(\Sigma)^\circ(\{m_i\})$ and
$[\frac{C_1}{GL(m_1}] \times \dots \times [\frac{C_n}{GL(m_n)}]$ gives the symplectic structure on the moduli space with prescribed holonomies.
\end{proof}

\begin{figure}[h]
    \centering
    \includegraphics[width=0.5\textwidth]{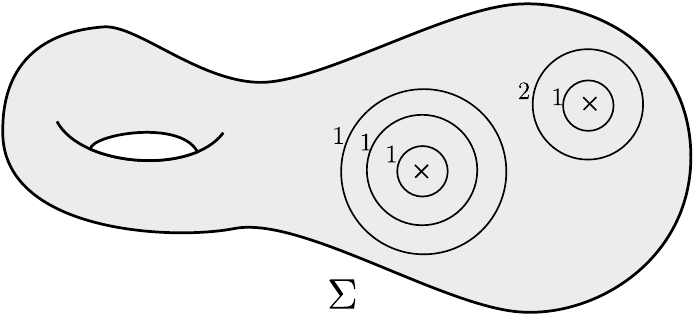}
         \caption{On the surface $\Sigma$ with two punctures, and with fixed microlocal rank along the concentric circles of $\Lambda$.
         If we look at the substack of $Sh_\Lambda(\Sigma)$ of objects with rank 0 at the punctures, the microlocal rank conditions
         mean we have rank 3 local systems equipped with invariant filtrations near each puncture; in this particular case we have two filtrations
         respectively of the form $0\subset k\subset k^2 \subset k^3$ and $0\subset k \subset k^3$}
    \label{fig:invfiltrations}
\end{figure}

\begin{example}
Consider $\Sigma$ an oriented surface with boundary $\del \Sigma = $ union of $n$ circles, without any Legendrians.
The Artin stack $\SSh(\Sigma)^\circ$
has disjoint components $\LLoc_{GL(m)}(\Sigma)$ labeled by the rank $m$ of the local system, and so we have 1-shifted Lagrangian morphisms
\[ \LLoc_{GL(m)}(\Sigma) \to (\LLoc_{GL(m)}(S^1))^{\times n} = \left[\frac{GL(m)}{GL(m)}\right]^{\times n} \]
and picking $n$ conjugacy classes $C_i$ in $G$, we can perform the intersection and get the so-called ``tame" character variety of $\Sigma$.
\end{example}

\begin{example}
Take $\Sigma$ to be the open cylinder with $n$ concentric circles around one of the boundary components, and no circles around the other. 
Fixing the ranks $m = m_1 + \dots + m_n$ at each boundary, we get a 1-shifted Lagrangian morphism
\[ \SSh_\Lambda(\Sigma)^\circ \to \LLoc_{GL_m}(S^1) \times \LLoc_{GL_{m_1}}(S^1) \times \dots \times \LLoc_{GL_{m_n}}(S^1) = 
\left[\frac{G}{G}\right] \times \left[\frac{L}{L}\right], \]
where $L$ is the Levi subgroup corresponding to the partition $\{m_i\}$. This example also appears in \cite{Saf2}, as a 1-shifted Lagrangian correspondence
\[\xymatrix{
 & \left[\frac{P}{P}\right] \ar[dl] \ar[dr] & \\
 \left[\frac{G}{G}\right] & & \left[\frac{L}{L}\right],
}\]
where the identification of our space with $[\frac{P}{P}]$ comes from the observation that an invariant filtration on $\Sigma$ is the same data as a $P$-local system on $S^1$. 

\begin{figure}[h]
    \centering
    \includegraphics[width=0.3\textwidth]{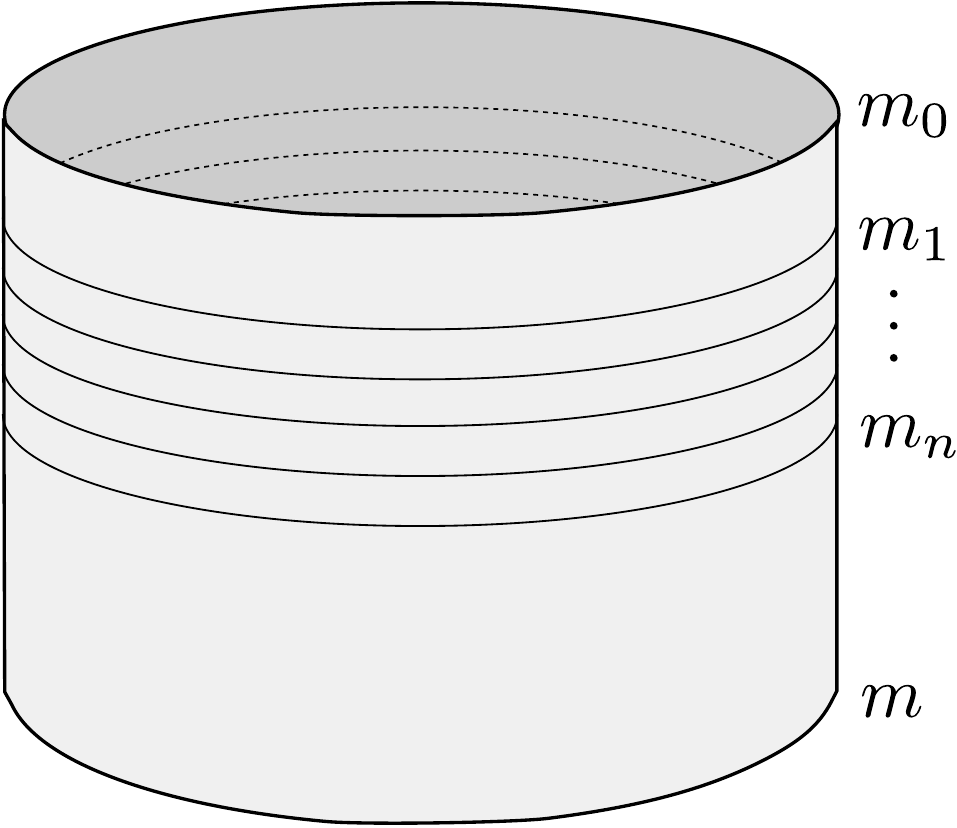}
         \caption{The restriction to the upper boundary components can be assembled into a map to $[\frac{L}{L}]$ where $L$ is a Levi subgroup
         of $G = GL_m$ given by the integers $\{m_i\}$. The restriction to the lower boundary components is a map to $[\frac{G}{G}]$ giving 
         the monodromy of the $G$-local system}
    \label{fig:cylinder}
\end{figure}

Another way of deducing this particular case is by writing $\Sigma$ as the product of a circle and a comb (as in \ref{sec:filt}). Therefore we have an equivalence of derived stacks
\[ \SSh_\Lambda(\Sigma) = \R Map(S^1, \FFilt_n) \]
and the Lagrangian correspondence is obtained from the correspondence in section \ref{sec:filt} by applying the mapping stack functor $\R Map(S^1,-)$, which shifts the degree down to the 1-shifted Lagrangian correspondence above.

\begin{remark} In the rank one case, when $k_i=1$, the correspondence $[\frac{B}{B}] \to [\frac{G}{G}] \times [\frac{T}{T}]$ is a group version of the Grothendieck-Springer correspondence, and we have $[\frac{B}{B}] \simeq [\frac{\tilde G}{G}]$ where $G$ acts on the Springer resolution $\tilde G$ by conjugation.
\end{remark}

\end{example}

\subsection{Stokes filtrations near punctures on surfaces}
Rather than take an invariant filtration around a puncture, 
one can allow the filtration itself to undergo monodromy.  The resulting notion generalizes 
the notion of Stokes structure; we will call it a Stokes filtration.  In most works,
this was presented as suggested above: in 
terms of a sheaf on the boundary circle equipped with a filtration that itself varies.  This notion can
be found e.g. in \cite{Mal}.  Defining what precisely it means for a filtration to vary along a circle 
is nontrivial and somewhat mysterious at the points where the steps in the filtration cross. 

We prefer to turn this notion sideways: rather than a filtered sheaf on $S^1$ with varying filtration, we take
a sheaf on $S^1 \times \R$ with microsupport in a prescribed Legendrian braid closure.  This determines a 
filtration in the $\R$ direction, just as in the previous examples; as it happens, the above notion exactly
captures at the crossings the notion in \cite{Mal} of Stokes filtration.  This idea 
seems to have been known to the experts, but we have not found any systematic exposition of it in the classical 
literature.\footnote{More precisely, we only know the following other occurrences 
of this picture. In \cite{DMR}, there is a letter from Deligne in which the Stokes sheaf is viewed
as a sheaf on an annulus rather than a filtered sheaf on a line. 
 In \cite{KKP}, the idea that a Legendrian knot can be associated to a Stokes
filtration appears as a remark.    Finally, the drawing of at least the projection of a knot already appears in the original work of Stokes \cite{Sto}.}

An attempt at this exposition appears in \cite[Sec. 3.3]{STWZ}.  Here we simply recall that the Deligne-Malgrange
account of the irregular Riemann-Hilbert correspondence on Riemann surfaces can be formulated as follows.  
Suppose we are given a Riemann surface $\Sigma$ with marked points $p_i$, and a specification of a (possibly ramified) irregular type
$\tau_i$, i.e., formal equivalence class of irregular singularity, at each.  Then there is an associated
Legendrian link $\Lambda = \coprod \Lambda(\tau_i)$, a union of links localized near the $p_i$, and an equivalence
of categories between the irregular connections with these singularities, 
and the full subcategory of 
$\Sh_\Lambda(\Sigma)$ on objects which have cohomology concentrated in degree zero, and appropriate rank
stalks and microstalks.  The corresponding component of the moduli space is the
moduli space of Stokes data. Finally, the microlocal restriction morphism
$\Sh_\Lambda(\Sigma) \to \Loc(\Lambda)$ is what would have classically been called ``taking the formal monodromies''. 

\begin{corollary}
The morphism from a moduli space of Stokes data to the moduli space of formal monodromies is 1-shifted-Lagrangian.
\end{corollary}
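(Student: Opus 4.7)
The plan is to apply Corollary~\ref{thm:immerse} directly with $M = \Sigma$, which is an oriented real $2$-manifold, so that $3 - \dim \Sigma = 1$ yields the desired shift. First I would verify the hypotheses: the Stokes Legendrian $\Lambda = \coprod_i \Lambda(\tau_i)$ arising from the Deligne--Malgrange correspondence is, by construction, a smooth closed Legendrian submanifold of $T^\infty \Sigma$ -- the crossings and turning points visible in classical Stokes theory all appear in the front projection $\pi(\Lambda) \subset \Sigma$, not in $\Lambda$ itself. For each irregular type $\tau_i$, the piece $\Lambda(\tau_i)$ is a union of spectral arcs sitting over a small punctured neighborhood of $p_i$, with front given locally by (approximately) level curves of $\mathrm{Re}(\phi_\alpha)$ for the formal exponents $\phi_\alpha$ of $\tau_i$.

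Next I would argue that the front projection $\Lambda \to \Sigma$ may be assumed to be an immersion with normal-crossings image. For generic $\tau_i$ this is automatic from the local model; in general a small contact isotopy of $\Lambda$, supported in a neighborhood of the finitely many degenerate front-points, suffices. By the invariance result of \cite{GKS}, such a perturbation induces an equivalence $Sh_\Lambda(\Sigma) \simeq Sh_{\Lambda'}(\Sigma)$ intertwining microlocal restriction, so this reduction loses no information.

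With these hypotheses in place, Corollary~\ref{thm:immerse} applies: the union $\X = \Sigma \cup \R_+\Lambda$ is a locally arboreal space whose orientation is produced by Theorem~\ref{thm:legendrian} from the (complex, hence real) orientation of $\Sigma$, and the general machinery of Theorem~\ref{thm:global} then promotes the microlocal restriction $M(Sh_\Lambda(\Sigma)) \to M(Loc(\Lambda \cup \partial \Sigma))$ to a $1$-shifted Lagrangian. In the standard Stokes setup $\Sigma$ is a closed Riemann surface, so $\partial \Sigma = \emptyset$ and the target collapses to $M(Loc(\Lambda))$; otherwise an additional $M(Loc(\partial \Sigma))$ factor is simply carried along. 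The moduli space of Stokes data is a union of connected components of the derived stack $M(Sh_\Lambda(\Sigma))$ cut out by fixing the microlocal ranks along each stratum of the front, and restriction of a Lagrangian morphism to an open-and-closed substack is still Lagrangian, so the conclusion follows.

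The main obstacle I expect is the verification step for ramified irregular types: when $\tau_i$ requires passage to a cyclic cover of the puncture to put into normal form, one must check that the resulting Legendrian in $T^\infty \Sigma$ is genuinely smooth (rather than Legendrian-singular at $p_i$) and has a front that, after perturbation, still satisfies the normal-crossings hypothesis of Theorem~\ref{thm:legendrian}. This amounts to the local analysis already carried out in \cite{STWZ,STW}, which identifies the arboreal singularities appearing in the Stokes picture as precisely the star-quiver models handled by Theorem~\ref{thm:legendrian}.
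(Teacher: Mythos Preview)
Your proposal is correct and follows essentially the same route as the paper: the corollary is deduced immediately from Corollary~\ref{thm:immerse} applied to the Stokes Legendrian $\Lambda \subset T^\infty \Sigma$, using $3 - \dim \Sigma = 1$ and the identification of the Stokes moduli as a component of $M(Sh_\Lambda(\Sigma))$ with the formal-monodromy map being microlocal restriction to $Loc(\Lambda)$. The paper in fact gives no explicit proof at all, treating the statement as an immediate consequence of the preceding discussion, so your verification of the smoothness and normal-crossings hypotheses (and the remark on ramified types) is more detailed than what appears there.
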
 

\begin{corollary}
A moduli space of Stokes data with formal monodromies taking values in prescribed conjugacy classes 
is 0-shifted-symplectic.  In
particular, any open substack which happens to be a scheme is symplectic in the usual sense. 
\end{corollary}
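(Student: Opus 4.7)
The plan is to deduce this from the preceding corollary by combining it with the standard Lagrangian presentation of a conjugacy class and then invoking the derived intersection theorem of \cite{PTVV, Cal}. The previous corollary already gives a $1$-shifted Lagrangian morphism
\[
\cM_{\text{Stokes}} \longrightarrow Loc(\Lambda),
\]
where $\Lambda = \coprod_i \Lambda(\tau_i)$ is a disjoint union of Legendrian circles obtained by taking the components of the braid closures at each puncture. The target $Loc(\Lambda)$ is thus a finite product of copies of $Loc(S^1) = [G/G]$, carrying its natural $2$-shifted symplectic structure from \cite{PTVV} applied to the compact oriented $1$-manifold $\Lambda$.

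Next, for each component circle of $\Lambda$ and each choice of conjugacy class $C_j \subset G$, the inclusion $[C_j/G] \hookrightarrow [G/G] = Loc(S^1)$ is a $1$-shifted Lagrangian morphism. This is a classical fact coming from the quasi-Hamiltonian moment map picture of \cite{AMM}, reinterpreted in the shifted-symplectic language by Safronov \cite{Saf1}, and is exactly the same input used to prove the tame analogue earlier in this section. Taking the product over the components of $\Lambda$ gives a $1$-shifted Lagrangian morphism
\[
\prod_j [C_j/G] \longrightarrow Loc(\Lambda).
\]

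The key step is then to invoke the PTVV--Calaque derived intersection theorem: the homotopy fiber product of two $n$-shifted Lagrangian morphisms into the same $n$-shifted symplectic target carries a canonical $(n-1)$-shifted symplectic structure \cite{PTVV, Cal}. Specializing to $n=1$, the derived fiber product
\[
\cM_{\text{Stokes}} \times^{h}_{Loc(\Lambda)} \prod_j [C_j/G]
\]
is, by construction, the moduli space of Stokes data whose formal monodromies lie in the prescribed conjugacy classes $\{C_j\}$; and by the theorem it inherits a canonical $0$-shifted symplectic structure.

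Finally, the statement about open substacks that are schemes is immediate from a general feature of shifted symplectic geometry: a $0$-shifted symplectic structure on a scheme forces the scheme to be smooth, and on a smooth scheme it is simply a nondegenerate closed $2$-form in the classical sense (see \cite[p.~227]{Toe2}). Thus any open substack of the above intersection that happens to be an ordinary scheme is a smooth symplectic variety in the usual sense. The only nontrivial ingredient beyond the previous corollary is the Lagrangian property of $[C_j/G] \to [G/G]$, which we do not reprove here since it is by now standard; everything else is formal consequence of shifted symplectic calculus.
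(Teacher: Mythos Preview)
Your proof is correct and follows essentially the same approach as the paper. The paper does not spell out a proof for this corollary, but the analogous earlier corollary (for invariant filtrations) is proved in exactly the way you describe: take the $1$-shifted Lagrangian morphism to the moduli of boundary monodromies, use that $[\frac{C}{G}] \to [\frac{G}{G}] = Loc(S^1)$ is $1$-shifted Lagrangian \cite{Saf1}, and form the derived Lagrangian intersection via \cite{PTVV}.
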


This recovers and generalizes all constructions of symplectic structures in e.g. \cite{Boa1, Boa2, BY, Mei} for $GL_n$ connections.  

\begin{example}(Wild character variety)
Consider a disc $\Sigma$ punctured at the origin and a trivial rank $n$ vector bundle $E\to\Sigma$, where the origin is marked with the irregular type
\[ Q(z) = \frac{A}{z^r}, \qquad r \in \Z_+, A \in \mf t^{reg} \subset \mf{gl}_n \]
i.e., $A$ has all distinct eigenvalues. A meromorphic connection $\nabla$ on $E$ has this irregular type if it can be brought by a local analytic gauge transformation to the connection defined by the connection one-form $dQ$.

By the irregular Riemann-Hilbert correspondence, the category of meromorphic connections on the trivial vector bundle with this irregular type is equivalent to the category of Stokes data. From the Stokes data, one can recover the monodromy of $\nabla$ and the formal monodromy; this latter is an element of the centralizer $Z_G(A)$ (in this case, the maximal torus $T$) up to conjugation. 

The moduli of Stokes data with fixed monodromies and formal monodromies is commonly known as the \emph{wild character variety}.
Upon fixing conjugacy classes $C_G,C_T$ in $G$ and $T$, the wild character variety can be described as a
quasi-Hamiltonian quotient \cite{Boa2,Boa3}
\[ (G \times T \times (U_+ \times U_-)^r) \sslash_{C_G,C_T} (G\times T) \]
where $U_\pm$ are the unipotent subgroups corresponding to the maximal torus $T$. The moment map to $G$ is taking the monodromy around the singularity, and the map to $T$ is taking the formal monodromy.

In our description, this category of Stokes data becomes a full subcategory of the category $Sh_\Lambda(\Sigma)$ of microlocal sheaves, for a 
corresponding Legendrian link $\Lambda \subset T^\infty$ around the singularity. The monodromy and formal monodromy then become literal
monodromies of the local systems one gets by restriction to the boundary.
To explicitly construct $\Lambda$, one can follow the prescriptions in \cite[Sec. 3.3]{STWZ}. This can be heuristically stated in terms of the
asymptotics of flat sections, i.e. the growth behavior of the solutions to
\[ \frac{df}{dz} = \frac{dQ}{dz} f(z). \]
In this case, the solutions are spanned by $n$ different solutions $f_i \sim \exp(\lambda_i z^{-r})$, where $\lambda_i$ are the eigenvalues of $A$, and we only keep the exponential part of the asymptotics. The Stokes phenomenon refers to the fact that these correspond to asymptotics of solutions in different sectors; as we go from one sector to the other, the growth of these solutions changes. On each sector, we draw concentric strands for the $f_i$, ordering them by growth: the faster-growing ones further from the origin. Whenever we cross a Stokes ray, where the solutions $f_i, f_j$ switch growth asymptotics, 
we introduce a crossing between the $i$ and $j$ strands.

\begin{figure}[h]
    \centering
    \includegraphics[width=0.3\textwidth]{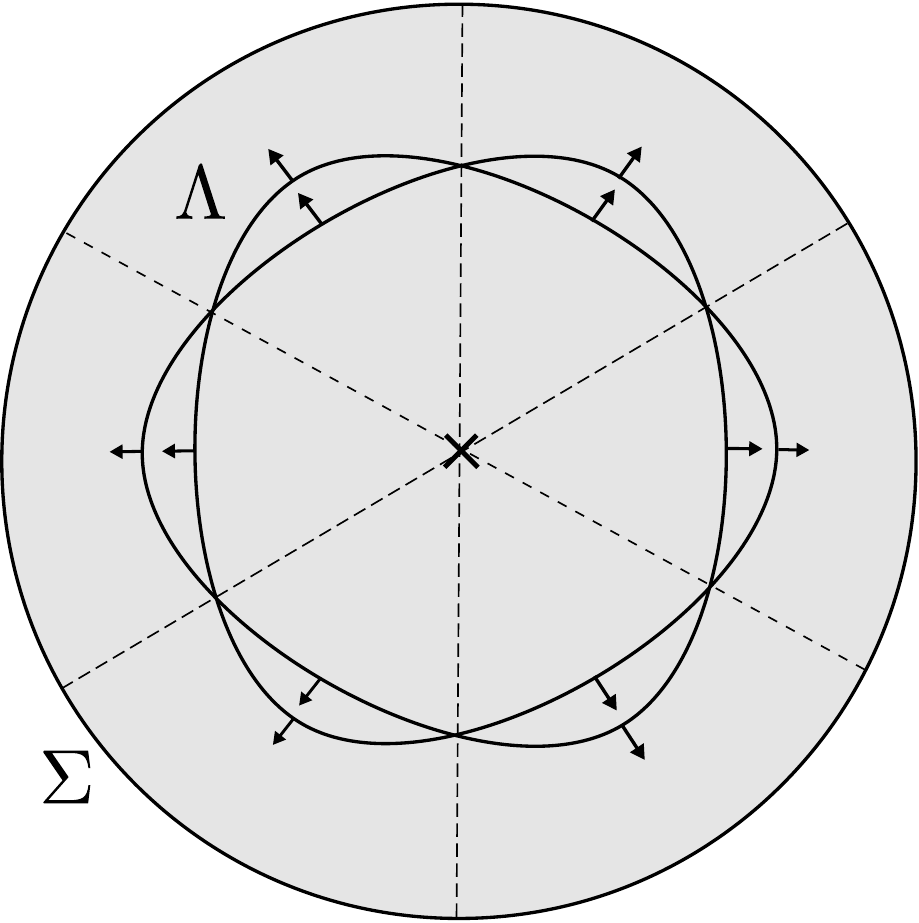}
         \caption{The Legendrian link for the irregular type \usebox{\smlmat} $\frac{1}{z^3}$.
          The Legendrian link $\Lambda \subset T^\infty \Sigma$ is obtained by 
         lifting the projection using the outward coorientation. The dashed lines are the Stokes rays, where the asymptotics of formal solution
         changes. Note that each component of the link is unknotted with itself: this is true of all the examples of this form.}
    \label{fig:wildcharvar}
\end{figure}

In the case we described above ($A \in \mf t^{reg}$), the corresponding Legendrian link $\Lambda$ is the closure of a $(n,2r)$ braid , cooriented outward, where we enforce the condition that the rank of the stalk inside $\Lambda$ is zero, and the microlocal ranks on each component of $\Lambda$ is one. What we call the moduli of Stokes data $\cM_\Lambda$ is the moduli of objects in the full subcategory of $Sh_\Lambda(\Sigma)^\circ$ with those rank conditions. The maps given by restriction to the boundary components can be assembled into a 1-shifted Lagrangian map
\[ \cM_\Lambda \to \left[\frac{G}{G}\right]\times \left[\frac{T}{T}\right] \]
where $[\frac{G}{G}]$ is $G$-local systems on the boundary of the disc, and $[\frac{T}{T}]$ is rank one local systems on $\Lambda$.
In this description, taking quasi-Hamiltonian quotient corresponds to taking intersection with another Lagrangian $[\frac{C_G}{G}] \times [\frac{C_T}{T}]$. The explicit description of the moduli space $\cM_\Lambda$ can be obtained by following the prescriptions in \cite{STZ}; one can check that this stack can in fact be expressed as the quotient $[(G \times T \times (U_+ \times U_-)^r) / (G\times T)]$, agreeing with the previously existing description.
\end{example}

\begin{example}
With the same notation of the previous example, consider the irregular type
\[ Q = \frac{A}{z^{r/2}}, \qquad A \in \mf t^{reg} \subset \mf{gl}_n \]
where $r$ is some odd number. Following the discussion in the last example, we get $n$ solutions
$f_i \sim \exp(\lambda_i z^{-r}) $
The strands $i$ and $j$ will cross whenever $f_i$ and $f_j$ ``switch" growth asymptotics. Suppose for instance that $\lambda_i,\lambda_j \in \R$. Writing $z = R e^{i\theta}$ the asymptotics will switch whenever $Re(z^{-r/2}) = 0$, i.e. on the rays
\[ \theta = \frac{\pi}{r} + \frac{2n\pi}{r}. \]
There are $r$ such rays between any pair $i,j$ even if $\lambda_i,\lambda_j \notin \R$: the expression for the rays is more complicated by the number of rays doesn't change. Therefore $\Lambda$ is the closure of a $(n,r)$ braid.

\begin{figure}[h]
    \centering
    \includegraphics[width=0.3\textwidth]{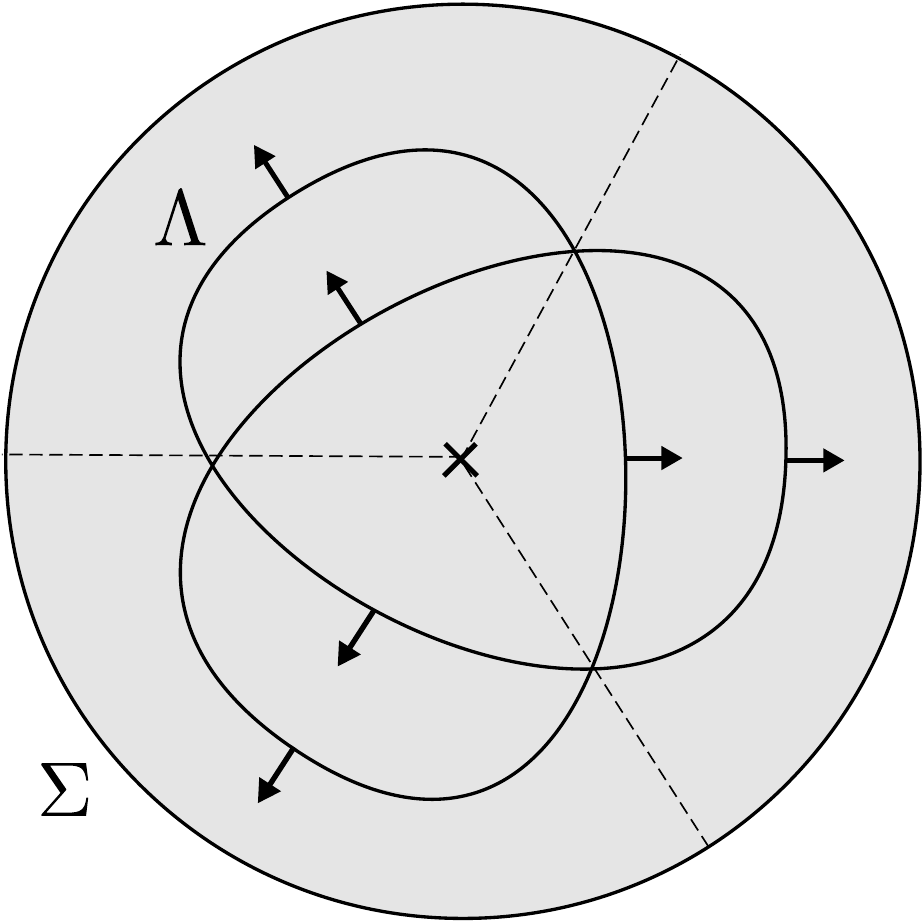}
         \caption{The Legendrian link for the irregular type \usebox{\smlmat} $\frac{1}{z^{3/2}}$.
         The microlocal rank on the link $\Lambda$ is 1.}
         \label{fig:twistedwildcharvar}
\end{figure}

The case where $n=2, r=3$ gets us a trefoil and appears in Stokes' discussion of the Airy equation \cite{Sto}. Note that the trefoil has only one component, so the formal monodromy map lands in $Loc_{k^\times}(S^1) \simeq [\frac{k^\times}{k^\times}]$ which doesn't stand for $\frac{T}{T}$ for any maximal torus $T \subset GL_2$. This explains why this more general case cannot be described just in terms of moment maps into subgroups of $G$.
\end{example}

\begin{remark}
One can play many variations on the theme of Stokes filtrations and irregular singularities. Considering connections with matrices $A$ that are not in the regular locus of $\mf g$, it becomes necessary to look at further less singular terms in the expression for $dQ$. To find the corresponding Legendrians one can still follow the prescriptions in \cite{STWZ}. One obtains cablings of torus knots by other torus knots and cablings of torus knots by such cablings and so on.

But there are many other knots that one can consider: pick any positive braid and close it around the origin into a Legendrian $\Lambda$. Picking rank conditions and monodromies around the components of $\Lambda$, this gives a symplectic space $\cM_\Lambda$ that doesn't necessarily come from an irregular meromorphic singularity. We can expect these spaces to carry some of the same structures as the tame and wild character varieties; whether this is true remains a topic of future research.
\end{remark}

\begin{remark}
For moduli of Stokes data for connections on 
higher dimensional complex varieties, 
the corresponding Riemann-Hilbert theorem recently been proven \cite{AK}.  We expect that it can be reformulated into an analogous ``microsupport in certain
smooth Legendrians'' version, from which we would be able to immediately deduce the existence of
the shifted symplectic structure. 

In terms of the corresponding symplectic/Poisson geometry, the moduli spaces of flat connections/local systems on higher-dimensional algebraic varieties has been recently studied in \cite{PT1,PT2}. We think it would be interesting to investigate the precise relation between our formalism and those results.
\end{remark}

\subsection{Positroid varieties, multiplicative Nakajima varieties, and other cluster structures}
The combinatorics of cluster algebras arising from surfaces was originally organized around data given
variously as a graph on a surface, a triangulation on a surface, etc \cite{Pos, FG, GSV, GSV2, FST}.   
One presentation of this data is in terms of the so-called ``alternating strand diagram'', the manipulation
of which by combinatorial topology \cite{Thu} underlies various theorems of the cluster algebra.  

In \cite{STWZ}, we took the perspective that the alternating strand diagram should be viewed as
a Legendrian knot, that triangulations of the surface give rise to Lagrangian fillings of it, and that 
all the corresponding cluster algebraic formulas are computing the Floer homology between such fillings. 
In particular,  the corresponding cluster $X$-variety was identified as a moduli space of ``rank one'' objects in 
$Sh_\Lambda(\Sigma)$, where $\Lambda$ is the Legendrian lift of the alternating strand diagram, and $\Sigma$
is the base curve.  

In \cite{STW}, there is a slightly different perspective: rather than working from $\Sigma, \Lambda$, the authors start
with a Legendrian $L$ --- one could view it as one of the above-mentioned fillings of $\Lambda$ --- and attach
Weinstein handles to its cotangent bundle along Legendrians which project to simple closed curves.  This perspective
is yet more general than the previous.  

It includes as a special case the multiplicative Nakajima quiver varieties 
of \cite{CS, Yam, BezKap}; this being the case where the attaching circles are contractible.  Indeed, this case is very close to the 
presentation in \cite{BezKap}.  In that reference, rather than locally arboreal singularities, they consider the spaces which are locally
either a smooth surface or modeled on the Lagrangian singularity given by the union of the zero section and the conormal 
to point.  However, this local model admits a noncharacteristic deformation to the union of the zero section and the positive conormal
to a circle.  The deformation is just given by the contact isotopy induced by the Reeb flow. Thus, the present work recovers all constructions of symplectic and Poisson structures on such spaces. For some further discussion of multiplicative quiver varieties, and their relation to quasi-Hamiltonian reduction, see \cite{vdB}.

\subsection{The augmentation variety of knot contact homology} 

Consider a knot or link $K \subset S^3$.  Naturally associated to this is the category of sheaves constructible with respect
to the stratification $S^3 = K \cup S^3 \setminus K$.  This study of this category e.g. allows one to prove that 
the Legendrian
isotopy type of the conormal torus to a knot determines the knot \cite{She, ENS}.    In an appropriate sense, it is equivalent to the 
a category of augmentations of knot contact homology. 

The union of the conormal to the knot with the zero section is not arboreal, but as the above discussion of \cite{BezKap}, this
can be remedied by perturbing the conormal torus by the Reeb flow, resulting in a skeleton given by the union of the zero section and 
the positive conormal to the inward (or outward) co-oriented boundary of a tubular neighborhood of the knot. 

In any case, we can study the space of objects in this category.  It has a map to the category of local systems on $T^2$, which by the results
here, becomes a $0$-shifted Lagrangian morphism on moduli spaces.  We note that the study of this moduli space
was also suggested in \cite{BezKap}.

To select a connected component (indeed, the connected component
corresponding to what is usually called the augmentation variety), we can pick those sheaves whose microsupport on the conormal 
torus is rank one in degree zero, i.e., what are called {\em simple sheaves} in \cite{KS}.  We restrict further to the open locus on which
objects which have no global sections, the main point of which is to eliminate constant summands. 
Let us write $A_1(K)$ for this component. 

Restriction to the microlocal boundary gives a map $A_1(K) \to \Loc_{k^\times}(T^2) = (k^\times)^2$. 
There is an analogous map in knot contact homology, described in \cite{Ng, EENS, AENV}.  Note that objects in $A_1(K)$ are
easy to understand: they are a local system in the complement of $K$, which is extended by a codimension one subspace of
meridian invariants along $K$; or possibly a nontrivial rank one local system supported on $K$.  

From this point of view 
it is clear both why $A_1(K)$ contains the classical $A$-polynomial curve, and also what are the other components: 
the $A$ polynomial curve has to do with $SL_2$ representations of the fundamental group; any such becomes, after rescaling by an eigenvalue of the meridian, a $GL_2$ representation with a meridian invariant subspace.  Similarly it is clear what the other components of $A_1(K)$ are.
(Compare \cite{Cor} for direct proofs of the analogous statements about the augmentation variety, from which it can be deduced that $A_1(K)$ as 
described here is the augmentation variety of knot contact homology.)  

Thus we have shown  that the morphism $A_1(K) \to k^\times \times k^\times$ is (0-shifted) Lagrangian.  This morphism can be quantized using higher genus holomorphic curves as described in
\cite{AENV, EN, ES}; it should be interesting to understand that quantization in our formalism of orientations.

\appendix

\section{The sheaf of morphisms}\label{sec:appendixSheafOfHoms}
In this Appendix we provide a construction of the (co)sheaves of Homs used in this article, that we mentioned from Section \ref{sec:sheaf of Homs}. As explained in the Introduction, we could not find in the literature a clear reference for their construction in the specific setting that we need, namely, for (co)sheaves valued in the $\infty$-category of $k$-linear stable $\infty$-categories.

It turns out that it is more natural to explain this construction using a different model for the same $\infty$-category, namely using the definition of `categorical algebras' from \cite{GepHau}. Given a monoidal $\infty$-category $\cV$, there is an $\infty$-category $\mathrm{Alg}_\mathrm{cat}(\cV)$ of categorical algebras in $\cV$; informally speaking, an object $\cC$ of this category is a double $\infty$-category, the data of which includes a space of objects $S$, objects of $\cV$ for any set of points in $S$, with composition/higher coherence maps parametrized by simplices in $S$. The $\infty$-category of `small (complete) $\cV\mh\infty$-categories' $\mathrm{Cat}^\cV_\infty$ is a certain reflective subcategory of $\mathrm{Alg}_\mathrm{cat}(\cV)$, spanned by objects satisfying a Segal completeness property.

Though \emph{a priori} it may seem like that an object of $\mathrm{Cat}^\cV_\infty$ contains more information an $\infty$-category which is tensored over $\cV$, in fact these approaches are equivalent. For $\cV = \cS$ it is proven in \cite{GepHau} that $\mathrm{Cat}^\cS_\infty$ is equivalent to the $\infty$-category of complete Segal spaces (known to present $\infty$-categories), and for other choices of $\cV$, the recent work \cite{Hei} established several variants of equivalence between $\cV$-tensored categories and categorical algebras. We paraphrase the results in \cite{Hei}:
\begin{theorem}
	There is an equivalence of $\infty$-categories between $\mathrm{Cat}^\mathrm{perf}(\cE)$ and a subcategory of $\mathrm{Cat}^{\Ind(\cE)}_\infty$, spanned by the objects whose underlying $\infty$-category are stable.
\end{theorem}
Using the equivalences above, we will sometimes regard diagrams in $\mathrm{Cat}^\mathrm{perf}(\cE)$, that is, of small stable $\infty$-categories tensored over $\cE$, as valued instead in categorical algebras in $\cE$.

\begin{lemma}\label{lem:homSheafLemma1}
	Let $\cV$ be a monoidal $\infty$-category and $\cF: I^\triangleleft \to \mathrm{Alg_{cat}(\cV)}$ be a limit diagram in categorical algebras in $\cV$. Then, for any two objects $x,y \in \cF(*)$, there is a limit diagram $\cF|^{x,y}:  I^\triangleleft \to \cV$, such that for any $i\in I$ there is an equivalence $\cF|^{x,y}(i) \simeq \cF(i)(\rho x,\rho y)$, where $\rho: S_{\cF(*)} \to S_{\cF(i)}$ is the image of the morphism $\cF(*) \to \cF(i)$ in $\cS$.
\end{lemma}
\begin{proof}
	We will first pull back out diagram of categorical algebras to the space of two points $\{0,1\}$ in the following way. For any fixed space $S \in \cS$, we can define the $\infty$-category $\mathrm{Alg_{cat}}(\cV) \times_\cS \cS_{S/}$ whose objects are a categorical algebra $\cC$, with space of objects $S_\cC$, together with a map $S \to S_\cC$. There is a canonical functor
	\[ \mathrm{Alg}_{\Delta^{op}_S}(\cV) \to \mathrm{Alg_{cat}}(\cV) \times_\cS \cS_{S/} \]
	induced by inclusion into the fiber of $\mathrm{Alg_{cat}}(\cV) \to \cS$ over $S$; this functor has a right adjoint $R$ which sends $(\cC,f:S \to S_\cC)$ to the pullback of $\cC$ along $f$.

	We now take $S = \{0,1\}$ mapping to $S_{\cF(*)}$ at $(x,y)$; this gives a limit diagram valued in $\mathrm{Alg_{cat}}(\cV) \times_\cS \cS_{\{0,1\}/}$ to which we can apply $R$ to get a diagram valued in $\mathrm{Alg}_{\Delta^{op}_{\{0,1\}}}(\cV)$. The functor from this category to $\cV$ sending $\cC \to \cC(0,1)$  preserves limits; one can construct explicitly its left adjoint $\Sigma: \cV \to \mathrm{Alg}_{\Delta^{op}_{\{0,1\}}}(\cV)$, sending an object $V \in \cV$ to the free categorical algebra which has $\Sigma V(0,1)=V$. Composing $R$ with this evaluation at $\{0,1\}$ gives the desired limit diagram $\cF|^{x,y}:  I^\triangleleft \to \cV$.
\end{proof}

The setting of categorical algebras also allows us promptly to define composition maps from the tensor product of $\cV$-valued sheaves:
\begin{lemma}\label{lem:homSheafLemma2}
	For any three objects $x,y,z$ of $\cF(*)$, there is a canonical \emph{composition} map $\cF|^{x,y} \otimes \cF|^{y,z} \to \cF|^{x,z}$ in $\Fun(I^\triangleleft,\cV)$.
\end{lemma}
\begin{proof}
	In the proof of the previous proposition, we can take $S = \{0,1,2\}$ mapping to $S_{\cF(*)}$ at $(x,y,z)$ to get a diagram $\mathrm{Alg}_{\Delta^{op}_{\{0,1,2\}}}(\cV)$; this is the category of $\mathbf{O}_{\{0,1,2\}}$-algebras in $\cV$. The desired composition map then comes from the natural transformation between the functors to $\cV$, given the image of the morphism $((0,1),(1,2)) \to (0,2)$ in $\mathbf{O}_{\{0,1,2\}}$.
\end{proof}

We combine the propositions above with the equivalence between stable $\infty$-categories tensored over $\cE$ and $\Ind(\cE)\mh\infty$-categories for our construction of the hom sheaves.
\begin{proof} (of Proposition \ref{prop:morphismFunctor})
	There is an equivalence between $\mathrm{Cat}^\mathrm{perf}(\cE)$ and a subcategory of $\mathrm{Alg_{cat}}(\Ind(\cE))$, whose objects are $\Ind(\cE)\mh\infty$-categories which are complete, stable and $\cE$-tensored. Recall that limits of stable $\infty$-categories are preserved by the inclusion into $\infty$-categories; together with the correspondence with categorical algebras this implies that $\cF$ can be seen as a limit diagram in $\mathrm{Alg_{cat}}(\Ind(\cE))$. The results then follow from Lemmas \ref{lem:homSheafLemma1} and \ref{lem:homSheafLemma2}.
\end{proof}

If $\cF$ be a sheaf of dg categories on $X$, applying the result above with $\cE = \Perf$ gives a way to define the sheaf of Homs from Definition \ref{def:sheafOfHoms}.

\section{The diagonal bimodule cosheaf}
In this Appendix we give a different description of the cosheaves of diagonal bimodules. To recall, in Section \ref{sec:orientationsCCC}, starting from a $\dgcat$-valued cosheaf $\cF$, for any $U\in \Opens(X)$ using bar complexes we produced a certain $\cF(U)$-bimodule valued $\Compacts$-sheaf, which we denoted $\phi\DDelta_U\cF$ and called the `Verdier dual diagonal $\Compacts$-sheaf'. As its name suggests, this is a representative for the $\Compacts$-sheaf Verdier dual of some \emph{cosheaf} of diagonal bimodules.

We will now explain how to understand these objects more conceptually; for that, it will be necessary to work with cosheaves valued in some $\infty$-category whose objects include the data of dg categories and also bimodules over them; the right setting turns out to be the notion of \emph{endomorphism categories} of \cite{HSS}

\subsection[Endomorphisms in (infinity,2)-categories]{Endomorphisms in $(\infty,2)$-categories}\label{sec:endos}

In this subsection we review ideas of \cite{HSS, JFS} regarding the construction of endomorphism categories and traces.  
Using similar ideas we construct certain variants we require in the sequel.  

\subsubsection[Modeling (infinity,n)-categories]{Modeling $(\infty,n)$-categories}
Recall that an $(\infty, n)$ category is a higher category where all morphisms of dimension above $n$ are invertible; 
the usual `$\infty$-categories' being the $(\infty, 1)$-categories.  In the literature there are many approaches to describe
such structures; as we will use constructions from \cite{HSS,JFS}, we follow them in modeling 
$(\infty, n)$-categories using Barwick's complete $n$-fold Segal spaces  \cite{Barw}.  Comparison theorems to other models 
can be found in  \cite{BSP}. 

Writing $\Delta$ for the simplicial indexing category, a complete $n$-fold Segal space $\cC$ 
can be described by a collection of spaces $\cC_{\vv k}$ indexed by vectors $\vv k \in \Delta^n$, with appropriate morphisms and coherence data between them; for any $0 \le p \le n$, one should regard the space $\cC_{(p)}$, where $(p)$ is the vector $(1,1,\dots,1,0,0,\dots)$ with $p$ entries `1' followed by $(n-p)$ entries `0', as the space of $p$-morphisms of the $(\infty,n)$-category $\cC$. For $\vv k \in \Delta^p$ we can define a complete $p$-fold Segal space $\iota_p \cC$ given by
\[ (\iota_p \cC)_{\vv k} := \cC_{\vv k,0,\dots,0} \]
modeling the maximal $(\infty,p)$-subcategory of $\cC$.

Barwick constructs the category of complete $n$-fold Segal spaces as a certain localization of 
the category of presheaves of spaces over $\Delta^n$; we denote this category as $\mathrm{CSS}(\Delta^n)$. 
This category gives a model for the homotopy theory of $(\infty,n)$-categories \cite[Corollary 14.3 and Theorem 14.6]{Barw}, carrying both projective and injective model structures \cite[App.A]{JFS}. We denote by $\maps^h$ the corresponding notion of derived mapping space, which we will choose to compute in the projective model structure.

In particular, \emph{strict} $n$-categories give objects of $\mathrm{CSS}(\Delta^n)$. To encode the data of such an object, we will use the notion of \emph{computad}, which appears in \cite[Def.3.1]{JFS} following \cite{Str}. An $n$-computad is a particular type presentation of a strict $n$-category; given by explicit sets of $k$-morphisms for $0 \le k \le n$.

Such an $n$-computad $\Theta$ generates a strict $n$-category, of which one can take the nerve, giving a complete $n$-fold Segal space, that is, an object of $\mathrm{CSS}(\Delta^n)$ \cite[Rem.3.2]{JFS}; all these objects carry the same data and as in \emph{op.cit} we will denote them equally by $\Theta$.

Let us recall some computads that will play an important role later.  
\begin{definition} { The walking $i$-morphism, or $i$-globe, $\Theta^{(i)}$} \cite{BSP, JFS}. 
Inductively 
define $\Theta^{(i)}$ as an $i$-computad, by setting $\Theta^{(0)}$ to be the singleton set, and $\Theta^{(i)}$ to be the category with two objects $\{0,1\}$ and with morphisms given by
\[ \Hom_{\Theta^{(i)}}(x,y) = \begin{cases} \Theta^{(0)}, & \text{if } x=y \\
\Theta^{(i-1)}, & \text{if } x=0, y=1 \\
\emptyset, & \text{if } x=1, y=0,
\end{cases} \]
This computad has the property that for any $(\infty, n)$-category $\cC$ there is a canonical homotopy equivalence
\[ \cC_{(i)} \simeq \maps^h(\Theta^{(i)}, \cC) \]
expressing the fact that $\Theta^{(i)}$ represents $i$-morphisms. For small values of $i$ we draw these categories as
\[
\Theta^{(0)} = \bullet, \qquad \Theta^{(1)} =
\begin{tikzpicture}[baseline={([yshift=-.5ex]current bounding box.center)}]
\node (a) at (0,0) {$\bullet$};
\node (b) at (2,0) {$\bullet$};
\draw [->] (a) to (b);
\end{tikzpicture}, \qquad \Theta^{(2)} = 
\begin{tikzpicture}[baseline={([yshift=-.5ex]current bounding box.center)}]
\node (a) at (0,0) {$\bullet$};
\node (b) at (2,0) {$\bullet$};
\draw [->,bend left=45] (a) to node (top) {} (b);
\draw [->,bend right=45] (a) to node (bot) {} (b); 
\draw [darrow] (top) to (bot);
\end{tikzpicture}
\]
where we denote a 2-morphism by a double arrow and so on.
\end{definition}
We note that a $1$-computad is simply given by the data of a quiver, and every $k$-computad is automatically a $n$-computad for every $n \ge k$, by adding empty sets of higher morphisms.

\begin{remark}
As a warning, the computads $\Theta^{i}$ for $i \ge 2$, as well as the most of the other ones we will use here, are not projective cofibrant; therefore one must find a cofibrant replacement when computing the derived space functor $\maps^h$. Nevertheless, when taking a target that is `essentially constant' (\cite[Def.2.7]{JFS}) up to equivalence one can often use a naive mapping space instead, this is the case for $\Theta^{(i)}$; see the discussion in Remarks 3.2 and 3.4 of \emph{op.cit.}
\end{remark}

We will also need the \emph{walking $i$-by-$j$ morphisms} of \cite[Section 3]{JFS}. These $\Theta^{(i);(j)}$ are defined inductively and are used by the authors of that paper to define the notion of (op)lax natural transformations. When $i$ or $j$ is zero we obtain the globe objects above: $\Theta^{(0);(i)} = \Theta^{(i);(0)} = \Theta^{(i)}$.   The only space $\Theta^{(i); (j)}$ that we will need to use explicitly are drawn below.

\begin{definition} 
\[
\Theta^{(1);(1)} = 
\begin{tikzpicture}[baseline={([yshift=-.5ex]current bounding box.center)}]
\node (a) at (0,2) {$\bullet$};
\node (b) at (2,2) {$\bullet$};
\node (c) at (0,0) {$\bullet$};
\node (d) at (2,0) {$\bullet$};
\draw [->] (a) to (b);
\draw [->] (a) to (c);
\draw [->] (c) to (d);
\draw [->] (b) to (d);
\draw [darrow,shorten <=10pt, shorten >=10pt] (c) to (b);
\end{tikzpicture}
\] 
\end{definition}

\begin{definition}\label{def:objs}
The 1-computad $\S$ is given by directed circle
\[ \S =
\begin{tikzpicture}[baseline={([yshift=-.5ex]current bounding box.center)}]
\node (a) at (0,0) {$\bullet$};
\draw [->] (a.-80) arc (-160:170:8mm) node [pos=0.6] (side) {} (a);
\end{tikzpicture}
\]
which generates the strict $1$-category $B\N$, that is, the category with one object and morphism set given by the monoid of natural numbers.
\end{definition}
For any $n \ge 1$ we will equally denote by $\S$ the corresponding strict $n$-category or complete $n$-fold Segal space.

\subsubsection{Oplax natural transformations}
We will follow the exposition in \cite[Section 2.2]{HSS}, mostly borrowing notation from it. Given a complete $n$-fold Segal space $\cC$ and a tuple $\vv k \in \Delta^n$ of natural numbers, there is a complete $n$-fold Segal space $ \Lax_{\vv k}(\cC)$, defined using the walking morphisms \cite[Definition 5.14]{JFS}:
\[ \left(  \Lax_{\vv k}(\cC)\right)_{\vv l} := \maps^h(\Theta^{\vv k;\vv l}, \cC). \]
For a fixed $\vv k$, these spaces for all $\vv l \in \Delta^n$ assemble into the complete $n$-fold Segal space $ \Lax_{\vv k}(\cC)$ \cite[Theorem 5.11]{JFS}. In particular, for $\vv k = (p), \vv l = (0)$, we have an isomorphism
\[ \left( \Lax_{(p)}(\cC)\right)_{(0)} \simeq \maps^h(\Theta^{(p)},\cC) = \cC_{(p)}, \]
exhibiting the space of objects of $ \Lax_{(p)}(\cC)$ as the space of $p$-morphisms of $\cC$. The corresponding morphisms can be understood as (higher) oplax natural transformations in $\cC$. 

Given two complete $n$-fold Segal spaces $\cB,\cC$, \cite[Corollary 5.19]{JFS} defines another complete $n$-fold Segal space $\Fun^\oplax(\cB,\cC)$ (of `oplax transfors') whose $\vv k$ space is given by
\[ \Fun^\oplax(\cB,\cC)_{\vv k} = \maps^h(\cB, \Lax_{\vv k}(\cC)), \]
where $\maps^h$ denotes a strictly functorial model for the derived mapping space between presheaves of spaces. This complete $n$-fold Segal space $\Fun^\oplax(\cB,\cC)$ is functorial on the choice of $\cB,\cC$.

These constructions can also be extended to incorporate symmetric monoidal structures.\footnote{In \cite{JFS} the symmetric monoidal structure is captured by 
strict functors from the category of pointed finite sets to $\mathrm{CSS}(\Delta^n)$, satisfying the Segal condition. For simplicity we will denote a symmetric monoidal category 
merely by the underlying complete $n$-fold Segal space given by the value of the corresponding functor on the finite pointed set $\{*,1\}$.}  
If $\cC$ is a symmetric monoidal complete $n$-fold Segal space then $ \Lax_{\vv k}(\cC)$ also carries a natural symmetric monoidal structure.
If $\cB,\cC$ are symmetric monoidal complete $n$-fold Segal spaces, then there is also a  complete $n$-fold Segal space $\Fun^\oplax_\otimes(\cB,\cC)$ satisfying 
\[ \Fun^\oplax_\otimes(\cB,\cC)_{\vv k} = \maps^h_\otimes(\cB, \Lax_{\vv k}(\cC)), \]
where the subscript $\otimes$ means to consider symmetric monoidal maps
\cite[Corollary 6.9]{JFS}. 
Again, this construction is functorial on the choice of $\cB,\cC$.

\subsubsection{Endomorphism categories}\label{sec:endo}
Let $\cC$ be a symmetric monoidal complete $n$-fold Segal space, for some $n \ge 2$. We will denote by $\cC^\mathrm{rig}$ the underlying $(\infty,n)$-category spanned by the dualizable objects in $\cC$. The functor $(-)^\mathrm{rig}$ preserves limits \cite[Remark 4.6.1.11]{LurHA} and admits a left adjoint \cite[Sec.2.2]{HSS}
\[ \Fr^{\rig}: \Cat_{(\infty,n)} \longleftrightarrow \Cat^\otimes_{(\infty,n)}: (-)^\mathrm{rig}, \]
which from any $(\infty,n)$-category $\cC$ gives a `free rigid symmetric monoidal $(\infty,n)$-category' $\Fr^\rig(\cC)$.

\begin{definition}\label{def:rr}
Consider the following $(\infty, n-1)$-categories: \footnote{Recall $\iota_{n-1}$ means to discard all non-invertible morphisms above $n-1$, and it is written because a priori if $\cA,\cB$ are symmetric monoidal $(\infty,n)$-categories, the object defined by $\Fun^\oplax(\cA,\cB)$ will also be an $(\infty,n)$-category. However, as argued in \cite[Remark 2.3]{HSS}, the categories defined by $\Fun^\oplax_\otimes(\Fr^\rig(\cA),\cB)$ are in fact equivalent to $(\infty,n-1)$-categories, since the rigidity of the images of $\Fr^\rig$ forces the $n$-morphisms to be invertible.  Thus
the $\iota_{n-1}$ serves only as a reminder that we have $(\infty, n-1)$-categories, and we henceforth omit it.
} 
\begin{align*}
\cC^\RR &= \iota_{n-1}
\Fun^\oplax_\otimes(\Fr^\rig(\pt),\cC) \quad \text{``right-rigid subcategory''} \\
\End(\cC) &= \iota_{n-1}
\Fun^\oplax_\otimes(\Fr^\rig(\S),\cC) \quad \text{``endomorphisms''} \\
\end{align*}
\end{definition}

\begin{remark}
The definition above of the category $\End(\cC)$ is from \cite{HSS}, where $\S$ is denoted $B\N$ because 
the category generated by $\S$ models the classifying category of the monoid of natural numbers. Further properties of $\End(\cC)$ can be found in \cite{HSS}. 
\end{remark}

Let us now describe these categories more explicitly.
\begin{proposition} \label{explicit} 
When $\cC$ is a symmetric monoidal $(\infty,2)$-category, we can describe its spaces of objects and morphisms, up to equivalence:
\begin{itemize}
\item The category $\cC^\RR$ has as objects the dualizable objects $A$ of $\cC$, and as morphisms between $A$ and $B$ the right-dualizable morphisms between $A$ and $B$ in $\cC$.
\item \cite{HSS} The category $\End(\cC)$ has as objects pairs $(A,f)$, where $A$ is a dualizable object of $\cC$ and $f$ is an endomorphism of $A$. The morphisms between $(A,f)$ and $(B,g)$ are pairs $(\phi,\alpha)$, such that $\phi$ is a right-dualizable morphism $A \to B$ and $\alpha: \phi \circ f \Rightarrow g \circ \phi$ is a 2-morphism in $\cC$.
\end{itemize}
\end{proposition}

\begin{proof}
We will start by describing the category $\cC^\RR$. The space of objects is given by
\[
\left( \cC^\RR \right)_{(0)} = \Fun^\oplax_\otimes(\Fr^\rig(\pt),\cC)_{(0)} = \maps_\otimes^h (\Fr^\rig(\pt), \Lax_{(0)}(\cC)) \simeq \maps_\otimes^h(\Fr^\rig(\pt), \cC),
\]
using the canonical equivalence $\Lax_{(0)}(\cC) \simeq \cC$. By the adjunction between $(-)^\mathrm{rig}$ and $\Fr^\rig$, this is canonically equivalent to $\maps^h(\pt, \cC^\rig)$, i.e., objects of $\cC^\RR$ are given by the dualizable objects in $\cC$. The space of 1-morphisms is given by a similar calculation:
\[
\left( \cC^\RR \right)_{(1)} = \Fun^\oplax_\otimes(\Fr^\rig(\pt),\cC)_{(1)} = \maps_\otimes^h (\Fr^\rig(\pt), \Lax_{(1)}(\cC)) \simeq \maps^h(\pt, \Lax_{(1)}(\cC)^\rig).
\]
By \cite[Lemma 2.4]{HSS}, the dualizable objects of $\Lax_{(1)}(\cC)$ are given by right-dualizable morphisms $\phi:A\to B$ between dualizable objects in $\cC$. Therefore, the category $\cC^\RR$ can be identified with the 
$(\infty,1)$-subcategory of $\cC$ containing all dualizable objects and right-dualizable 1-morphisms.

The description of $\End(\cC)$ appears in \cite{HSS}, where one calculates:
\begin{align*} \End(\cC)_{(0)} &= \Fun^\oplax_\otimes(\Fr^\rig(\S),\cC)_{(0)} \simeq \maps^h(\S, (\Lax_{(0)}(\cC))^\rig) = \maps^h(\S, \cC^\rig)
\end{align*}
A point in this space is an endomorphism of a dualizable object in $\cC^\rig$, that is, a pair $(A,f)$ where $A$ is a dualizable object of $\cC$ and $f$ is an endomorphism of $A$. A similar calculation shows that a morphism $(A,f) \to (B,g)$ is the data of a pair $(\phi,\alpha)$, such that $\phi$ is a right-dualizable morphism $A \to B$ and $\alpha: \phi \circ f \Rightarrow g \circ \phi$ is a 2-morphism.
\end{proof}

\begin{example}
If $\cC$ is the symmetric monoidal $(\infty,2)$-category $\DGCat^2$, then we can identify $\cC^{\RR}$ with the wide $(\infty,1)$-subcategory of $\DGCat$ containing all objects, but only the right-dualizable morphisms. In other words, $(\DGCat^2)^{\RR}$ is equivalent to the essential image of $\Ind: \dgcat \to \DGCat$.
\end{example}

\subsubsection{The trace of an endomorphism}\label{sec:trace}
Let $\cC$ be an $(\infty,n)$-category. For any two objects $X,Y$ of $\cC$, there is an $(\infty,n-1)$-category of morphisms $\cC(X,Y)$ from $X$ to $Y$.  

When $\cC$ is a symmetric monoidal $(\infty,n)$-category with monoidal unit $1_\cC$, we write 
\[ \Omega\cC := \cC(1_\cC,1_\cC) \]
This can be shown to carry a natural symmetric monoidal structure.

There is \cite[Definition 2.9]{HSS} a \emph{trace functor} $\Tr:\End(\cC) \to \Omega\cC$.  That is, this functor carries
an endomorphism of an object of $\cC$ to an endomorphism of the monoidal unit. In the case where $\cC$ is an $(\infty,2)$-category, the category $\Omega\cC$ has as objects endomorphisms of the unit object $1_\cC$, with morphisms given by 2-morphisms between them in $\cC$. The functor $\Tr$ sends an object $(A,f)$ to the endomorphism of $1_\cC$ given by
\[ 1_\cC \xrightarrow{\coev_A} A \otimes A^\vee \xrightarrow{f \otimes \id_A} A \otimes A^\vee \xrightarrow{\tau} A^\vee \otimes A \xrightarrow{\ev_A} 1_\cC, \]
where $\ev_A,\coev_A$ are the canonical evaluation and coevaluation maps of the dualizable object $A$ and $\tau$ is the involution switching the factors. The trace map on morphisms can also be described explicitly as in \cite[Sec.4.1]{BD2}.

\subsection{The diagonal map}
We will use the category of endomorphisms to define an $\infty$-functor that picks the identity endomorphisms in a coherent way. As above, let $\cC$ be a complete $n$-fold Segal space, with $n \ge 2$.

\begin{definition}\label{def:diag}
The \emph{diagonal map to endomorphisms} is the $(\infty,n-1)$-functor $\Delta: \cC^\RR \to \End(\cC)$ induced by the terminal map $p_\S:\S \to \pt$.
\end{definition}

This map has a very simple description at level of objects and 1-morphisms: $\Delta$ sends a dualizable object $X$ to the object $(X,\Id_X)$ of $\End(\cC)$, and a right-dualizable morphism $\phi:X \to Y$ to the morphism in $\End(\cC)$ given by $(\phi,\id_\phi)$.

\begin{remark}
For clarity we will denote $\Id_X$ (capitalized) when we refer to the identity 1-morphism of an object $X$ (which will later be given by a diagonal bimodule), as opposed to $\id_f$ (lowercase) when referring to the identity 2-morphism on a 1-morphism $f$ (later, a map of bimodules).
\end{remark}

From now on, we specialize to the case where $n=2$, i.e. where $\cC$ is a symmetric monoidal $(\infty,2)$-category. In this case, $\cC^\RR$ and $\End(\cC)$ will be $(\infty,1)$-categories. The remainder of this subsection is devoted to proving that under appropriate conditions on $\cC$, the diagonal map preserves colimits. This will prove to be quite subtle, depending on the internal structure of $\cC$ as a $(\infty,2)$-category. To illustrate the issue at hand, let us make a digression to discuss the problem in ordinary categories.

\subsubsection{Digression: the diagonal map in strict 2-categories}
Let $\cC$ be a strict 2-category, i.e., a category enriched over the category $\mathrm{Cat}$ of small categories. Analogously to the endomorphism categories defined above, we can define its endomorphism category $\End(\cC)$ as the category whose objects are pairs $(A,f)$ of an object $A$ of $\cC$ and an endomorphism $f$ of $A$, and whose 1-morphisms $(A,f) \to (B,g)$ are pairs $(\phi,\alpha)$ forming a square
\[
\begin{tikzpicture}[baseline={([yshift=-.5ex]current bounding box.center)}]
\node (a) at (0,1.5) {$A$};
\node (b) at (1.5,1.5) {$B$};
\node (c) at (0,0) {$A$};
\node (d) at (1.5,0) {$B$};
\draw [->] (a) to node [auto] {$\phi$} (b);
\draw [->] (a) to node [auto, swap] {$f$} (c);
\draw [->] (c) to node [auto, swap] {$\phi$} (d);
\draw [->] (b) to node [auto] {$g$} (d);
\draw [darrow,shorten <=1pt, shorten >=1pt] (c) to node [auto, swap] {$\alpha$} (b);
\end{tikzpicture}
\]

Let us write $\iota_1 \cC$ to denote the underlying category of $\cC$, i.e. whose morphisms are the object sets of the categories $\Hom_\cC(X,Y)$. We then have a diagonal map $\Delta: \iota_1 \cC \to \End(\cC)$ given on objects by $A \mapsto (A, \id_A)$ and on morphisms by $\phi \mapsto (\phi, \id_\phi)$.

We now describe a condition for the existence of a right adjoint to $\Delta$. Consider a fixed endomorphism $A \xrightarrow{f} A$, and let us define a functor
\[ \mathrm{tri}_f: (\iota_1\cC)^{op} \to \mathrm{Sets}, \]
which maps an object $X$ to the set of diagrams of the form 
\[\begin{tikzpicture}[baseline={([yshift=-.5ex]current bounding box.center)}]
\node (left) at (0,1) {$X$};
\node (top) at (2,2) {$A$};
\node (bottom) at (2,0) {$A$};
\draw [->] (left) to node [auto] {$\psi$} (top);
\draw [->] (left) to node (mid) [auto,swap] {$\psi$} (bottom);
\draw [->] (top) to node [auto] {$f$} (bottom);
\draw [darrow,shorten <=10pt, shorten >=10pt] (mid) to node [auto,swap] {$\mu$} (top);
\end{tikzpicture}
\]
in the 2-category $\cC$, and which maps a morphism $g:X\to X'$ to the map of sets of triangles given by composing on the left with $g$.

\begin{definition}
We will say that the 2-category $\cC$ has \emph{right-sided inserters} if for any endomorphism $A \xrightarrow{f} A$, the functor $\mathrm{tri}_f$ is representable by an object in $\iota_1\cC$. In that case we denote the representing object by the notation $\Ins(\Id_A,f)$.
\end{definition}
Unpacking this definition, that is to say that the inserter object $\Ins(A,f)$ comes with a canonical triangle
\[\begin{tikzpicture}[baseline={([yshift=-.5ex]current bounding box.center)}]
\node (left) at (0,1) {$\Ins(\Id_A,f)$};
\node (top) at (2.4,2) {$A$};
\node (bottom) at (2.4,0) {$A$};
\draw [->] (left) to node [auto] {$\psi_f$} (top);
\draw [->] (left) to node (mid) [auto,swap] {$\psi_f$} (bottom);
\draw [->] (top) to node [auto] {$f$} (bottom);
\draw [darrow,shorten <=10pt, shorten >=10pt] (mid) to node [auto,swap] {$\mu_f$} (top);
\end{tikzpicture}
\]
such that any other triangle in the image $\mathrm{tri}_f(X)$ is obtained from the one above by composing with a unique map $X \to \Ins(\Id_A,f)$.

\begin{remark}
The terminology and notation come from the notion of inserters in a 2-category: the inserter of a pair $g,f:A \rightrightarrows B$ is an object with a universal 1-morphism $h: \Ins(g,f) \to A$ and a universal 2-morphism $g \circ h \Rightarrow f \circ h$. This is a type of weighted 2-limit; the right-sided inserter is the particular case where $A=B$ and $g = \Id_A$.
\end{remark}

\begin{example}
The strict 2-category $\Cat^2$ has right-sided inserters (also all inserters): the object $\Ins(\Id_A,f)$ is given by the category whose objects are pairs $(a \in A, m:a \mapsto f(a))$, and whose morphisms $(a,m) \to (a',m')$ are given by morphisms $n: a \to a'$ in $A$ such that $m' \circ n = f(n) \circ m$.
\end{example}

Let us now use inserters to produce an adjoint to the diagonal functor. Consider two objects $(A,f)$ and $(B,g)$ of $\End(\cC)$ and a morphism $(\phi,\alpha): (A,f) \to (B,g)$. We can compose the canonical triangle corresponding to $\Ins(\Id_A,f)$ with the square corresponding to $(\phi,\alpha)$:
\[\begin{tikzpicture}[baseline={([yshift=-.5ex]current bounding box.center)}]
\node (left) at (0,1) {$\Ins(\Id_A,f)$};
\node (top) at (2,2) {$A$};
\node (bottom) at (2,0) {$A$};
\node (topright) at (4,2) {$B$};
\node (botright) at (4,0) {$B$};
\draw [->] (left) to (top);
\draw [->] (left) to node (mid) [auto,swap] {} (bottom);
\draw [->] (top) to node [auto] {$f$} (bottom);
\draw [darrow,shorten <=10pt, shorten >=10pt] (mid) to (top);
\draw [->] (top) to node [auto] {$\phi$} (topright);
\draw [->] (bottom) to node [auto, swap] {$\phi$} (botright);
\draw [->] (topright) to node [auto] {$g$} (botright);
\draw [darrow,shorten <=1pt, shorten >=1pt] (bottom) to node [auto, swap] {$\alpha$} (topright);
\end{tikzpicture}
\qquad \to \qquad
\begin{tikzpicture}[baseline={([yshift=-.5ex]current bounding box.center)}]
\node (left) at (0,1) {$\Ins(\Id_A,f)$};
\node (top) at (3,2) {$A$};
\node (bottom) at (3,0) {$A$};
\draw [->] (left) to node [auto] {$b\circ \psi_f$} (top);
\draw [->] (left) to node (mid) [auto,swap] {$b\circ \psi_f$} (bottom);
\draw [->] (top) to node [auto] {$f$} (bottom);
\draw [darrow,shorten <=10pt, shorten >=10pt] (mid) to (top);
\end{tikzpicture}
\]
which by the universal property of the inserter gives a map $G_{(\phi,\alpha)}: \Ins(\Id_A,f) \to \Ins(\Id_B,g)$. 

One can check that this defines a functor, which we call `right-sided insertion'
\[ \mathrm{RIns}: \End(\cC) \to \iota_1\cC, \]
which on objects maps $(A,f) \mapsto \Ins(\Id_A,f)$, and on morphisms maps $(\phi,\alpha) \mapsto g_{(\phi,\alpha)}$. We can rephrase the universal property of inserters as the following proposition:
\begin{proposition}
$\mathrm{RIns}$ is the right adjoint to the diagonal map $\Delta$, therefore $\Delta$ preserves all colimits existing in $\iota_1 \cC$.
\end{proposition}

\subsubsection{Upgrading to symmetric monoidal $(\infty,2)$-categories}
Now we will replicate the constructions in the digression above to our setting, where $\cC$ is a symmetric monoidal $(\infty,2)$-category. The main new difficulty is that we cannot define the constructions `by hand', as we have done in the strict 2-category case. Moreover, in order to state the needed condition on $\cC$, namely the existence of inserters, we will need to use a notion of representability appropriate to the setting of $\infty$-categories.

We will overcome both of these difficulties by using Lurie's theory of universal fibrations, explained in \cite[Sec.3.3.2 and 4.4.4]{LurHTT}. Given the right conditions, this framework allows one to go back and forth between functors from an $\infty$-category valued in spaces, and fibrations of simplicial sets over it. This will allow us to define the functors we need by specifying the corresponding fibrations, and the representability condition can be then checked at the level of these fibrations.

The relation between fibrations and functors valued in spaces is given by the following proposition. Here $\cS$ denotes the $\infty$-category of spaces, and by $\cH = \uph\cS$ the homotopy category of spaces \cite[Def.1.1.5.14]{LurHTT}. For any $\infty$-category $\cD$, its homotopy category $\uph\cD$ is naturally an $\cH$-enriched category.

\begin{proposition}\cite[Prop.3.3.2.5]{LurHTT}
Let $p: X \to Y$ be a Cartesian fibration of simplicial sets. Then $p$ is a right fibration if and only if $p$ is classified by a map $F: Y^{op} \to \cS$.
\end{proposition}
The notions of Cartesian and right fibrations are explained in \cite[Ch.2]{LurHTT}, and the notion of a map classifying a fibration is defined in \cite[Def.3.3.2.2]{LurHTT}; in particular it implies that for any vertex $y \in Y$ there is an equivalence 
\[ X \times_Y \{y\} \simeq F(y) \]
in the homotopy category $\cH$.

One can translate the condition of representability into the language of fibrations. The appropriate notion of representability is given at the level of the homotopy category:
\begin{definition}\cite[Def.4.4.4.1]{LurHTT}
A functor $F: \cD^{op} \to \cS$ from an $\infty$-category $\cD$ is represented by an object $D \in \cD$ if the corresponding functor
\[ \uph F: \uph\cD \to \cH \]
is represented by $D$ as an $\cH$-enriched functor, in the sense that there is a map of simplicial sets $* \to F(D)$ such that for any object $X \in \cD$ the induced map
\[ \Map_\cC(X,D) \to \Map_\cC(X,D) \times F(D) \to F(X) \]
is an isomorphism in $\cH$. When such an object exists we will say that the functor $F$ is representable.
\end{definition}

Recall the diagonal map $\Delta: \cC^\RR \to \End(\cC)$ which we constructed as a morphisms of complete (1-fold) Segal spaces. In order to regard them in a setting where we can use the formalism of \cite{LurHTT}, we switch to modeling $(\infty,1)$-categories as weak Kan complexes instead. For that, we can use an explicit map \cite{JT}: to each complete (1-fold) Segal space $\cD$, we can assign the simplicial set $d \mapsto (\cD_{d})_0$, that is, the set of vertices of the space $\cD_{d}$, which will be a Kan complex \footnote{Here the notation may be confusing, so let us reiterate: to whatever complete $n$-fold Segal space $\cD$, $\cD_d$ is the space associated to the vector $(d,0,0,\dots)$, and represents the space of a sequence of $d$ composable 1-morphisms, together with their compositions. Meanwhile, the notation $\cD_{(p)}$ with parenthesis, that appeared before, denotes the space associated to the vector $(1,1\dots,1,0,0,\dots)$ with $p$ entries equal to one; this makes sense only for $p \le n$ and represents the space of $p$-morphisms.}. Equally, for any explicit morphism of complete (1-fold) Segal spaces, such as $\Delta$, we get an explicit map of simplicial sets.

From now on let us regard $\cC^{\RR}, \End(\cC)$ as simplicial sets. For a fixed object $(A,f) \in \End(\cC)$ we consider the overcategory $\End(\cC)_{/(A,f)}$ with its canonical map to $\End(\cC)$ and take the pullback of simplicial sets
\[\xymatrix{
	\End(\cC)_{/(A,f)} \times_{\End(\cC)} \cC^\RR \ar[r] \ar[d]_{p_{(A,f)}} & \End(\cC)_{/(A,f)} \ar[d]\\
	\cC^\RR \ar[r]^{\Delta} & \End(\cC)
}\]
The fiber of $p_{(A,f)}$ over an object $Y$ of $\cC^\RR$ is equivalent to the space of  morphisms $(Y,\Id_Y) \to (A,f)$ in $\End(\cC)$.
\begin{lemma}
The map $p_{(A,f)}$ is a right fibration of simplicial sets.
\end{lemma}
\begin{proof}
Follows from the fact that the canonical map from the overcategory is a right fibration \cite[Cor.2.1.2.2]{LurHTT} and from the fact that the collection of right fibrations is stable under pullbacks \cite[\href{https://kerodon.net/tag/014P}{Tag 014P}]{kerodon}
\end{proof}

Now we are ready to state the desired condition on $\cC$.
\begin{definition}
We will say that the symmetric monoidal $(\infty,2)$-category $\cC$ \emph{has right-sided inserters} if, for any $(A,f) \in \End(\cC)$, the functor $F_{(A,f)}: \cC^\RR \to \cS$ classifying $p_{(A,f)}$ is representable.
\end{definition}
Unraveling this definition, $\cC$ has right-sided inserters when for any object $(A,f)$ of $\End(\cC)$, there is an object $\Ins(\Id_A,f)$ of $\cC^\RR$ (i.e., a dualizable object of $\cC$), such that there is a homotopy equivalence $\Map_{\cC^\RR}(X,\Ins(\Id_A,f)) \simeq \Map_{\End(\cC)}((X,\Id_x), (A,f))$.

\begin{theorem}\label{thm:prescolim}
If $\cC$ has right-sided inserters, then the diagonal map $\Delta: \cC^\RR \to \End(\cC)$ has a right adjoint and therefore preserves all colimits existing in $\cC^\RR$.
\end{theorem}
\begin{proof}
This follows from the fact that the existence of adjoints can be checked `pointwise', as explained in \cite[A.2.25]{AMG} following the results in \cite[Sec.5.2.2]{LurHTT}. More specifically, any functor between $\infty$-categories $f: \cA \to \cB$ defines a functor $\tilde F = \cB \to \cP(\cA)$ to the category of presheaves on $\cA$, which sends $b \mapsto \Hom_\cB(f(-), b)$. If it exists, the right adjoint to $f$ is a lift of $\tilde F$ along the Yoneda embedding $\cA \hookrightarrow \cP(\cA)$. Since this is a fully faithful embedding, the existence of this lift can be checked pointwise, i.e. by the existence of a representing object for each fixed object $b$ of $\cB$. For the case of the diagonal map $\Delta:\cC^\RR \to \End(\cC)$, this is exactly the assumption of having right-sided inserters.
\end{proof}

\subsubsection{Inserters in dg categories}
This condition is satisfied for dg categories, so the diagonal map will preserve colimits in that setting. More precisely:
\begin{lemma}\label{lem:dgInserters}
	The $(\infty,2)$-category $\DGCat^2$ has right-sided inserters.
\end{lemma}
\begin{proof}
	This follows from the discussion of comma object in the homotopy 2-category of $\infty$-categories in \cite{RieVer}, together with some facts about dg categories. Consider an object $(\cA,f)$ of $\End(\DGCat^2)$, that is, a small dg category $\cA$ and an endomorphism $f \in \End(\cA)$, i.e. an $\Ind(\cA)$-bimodule. We define $\cA^{\Delta^1}$ to be the functor $\infty$-category $\Fun(\Delta^1,\cA)$; this is by construction small and by \cite[Prop.1.1.3.1]{LurHA} this is stable so with the $k$-linear structure inherited from $\cA$ we consider $\cA^{\Delta^1}$ as an object of $\dgcat$. Let us now define the object $\Ins(\Id_\cA,f)$ to be the pullback
	\[\xymatrix{
		\Ins(\Id_\cA,f) \ar[r] \ar[d]	\pullbackcorner		& \cA^{\Delta^1} \ar[d] \\
		\cA \ar[r]_-{\Id_\cA \times f}		& \cA \times \cA
	}\]
	in $\dgcat$, where the right vertical arrow is given by restriction along the boundary inclusion. The underlying stable $\infty$-category then coincides with the pullback taken as $\infty$-categories. 
	
	In \cite[Def.3.3.15]{RieVer}, the comma object of a cospan $\cB \xrightarrow{f} \cA \xleftarrow{g} \cC$ of $\infty$-categories is defined as the pullback
	\[\xymatrix{
		f\narrowdown g \ar[r] \ar[d]	\pullbackcorner		& \cA^{\Delta^1} \ar[d] \\
		\cB \times \cC \ar[r]_-{f \times g}					& \cA \times \cA
	}\]
	and by \cite[Prop.3.3.18]{RieVer}, $f \narrowdown g$ is a weak comma object in the 2-category of $\infty$-categories, i.e. for any $\infty$-category $\cX$ there is a homotopy equivalence between $\Map(\cX, f \narrowdown g)$ and the space of squares of the form
	\[
	\begin{tikzpicture}[baseline={([yshift=-.5ex]current bounding box.center)}]
		\node (a) at (0,1.5) {$\cX$};
		\node (b) at (1.5,1.5) {$\cC$};
		\node (c) at (0,0) {$\cB$};
		\node (d) at (1.5,0) {$\cA$};
		\draw [->] (a) to (b);
		\draw [->] (a) to (c);
		\draw [->] (c) to node [auto, swap] {$g$} (d);
		\draw [->] (b) to node [auto] {$f$} (d);
		\draw [darrow,shorten <=1pt, shorten >=1pt] (c) to node [auto, swap] {$\alpha$} (b);
	\end{tikzpicture}
	\]
	We see then that our object $\Ins(\Id_\cA,f)$ then is also the pullback of $\Id_\cA \narrowdown f$ along the diagonal inclusion
	\[\xymatrix{
		\Ins(\Id_\cA,f) \ar[r] \ar[d]	\pullbackcorner		& \Id_\cA \narrowdown f \ar[d] \\
		\cA \ar[r]_-{\Delta}							& \cA \times \cA
	}\]
	so the equivalence above restricts to an equivalence between $\Map(\cX,\Ins(\cA,f))$ and the space of squares as above where $\cC = \cA$ and $g = \Id_\cA$, which by definition is the universal property required from a right-sided inserter.
\end{proof}

\subsection{Colimits in the endomorphism category}\label{sec:push}
In this section we will discuss a feature of colimits in $\End(\cC)$, which is the fact that a colimit in this category gives colimits in the categories of endomorphisms of single objects in $\cC^\RR$. Though this statement will be phrased in purely $(\infty,1)$-categorical terms, for its proof we will need one last thing about the $(\infty,2)$-categorical structure of $\cC$.
\begin{proposition}\label{prop:cocartesian}
	The $\infty$-functor $p:\End(\cC) \to \cC^\RR$, given by pullback under the inclusion $\pt \hookrightarrow \S$, is a cocartesian fibration of simplicial sets up to equivalence \cite[Def.2.4.2.1]{LurHTT}.
\end{proposition}
\begin{proof}
	We must prove that the map $p$, which is the map forgetting the endomorphism, possibly composed with equivalences of the source and target simplicial sets, is an inner fibration of simplicial sets and that it satisfies the correct lifting property. The former follows trivially from the fact that every functor is an inner fibration up to equivalence; for the latter, it suffices to produce
\end{proof}

\subsubsection{The push functor}
Let $(B,g)$ be an object of $\End(\cC)$, and as usual let us write $\End(\cC)_{/(B,g)}$ for its overcategory. We denote
\[ \eend(B) = \End(\cC) \times_{\cC^\RR} \pt, \quad \eend(B)_{/g} = \End(\cC)_{/(B,g)} \times_{\cC^\RR} \pt \]
where the point maps to the object $B$ of $\cC^\RR$ in the first expression, and to the final object $(B,\id_B)$ in the second. We can identify the objects of $\eend(B)$ with the 1-endomorphisms of the object $B$ in $\cC$, and then, as the notation suggests, $\eend(B)_{/g}$ is naturally identified with the overcategory of $g$.

\begin{lemma}\label{lem:push}
The inclusion $\eend(B)_{/g} \subseteq \End(\cC)_{/(B,g)}$ has a left adjoint, which we denote 
\[ \Push_{(B,g)}: \End(\cC)_{/(B,g)} \to \eend(B)_{/g} \]
such that the image of an object $\left( (A,f) \xrightarrow{(\phi,\alpha)} (B,g) \right)$ under $\Push_{(B,g)}$ is isomorphic to
\[ \left( \phi \circ f \circ \phi^r \xrightarrow{\alpha \circ ((\phi \circ f) \bullet \epsilon)} g \right) \]
in the homotopy category $h\eend(B)_{/g}$, where $\phi^r$ is the right dual of $\phi$ and $\epsilon$ is the counit of that adjunction between $\phi$ and $\phi^r$.
\end{lemma}
\begin{proof}
The first claim, existence of the left adjoint, follows from Proposition \ref{prop:cocartesian} together with \cite[Ex.5.2.7.10 and Cor.5.2.7.11]{LurHTT}, which imply that $\eend(B)$ is a reflexive subcategory of $\End(\cC)_{/(B,g)}$. As for the second claim, it suffices to produce, for each object $\left( (A,f) \xrightarrow{(\phi,\alpha)} (B,g) \right)$, a morphism
\[ f:\left( (A,f) \xrightarrow{(\phi,\alpha)} (B,g) \right) \to \left( (B, \phi \circ f \circ \phi^r) \xrightarrow{(\id_B,\alpha \circ ((\phi \circ f) \bullet \epsilon))} (B,g) \right)\]
which exhibits its target as the localization of its source relative to $\eend(B)_{/g}$. But since $(B,\id_B)$ is a final object of $\cC^\RR_{/B}$, for any object $(A,\phi)$, its localization relative to $\pt$ is given exactly by the canonical morphism $\overline f:(A,\phi)\to (B,\id_B)$. Therefore, by the proof of above-cited corollary, it is enough to pick an $f$ which is a $p$-coCartesian lift of $\overline{f}$, which is exactly what we did in the proof of Proposition \ref{prop:cocartesian}.
\end{proof}

\begin{definition} \label{def:totalpush}
Let $I$ be a small category with a terminal object $\star \in I$.  Consider a diagram $F: I \to \End(\cC)$, 
with $F(\star) = (B, g)$.  
We define 
$$\Pi F: I \to \End(B)$$ 
by composing
$ \Push_{(B,g)} \circ F_{/(B,g)}: I \to \End(B)_{/g}$
with the forgetful map $\End(B)_{/g} \to \End(B)$. 
\end{definition}

\begin{lemma} \label{lem:colimpush} 
If $\colim_{I \setminus *} F = (B, g)$ then $\colim_{I \setminus *} \Pi F = g$. 
\end{lemma}
\begin{proof}
The forgetful map preserves colimits \cite[Prop.1.2.13.8]{LurHTT}, and 
the $\Push$ preserves colimits by Lemma \ref{lem:push}. 
\end{proof} 

More explicitly, Lemma \ref{lem:colimpush} asserts that if we have a diagram $K \xrightarrow{(A,f)} \End(\cC)$, with a colimit $(B,g) = \colim_{i \in K} (A_i,f_i)$, then in $\End(B)$ we have
$\colim_{i \in K} (\phi_i \circ f_i \circ \phi_i^r) \simeq g$, where $\phi_i: A_i \to B$ is the the map in $\cC^\RR$ coming from the maps to the colimit in $\End(\cC)$.

\subsection{The diagonal cosheaf}
\begin{definition}\label{def:diagcosheaf}
	Let $X$ be a topological space, let $\cC$ be an $(\infty,2)$-category, and $\cF$ a $\cC^{\RR}$-valued precosheaf on $X$.  
	The composition $\Delta \cF$ is an $\End(\cC)$ valued precosheaf on $X$, which we term the {\em diagonal precosheaf}. If $\cF$ is a cosheaf and $\cC$ has right-sided inserters, satisfying the conditions of Theorem \ref{thm:prescolim}, then by that theorem $\Delta \cF$ is a cosheaf and we term it the {\em diagonal cosheaf}. 
\end{definition}

To an open set $U \subset X$  the diagonal (pre)cosheaf assigns the pair $(\Delta \cF)(U) = (\cF(U),\Id_{\cF(U)})$, and to an open inclusion $U \subset V$, the map $(\cF(U),\Id_{\cF(U)}) \to (\cF(V),\Id_{\cF(V)})$ given by $(\phi, \id_\phi)$ where $\phi:\cF(U) \to \cF(V)$ is the corestriction.  
\begin{definition} \label{def:pushdiag} 
	In the situation of Definition \ref{def:diagcosheaf} we abbreviate $\DeltaDelta_X \cF := \Pi \Delta \cF$.  It is a precosheaf valued in the $\infty$-category $\End(\cF(X))$. If $\Delta \cF$ is a cosheaf, so is $\DeltaDelta \cF$, by the proof of Lemma \ref{lem:colimpush}.  
\end{definition}

Let us discuss the diagonal map for the case where $\cC$ is the symmetric monoidal $(\infty,2)$-category $\DGCat^2$, as in Section \ref{sec:dgcat}. Recall that ind-completion gives a embedding $\dgcat \hookrightarrow \DGCat$ of small dg categories as a wide subcategory of $\DGCat$, on all isomorphism classes of objects and all right-dualizable morphisms. Therefore for $\cC = \DGCat^2$ we will identify the $(\infty,1))$-category $\cC^\RR$ with $\dgcat$. For any object $\cA$, the endomorphism $(\infty,1)$-category $\eend(\cA)$ is identified with the category of $\cA$-bimodules. Moreover, given a morphism $\phi: A \to B$ in $\dgcat$ and $M \in \eend(A)$, under the identification with bimodules, the endomorphism $\phi \circ M \circ \phi^r$ of $A$ can be identified with the `pushforward bimodule' $\Phi_! M$ in the notation of \cite{BD1}, where $\Phi = \phi^{op}\otimes \phi$. Since $\DGCat^2$ has right-sided inserters (Lemma \ref{lem:dgInserters}), in this context, Lemma \ref{lem:colimpush} means that if we have a diagram $K \xrightarrow{(A,M)} \End(\cC)$ with colimit $(B,N)$, we have an isomorphism $N \cong \colim_{i\in K} ((\Phi_i)_! M)$.

The following statement is well-known; we give a proof using the diagonal cosheaf of bimodules.
\begin{lemma}\label{lem:propsm}
	A finite colimit of smooth dg categories is smooth.
\end{lemma}
\begin{proof}
	Let $\cB$ be the colimit of a finite diagram $B:I \to \dgcat$, where $B: i \mapsto \cB_i$ with $\cB_i$ smooth. By Lemma \ref{lem:colimpush}, there is an isomorphism
	\[ \colim ((\Phi_i)_! \Id_{\cB_i} \simeq \Id_{\cB}, \]
	and since the pushforward functors $(\Phi_i)_!$ preserves compact objects, as a consequence of Lemma 4.2 of \cite{BD1}, $\Id_{\cB}$ is a finite colimit of compact objects, and therefore compact in the category of bimodules.
\end{proof}

Summarizing, a $\dgcat$-valued cosheaf $\cF$ on $X$ gives an $\End(\DGCat)$-valued cosheaf $\Delta \cF$ on $X$. Applying the push construction, we get an $\End(\cF(X))$-valued cosheaf $\DeltaDelta \cF$, which assigns $U \mapsto \Phi_! \Id_{\cF(U)}$, where $\Phi = \phi^{op}\otimes \phi$, with $\phi:\cF(U) \to \cF(X)$ the corestriction map. We believe that the object constructed explicitly in Definition \ref{def:VerdierDualDiagonal} is related to this object by the $\Compacts$-sheaf version of covariant Verdier duality, in other words:
\begin{conjecture}
	For any $(X,\cF)$ as above, there is an equivalence in $\Sh_{\Compacts}(X,\cF(X)^e\mh\Mod)$ between $D_X (\DeltaDelta\cF) \simeq \varphi\DDelta_X\cF$.
\end{conjecture}

\printbibliography

\end{document}